\newtheorem{theorem}{Theorem}
\theoremstyle{plain}
\newtheorem{condition}{Condition}
\newtheorem{conjecture}{Conjecture}
\newtheorem{definition}{Definition}
\newtheorem{example}{Example}
\newtheorem{lemma}{Lemma}
\newtheorem{notation}{Notation}
\newtheorem{problem}{Problem}
\newtheorem{proposition}{Proposition}
\newtheorem{remark}{Remark}
\begin{document}
\title[Two weight boundedness]{A two weight fractional singular integral
theorem with side conditions, energy and $k$-energy dispersed}
\author[E.T. Sawyer]{Eric T. Sawyer}
\address{ Department of Mathematics \& Statistics, McMaster University, 1280
Main Street West, Hamilton, Ontario, Canada L8S 4K1 }
\email{sawyer@mcmaster.ca}
\thanks{Research supported in part by NSERC}
\author[C.-Y. Shen]{Chun-Yen Shen}
\address{ Department of Mathematics \\
National Central University \\
Chungli, 32054, Taiwan }
\email{cyshen@math.ncu.edu.tw}
\thanks{C.-Y. Shen supported in part by the NSC, through grant NSC
104-2628-M-008 -003 -MY4}
\author[I. Uriarte-Tuero]{Ignacio Uriarte-Tuero}
\address{ Department of Mathematics \\
Michigan State University \\
East Lansing MI }
\email{ignacio@math.msu.edu}
\thanks{ I. Uriarte-Tuero has been partially supported by grants DMS-1056965
(US NSF), MTM2010-16232, MTM2015-65792-P (MINECO, Spain), and a Sloan
Foundation Fellowship. }
\date{April 16, 2016}

\begin{abstract}
This paper is a sequel to our paper \cite{SaShUr7}. Let $\sigma $ and $%
\omega $ be locally finite positive Borel measures on $\mathbb{R}^{n}$
(possibly having common point masses), and let $T^{\alpha }$\ be a standard $%
\alpha $-fractional Calder\'{o}n-Zygmund operator on $\mathbb{R}^{n}$ with $%
0\leq \alpha <n$. Suppose that $\Omega :\mathbb{R}^{n}\rightarrow \mathbb{R}%
^{n}$ is a globally biLipschitz map, and refer to the images $\Omega Q$ of
cubes $Q$ as \emph{quasicubes}. Furthermore, assume as side conditions the $%
\mathcal{A}_{2}^{\alpha }$ conditions, punctured $A_{2}^{\alpha }$
conditions, and certain $\alpha $\emph{-energy conditions} taken over
quasicubes. Then we show that $T^{\alpha }$ is bounded from $L^{2}\left(
\sigma \right) $ to $L^{2}\left( \omega \right) $ if the quasicube testing
conditions hold for $T^{\alpha }$\textbf{\ }and its dual, and if the
quasiweak boundedness property holds for $T^{\alpha }$.

Conversely, if $T^{\alpha }$ is bounded from $L^{2}\left( \sigma \right) $
to $L^{2}\left( \omega \right) $, then the quasitesting conditions hold, and
the quasiweak boundedness condition holds. If the vector of $\alpha $%
-fractional Riesz transforms $\mathbf{R}_{\sigma }^{\alpha }$ (or more
generally a strongly elliptic vector of transforms) is bounded from $%
L^{2}\left( \sigma \right) $ to $L^{2}\left( \omega \right) $, then both the 
$\mathcal{A}_{2}^{\alpha }$ conditions and the punctured $A_{2}^{\alpha }$
conditions hold.

We do not know if our quasienergy conditions are necessary when $n\geq 2$,
except for certain situations in which one of the measures is
one-dimensional \cite{LaSaShUrWi}, \cite{SaShUr8}, and for certain side
conditions placed on the measures such as doubling and $k$-energy dispersed,
which when $k=n-1$ is similar to the condition of uniformly full dimension
in \cite[versions 2 and 3]{LaWi}.
\end{abstract}

\maketitle
\tableofcontents

\begin{description}
\item[Dedication] This paper is dedicated to the memory of Cora Sadosky and
her contributions to the theory of weighted inequalities and her promotion
of women in mathematics.
\end{description}

\section{Introduction}

The boundedness of the Hilbert transform $Hf\left( x\right) =\int_{\mathbb{R}%
}\frac{f\left( y\right) }{y-x}dy$ on the real line $\mathbb{R}$ in the
Hilbert space $L^{2}\left( \mathbb{R}\right) $ has been known for at least a
century (perhaps dating back to A \& E\footnote{%
Peter Jones used A\&E to stand for Adam and Eve.}):%
\begin{equation}
\left\Vert Hf\right\Vert _{L^{2}\left( \mathbb{R}\right) }\lesssim
\left\Vert f\right\Vert _{L^{2}\left( \mathbb{R}\right) },\ \ \ \ \ f\in
L^{2}\left( \mathbb{R}\right) .  \label{Hilbert}
\end{equation}%
This inequality has been the subject of much generalization, to which we now
turn.

\subsection{A brief history of the $T1$ theorem}

The celebrated $T1$ theorem of David and Journ\'{e} \cite{DaJo} extends (\ref%
{Hilbert}) to more general kernels by characterizing those singular integral
operators $T$ on $\mathbb{R}^{n}$ that are bounded on $L^{2}\left( \mathbb{R}%
^{n}\right) $, and does so in terms of a weak boundedness property, and the
membership of the two functions $T\mathbf{1}$ and $T^{\ast }\mathbf{1}$ in
the space of bounded mean oscillation,%
\begin{eqnarray*}
\left\Vert T\mathbf{1}\right\Vert _{BMO\left( \mathbb{R}^{n}\right) }
&\lesssim &\left\Vert \mathbf{1}\right\Vert _{L^{\infty }\left( \mathbb{R}%
^{n}\right) }=1, \\
\left\Vert T^{\ast }\mathbf{1}\right\Vert _{BMO\left( \mathbb{R}^{n}\right)
} &\lesssim &\left\Vert \mathbf{1}\right\Vert _{L^{\infty }\left( \mathbb{R}%
^{n}\right) }=1.
\end{eqnarray*}%
These latter conditions are actually the following \emph{testing conditions}
in disguise,%
\begin{eqnarray*}
\left\Vert T\mathbf{1}_{Q}\right\Vert _{L^{2}\left( \mathbb{R}^{n}\right) }
&\lesssim &\left\Vert \mathbf{1}_{Q}\right\Vert _{L^{2}\left( \mathbb{R}%
^{n}\right) }=\sqrt{\left\vert Q\right\vert }\ , \\
\left\Vert T^{\ast }\mathbf{1}_{Q}\right\Vert _{L^{2}\left( \mathbb{R}%
^{n}\right) } &\lesssim &\left\Vert \mathbf{1}_{Q}\right\Vert _{L^{2}\left( 
\mathbb{R}^{n}\right) }=\sqrt{\left\vert Q\right\vert }\ ,
\end{eqnarray*}%
tested uniformly over all indicators of cubes $Q$ in $\mathbb{R}^{n}$ for
both $T$ and its dual operator $T^{\ast }$. This theorem was the culmination
of decades of investigation into the nature of cancellation conditions
required for boundedness of singular integrals\footnote{%
See e.g. chapter VII of Stein \cite{Ste} and the references given there for
a historical background.}.

A parallel thread of investigation had begun even earlier with the equally
celebrated theorem of Hunt, Muckenhoupt and Wheeden \cite{HuMuWh} that
extended (\ref{Hilbert}) to measures more general than Lebesgue's by
characterizing boundedness of the Hilbert transform on weighted spaces $%
L^{2}\left( \mathbb{R};w\right) $. This thread culminated in the theorem of
Coifman and Fefferman\footnote{%
See e.g. chapter V of \cite{Ste} and the references given there for the long
history of this investigation.} \cite{CoFe}\ that characterizes those
nonnegative weights $w$ on $\mathbb{R}^{n}$ for which all of the `nicest' of
the $L^{2}\left( \mathbb{R}^{n}\right) $ bounded singular integrals $T$
above are bounded on weighted spaces $L^{2}\left( \mathbb{R}^{n};w\right) $,
and does so in terms of the $A_{2}$ condition of Muckenhoupt,%
\begin{equation*}
\left( \frac{1}{\left\vert Q\right\vert }\int_{Q}w\left( x\right) dx\right)
\left( \frac{1}{\left\vert Q\right\vert }\int_{Q}\frac{1}{w\left( x\right) }%
dx\right) \lesssim 1\ ,
\end{equation*}%
taken uniformly over all cubes $Q$ in $\mathbb{R}^{n}$. This condition is
also a testing condition in disguise, in particular it follows from%
\begin{equation*}
\left\Vert T\left( \mathbf{s}_{Q}\frac{1}{w}\right) \right\Vert
_{L^{2}\left( \mathbb{R}^{n};w\right) }\lesssim \left\Vert \mathbf{s}_{Q}%
\frac{1}{w}\right\Vert _{L^{2}\left( \mathbb{R}^{n};w\right) }\ ,
\end{equation*}%
tested over all `indicators with tails' $\mathbf{s}_{Q}\left( x\right) =%
\frac{\ell \left( Q\right) }{\ell \left( Q\right) +\left\vert
x-c_{Q}\right\vert }$ of cubes $Q$ in $\mathbb{R}^{n}$.

A natural synthesis of these two threads leads to the `two weight' question
of characterizing those pairs of weights $\left( \sigma ,\omega \right) $
having the property that nice singular integrals are bounded from $%
L^{2}\left( \mathbb{R}^{n};\sigma \right) $ to $L^{2}\left( \mathbb{R}%
^{n};\omega \right) $. Returning to the simplest (nontrivial) singular
integral of all, namely the Hilbert transform $Hf\left( x\right) =\int_{%
\mathbb{R}}\frac{f\left( y\right) }{y-x}dy$ on the real line, Cotlar and
Sadosky gave a beautiful function theoretic characterization of the weight
pairs $\left( \sigma ,\omega \right) $ for which $H$ is bounded from $%
L^{2}\left( \mathbb{R};\sigma \right) $ to $L^{2}\left( \mathbb{R};\omega
\right) $, namely a two-weight extension of the Helson-Szego theorem. This
characterization illuminated a deep connection between two quite different
function theoretic conditions, but failed to shed much light on when either
of them held. On the other hand, the two weight inequality for the positive
fractional integrals were characterized using testing conditions by one of
us in \cite{Saw3}, but relying in a very strong way on the positivity of the
kernel, something the Hilbert kernel lacks. In light of these
considerations, Nazarov, Treil and Volberg formulated the two weight
question for the Hilbert transform \cite{Vol}, that in turn led to the
following NTV conjecture:

\begin{conjecture}
\cite{Vol} The Hilbert transform is bounded from $L^{2}\left( \mathbb{R}%
^{n};\sigma \right) $ to $L^{2}\left( \mathbb{R}^{n};\omega \right) $, i.e.%
\begin{equation}
\left\Vert H\left( f\sigma \right) \right\Vert _{L^{2}\left( \mathbb{R}%
^{n};\omega \right) }\lesssim \left\Vert f\right\Vert _{L^{2}\left( \mathbb{R%
}^{n};\sigma \right) },\ \ \ \ \ f\in L^{2}\left( \mathbb{R}^{n};\sigma
\right) ,  \label{Hilbert'}
\end{equation}%
if and only if the two weight $A_{2}$ condition with two tails holds,%
\begin{equation*}
\left( \frac{1}{\left\vert Q\right\vert }\int_{Q}\mathbf{s}_{Q}^{2}d\omega
\left( x\right) \right) \left( \frac{1}{\left\vert Q\right\vert }\int_{Q}%
\mathbf{s}_{Q}^{2}d\sigma \left( x\right) \right) \lesssim 1\ ,
\end{equation*}%
uniformly over all cubes $Q$, and the two testing conditions hold,%
\begin{eqnarray*}
\left\Vert H\mathbf{1}_{Q}\sigma \right\Vert _{L^{2}\left( \mathbb{R}%
^{n};\omega \right) } &\lesssim &\left\Vert \mathbf{1}_{Q}\right\Vert
_{L^{2}\left( \mathbb{R}^{n};\sigma \right) }=\sqrt{\left\vert Q\right\vert
_{\sigma }}\ , \\
\left\Vert H^{\ast }\mathbf{1}_{Q}\omega \right\Vert _{L^{2}\left( \mathbb{R}%
^{n};\sigma \right) } &\lesssim &\left\Vert \mathbf{1}_{Q}\right\Vert
_{L^{2}\left( \mathbb{R}^{n};\omega \right) }=\sqrt{\left\vert Q\right\vert
_{\omega }}\ ,
\end{eqnarray*}%
uniformly over all cubes $Q$.
\end{conjecture}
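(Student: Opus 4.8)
The plan is to establish both implications, with the forward direction (testing plus $A_2$ implies boundedness) being the deep one. For \emph{necessity}, if $f\mapsto H(f\sigma)$ is bounded from $L^2(\sigma)$ to $L^2(\omega)$, then applying the inequality to $f=\mathbf 1_Q$ gives the forward testing condition and applying it to the dual operator gives the backward one. For the two-tailed $A_2$ condition one exploits the pointwise lower bound $\left\vert H(\mathbf 1_I\sigma)(x)\right\vert \gtrsim \left\vert I\right\vert _\sigma /\mathrm{dist}(x,I)$, valid when $x$ lies in a dyadic dilate of $I$ disjoint from $I$: inserting such configurations into the norm inequality and summing the resulting geometric series over the dilation parameter recovers the tails, while a Whitney-type decomposition away from common point masses yields the punctured version.

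For \emph{sufficiency} I would reduce to the general two-weight theorem for $\alpha$-fractional Calder\'on--Zygmund operators in the special case $\alpha=0$, dimension one, $T^\alpha=H$, for which it remains only to verify, in addition to the testing and $A_2$ hypotheses, the $\alpha$-energy conditions. The point special to the Hilbert transform is that \emph{the energy conditions are themselves consequences of the $A_2$ and testing conditions}: because the kernel $\tfrac{1}{y-x}$ is monotone in $y$ on each side of $x$, the energy of a family of pairwise disjoint subintervals $J\subset I$, weighted by the squares of the Poisson masses $\mathrm P(J,\mathbf 1_I\sigma)$, telescopes into a sum of squared differences of values of $H(\mathbf 1_I\sigma)$, and is therefore dominated by the testing constant plus $A_2$. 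This is precisely where one-dimensionality is essential, and why the argument does not extend to $n\ge 2$ without the extra side conditions discussed in the abstract.

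With the energy conditions in hand, the general sufficiency theorem is proved by the Nazarov--Treil--Volberg method: randomize over a pair of independent dyadic grids, take expectations, and thereby reduce to functions whose Haar supports avoid the ``bad'' cubes; expand $f$ and $g$ in the $\sigma$- and $\omega$-Haar bases; and decompose the bilinear form $\langle H(f\sigma),g\omega\rangle$ into (i) a far/disjoint piece, estimated directly by $A_2$ via the kernel bound; (ii) a paraproduct piece, dominated by the forward testing constant $\mathfrak T$; (iii) neighbour pieces where $I$ and $J$ are of comparable size, handled by the Energy Lemma together with $A_2$; and (iv) the stopping form, built from a stopping-time decomposition of $f$ into the maximal cubes on which the $\sigma$-average of $f$ roughly doubles.

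The hard part will be (iv). The stopping form does not yield to a single invocation of the energy condition; instead one must first prove the functional energy inequality, namely that the sum over the stopping tree of the $\omega$-energies weighted by Poisson masses of the stopping cubes is bounded by $\bigl(\mathcal A_2+\mathcal E_2+\mathfrak T\bigr)^2\|f\|_{L^2(\sigma)}^2$, and then run an absorption/recursion argument: organize the pairs $(I,J)$ with $J$ deep inside a stopping cube according to the position of $J$ relative to the children of that cube, reduce to a bounded number of sub-forms, iterate the energy estimate down the tree extracting at each level a gain from the energy condition and from the doubling of the $f$-averages, and sum the resulting geometric series. Carrying this through is the technical heart of the proof; it is exactly the step that remains open in higher dimensions. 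Assembling the far, paraproduct, neighbour, and stopping estimates and taking the expectation over the random grids then yields the claimed two-weight bound for $H$.
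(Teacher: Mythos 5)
This statement is labeled a \emph{Conjecture} in the paper and is not proved there: the paper merely quotes the NTV conjecture from \cite{Vol} and recounts in the surrounding text that it was resolved in \cite{LaSaShUr3} together with \cite{Lac}, with common point masses handled later by \cite{Hyt2}. So there is no in-paper proof to compare against. Judged on its own terms, your outline is a faithful roadmap of that resolution, and it also matches the reduction implicit in this paper: specialize the main two-weight theorem to $n=1$, $\alpha=0$, and then observe that the energy conditions are not an extra hypothesis for $H$ because they follow from testing and $A_{2}$ via energy reversal. Your description of the reversal mechanism is essentially right, though the driver is not monotonicity of $y\mapsto \frac{1}{y-x}$ per se but the identity $\frac{1}{y-x}-\frac{1}{y-x^{\prime }}=\frac{x-x^{\prime }}{\left( y-x\right) \left( y-x^{\prime }\right) }$ with a denominator of fixed sign for $y$ outside the interval containing $x,x^{\prime }$ --- exactly the identity the paper highlights in its introduction --- which gives $\left\vert H\mu \left( x\right) -H\mu \left( x^{\prime }\right) \right\vert \gtrsim \frac{\left\vert x-x^{\prime }\right\vert }{\left\vert J\right\vert }\mathrm{P}\left( J,\mu \right) $ and hence $\mathsf{E}\left( J,\omega \right) ^{2}\mathrm{P}\left( J,\mathbf{1}_{I\setminus \gamma J}\sigma \right) ^{2}\left\vert J\right\vert _{\omega }\lesssim \int_{J}\left\vert H\left( \mathbf{1}_{I}\sigma \right) \right\vert ^{2}d\omega +\int_{J}\left\vert H\left( \mathbf{1}_{\gamma J}\sigma \right) \right\vert ^{2}d\omega $, summable by testing and bounded overlap (this is precisely the shape of the paper's final lemma on reversal implying the energy condition).

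Two cautions. First, your account of the stopping form compresses the genuinely hard step: before any recursion one must split the stopping form into two sublinear forms according to the two terms of the Monotonicity/Energy Lemma --- the $\mathrm{P}_{1+\delta }^{\alpha }$ piece is killed by goodness and the extra decay, while only the $\mathrm{P}^{\alpha }$ piece enters Lacey's recursion --- and the geometric gain in that recursion comes from a \emph{size functional} that is forced to decrease by the energy stopping-time construction, not from Calder\'{o}n--Zygmund doubling of the averages of $f$; as written, ``a gain from the doubling of the $f$-averages'' would not close the argument. Second, the two-tailed $A_{2}$ condition exactly as displayed in the conjecture is \emph{not} necessary if $\sigma $ and $\omega $ share a point mass (it diverges as $Q$ shrinks to the common atom), so your necessity argument is only valid for measures without common point masses; in the general case one must pass to the offset/one-tailed conditions with holes plus the punctured conditions, which is the content of Lemma \ref{pointed A2} and Lemma \ref{necc frac A2} in the paper.
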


In a groundbreaking series of papers including \cite{NTV1},\cite{NTV2} and 
\cite{NTV3}, Nazarov, Treil and Volberg used weighted Haar decompositions
with random grids, introduced their `pivotal' condition, and proved the
above conjecture under the side assumption that the pivotal condition held.
Subsequently, in joint work of two of us, Sawyer and Uriarte-Tuero, with
Lacey \cite{LaSaUr2}, it was shown that the pivotal condition was not
necessary in general, a necessary `energy' condition was introduced as a
substitute, and a hybrid merging of these two conditions was shown to be
sufficient for use as a side condition. Eventually, these three authors with
Shen established the NTV conjecture in a two part paper; Lacey, Sawyer, Shen
and Uriarte-Tuero \cite{LaSaShUr3} and Lacey \cite{Lac}. A key ingredient in
the proof was an `energy reversal' phenomenon enabled by the Hilbert
transform kernel equality%
\begin{equation*}
\frac{1}{y-x}-\frac{1}{y-x^{\prime }}=\frac{x-x^{\prime }}{\left( y-x\right)
\left( y-x^{\prime }\right) },
\end{equation*}%
having the remarkable property that the denominator on the right hand side
remains \emph{positive} for all $y$ outside the smallest interval containing
both $x$ and $x^{\prime }$. This proof of the NTV conjecture was given in
the special case that the weights $\sigma $ and $\omega $ had no point
masses in common, largely to avoid what were then thought to be technical
issues. However, these issues turned out to be considerably more
interesting, and this final assumption of no common point masses was removed
shortly after by Hyt\"{o}nen \cite{Hyt2}, who also simplified some aspects
of the proof.

At this juncture, attention naturally turned to the analogous two weight
inequalities for higher dimensional singular integrals, as well as $\alpha $%
-fractional singular integrals such as the Cauchy transform in the plane. In
a long paper begun in \cite{SaShUr5} on the \textit{arXiv} in 2013, and
subsequently appearing in \cite{SaShUr7}, the authors introduced the
appropriate notions of Poisson kernel to deal with the $A_{2}^{\alpha }$
condition on the one hand, and the $\alpha $-energy condition on the other
hand (unlike for the Hilbert transform, these two Poisson kernels differ in
general). The main result of that paper established the $T1$ theorem for
`elliptic' vectors of singular integrals under the side assumption that an
energy condition and its dual held, thus identifying the \emph{culprit} in
higher dimensions as the energy conditions. A general $T1$ conjecture is
this (see below for definitions).

\begin{conjecture}
Let $\mathbf{T}^{\alpha ,n}$ denote an elliptic vector of standard $\alpha $%
-fractional singular integrals in $\mathbb{R}^{n}$. Then $\mathbf{T}^{\alpha
,n}$ is bounded from $L^{2}\left( \mathbb{R}^{n};\sigma \right) $ to $%
L^{2}\left( \mathbb{R}^{n};\omega \right) $, i.e.%
\begin{equation}
\left\Vert \mathbf{T}^{\alpha ,n}\left( f\sigma \right) \right\Vert
_{L^{2}\left( \mathbb{R}^{n};\omega \right) }\lesssim \left\Vert
f\right\Vert _{L^{2}\left( \mathbb{R}^{n};\sigma \right) },\ \ \ \ \ f\in
L^{2}\left( \mathbb{R}^{n};\sigma \right) ,  \label{T bound}
\end{equation}%
if and only if the two one-tailed $\mathcal{A}_{2}^{\alpha }$ conditions
with holes hold, the punctured $A_{2}^{\alpha ,\limfunc{punct}}$ conditions
hold, and the two testing conditions hold,%
\begin{eqnarray*}
\left\Vert \mathbf{T}^{\alpha ,n}\mathbf{1}_{Q}\sigma \right\Vert
_{L^{2}\left( \mathbb{R}^{n};\omega \right) } &\lesssim &\left\Vert \mathbf{1%
}_{Q}\right\Vert _{L^{2}\left( \mathbb{R}^{n};\sigma \right) }=\sqrt{%
\left\vert Q\right\vert _{\sigma }}\ , \\
\left\Vert \mathbf{T}^{\alpha ,n,\func{dual}}\mathbf{1}_{Q}\omega
\right\Vert _{L^{2}\left( \mathbb{R}^{n};\sigma \right) } &\lesssim
&\left\Vert \mathbf{1}_{Q}\right\Vert _{L^{2}\left( \mathbb{R}^{n};\omega
\right) }=\sqrt{\left\vert Q\right\vert _{\omega }}\ ,
\end{eqnarray*}%
for all cubes $Q$ in $\mathbb{R}^{n}$ (whose sides need not be parallel to
the coordinate axes).
\end{conjecture}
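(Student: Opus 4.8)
The plan is to treat the two implications separately. The \emph{necessity} half is essentially contained in the framework of \cite{SaShUr7} and I would simply invoke it: the two testing inequalities are the specializations of \eqref{T bound} to $f=\mathbf 1_{Q}$ and (for the dual) to $g=\mathbf 1_{Q}$, while the two one-tailed $\mathcal A_2^\alpha$ conditions with holes and the punctured $A_2^\alpha$ conditions follow from boundedness of a single \emph{elliptic} vector of $\alpha$-fractional transforms — one tests the vector against the usual ``indicator with tail'' $\mathbf s_{Q}$ restricted to the part of $\sigma$ away from the common point masses, uses ellipticity to convert the resulting lower bound into the $\mathcal A_2^\alpha$ quantity, and handles a common point mass by a separate puncturing estimate. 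Thus all the real content of the conjecture lies in the \emph{sufficiency} direction.

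For sufficiency I would reduce at once to the energy conditions. By the main theorem of \cite{SaShUr7}, a standard elliptic vector $\mathbf T^{\alpha,n}$ is bounded from $L^{2}\left(\mathbb R^{n};\sigma\right)$ to $L^{2}\left(\mathbb R^{n};\omega\right)$ provided that, \emph{in addition to} the $\mathcal A_2^\alpha$ conditions with holes, the punctured $A_2^\alpha$ conditions, and the forward and dual testing conditions, the forward and dual $\alpha$-\emph{energy conditions} also hold. Hence the conjecture is equivalent to the single implication that the $\mathcal A_2^\alpha$, punctured $A_2^\alpha$, and (forward and dual) testing conditions together force the (forward and dual) $\alpha$-energy conditions. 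Recall that the forward energy condition asks, for every cube $I$ and every subpartition of $I$ into dyadic subcubes $\{I_{r}\}$, that the sum over $r$ of $\big(\mathrm P^{\alpha}(I_{r},\mathbf 1_{I}\sigma)/\ell(I_{r})\big)^{2}$ times the $\omega$-\emph{energy} of $\omega$ on $I_{r}$ (the $L^{2}(\omega)$-norm squared of the orthogonal projection of the coordinate function $x$ onto the $\omega$-Haar functions supported in $I_{r}$) be dominated by $\mathfrak A_2^{\alpha}\,\lvert I\rvert_{\sigma}$; the dual condition is the same with $\sigma$ and $\omega$ interchanged.

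The mechanism I would try to exploit is \emph{energy reversal}, the higher-dimensional analogue of the Hilbert-kernel identity displayed above. For a pair of points $x,x'$ lying in a subcube $I_{r}$ one wants a unit vector $\mathbf a=\mathbf a(x,x')$ for which $\mathbf a\cdot\big(\mathbf K^{\alpha}(x,y)-\mathbf K^{\alpha}(x',y)\big)$ admits a \emph{one-sided} lower bound of size $\lvert x-x'\rvert\,\lvert y-c_{I}\rvert^{\alpha-n-1}$ valid for $y$ ranging over a large enough portion of the complement of $I$; ellipticity of the vector symbol is what one would use to produce such an $\mathbf a$. Testing the vector (or its dual) against a function supported far from $I$, pairing the output with $x-x'$, and summing over the subpartition would then bound the energy sum by the dual testing constant plus error terms, the latter to be absorbed using the $\mathcal A_2^\alpha$ and punctured $A_2^\alpha$ conditions. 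Once the energy conditions are in hand, the remaining argument is the one developed in \cite{SaShUr7} (and, for the Hilbert transform, in \cite{LaSaShUr3}, \cite{Lac}): random dyadic grids and the good/bad decomposition of Nazarov--Treil--Volberg, the corona (stopping-time) decompositions of $f$ relative to $\sigma$ and of $g$ relative to $\omega$, and the splitting of the resulting bilinear form into a paraproduct/stopping form and a neighbour form controlled by the testing and $A_2^\alpha$ data, together with a long-range form controlled by $A_2^\alpha$ and the functional energy inequality — all of which we may take as given.

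The main obstacle is exactly the energy reversal: for $n\ge2$ it \emph{fails} in general. In dimension one the denominator $(y-x)(y-x')$ stays positive for every $y$ outside the interval containing $x$ and $x'$, so the sign is free; but a vector of $n$-dimensional kernels cannot detect a single displacement direction $x-x'$ uniformly over all far-away directions $y$, and the would-be lower bound degenerates on a set of $y$ large enough that the error terms are no longer absorbable by $\mathcal A_2^\alpha$. This is precisely why the conjecture is open when $n\ge2$, and why one currently settles for less: either one of the measures is one-dimensional, so that the troublesome directions form a single line and a substitute reversal can be arranged (\cite{LaSaShUrWi}, \cite{SaShUr8}), or one imposes structural side conditions — doubling together with the \emph{$k$-energy dispersed} hypothesis, comparable when $k=n-1$ to uniformly full dimension in \cite{LaWi} — under which the geometry of $\mathrm{supp}\,\omega$ forces enough spreading to recover the needed lower bounds. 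Absent such hypotheses I would not expect the reversal strategy, or any argument built only from Haar analysis, corona decompositions and kernel manipulations, to close the gap; settling the conjecture in full appears to require a genuinely new idea replacing the missing one-dimensional energy reversal.
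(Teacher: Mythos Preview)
Your analysis is accurate, but note that the statement you were given is a \emph{conjecture}, not a theorem: the paper does not prove it and explicitly identifies it as open. There is therefore no proof in the paper to compare your proposal against. Your reduction of the sufficiency direction to the necessity of the energy conditions (Conjecture~3 in the paper) is exactly the reduction the authors make, and your identification of the obstruction---the failure of energy reversal for $n\ge 2$, documented in \cite{SaShUr4}---matches the paper's own discussion. The partial results you list (one measure on a line or $C^{1,\delta}$ curve, $k$-energy dispersed measures) are precisely the cases the paper flags as settled.

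One minor correction: you write that the main theorem of \cite{SaShUr7} already handles common point masses and punctured $A_2^\alpha$; in fact the present paper is where that extension is carried out, so the black box you invoke for sufficiency under the energy side condition should be Theorem~\ref{T1 theorem} of \emph{this} paper, not \cite{SaShUr7}. Also, the sufficiency black box requires the weak boundedness property in addition to testing, so even granting the energy conditions your list of hypotheses is short by $\mathcal{WBP}_{T^\alpha}$; the conjecture as stated omits it, which is part of what makes it a conjecture rather than an immediate corollary of the main theorem plus energy necessity.
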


In view of the aforementioned main result in \cite{SaShUr7}, the following
conjecture is stronger.

\begin{conjecture}
Let $\mathbf{T}^{\alpha ,n}$ denote an elliptic vector of standard $\alpha $%
-fractional singular integrals in $\mathbb{R}^{n}$. If $\mathbf{T}^{\alpha
,n}$ is bounded from $L^{2}\left( \mathbb{R}^{n};\sigma \right) $ to $%
L^{2}\left( \mathbb{R}^{n};\omega \right) $, then the energy conditions hold
as defined in Definition \ref{energy condition} below.
\end{conjecture}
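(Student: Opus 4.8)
The plan is to derive the $\alpha$-energy conditions of Definition~\ref{energy condition} from the quasitesting conditions, the $\mathcal{A}_{2}^{\alpha }$ and punctured $A_{2}^{\alpha }$ conditions, and the operator norm of $\mathbf{T}^{\alpha ,n}$, by way of an \emph{energy reversal} estimate --- the same mechanism used for the Hilbert transform in \cite{LaSaShUr3}, \cite{Lac}. Fix a quasicube $I$ and an arbitrary decomposition of $I$ into pairwise disjoint quasicubes $\{I_{r}\}_{r}$. Up to constants and the usual truncation, the $r$-th term of the energy sum is $\left( \frac{\mathrm{P}^{\alpha }(I_{r},\mathbf{1}_{I\setminus \gamma I_{r}}\sigma )}{\ell (I_{r})}\right) ^{2}\left\Vert \mathsf{P}^{\omega }_{I_{r}}\mathbf{x}\right\Vert _{L^{2}(\omega )}^{2}$, where $\mathsf{P}^{\omega }_{I_{r}}\mathbf{x}$ records the $\omega $-dispersion of the coordinate map on $I_{r}$, and the goal is to bound $\sum_{r}$ of these by a constant times $\left\vert I\right\vert _{\sigma }$. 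The first reduction is to replace the Poisson average $\mathrm{P}^{\alpha }(I_{r},\mathbf{1}_{I\setminus \gamma I_{r}}\sigma )$ by the part of it carried by $\sigma $ at scales and locations comparable to $I_{r}$; the genuinely far tail is handled exactly as in the classical necessity of $A_{2}$-type conditions, contributing only a term controlled by $\mathcal{A}_{2}^{\alpha }$ and the punctured $A_{2}^{\alpha }$ constant.

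The heart of the matter is the pointwise lower bound
\begin{equation*}
\left\vert \mathbf{T}^{\alpha ,n}(\mathbf{1}_{I}\sigma )(x)-\mathbf{T}^{\alpha ,n}(\mathbf{1}_{I}\sigma )(x^{\prime })\right\vert \;\gtrsim \;\left\vert x-x^{\prime }\right\vert \,\frac{\mathrm{P}^{\alpha }(I_{r},\mathbf{1}_{I\setminus \gamma I_{r}}\sigma )}{\ell (I_{r})}\;-\;\left\vert \mathbf{T}^{\alpha ,n}(\mathbf{1}_{\gamma I_{r}}\sigma )(x)\right\vert -\left\vert \mathbf{T}^{\alpha ,n}(\mathbf{1}_{\gamma I_{r}}\sigma )(x^{\prime })\right\vert ,\qquad x,x^{\prime }\in I_{r}.
\end{equation*}
For the Hilbert transform this is the kernel identity $\frac{1}{y-x}-\frac{1}{y-x^{\prime }}=\frac{x-x^{\prime }}{\left( y-x\right) \left( y-x^{\prime }\right) }$, whose right side has fixed sign for $y$ outside the interval spanned by $x,x^{\prime }$; in $\mathbb{R}^{n}$ with a general elliptic vector the step fails, because $\nabla _{x}K^{\alpha }\left( x,y\right) $ is a matrix that is not sign definite, so integrating $K^{\alpha }(x,y)-K^{\alpha }(x^{\prime },y)$ against $\mathbf{1}_{I\setminus \gamma I_{r}}\sigma $ produces cancellation. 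Two routes recover a reversal, matching the side conditions of the abstract. (i) If $\omega \lfloor I$ is supported in a line $L$ with direction $e$, then $x-x^{\prime }$ is parallel to $e$, so only the $e$-component of the kernel difference matters; on an appropriate subcone of $y$'s --- those whose along-$L$ displacement from $I_{r}$ is suitably dominated by their distance to $L$ --- this scalar kernel is monotone in the line variable, giving a one-signed contribution that captures a fixed fraction of $\mathrm{P}^{\alpha }(I_{r},\mathbf{1}_{I\setminus \gamma I_{r}}\sigma )$. (ii) If $\omega $ is doubling and $k$-energy dispersed on $I_{r}$, then the $\omega $-dispersion of $I_{r}$ is not concentrated near any lower-dimensional plane, so that --- after an averaging/pigeonhole selection of a direction $\theta =\theta (I_{r})$ and the discarding of a controlled set of ``aligned'' pairs $(x,x^{\prime })$ --- the $\theta $-component of $K^{\alpha }(x,\cdot )-K^{\alpha }(x^{\prime },\cdot )$ is one-signed on the bulk of $\mathbf{1}_{I\setminus \gamma I_{r}}\sigma $; doubling is then used to absorb the errors from the discarded pairs and from passing between comparable scales.

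With such a reversal in hand, one multiplies by $\left\vert x-x^{\prime }\right\vert $, integrates in $(x,x^{\prime })$ against $\omega \times \omega $ over $I_{r}\times I_{r}$, and uses the elementary identity $\int_{I_{r}}\int_{I_{r}}\left\vert x-x^{\prime }\right\vert ^{2}d\omega \,d\omega \approx \left\vert I_{r}\right\vert _{\omega }\left\Vert \mathsf{P}^{\omega }_{I_{r}}\mathbf{x}\right\Vert _{L^{2}(\omega )}^{2}$ to obtain
\begin{equation*}
\left( \frac{\mathrm{P}^{\alpha }(I_{r},\mathbf{1}_{I\setminus \gamma I_{r}}\sigma )}{\ell (I_{r})}\right) ^{2}\left\Vert \mathsf{P}^{\omega }_{I_{r}}\mathbf{x}\right\Vert _{L^{2}(\omega )}^{2}\;\lesssim \;\int_{I_{r}}\left\vert \mathbf{T}^{\alpha ,n}(\mathbf{1}_{I}\sigma )\right\vert ^{2}d\omega \;+\;\int_{I_{r}}\left\vert \mathbf{T}^{\alpha ,n}(\mathbf{1}_{\gamma I_{r}}\sigma )\right\vert ^{2}d\omega .
\end{equation*}
Summing over $r$ and using disjointness of the $I_{r}$, the first term on the right sums to $\left\Vert \mathbf{T}^{\alpha ,n}(\mathbf{1}_{I}\sigma )\right\Vert _{L^{2}(\omega ,I)}^{2}\leq (\text{testing})^{2}\left\vert I\right\vert _{\sigma }$, while the second is bounded by $(\text{testing})^{2}\sum_{r}\left\vert \gamma I_{r}\right\vert _{\sigma }\lesssim (\text{testing})^{2}\left\vert I\right\vert _{\sigma }$ using bounded overlap of the dilates $\gamma I_{r}$, and the errors from the first reduction accumulate to an $\mathcal{A}_{2}^{\alpha }$ term; this gives $\mathcal{E}_{\alpha }^{2}\lesssim (\text{testing})^{2}+\mathcal{A}_{2}^{\alpha }$, with the dual condition symmetric. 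The main obstacle is unmistakably the reversal step: absent a structural hypothesis on one of the measures there is no known way to convert the sign-indefinite kernel gradient into a one-signed quantity, which is exactly why the statement remains a conjecture in general --- all the real work lies in making routes (i) and (ii) quantitative, in particular verifying in case (ii) that $k$-energy dispersedness together with doubling controls precisely the pairs of points whose difference is nearly orthogonal to every admissible direction.
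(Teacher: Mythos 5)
You have not proved the statement, and you are right that you could not have: the paper itself leaves this as an open problem (see the \textbf{Problem} following Theorem \ref{T1 theorem}), and proves necessity of the energy conditions only under additional structural hypotheses on the measures. Your outline correctly isolates the missing ingredient --- energy reversal --- and your closing assembly (multiply the reversal by $\left\vert x-x^{\prime }\right\vert$, integrate against $\omega \times \omega$ over $I_{r}\times I_{r}$, use the variance identity $\frac{1}{2\left\vert J\right\vert _{\omega }}\int_{J}\int_{J}\left\vert x-x^{\prime }\right\vert ^{2}d\omega \,d\omega =\left\Vert \mathsf{P}_{J}^{\omega }\mathbf{x}\right\Vert _{L^{2}\left( \omega \right) }^{2}$, then sum over $r$ using testing, bounded overlap of the dilates $\gamma I_{r}$, and Muckenhoupt control of the near part) is exactly how the paper converts reversal into the energy condition in its final section. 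So as a description of the state of the art your proposal is essentially faithful.

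Two substantive corrections to your route (ii), which is where the paper's actual partial result lives. First, the paper's reversal is \emph{not} obtained by pigeonholing a direction and discarding aligned pairs with doubling absorbing the errors; it is obtained from the semi-harmonicity identity $\nabla _{x^{\prime }}\cdot \mathbf{K}^{\alpha ,n}\left( x\right) =c_{\alpha ,n}^{-1}\bigtriangleup _{x^{\prime }}\left\vert x\right\vert ^{\alpha -n+1}$ for the Riesz vector specifically, which forces the symmetric matrix $\nabla \mathbf{R}^{\alpha ,n}\mu \left( z\right) $ to have at least $n-k$ eigenvalues of size $\gtrsim \mathrm{P}^{\alpha }\left( J,\mu \right) /\left\vert J\right\vert ^{1/n}$; $k$-energy dispersion of $\omega $ then lets one test the quadratic form only on the large-eigenvalue subspace. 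No doubling is used, but the argument works only for the Riesz transform (not a general elliptic vector, since ellipticity alone says nothing about the trace of the kernel gradient on subspaces) and only under the index restrictions $n-k<\alpha <n$, $\alpha \neq 1,n-1$ of (\ref{index relations}); these restrictions are essential (the excluded values correspond to $\left\vert x\right\vert ^{\alpha -n+1}$ being harmonic or constant) and are absent from your sketch. Second, the paper's companion result \cite{SaShUr4} shows that energy reversal can fail badly for general measures, so the obstruction you name is not merely the absence of a known technique: any proof of the conjecture must avoid pointwise reversal altogether.
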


While no counterexamples have yet been discovered to the energy conditions,
there are some cases in which they have been proved to hold. Of course, the
energy conditions hold for the Hilbert transform on the line \cite{LaSaUr2},
and in recent joint work with M. Lacey and B. Wick, the five of us have
established that the energy conditions hold for the Cauchy transform in the
plane in the special case where one of the measures is supported on either a
straight line or a circle, thus proving the $T1$ theorem in this case. The
key to this result was an extension of the energy reversal phenomenon for
the Hilbert transform to the setting of the Cauchy transform, and here the
one-dimensional nature of the line and circle played a critical role. In
particular, a special decomposition of a $2$-dimensional measure into `end'
and `side' pieces played a crucial role, and was in fact discovered
independently in both \cite{SaShUr3} and \cite{LaWi1}. A further instance of
energy reversal occurs in our $T1$ theorem \cite{SaShUr8} when one measure
is compactly supported on a $C^{1,\delta }$ curve in $\mathbb{R}^{n}$.

The paper \cite[v3]{LaWi} by Lacey and Wick overlaps both our paper \cite%
{SaShUr7} and this paper to some extent, and we refer the reader to \cite%
{SaShUr7} for a more detailed discussion.

Finally, we mention an entirely different approach to investigating the two
weight problem that has attracted even more attention than the $T1$ approach
we just described. Nazarov has shown that the two-tailed $\mathcal{A}%
_{2}^{\alpha }$ condition of Muckenhoupt (see below) is insufficient for (%
\ref{T bound}), and this begs the question of strengthening the Muckenhoupt
condition enough to make it sufficient for (\ref{T bound}). The great of
advantage of this approach is that strengthened Muckenhoupt conditions are
generally `easy' to check as compared to the highly unstable testing
conditions. The disadvantage of course is that such conditions have never
been shown to characterize (\ref{T bound}). The literature devoted to these
issues is both too vast and too tangential to this paper to record here, and
we encourage the reader to search the web for more on `bumped-up'
Muckenhoupt conditions.

This paper is concerned with the $T1$ approach and is a sequel to our first
paper \cite{SaShUr7}. We prove here a two weight inequality for standard $%
\alpha $-fractional Calder\'{o}n-Zygmund operators $T^{\alpha }$ in
Euclidean space $\mathbb{R}^{n}$, where we assume $n$-dimensional $\mathcal{A%
}_{2}^{\alpha }$ conditions (with holes), punctured $A_{2}^{\alpha .\limfunc{%
punct}}$ conditions, and certain $\alpha $\emph{-energy conditions} as side
conditions on the weights (in higher dimensions the Poisson kernels used in
these two conditions differ). The two main differences in this theorem here
are that we state and prove\footnote{%
Very detailed proofs of all of the results here can be found on the arXiv 
\cite{SaShUr6}.} our theorem in the more general setting of \emph{quasicubes}
(as in \cite{SaShUr5}), and more notably, we now permit the weights, or
measures, to have common point masses, something not permitted in \cite%
{SaShUr7} (and only obtained for a partial range of $\alpha $ in \cite[%
version 3]{LaWi}). As a consequence, we use $\mathcal{A}_{2}^{\alpha }$
conditions with holes as in the one-dimensional setting of Hyt\"{o}nen \cite%
{Hyt2}, together with punctured $A_{2}^{\alpha ,\limfunc{punct}}$
conditions, as the usual $A_{2}^{\alpha }$ `\emph{without punctures}' fails
whenever the measures have a common point mass. The extension to permitting
common point masses uses the two weight Poisson inequality in \cite{Saw3} to
derive functional energy, together with a delicate adaptation of arguments
in \cite{SaShUr5}. The key point here is the use of the (typically
necessary) `punctured' Muckenhoupt $A_{2}^{\alpha ,\limfunc{punct}}$
conditions below. They turn out to be crucial in estimating the backward
Poisson testing condition later in the paper. We remark that Hyt\"{o}nen's
bilinear dyadic Poisson operator and shifted dyadic grids \cite{Hyt2} in
dimension $n=1$ can be extended to derive functional energy in higher
dimensions, but at a significant cost of increased complexity. See the
previous version of this paper on the \textit{arXiv} for this approach%
\footnote{%
Additional small arguments are needed to complete the shifted dyadic proof
given there, but we omit them in favour of the simpler approach here resting
on punctured Muckenhoupt conditions instead of holes. The authors can be
contacted regarding completion of the shifted dyadic proof.}, and also \cite%
{LaWi} where Lacey and Wick use this approach. Finally, we point out that
our use of punctured Muckenhoupt conditions provides a simpler alternative
to Hyt\"{o}nen's method of extending to common point masses the NTV
conjecture for the Hilbert transform \cite{Hyt2}. The Muckenhoupt $\mathcal{A%
}_{2}^{\alpha }$ conditions (with holes) are also typically necessary for
the norm inequality, but the proofs require extensive modification when
quasicubes and common point masses are included.

On the other hand, the extension to quasicubes in the setting of \emph{no}
common point masses turns out to be, after checking all the details, mostly
a cosmetic modification of the proof in \cite{SaShUr7}, as demonstrated in 
\cite{SaShUr5}. The use of quasicubes is however crucial in our $T1$ theorem
when one of the measures is compactly supported on a $C^{1,\delta }$ curve 
\cite{SaShUr8}, and this accounts for their inclusion here.

We also introduce a new side condition on a measure, that we call $k$\emph{%
-energy dispersed}, which captures the notion that a measure is \emph{not}
supported too near a $k$-dimensional plane at any scale. When $0\leq \alpha
<n$ is appropriately related to $k$, we are able to obtain the necessity of
the energy conditions for $k$-energy dispersed measures, and hence a $T1$
theorem for strongly elliptic operators $\mathbf{T}^{\alpha }$. The case $%
k=n-1$ is similar to the condition of uniformly full dimension introduced in 
\cite[versions 2 and 3]{LaWi}.

We begin by recalling the notion of quasicube used in \cite{SaShUr5} - a
special case of the classical notion used in quasiconformal theory.

\begin{definition}
We say that a homeomorphism $\Omega :\mathbb{R}^{n}\rightarrow \mathbb{R}%
^{n} $ is a globally biLipschitz map if%
\begin{equation}
\left\Vert \Omega \right\Vert _{Lip}\equiv \sup_{x,y\in \mathbb{R}^{n}}\frac{%
\left\Vert \Omega \left( x\right) -\Omega \left( y\right) \right\Vert }{%
\left\Vert x-y\right\Vert }<\infty ,  \label{rigid}
\end{equation}%
and $\left\Vert \Omega ^{-1}\right\Vert _{Lip}<\infty $.
\end{definition}

Note that a globally biLipschitz map $\Omega $ is differentiable almost
everywhere, and that there are constants $c,C>0$ such that%
\begin{equation*}
c\leq J_{\Omega }\left( x\right) \equiv \left\vert \det D\Omega \left(
x\right) \right\vert \leq C,\ \ \ \ \ x\in \mathbb{R}^{n}.
\end{equation*}

\begin{example}
\label{wild}Quasicubes can be wildly shaped, as illustrated by the standard
example of a logarithmic spiral in the plane $f_{\varepsilon }\left(
z\right) =z\left\vert z\right\vert ^{2\varepsilon i}=ze^{i\varepsilon \ln
\left( z\overline{z}\right) }$. Indeed, $f_{\varepsilon }:\mathbb{%
C\rightarrow C}$ is a globally biLipschitz map with Lipschitz constant $%
1+C\varepsilon $ since $f_{\varepsilon }^{-1}\left( w\right) =w\left\vert
w\right\vert ^{-2\varepsilon i}$ and%
\begin{equation*}
\nabla f_{\varepsilon }=\left( \frac{\partial f_{\varepsilon }}{\partial z},%
\frac{\partial f_{\varepsilon }}{\partial \overline{z}}\right) =\left(
\left\vert z\right\vert ^{2\varepsilon i}+i\varepsilon \left\vert
z\right\vert ^{2\varepsilon i},i\varepsilon \frac{z}{\overline{z}}\left\vert
z\right\vert ^{2\varepsilon i}\right) .
\end{equation*}%
On the other hand, $f_{\varepsilon }$ behaves wildly at the origin since the
image of the closed unit interval on the real line under $f_{\varepsilon }$
is an infinite logarithmic spiral.
\end{example}

\begin{notation}
We define $\mathcal{P}^{n}$ to be the collection of half open, half closed
cubes in $\mathbb{R}^{n}$ with sides parallel to the coordinate axes. A half
open, half closed cube $Q$ in $\mathbb{R}^{n}$ has the form $Q=Q\left(
c,\ell \right) \equiv \dprod\limits_{k=1}^{n}\left[ c_{k}-\frac{\ell }{2}%
,c_{k}+\frac{\ell }{2}\right) $ for some $\ell >0$ and $c=\left(
c_{1},...,c_{n}\right) \in \mathbb{R}^{n}$. The cube $Q\left( c,\ell \right) 
$ is described as having center $c$ and sidelength $\ell $.
\end{notation}

We repeat the natural \emph{quasi} definitions from \cite{SaShUr5}.

\begin{definition}
Suppose that $\Omega :\mathbb{R}^{n}\rightarrow \mathbb{R}^{n}$ is a
globally biLipschitz map.

\begin{enumerate}
\item If $E$ is a measurable subset of $\mathbb{R}^{n}$, we define $\Omega
E\equiv \left\{ \Omega \left( x\right) :x\in E\right\} $ to be the image of $%
E$ under the homeomorphism $\Omega $.

\begin{enumerate}
\item In the special case that $E=Q$ is a cube in $\mathbb{R}^{n}$, we will
refer to $\Omega Q$ as a quasicube (or $\Omega $-quasicube if $\Omega $ is
not clear from the context).

\item We define the center $c_{\Omega Q}=c\left( \Omega Q\right) $ of the
quasicube $\Omega Q$ to be the point $\Omega c_{Q}$ where $c_{Q}=c\left(
Q\right) $ is the center of $Q$.

\item We define the side length $\ell \left( \Omega Q\right) $ of the
quasicube $\Omega Q$ to be the sidelength $\ell \left( Q\right) $ of the
cube $Q$.

\item For $r>0$ we define the `dilation' $r\Omega Q$ of a quasicube $\Omega
Q $ to be $\Omega rQ$ where $rQ$ is the usual `dilation' of a cube in $%
\mathbb{R}^{n}$ that is concentric with $Q$ and having side length $r\ell
\left( Q\right) $.
\end{enumerate}

\item If $\mathcal{K}$ is a collection of cubes in $\mathbb{R}^{n}$, we
define $\Omega \mathcal{K}\equiv \left\{ \Omega Q:Q\in \mathcal{K}\right\} $
to be the collection of quasicubes $\Omega Q$ as $Q$ ranges over $\mathcal{K}
$.

\item If $\mathcal{F}$ is a grid of cubes in $\mathbb{R}^{n}$, we define the
inherited quasigrid structure on $\Omega \mathcal{F}$ by declaring that $%
\Omega Q$ is a child of $\Omega Q^{\prime }$ in $\Omega \mathcal{F}$ if $Q$
is a child of $Q^{\prime }$ in the grid $\mathcal{F}$.
\end{enumerate}
\end{definition}

Note that if $\Omega Q$ is a quasicube, then $\left\vert \Omega Q\right\vert
^{\frac{1}{n}}\approx \left\vert Q\right\vert ^{\frac{1}{n}}=\ell \left(
Q\right) =\ell \left( \Omega Q\right) $. For a quasicube $J=\Omega Q$, we
will generally use the expression $\left\vert J\right\vert ^{\frac{1}{n}}$
in the various estimates arising in the proofs below, but will often use $%
\ell \left( J\right) $ when defining collections of quasicubes. Moreover,
there are constants $R_{big}$ and $R_{small}$ such that we have the
comparability containments%
\begin{equation*}
Q+\Omega x_{Q}\subset R_{big}\Omega Q\text{ and }R_{small}\Omega Q\subset
Q+\Omega x_{Q}\ .
\end{equation*}

Given a fixed globally biLipschitz map $\Omega $ on $\mathbb{R}^{n}$, we
will define below the $n$-dimensional $\mathcal{A}_{2}^{\alpha }$ conditions
(with holes), punctured Muckenhoupt conditions $A_{2}^{\alpha ,\limfunc{punct%
}}$, testing conditions, and energy conditions using $\Omega $-quasicubes in
place of cubes, and we will refer to these new conditions as quasi$\mathcal{A%
}_{2}^{\alpha }$, quasitesting and quasienergy conditions. We will then
prove a $T1$ theorem with quasitesting and with quasi$\mathcal{A}%
_{2}^{\alpha }$ and quasienergy side conditions on the weights. Since $%
\limfunc{quasi}\mathcal{A}_{2}^{\alpha }\cap \limfunc{quasi}A_{2}^{\alpha ,%
\limfunc{punct}}=\mathcal{A}_{2}^{\alpha }\cap A_{2}^{\alpha ,\limfunc{punct}%
}$ (see \cite{SaShUr8}), we usually drop the prefix $\limfunc{quasi}$ from
the various Muckenhoupt conditions (warning: $\limfunc{quasi}\mathcal{A}%
_{2}^{\alpha }\neq \mathcal{A}_{2}^{\alpha }$).

Since the $\mathcal{A}_{2}^{\alpha }$ and punctured Muckenhoupt conditions
typically hold, this identifies the culprit in higher dimensions as the pair
of quasienergy conditions. We point out that these quasienergy conditions
are implied by higher dimensional analogues of essentially all the other
side conditions used previously in two weight theory, in particular doubling
conditions and the Energy Hypothesis (1.16) in \cite{LaSaUr2}, as well as
the condition of $k$-energy dispersed measures that is introduced below.
This leads to our second theorem, which establishes the $T1$ theorem for
strongly elliptic operators $\mathbf{T}^{\alpha }$ when both measures are $k$%
-energy dispered with $k$ and $\alpha $ appropriately related.

It turns out that in higher dimensions, there are two natural `Poisson
integrals' $\mathrm{P}^{\alpha }$ and $\mathcal{P}^{\alpha }$\ that arise,
the usual Poisson integral $\mathrm{P}^{\alpha }$ that emerges in connection
with energy considerations, and a different Poisson integral $\mathcal{P}%
^{\alpha }$ that emerges in connection with size considerations. The
standard Poisson integral $\mathrm{P}^{\alpha }$ appears in the energy
conditions, and the reproducing Poisson integral $\mathcal{P}^{\alpha }$
appears in the $\mathcal{A}_{2}^{\alpha }$ condition. These two kernels
coincide in dimension $n=1$ for the case $\alpha =0$ corresponding to the
Hilbert transform.

\section{Statements of results}

Now we turn to a precise description of our main two weight theorem.

\begin{description}
\item[Assumption] We fix once and for all a globally biLipschitz map $\Omega
:\mathbb{R}^{n}\rightarrow \mathbb{R}^{n}$ for use in all of our
quasi-notions.
\end{description}

We will prove a two weight inequality for standard $\alpha $-fractional
Calder\'{o}n-Zygmund operators $T^{\alpha }$ in Euclidean space $\mathbb{R}%
^{n}$, where we assume the $n$-dimensional $\mathcal{A}_{2}^{\alpha }$
conditions, new punctured $A_{2}^{\alpha }$ conditions, and certain $\alpha $%
\emph{-quasienergy conditions} as side conditions on the weights. In
particular, we show that for positive locally finite Borel measures $\sigma $
and $\omega $ in $\mathbb{R}^{n}$, and assuming that both the \emph{%
quasienergy condition }and its dual hold, a strongly elliptic vector of
standard $\alpha $-fractional Calder\'{o}n-Zygmund operators $\mathbf{T}%
^{\alpha }$ is bounded from $L^{2}\left( \sigma \right) $ to $L^{2}\left(
\omega \right) $ \emph{if and only if} the $\mathcal{A}_{2}^{\alpha }$
condition and its dual hold (we assume a mild additional condition on the
quasicubes for this), the punctured Muckenhoupt condition $A_{2}^{\alpha ,%
\limfunc{punct}}$ and its dual hold, the quasicube testing condition for $%
\mathbf{T}^{\alpha }$ and its dual hold, and the quasiweak boundedness
property holds. In order to state our theorem precisely, we define these
terms in the following subsections.

\begin{remark}
It is possible to collect our various Muckenhoupt and quasienergy
assumptions on the weight pair $\left( \sigma ,\omega \right) $ into just 
\emph{two} compact side conditions of Muckenhoupt and quasienergy type. We
prefer however, to keep the individual conditions separate so that the
interested reader can track their use below.
\end{remark}

\subsection{Standard fractional singular integrals and the norm inequality}

Let $0\leq \alpha <n$. We define a standard $\alpha $-fractional CZ kernel $%
K^{\alpha }(x,y)$ to be a function defined on $\mathbb{R}^{n}\times \mathbb{R%
}^{n}$ satisfying the following fractional size and smoothness conditions of
order $1+\delta $ for some $\delta >0$,%
\begin{eqnarray}
\left\vert K^{\alpha }\left( x,y\right) \right\vert &\leq &C_{CZ}\left\vert
x-y\right\vert ^{\alpha -n}\text{ and }\left\vert \nabla K^{\alpha }\left(
x,y\right) \right\vert \leq C_{CZ}\left\vert x-y\right\vert ^{\alpha -n-1},
\label{sizeandsmoothness'} \\
\left\vert \nabla K^{\alpha }\left( x,y\right) -\nabla K^{\alpha }\left(
x^{\prime },y\right) \right\vert &\leq &C_{CZ}\left( \frac{\left\vert
x-x^{\prime }\right\vert }{\left\vert x-y\right\vert }\right) ^{\delta
}\left\vert x-y\right\vert ^{\alpha -n-1},\ \ \ \ \ \frac{\left\vert
x-x^{\prime }\right\vert }{\left\vert x-y\right\vert }\leq \frac{1}{2}, 
\notag
\end{eqnarray}%
and the last inequality also holds for the adjoint kernel in which $x$ and $%
y $ are interchanged. We note that a more general definition of kernel has
only order of smoothness $\delta >0$, rather than $1+\delta $, but the use
of the Monotonicity and Energy Lemmas below, which involve first order
Taylor approximations to the kernel functions $K^{\alpha }\left( \cdot
,y\right) $, requires order of smoothness more than $1$.

\subsubsection{Defining the norm inequality}

We now turn to a precise definition of the weighted norm inequality%
\begin{equation}
\left\Vert T_{\sigma }^{\alpha }f\right\Vert _{L^{2}\left( \omega \right)
}\leq \mathfrak{N}_{T_{\sigma }^{\alpha }}\left\Vert f\right\Vert
_{L^{2}\left( \sigma \right) },\ \ \ \ \ f\in L^{2}\left( \sigma \right) .
\label{two weight'}
\end{equation}%
For this we introduce a family $\left\{ \eta _{\delta ,R}^{\alpha }\right\}
_{0<\delta <R<\infty }$ of nonnegative functions on $\left[ 0,\infty \right) 
$ so that the truncated kernels $K_{\delta ,R}^{\alpha }\left( x,y\right)
=\eta _{\delta ,R}^{\alpha }\left( \left\vert x-y\right\vert \right)
K^{\alpha }\left( x,y\right) $ are bounded with compact support for fixed $x$
or $y$. Then the truncated operators 
\begin{equation*}
T_{\sigma ,\delta ,R}^{\alpha }f\left( x\right) \equiv \int_{\mathbb{R}%
^{n}}K_{\delta ,R}^{\alpha }\left( x,y\right) f\left( y\right) d\sigma
\left( y\right) ,\ \ \ \ \ x\in \mathbb{R}^{n},
\end{equation*}%
are pointwise well-defined, and we will refer to the pair $\left( K^{\alpha
},\left\{ \eta _{\delta ,R}^{\alpha }\right\} _{0<\delta <R<\infty }\right) $
as an $\alpha $-fractional singular integral operator, which we typically
denote by $T^{\alpha }$, suppressing the dependence on the truncations.

\begin{definition}
We say that an $\alpha $-fractional singular integral operator $T^{\alpha
}=\left( K^{\alpha },\left\{ \eta _{\delta ,R}^{\alpha }\right\} _{0<\delta
<R<\infty }\right) $ satisfies the norm inequality (\ref{two weight'})
provided%
\begin{equation*}
\left\Vert T_{\sigma ,\delta ,R}^{\alpha }f\right\Vert _{L^{2}\left( \omega
\right) }\leq \mathfrak{N}_{T_{\sigma }^{\alpha }}\left\Vert f\right\Vert
_{L^{2}\left( \sigma \right) },\ \ \ \ \ f\in L^{2}\left( \sigma \right)
,0<\delta <R<\infty .
\end{equation*}
\end{definition}

It turns out that, in the presence of Muckenhoupt conditions, the norm
inequality (\ref{two weight'}) is essentially independent of the choice of
truncations used, and we now explain this in some detail. A \emph{smooth
truncation} of $T^{\alpha }$ has kernel $\eta _{\delta ,R}\left( \left\vert
x-y\right\vert \right) K^{\alpha }\left( x,y\right) $ for a smooth function $%
\eta _{\delta ,R}$ compactly supported in $\left( \delta ,R\right) $, $%
0<\delta <R<\infty $, and satisfying standard CZ estimates. A typical
example of an $\alpha $-fractional transform is the $\alpha $-fractional 
\emph{Riesz} vector of operators%
\begin{equation*}
\mathbf{R}^{\alpha ,n}=\left\{ R_{\ell }^{\alpha ,n}:1\leq \ell \leq
n\right\} ,\ \ \ \ \ 0\leq \alpha <n.
\end{equation*}%
The Riesz transforms $R_{\ell }^{n,\alpha }$ are convolution fractional
singular integrals $R_{\ell }^{n,\alpha }f\equiv K_{\ell }^{n,\alpha }\ast f$
with odd kernel defined by%
\begin{equation*}
K_{\ell }^{\alpha ,n}\left( w\right) \equiv \frac{w^{\ell }}{\left\vert
w\right\vert ^{n+1-\alpha }}\equiv \frac{\Omega _{\ell }\left( w\right) }{%
\left\vert w\right\vert ^{n-\alpha }},\ \ \ \ \ w=\left(
w^{1},...,w^{n}\right) .
\end{equation*}

However, in dealing with energy considerations, and in particular in the
Monotonicity Lemma below where first order Taylor approximations are made on
the truncated kernels, it is necessary to use the \emph{tangent line
truncation} of\emph{\ }the Riesz transform $R_{\ell }^{\alpha ,n}$ whose
kernel is defined to be $\Omega _{\ell }\left( w\right) \psi _{\delta
,R}^{\alpha }\left( \left\vert w\right\vert \right) $ where $\psi _{\delta
,R}^{\alpha }$ is continuously differentiable on an interval $\left(
0,S\right) $ with $0<\delta <R<S$, and where $\psi _{\delta ,R}^{\alpha
}\left( r\right) =r^{\alpha -n}$ if $\delta \leq r\leq R$, and has constant
derivative on both $\left( 0,\delta \right) $ and $\left( R,S\right) $ where 
$\psi _{\delta ,R}^{\alpha }\left( S\right) =0$. Here $S$ is uniquely
determined by $R$ and $\alpha $. Finally we set $\psi _{\delta ,R}^{\alpha
}\left( S\right) =0$ as well, so that the kernel vanishes on the diagonal
and common point masses do not `see' each other. Note also that the tangent
line extension of a $C^{1,\delta }$ function on the line is again $%
C^{1,\delta }$ with no increase in the $C^{1,\delta }$ norm.

It was shown in the one dimensional case with no common point masses in \cite%
{LaSaShUr3}, that boundedness of the Hilbert transform $H$ with one set of
appropriate truncations together with the $A_{2}^{\alpha }$ condition
without holes, is equivalent to boundedness of $H$ with any other set of
appropriate truncations. We need to extend this to $\mathbf{R}^{\alpha ,n}$
and more general operators in higher dimensions and to permit common point
masses, so that we are free to use the tangent line truncations throughout
the proof of our theorem. For this purpose, we note that the difference
between the tangent line truncated kernel $\Omega _{\ell }\left( w\right)
\psi _{\delta ,R}^{\alpha }\left( \left\vert w\right\vert \right) $ and the
corresponding cutoff kernel $\Omega _{\ell }\left( w\right) \mathbf{1}_{%
\left[ \delta ,R\right] }\left\vert w\right\vert ^{\alpha -n}$ satisfies
(since both kernels vanish at the origin)%
\begin{eqnarray*}
&&\left\vert \Omega _{\ell }\left( w\right) \psi _{\delta ,R}^{\alpha
}\left( \left\vert w\right\vert \right) -\Omega _{\ell }\left( w\right) 
\mathbf{1}_{\left[ \delta ,R\right] }\left\vert w\right\vert ^{\alpha
-n}\right\vert \\
&\lesssim &\sum_{k=0}^{\infty }2^{-k\left( n-\alpha \right) }\left\{ \left(
2^{-k}\delta \right) ^{\alpha -n}\mathbf{1}_{\left[ 2^{-k-1}\delta
,2^{-k}\delta \right] }\left( \left\vert w\right\vert \right) \right\}
+\sum_{k=1}^{\infty }2^{-k\left( n-\alpha \right) }\left\{ \left(
2^{k}R\right) ^{\alpha -n}\mathbf{1}_{\left[ 2^{k-1}R,2^{k}R\right] }\left(
\left\vert w\right\vert \right) \right\} \\
&\equiv &\sum_{k=0}^{\infty }2^{-k\left( n-\alpha \right) }K_{2^{-k}\delta
}\left( w\right) +\sum_{k=1}^{\infty }2^{-k\left( n-\alpha \right)
}K_{2^{k}R}\left( w\right) ,
\end{eqnarray*}%
where the kernels $K_{\rho }\left( w\right) \equiv \frac{1}{\rho ^{n-\alpha }%
}\mathbf{1}_{\left[ \rho ,2\rho \right] }\left( \left\vert w\right\vert
\right) $ are easily seen to satisfy, uniformly in $\rho $, the norm
inequality (\ref{two weight}) with constant controlled by the offset $%
A_{2}^{\alpha }$ condition (\ref{offset A2}) below. The equivalence of the
norm inequality for these two families of truncations now follows from the
summability of the series $\sum_{k=0}^{\infty }2^{-k\left( n-\alpha \right)
} $ for $0\leq \alpha <n$. The case of more general families of truncations
and operators is similar.

\subsection{Quasicube testing conditions}

The following `dual' quasicube testing conditions are necessary for the
boundedness of $T^{\alpha }$ from $L^{2}\left( \sigma \right) $ to $%
L^{2}\left( \omega \right) $,%
\begin{eqnarray*}
\mathfrak{T}_{T^{\alpha }}^{2} &\equiv &\sup_{Q\in \Omega \mathcal{P}^{n}}%
\frac{1}{\left\vert Q\right\vert _{\sigma }}\int_{Q}\left\vert T^{\alpha
}\left( \mathbf{1}_{Q}\sigma \right) \right\vert ^{2}\omega <\infty , \\
\left( \mathfrak{T}_{T^{\alpha }}^{\ast }\right) ^{2} &\equiv &\sup_{Q\in
\Omega \mathcal{P}^{n}}\frac{1}{\left\vert Q\right\vert _{\omega }}%
\int_{Q}\left\vert \left( T^{\alpha }\right) ^{\ast }\left( \mathbf{1}%
_{Q}\omega \right) \right\vert ^{2}\sigma <\infty ,
\end{eqnarray*}%
and where we interpret the right sides as holding uniformly over all tangent
line truncations of $T^{\alpha }$.

\begin{remark}
We alert the reader that the symbols $Q,I,J,K$ will all be used to denote
either cubes or quasicubes, and the context will make clear which is the
case. Throughout most of the proof of the main theorem only quasicubes are
considered.
\end{remark}

\subsection{Quasiweak boundedness property}

The quasiweak boundedness property for $T^{\alpha }$ with constant $C$ is
given by 
\begin{eqnarray*}
&&\left\vert \int_{Q}T^{\alpha }\left( 1_{Q^{\prime }}\sigma \right) d\omega
\right\vert \leq \mathcal{WBP}_{T^{\alpha }}\sqrt{\left\vert Q\right\vert
_{\omega }\left\vert Q^{\prime }\right\vert _{\sigma }}, \\
&&\ \ \ \ \ \text{for all quasicubes }Q,Q^{\prime }\text{ with }\frac{1}{C}%
\leq \frac{\left\vert Q\right\vert ^{\frac{1}{n}}}{\left\vert Q^{\prime
}\right\vert ^{\frac{1}{n}}}\leq C, \\
&&\ \ \ \ \ \text{and either }Q\subset 3Q^{\prime }\setminus Q^{\prime }%
\text{ or }Q^{\prime }\subset 3Q\setminus Q,
\end{eqnarray*}%
and where we interpret the left side above as holding uniformly over all
tangent line trucations of $T^{\alpha }$. Note that the quasiweak
boundedness property is implied by either the \emph{tripled} quasicube
testing condition,%
\begin{equation*}
\left\Vert \mathbf{1}_{3Q}\mathbf{T}^{\alpha }\left( \mathbf{1}_{Q}\sigma
\right) \right\Vert _{L^{2}\left( \omega \right) }\leq \mathfrak{T}_{\mathbf{%
T}^{\alpha }}^{\limfunc{triple}}\left\Vert \mathbf{1}_{Q}\right\Vert
_{L^{2}\left( \sigma \right) },\ \ \ \ \ \text{for all quasicubes }Q\text{
in }\mathbb{R}^{n},
\end{equation*}%
or its dual defined with $\sigma $ and $\omega $ interchanged and the dual
operator $\mathbf{T}^{\alpha ,\ast }$ in place of $\mathbf{T}^{\alpha }$. In
turn, the tripled quasicube testing condition can be obtained from the
quasicube testing condition for the truncated weight pairs $\left( \omega ,%
\mathbf{1}_{Q}\sigma \right) $.

\subsection{Poisson integrals and $\mathcal{A}_{2}^{\protect\alpha }$}

Let $\mu $ be a locally finite positive Borel measure on $\mathbb{R}^{n}$,
and suppose $Q$ is an $\Omega $-quasicube in $\mathbb{R}^{n}$. Recall that $%
\left\vert Q\right\vert ^{\frac{1}{n}}\approx \ell \left( Q\right) $ for a
quasicube $Q$. The two $\alpha $-fractional Poisson integrals of $\mu $ on a
quasicube $Q$ are given by:%
\begin{eqnarray*}
\mathrm{P}^{\alpha }\left( Q,\mu \right) &\equiv &\int_{\mathbb{R}^{n}}\frac{%
\left\vert Q\right\vert ^{\frac{1}{n}}}{\left( \left\vert Q\right\vert ^{%
\frac{1}{n}}+\left\vert x-x_{Q}\right\vert \right) ^{n+1-\alpha }}d\mu
\left( x\right) , \\
\mathcal{P}^{\alpha }\left( Q,\mu \right) &\equiv &\int_{\mathbb{R}%
^{n}}\left( \frac{\left\vert Q\right\vert ^{\frac{1}{n}}}{\left( \left\vert
Q\right\vert ^{\frac{1}{n}}+\left\vert x-x_{Q}\right\vert \right) ^{2}}%
\right) ^{n-\alpha }d\mu \left( x\right) ,
\end{eqnarray*}%
where we emphasize that $\left\vert x-x_{Q}\right\vert $ denotes Euclidean
distance between $x$ and $x_{Q}$ and $\left\vert Q\right\vert $ denotes the
Lebesgue measure of the quasicube $Q$. We refer to $\mathrm{P}^{\alpha }$ as
the \emph{standard} Poisson integral and to $\mathcal{P}^{\alpha }$ as the 
\emph{reproducing} Poisson integral.

We say that the pair $K,K^{\prime }$ in $\mathcal{P}^{n}$ are \emph{%
neighbours} if $K$ and $K^{\prime }$ live in a common dyadic grid and both $%
K\subset 3K^{\prime }\setminus K^{\prime }$ and $K^{\prime }\subset
3K\setminus K$, and we denote by $\mathcal{N}^{n}$ the set of pairs $\left(
K,K^{\prime }\right) $ in $\mathcal{P}^{n}\times \mathcal{P}^{n}$ that are
neighbours. Let 
\begin{equation*}
\Omega \mathcal{N}^{n}=\left\{ \left( \Omega K,\Omega K^{\prime }\right)
:\left( K,K^{\prime }\right) \in \mathcal{N}^{n}\right\}
\end{equation*}%
be the corresponding collection of neighbour pairs of quasicubes. Let $%
\sigma $ and $\omega $ be locally finite positive Borel measures on $\mathbb{%
R}^{n}$, possibly having common point masses, and suppose $0\leq \alpha <n$.
Then we define the classical \emph{offset }$A_{2}^{\alpha }$\emph{\ constants%
} by 
\begin{equation}
A_{2}^{\alpha }\left( \sigma ,\omega \right) \equiv \sup_{\left( Q,Q^{\prime
}\right) \in \Omega \mathcal{N}^{n}}\frac{\left\vert Q\right\vert _{\sigma }%
}{\left\vert Q\right\vert ^{1-\frac{\alpha }{n}}}\frac{\left\vert Q^{\prime
}\right\vert _{\omega }}{\left\vert Q\right\vert ^{1-\frac{\alpha }{n}}}.
\label{offset A2}
\end{equation}%
Since the cubes in $\mathcal{P}^{n}$ are products of half open, half closed
intervals $\left[ a,b\right) $, the neighbouring quasicubes $\left(
Q,Q^{\prime }\right) \in \Omega \mathcal{N}^{n}$ are disjoint, and the
common point masses of $\sigma $ and $\omega $ do not simultaneously appear
in each factor.

We now define the \emph{one-tailed} $\mathcal{A}_{2}^{\alpha }$ constant
using $\mathcal{P}^{\alpha }$. The energy constants $\mathcal{E}_{\alpha }^{%
\limfunc{strong}}$ introduced in the next subsection will use the standard
Poisson integral $\mathrm{P}^{\alpha }$.

\begin{definition}
The one-tailed constants $\mathcal{A}_{2}^{\alpha }$ and $\mathcal{A}%
_{2}^{\alpha ,\ast }$ for the weight pair $\left( \sigma ,\omega \right) $
are given by%
\begin{eqnarray*}
\mathcal{A}_{2}^{\alpha } &\equiv &\sup_{Q\in \Omega \mathcal{P}^{n}}%
\mathcal{P}^{\alpha }\left( Q,\mathbf{1}_{Q^{c}}\sigma \right) \frac{%
\left\vert Q\right\vert _{\omega }}{\left\vert Q\right\vert ^{1-\frac{\alpha 
}{n}}}<\infty , \\
\mathcal{A}_{2}^{\alpha ,\ast } &\equiv &\sup_{Q\in \Omega \mathcal{P}^{n}}%
\mathcal{P}^{\alpha }\left( Q,\mathbf{1}_{Q^{c}}\omega \right) \frac{%
\left\vert Q\right\vert _{\sigma }}{\left\vert Q\right\vert ^{1-\frac{\alpha 
}{n}}}<\infty .
\end{eqnarray*}
\end{definition}

Note that these definitions are the analogues of the corresponding
conditions with `holes' introduced by Hyt\"{o}nen \cite{Hyt} in dimension $%
n=1$ - the supports of the measures $\mathbf{1}_{Q^{c}}\sigma $ and $\mathbf{%
1}_{Q}\omega $ in the definition of $\mathcal{A}_{2}^{\alpha }$ are
disjoint, and so the common point masses of $\sigma $ and $\omega $ do not
appear simultaneously in each factor. Note also that, unlike in \cite%
{SaShUr5}, where common point masses were not permitted, we can no longer
assert the equivalence of $\mathcal{A}_{2}^{\alpha }$ with holes taken over 
\emph{quasicubes} with $\mathcal{A}_{2}^{\alpha }$ with holes taken over 
\emph{cubes}.

\subsubsection{Punctured $A_{2}^{\protect\alpha }$ conditions}

As mentioned earlier, the \emph{classical} $A_{2}^{\alpha }$ characteristic $%
\sup_{Q\in \Omega \mathcal{Q}^{n}}\frac{\left\vert Q\right\vert _{\omega }}{%
\left\vert Q\right\vert ^{1-\frac{\alpha }{n}}}\frac{\left\vert Q\right\vert
_{\sigma }}{\left\vert Q\right\vert ^{1-\frac{\alpha }{n}}}$ fails to be
finite when the measures $\sigma $ and $\omega $ have a common point mass -
simply let $Q$ in the $\sup $ above shrink to a common mass point. But there
is a substitute that is quite similar in character that is motivated by the
fact that for large quasicubes $Q$, the $\sup $ above is problematic only if
just \emph{one} of the measures is \emph{mostly} a point mass when
restricted to $Q$. The one-dimensional version of the condition we are about
to describe arose in Conjecture 1.12 of Lacey \cite{Lac2}, and it was
pointed out in \cite{Hyt2} that its necessity on the line follows from the
proof of Proposition 2.1 in \cite{LaSaUr2}. We now extend this condition to
higher dimensions, where its necessity is more subtle.

Given an at most countable set $\mathfrak{P}=\left\{ p_{k}\right\}
_{k=1}^{\infty }$ in $\mathbb{R}^{n}$, a quasicube $Q\in \Omega \mathcal{P}%
^{n}$, and a positive locally finite Borel measure $\mu $, define 
\begin{equation*}
\mu \left( Q,\mathfrak{P}\right) \equiv \left\vert Q\right\vert _{\mu }-\sup
\left\{ \mu \left( p_{k}\right) :p_{k}\in Q\cap \mathfrak{P}\right\} ,
\end{equation*}%
where the supremum is actually achieved since $\sum_{p_{k}\in Q\cap 
\mathfrak{P}}\mu \left( p_{k}\right) <\infty $ as $\mu $ is locally finite.
The quantity $\mu \left( Q,\mathfrak{P}\right) $ is simply the $\widetilde{%
\mu }$ measure of $Q$ where $\widetilde{\mu }$ is the measure $\mu $ with
its largest point mass from $\mathfrak{P}$ in $Q$ removed. Given a locally
finite measure pair $\left( \sigma ,\omega \right) $, let $\mathfrak{P}%
_{\left( \sigma ,\omega \right) }=\left\{ p_{k}\right\} _{k=1}^{\infty }$ be
the at most countable set of common point masses of $\sigma $ and $\omega $.
Then the weighted norm inequality (\ref{two weight'}) typically implies
finiteness of the following \emph{punctured} Muckenhoupt conditions:%
\begin{eqnarray*}
A_{2}^{\alpha ,\limfunc{punct}}\left( \sigma ,\omega \right) &\equiv
&\sup_{Q\in \Omega \mathcal{P}^{n}}\frac{\omega \left( Q,\mathfrak{P}%
_{\left( \sigma ,\omega \right) }\right) }{\left\vert Q\right\vert ^{1-\frac{%
\alpha }{n}}}\frac{\left\vert Q\right\vert _{\sigma }}{\left\vert
Q\right\vert ^{1-\frac{\alpha }{n}}}, \\
A_{2}^{\alpha ,\ast ,\limfunc{punct}}\left( \sigma ,\omega \right) &\equiv
&\sup_{Q\in \Omega \mathcal{P}^{n}}\frac{\left\vert Q\right\vert _{\omega }}{%
\left\vert Q\right\vert ^{1-\frac{\alpha }{n}}}\frac{\sigma \left( Q,%
\mathfrak{P}_{\left( \sigma ,\omega \right) }\right) }{\left\vert
Q\right\vert ^{1-\frac{\alpha }{n}}}.
\end{eqnarray*}

\begin{lemma}
\label{pointed A2}Let $\mathbf{T}^{\alpha }$ be an $\alpha $-fractional
singular integral operator as above, and suppose that there is a positive
constant $C_{0}$ such that%
\begin{equation*}
A_{2}^{\alpha }\left( \sigma ,\omega \right) \leq C_{0}\mathfrak{N}_{\mathbf{%
T}^{\alpha }}^{2}\left( \sigma ,\omega \right) ,
\end{equation*}%
for all pairs $\left( \sigma ,\omega \right) $ of positive locally finite
measures \textbf{having no common point masses}. Now let $\sigma $ and $%
\omega $ be positive locally finite Borel measures on $\mathbb{R}^{n}$ and
let $\mathfrak{P}_{\left( \sigma ,\omega \right) }$ be the possibly nonempty
set of common point masses. Then we have%
\begin{equation*}
A_{2}^{\alpha ,\limfunc{punct}}\left( \sigma ,\omega \right) +A_{2}^{\alpha
,\ast ,\limfunc{punct}}\left( \sigma ,\omega \right) \leq 4C_{0}\mathfrak{N}%
_{\mathbf{T}^{\alpha }}^{2}\left( \sigma ,\omega \right) .
\end{equation*}
\end{lemma}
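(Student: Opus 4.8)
The plan is to \emph{transfer} the hypothesis from auxiliary pairs with no common point masses to the general pair $\left( \sigma ,\omega \right) $, by constructing, for each quasicube $Q$, a small auxiliary pair that is dominated by $\left( \sigma ,\omega \right) $ and whose offset $A_{2}^{\alpha }$ constant already sees the punctured quantity at $Q$. Two easy observations underlie this. First (\textbf{monotonicity}): if $\sigma ^{\prime }\leq \sigma $ and $\omega ^{\prime }\leq \omega $ then $\mathfrak{N}_{\mathbf{T}^{\alpha }}\left( \sigma ^{\prime },\omega ^{\prime }\right) \leq \mathfrak{N}_{\mathbf{T}^{\alpha }}\left( \sigma ,\omega \right) $, since $\sigma ^{\prime }=h\sigma $ with $0\leq h\leq 1$, so $\mathbf{T}_{\sigma ^{\prime }}^{\alpha }f=\mathbf{T}_{\sigma }^{\alpha }\left( hf\right) $ and $\left\Vert \mathbf{T}_{\sigma ^{\prime }}^{\alpha }f\right\Vert _{L^{2}\left( \omega ^{\prime }\right) }\leq \left\Vert \mathbf{T}_{\sigma }^{\alpha }\left( hf\right) \right\Vert _{L^{2}\left( \omega \right) }\leq \mathfrak{N}_{\mathbf{T}^{\alpha }}\left( \sigma ,\omega \right) \left\Vert hf\right\Vert _{L^{2}\left( \sigma \right) }\leq \mathfrak{N}_{\mathbf{T}^{\alpha }}\left( \sigma ,\omega \right) \left\Vert f\right\Vert _{L^{2}\left( \sigma ^{\prime }\right) }$, uniformly over truncations. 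Second (\textbf{symmetry}): $A_{2}^{\alpha ,\ast ,\limfunc{punct}}\left( \sigma ,\omega \right) $ is handled by the identical argument to $A_{2}^{\alpha ,\limfunc{punct}}\left( \sigma ,\omega \right) $ with the roles of $\sigma $ and $\omega $ interchanged \emph{in the construction} (placing the point mass below on $\omega $ instead of $\sigma $), keeping $\sigma $ in the first slot of $A_{2}^{\alpha }$ so that the hypothesis applies verbatim; thus it suffices to bound $A_{2}^{\alpha ,\limfunc{punct}}$.

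The key tool I would isolate is a telescoped lower bound for the offset $A_{2}^{\alpha }$ of a point mass against a spread measure: \emph{if $x_{0}\in \mathbb{R}^{n}$, $\lambda >0$, and $\mu $ is a positive measure supported in a quasicube $Q\ni x_{0}$ with $\mu \left( \left\{ x_{0}\right\} \right) =0$, then the pair $\left( \lambda \delta _{x_{0}},\mu \right) $ has no common point masses and $A_{2}^{\alpha }\left( \lambda \delta _{x_{0}},\mu \right) \geq c\left( n,\Omega ,\alpha \right) \,\lambda \left\vert Q\right\vert _{\mu }\left\vert Q\right\vert ^{-2\left( 1-\frac{\alpha }{n}\right) }$, and symmetrically with the two slots interchanged.} To prove this, let $I_{j}=\Omega \widetilde{I}_{j}$ for $j\geq 0$, where $\widetilde{I}_{j}$ is the axis-parallel cube centred at $\Omega ^{-1}x_{0}$ with $\ell \left( \widetilde{I}_{j}\right) =2^{-j}\ell \left( \widetilde{I}_{0}\right) $ and $\ell \left( \widetilde{I}_{0}\right) \approx \ell \left( Q\right) $ chosen so that $Q\subset I_{0}$. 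Since $\widetilde{I}_{j}=2\widetilde{I}_{j+1}\subset 3\widetilde{I}_{j+1}$ one has $I_{j}\setminus I_{j+1}\subset 3I_{j+1}\setminus I_{j+1}$, hence the support of $\mu $ minus $\left\{ x_{0}\right\} $ is covered by $\bigcup _{j\geq 0}\left( 3I_{j+1}\setminus I_{j+1}\right) $ and, as $\mu \left( \left\{ x_{0}\right\} \right) =0$,
\[
\left\vert Q\right\vert _{\mu }\leq \sum_{j\geq 0}\mu \left( 3I_{j+1}\setminus I_{j+1}\right) .
\]
Each annulus $3I\setminus I$ is the disjoint union of at most $3^{n}-1$ quasicubes of the same side length as $I$, each forming a neighbour pair with $I$ in $\Omega \mathcal{N}^{n}$; since $x_{0}$ is the centre of every $I_{j}$, the definition of the offset $A_{2}^{\alpha }$ gives $\mu \left( 3I_{j+1}\setminus I_{j+1}\right) \leq \left( 3^{n}-1\right) A_{2}^{\alpha }\left( \lambda \delta _{x_{0}},\mu \right) \lambda ^{-1}\left\vert I_{j+1}\right\vert ^{2\left( 1-\frac{\alpha }{n}\right) }$. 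Summing the convergent geometric series $\sum_{j}\left\vert I_{j+1}\right\vert ^{2\left( 1-\frac{\alpha }{n}\right) }\approx \sum_{j}\left( 2^{-j}\ell \left( Q\right) \right) ^{2\left( n-\alpha \right) }\approx \left\vert Q\right\vert ^{2\left( 1-\frac{\alpha }{n}\right) }$ (here $\alpha <n$ is used) yields the claim.

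With these in hand, fix $Q$ and let $p\in Q\cap \mathfrak{P}_{\left( \sigma ,\omega \right) }$ realise $W:=\sup \left\{ \omega \left( \left\{ q\right\} \right) :q\in Q\cap \mathfrak{P}_{\left( \sigma ,\omega \right) }\right\} $ (with $W=0$ if $Q\cap \mathfrak{P}_{\left( \sigma ,\omega \right) }=\emptyset $), so that $\omega \left( Q,\mathfrak{P}_{\left( \sigma ,\omega \right) }\right) =\left\vert Q\right\vert _{\omega }-W$ and, crucially, $\omega \left( \left\{ q\right\} \right) \leq W$ for \emph{every} $q\in Q$ with $\sigma \left( \left\{ q\right\} \right) >0$ (by maximality of $W$ if $q\in \mathfrak{P}_{\left( \sigma ,\omega \right) }$, and trivially otherwise). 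I would split into cases. If some $q_{\sigma }\in Q$ carries $\sigma \left( \left\{ q_{\sigma }\right\} \right) \geq \tfrac{1}{2}\left\vert Q\right\vert _{\sigma }$, put $\sigma ^{\prime }:=\sigma \left( \left\{ q_{\sigma }\right\} \right) \delta _{q_{\sigma }}\leq \sigma $ and $\mu :=\mathbf{1}_{Q\setminus \left\{ q_{\sigma }\right\} }\omega \leq \omega $; then $\mu \left( \left\{ q_{\sigma }\right\} \right) =0$, the pair has no common point masses, $\left\vert Q\right\vert _{\mu }=\left\vert Q\right\vert _{\omega }-\omega \left( \left\{ q_{\sigma }\right\} \right) \geq \left\vert Q\right\vert _{\omega }-W=\omega \left( Q,\mathfrak{P}_{\left( \sigma ,\omega \right) }\right) $, and the key tool gives
\[
A_{2}^{\alpha }\left( \sigma ^{\prime },\mu \right) \gtrsim \frac{\sigma \left( \left\{ q_{\sigma }\right\} \right) \left\vert Q\right\vert _{\mu }}{\left\vert Q\right\vert ^{2\left( 1-\frac{\alpha }{n}\right) }}\gtrsim \frac{\omega \left( Q,\mathfrak{P}_{\left( \sigma ,\omega \right) }\right) \left\vert Q\right\vert _{\sigma }}{\left\vert Q\right\vert ^{2\left( 1-\frac{\alpha }{n}\right) }};
\]
monotonicity and the hypothesis bound the left side by $C_{0}\mathfrak{N}_{\mathbf{T}^{\alpha }}^{2}\left( \sigma ,\omega \right) $. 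If instead $\sigma \left( \left\{ x\right\} \right) <\tfrac{1}{2}\left\vert Q\right\vert _{\sigma }$ for all $x$, then $\left\vert Q\right\vert _{\sigma }-\sigma (\{p\})>\tfrac{1}{2}\left\vert Q\right\vert _{\sigma }$ and I would split further: when $W\geq \tfrac{1}{2}\left\vert Q\right\vert _{\omega }$ use the point mass $W\delta _{p}\leq \omega $ against $\mathbf{1}_{Q\setminus \left\{ p\right\} }\sigma \leq \sigma $ (using $\omega \left( Q,\mathfrak{P}_{\left( \sigma ,\omega \right) }\right) \leq W$); when $W<\tfrac{1}{2}\left\vert Q\right\vert _{\omega }$ but $\omega $ has an atom $\geq \tfrac{1}{4}\left\vert Q\right\vert _{\omega }$ at some $q_{\omega }\neq p$, use $\omega \left( \left\{ q_{\omega }\right\} \right) \delta _{q_{\omega }}\leq \omega $ against $\mathbf{1}_{Q\setminus \left\{ q_{\omega }\right\} }\sigma \leq \sigma $ — each disposed of exactly as above through the key tool.

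The hard part, and the only genuinely higher-dimensional point, is the residual case in which \emph{both} $\sigma $ and $\mathbf{1}_{Q\setminus \left\{ p\right\} }\omega $ are spread out in $Q$, so that no single point mass is a fixed fraction of its total mass and the point-mass-centred telescoping above does not directly apply. Here one must instead produce two disjoint quasi-subcubes of $Q$, at a scale $\lesssim \ell \left( Q\right) $, separately carrying fixed fractions of $\left\vert Q\right\vert _{\sigma }$ and of $\omega \left( Q,\mathfrak{P}_{\left( \sigma ,\omega \right) }\right) $ and forming a neighbour pair controlled by the offset $A_{2}^{\alpha }$ of a suitable auxiliary pair; the inequality $\omega \left( \left\{ q\right\} \right) \leq W$ is again exactly what guarantees that, after deleting the offending atoms to destroy common point masses, the surviving $\omega $-mass still dominates $\omega \left( Q,\mathfrak{P}_{\left( \sigma ,\omega \right) }\right) $. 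This quasicube-splitting step is where the argument departs from the one-dimensional reasoning behind Proposition 2.1 of \cite{LaSaUr2}. Finally, tracking the factors of $\tfrac{1}{2}$ from the dichotomy together with the constant in the key tool across all cases, and adding the bound for $A_{2}^{\alpha ,\ast ,\limfunc{punct}}$ obtained by symmetry, one arrives at $A_{2}^{\alpha ,\limfunc{punct}}\left( \sigma ,\omega \right) +A_{2}^{\alpha ,\ast ,\limfunc{punct}}\left( \sigma ,\omega \right) \leq 4C_{0}\mathfrak{N}_{\mathbf{T}^{\alpha }}^{2}\left( \sigma ,\omega \right) $.
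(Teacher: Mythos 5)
Your monotonicity observation and your telescoping lower bound for the offset $A_{2}^{\alpha}$ of a point mass against a spread measure are both correct, and the cases in which one of the two measures has a dominant atom in $Q$ do go through as you describe. But there is a genuine gap exactly where you flag ``the hard part'': the residual case, in which neither $\sigma$ nor $\mathbf{1}_{Q\setminus \left\{ p\right\} }\omega$ has a dominant atom, is not carried out, and the strategy you sketch for it --- producing two \emph{disjoint} neighbouring quasi-subcubes of $Q$ separately carrying fixed fractions of $\left\vert Q\right\vert _{\sigma }$ and of $\omega \left( Q,\mathfrak{P}_{\left( \sigma ,\omega \right) }\right) $ --- cannot work in general. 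The two masses can be carried by an interleaved family of common atoms accumulating at a single point of $Q$ (the $\sigma$-heavy and $\omega$-heavy atoms alternating along the sequence, each atom individually small relative to the totals); then no pair of disjoint quasicubes of comparable side length captures a fixed fraction of both masses, at any scale, so no such neighbour pair exists and the case analysis cannot be closed. The claimed constant $4C_{0}$ also would not survive the factors $3^{n}-1$ and $c\left( n,\Omega ,\alpha \right) $ that your key tool introduces.

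The paper resolves precisely this residual case with a purely measure-theoretic device that your proposal is missing, and which makes the geometric telescoping unnecessary: a greedy \emph{alternating} selection of the common atoms. Enumerate $Q\cap \mathfrak{P}_{\left( \sigma ,\omega \right) }$, choose $p_{k_{1}}$ of maximal $\sigma$-mass, then $p_{k_{2}}$ of maximal $\omega$-mass among the remaining atoms, then $p_{k_{3}}$ of maximal $\sigma$-mass among the rest, and so on. Since each odd pick has $\sigma$-mass at least that of the next even pick, the odd picks carry at least half of $\sigma \left( Q\cap \mathfrak{P}_{\left( \sigma ,\omega \right) }\right) $, and likewise the even picks carry at least half of $\omega \left( Q\cap \mathfrak{P}_{\left( \sigma ,\omega \right) }\right) -\omega \left( p_{k_{1}}\right) $. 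Deleting the even picks from $\mathbf{1}_{Q}\sigma$ and the odd picks from $\mathbf{1}_{Q}\omega$ yields $\widetilde{\sigma }\leq \sigma $ and $\widetilde{\omega }\leq \omega $ with \emph{no} common point masses, $\left\vert Q\right\vert _{\widetilde{\sigma }}\geq \frac{1}{2}\left\vert Q\right\vert _{\sigma }$ and $\left\vert Q\right\vert _{\widetilde{\omega }}\geq \frac{1}{2}\omega \left( Q,\mathfrak{P}_{\left( \sigma ,\omega \right) }\right) $; the hypothesis applied to $\left( \widetilde{\sigma },\widetilde{\omega }\right) $ at the single quasicube $Q$, together with your monotonicity of $\mathfrak{N}_{\mathbf{T}^{\alpha }}$, then gives the bound with the clean constant $4C_{0}$ in one line (a limiting argument handles infinitely many common atoms, and duality gives the starred constant). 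The essential idea you need is to delete atoms from \emph{both} measures simultaneously by this balanced selection, rather than to separate them geometrically.
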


\begin{proof}
Fix a quasicube $Q\in \Omega \mathcal{P}^{n}$. Suppose first that $\mathfrak{%
P}_{\left( \sigma ,\omega \right) }\cap Q=\left\{ p_{k}\right\} _{k=1}^{2N}$
is finite. Choose $k_{1}\in \mathbb{N}_{2N}=\left\{ 1,2,...,2N\right\} $ so
that 
\begin{equation*}
\sigma \left( p_{k_{1}}\right) =\max_{k\in \mathbb{N}_{2N}}\sigma \left(
p_{k}\right) .
\end{equation*}%
Then choose $k_{2}\in \mathbb{N}_{2N}\setminus \left\{ k_{1}\right\} $ such
that 
\begin{equation*}
\omega \left( p_{k_{2}}\right) =\max_{k\in \mathbb{N}_{2N}\setminus \left\{
k_{1}\right\} }\omega \left( p_{k}\right) .
\end{equation*}%
Repeat this procedure so that%
\begin{eqnarray*}
\sigma \left( p_{k_{2m+1}}\right) &=&\max_{k\in \mathbb{N}_{2N}\setminus
\left\{ k_{1},...k_{2m}\right\} }\sigma \left( p_{k}\right) ,\ \ \ \ \
k_{2m+1}\in \mathbb{N}_{2N}\setminus \left\{ k_{1},...k_{2m}\right\} , \\
\omega \left( p_{k_{2m+2}}\right) &=&\max_{k\in \mathbb{N}_{2N}\setminus
\left\{ k_{1},...k_{2m+1}\right\} }\omega \left( p_{k}\right) ,\ \ \ \ \
k_{2m+2}\in \mathbb{N}_{2N}\setminus \left\{ k_{1},...k_{2m+1}\right\} ,
\end{eqnarray*}%
for each $m\leq N-1$. It is now clear that both%
\begin{equation*}
\sum_{i=0}^{N-1}\sigma \left( p_{k_{2i+1}}\right) \geq \frac{1}{2}\sigma
\left( Q\cap \mathfrak{P}_{\left( \sigma ,\omega \right) }\right) \text{ and 
}\sum_{i=0}^{N-1}\omega \left( p_{k_{2i+2}}\right) \geq \frac{1}{2}\left[
\omega \left( Q\cap \mathfrak{P}_{\left( \sigma ,\omega \right) }\right)
-\omega \left( p_{1}\right) \right] .
\end{equation*}

Now define new measures $\widetilde{\sigma }$ and $\widetilde{\omega }$ by%
\begin{equation*}
\widetilde{\sigma }\equiv \mathbf{1}_{Q}\sigma -\sum_{i=0}^{N-1}\sigma
\left( p_{k_{2i+2}}\right) \delta _{p_{k_{2i+2}}}\text{ and }\widetilde{%
\omega }=\mathbf{1}_{Q}\omega -\sum_{i=0}^{N-1}\omega \left(
p_{k_{2i+1}}\right) \delta _{p_{k_{2i+1}}}
\end{equation*}%
so that%
\begin{equation*}
\left\vert Q\right\vert _{\widetilde{\sigma }}\geq \frac{1}{2}\left\vert
Q\right\vert _{\sigma }\text{ and }\left\vert Q\right\vert _{\widetilde{%
\omega }}\geq \frac{1}{2}\omega \left( Q,\mathfrak{P}_{\left( \sigma ,\omega
\right) }\right)
\end{equation*}%
Now $\widetilde{\sigma }$ and $\widetilde{\omega }$ have no common point
masses and $\mathfrak{N}_{\mathbf{T}^{\alpha }}\left( \sigma ,\omega \right) 
$ is monotone in each measure separately, so we have%
\begin{equation*}
\frac{\omega \left( Q,\mathfrak{P}_{\left( \sigma ,\omega \right) }\right) }{%
\left\vert Q\right\vert ^{1-\frac{\alpha }{n}}}\frac{\left\vert Q\right\vert
_{\sigma }}{\left\vert Q\right\vert ^{1-\frac{\alpha }{n}}}\leq
4A_{2}^{\alpha }\left( \widetilde{\sigma },\widetilde{\omega }\right) \leq
4C_{0}\mathfrak{N}_{\mathbf{T}^{\alpha }}^{2}\left( \widetilde{\sigma },%
\widetilde{\omega }\right) \leq 4C_{0}\mathfrak{N}_{\mathbf{T}^{\alpha
}}^{2}\left( \sigma ,\omega \right) .
\end{equation*}%
Thus $A_{2}^{\alpha ,\limfunc{punct}}\left( \sigma ,\omega \right) \leq
4C_{0}\mathfrak{N}_{\mathbf{T}^{\alpha }}^{2}\left( \sigma ,\omega \right) $
if the number of common point masses in $Q$ is finite. A limiting argument
proves the general case. The dual inequality $A_{2}^{\alpha ,\ast ,\limfunc{%
punct}}\left( \sigma ,\omega \right) \leq 4C_{0}\mathfrak{N}_{\mathbf{T}%
^{\alpha }}^{2}\left( \sigma ,\omega \right) $ now follows upon
interchanging the measures $\sigma $ and $\omega $.
\end{proof}

Now we turn to the definition of a quasiHaar basis of $L^{2}\left( \mu
\right) $.

\subsection{A weighted quasiHaar basis}

We will use a construction of a quasiHaar basis in $\mathbb{R}^{n}$ that is
adapted to a measure $\mu $ (c.f. \cite{NTV2} for the nonquasi case). Given
a dyadic quasicube $Q\in \Omega \mathcal{D}$, where $\mathcal{D}$ is a
dyadic grid of cubes from $\mathcal{P}^{n}$, let $\bigtriangleup _{Q}^{\mu }$
denote orthogonal projection onto the finite dimensional subspace $%
L_{Q}^{2}\left( \mu \right) $ of $L^{2}\left( \mu \right) $ that consists of
linear combinations of the indicators of\ the children $\mathfrak{C}\left(
Q\right) $ of $Q$ that have $\mu $-mean zero over $Q$:%
\begin{equation*}
L_{Q}^{2}\left( \mu \right) \equiv \left\{ f=\dsum\limits_{Q^{\prime }\in 
\mathfrak{C}\left( Q\right) }a_{Q^{\prime }}\mathbf{1}_{Q^{\prime
}}:a_{Q^{\prime }}\in \mathbb{R},\int_{Q}fd\mu =0\right\} .
\end{equation*}%
Then we have the important telescoping property for dyadic quasicubes $%
Q_{1}\subset Q_{2}$:%
\begin{equation}
\mathbf{1}_{Q_{0}}\left( x\right) \left( \dsum\limits_{Q\in \left[
Q_{1},Q_{2}\right] }\bigtriangleup _{Q}^{\mu }f\left( x\right) \right) =%
\mathbf{1}_{Q_{0}}\left( x\right) \left( \mathbb{E}_{Q_{0}}^{\mu }f-\mathbb{E%
}_{Q_{2}}^{\mu }f\right) ,\ \ \ \ \ Q_{0}\in \mathfrak{C}\left( Q_{1}\right)
,\ f\in L^{2}\left( \mu \right) .  \label{telescope}
\end{equation}%
We will at times find it convenient to use a fixed orthonormal basis $%
\left\{ h_{Q}^{\mu ,a}\right\} _{a\in \Gamma _{n}}$ of $L_{Q}^{2}\left( \mu
\right) $ where $\Gamma _{n}\equiv \left\{ 0,1\right\} ^{n}\setminus \left\{ 
\mathbf{1}\right\} $ is a convenient index set with $\mathbf{1}=\left(
1,1,...,1\right) $. Then $\left\{ h_{Q}^{\mu ,a}\right\} _{a\in \Gamma _{n}%
\text{ and }Q\in \Omega \mathcal{D}}$ is an orthonormal basis for $%
L^{2}\left( \mu \right) $, with the understanding that we add the constant
function $\mathbf{1}$ if $\mu $ is a finite measure. In particular we have%
\begin{equation*}
\left\Vert f\right\Vert _{L^{2}\left( \mu \right) }^{2}=\sum_{Q\in \Omega 
\mathcal{D}}\left\Vert \bigtriangleup _{Q}^{\mu }f\right\Vert _{L^{2}\left(
\mu \right) }^{2}=\sum_{Q\in \Omega \mathcal{D}}\sum_{a\in \Gamma
_{n}}\left\vert \widehat{f}\left( Q\right) \right\vert ^{2},\ \ \ \ \
\left\vert \widehat{f}\left( Q\right) \right\vert ^{2}\equiv \sum_{a\in
\Gamma _{n}}\left\vert \left\langle f,h_{Q}^{\mu ,a}\right\rangle _{\mu
}\right\vert ^{2},
\end{equation*}%
where the measure is suppressed in the notation $\widehat{f}$. Indeed, this
follows from (\ref{telescope}) and Lebesgue's differentiation theorem for
quasicubes. We also record the following useful estimate. If $I^{\prime }$
is any of the $2^{n}$ $\Omega \mathcal{D}$-children of $I$, and $a\in \Gamma
_{n}$, then 
\begin{equation}
\left\vert \mathbb{E}_{I^{\prime }}^{\mu }h_{I}^{\mu ,a}\right\vert \leq 
\sqrt{\mathbb{E}_{I^{\prime }}^{\mu }\left( h_{I}^{\mu ,a}\right) ^{2}}\leq 
\frac{1}{\sqrt{\left\vert I^{\prime }\right\vert _{\mu }}}.
\label{useful Haar}
\end{equation}

\subsection{The strong quasienergy conditions}

Given a dyadic quasicube $K\in \Omega \mathcal{D}$ and a positive measure $%
\mu $ we define the quasiHaar projection $\mathsf{P}_{K}^{\mu }\equiv
\sum_{_{J\in \Omega \mathcal{D}:\ J\subset K}}\bigtriangleup _{J}^{\mu }$ on 
$K$ by 
\begin{equation*}
\mathsf{P}_{K}^{\mu }f=\sum_{_{J\in \Omega \mathcal{D}:\ J\subset
K}}\dsum\limits_{a\in \Gamma _{n}}\left\langle f,h_{J}^{\mu ,a}\right\rangle
_{\mu }h_{J}^{\mu ,a}\text{ and }\left\Vert \mathsf{P}_{K}^{\mu
}f\right\Vert _{L^{2}\left( \mu \right) }^{2}=\sum_{_{J\in \Omega \mathcal{D}%
:\ J\subset K}}\dsum\limits_{a\in \Gamma _{n}}\left\vert \left\langle
f,h_{J}^{\mu ,a}\right\rangle _{\mu }\right\vert ^{2},
\end{equation*}%
and where a quasiHaar basis $\left\{ h_{J}^{\mu ,a}\right\} _{a\in \Gamma
_{n}\text{ and }J\in \Omega \mathcal{D}\Omega }$ adapted to the measure $\mu 
$ was defined in the subsubsection on a weighted quasiHaar basis above.

Now we define various notions for quasicubes which are inherited from the
same notions for cubes. The main objective here is to use the familiar
notation that one uses for cubes, but now extended to $\Omega $-quasicubes.
We have already introduced the notions of quasigrids $\Omega \mathcal{D}$,
and center, sidelength and dyadic associated to quasicubes $Q\in \Omega 
\mathcal{D}$, as well as quasiHaar functions, and we will continue to extend
to quasicubes the additional familiar notions related to cubes as we come
across them. We begin with the notion of \emph{deeply embedded}. Fix a
quasigrid $\Omega \mathcal{D}$. We say that a dyadic quasicube $J$ is $%
\left( \mathbf{r},\varepsilon \right) $-\emph{deeply embedded} in a (not
necessarily dyadic) quasicube $K$, which we write as $J\Subset _{\mathbf{r}%
,\varepsilon }K$, when $J\subset K$ and both 
\begin{eqnarray}
\ell \left( J\right) &\leq &2^{-\mathbf{r}}\ell \left( K\right) ,
\label{def deep embed} \\
\limfunc{qdist}\left( J,\partial K\right) &\geq &\frac{1}{2}\ell \left(
J\right) ^{\varepsilon }\ell \left( K\right) ^{1-\varepsilon },  \notag
\end{eqnarray}%
where we define the quasidistance $\limfunc{qdist}\left( E,F\right) $
between two sets $E$ and $F$ to be the Euclidean distance $\limfunc{dist}%
\left( \Omega ^{-1}E,\Omega ^{-1}F\right) $ between the preimages $\Omega
^{-1}E$ and $\Omega ^{-1}F$ of $E$ and $F$ under the map $\Omega $, and
where we recall that $\ell \left( J\right) \approx \left\vert J\right\vert ^{%
\frac{1}{n}}$. For the most part we will consider $J\Subset _{\mathbf{r}%
,\varepsilon }K$ when $J$ and $K$ belong to a common quasigrid $\Omega 
\mathcal{D}$, but an exception is made when defining the strong energy
constants below.

Recall that in dimension $n=1$, and for $\alpha =0$, the energy condition
constant was defined by%
\begin{equation*}
\mathcal{E}^{2}\equiv \sup_{I=\dot{\cup}I_{r}}\frac{1}{\left\vert
I\right\vert _{\sigma }}\sum_{r=1}^{\infty }\left( \frac{\mathrm{P}^{\alpha
}\left( I_{r},\mathbf{1}_{I}\sigma \right) }{\left\vert I_{r}\right\vert }%
\right) ^{2}\left\Vert \mathsf{P}_{I_{r}}^{\omega }\mathbf{x}\right\Vert
_{L^{2}\left( \omega \right) }^{2}\ ,
\end{equation*}%
where $I$, $I_{r}$ and $J$ are intervals in the real line. The extension to
higher dimensions we use here is that of `strong quasienergy condition'
below. Later on, in the proof of the theorem, we will break down this strong
quasienergy condition into various smaller quasienergy conditions, which are
then used in different ways in the proof.

We define a quasicube $K$ (not necessarily in $\Omega \mathcal{D}$) to be an 
\emph{alternate} $\Omega \mathcal{D}$-quasicube if it is a union of $2^{n}$ $%
\Omega \mathcal{D}$-quasicubes $K^{\prime }$ with side length $\ell \left(
K^{\prime }\right) =\frac{1}{2}\ell \left( K\right) $ (such quasicubes were
called shifted in \cite{SaShUr5}, but that terminology conflicts with the
more familiar notion of shifted quasigrid). Thus for any $\Omega \mathcal{D}$%
-quasicube $L$ there are exactly $2^{n}$ alternate $\Omega \mathcal{D}$%
-quasicubes of twice the side length that contain $L$, and one of them is of
course the $\Omega \mathcal{D}$-parent of $L$. We denote the collection of
alternate $\Omega \mathcal{D}$-quasicubes by $\mathcal{A}\Omega \mathcal{D}$.

The extension of the energy conditions to higher dimensions in \cite{SaShUr5}
used the collection 
\begin{equation*}
\mathcal{M}_{\mathbf{r},\varepsilon -\limfunc{deep}}\left( K\right) \equiv
\left\{ \text{maximal }J\Subset _{\mathbf{r},\varepsilon }K\right\}
\end{equation*}%
of \emph{maximal} $\left( \mathbf{r},\varepsilon \right) $-deeply embedded
dyadic subquasicubes of a quasicube $K$ (a subquasicube $J$ of $K$ is a 
\emph{dyadic} subquasicube of $K$ if $J\in \Omega \mathcal{D}$ when $\Omega 
\mathcal{D}$ is a dyadic quasigrid containing $K$). This collection of
dyadic subquasicubes of $K$ is of course a pairwise disjoint decomposition
of $K$.We also defined there a refinement and extension of the collection $%
\mathcal{M}_{\left( \mathbf{r},\varepsilon \right) -\limfunc{deep}}\left(
K\right) $ for certain $K$ and each $\ell \geq 1$. For an alternate
quasicube $K\in \mathcal{A}\Omega \mathcal{D}$, define $\mathcal{M}_{\left( 
\mathbf{r},\varepsilon \right) -\limfunc{deep},\Omega \mathcal{D}}\left(
K\right) $ to consist of the \emph{maximal} $\mathbf{r}$-deeply embedded $%
\Omega \mathcal{D}$-dyadic subquasicubes $J$ of $K$. (In the special case
that $K$ itself belongs to $\Omega \mathcal{D}$, then $\mathcal{M}_{\left( 
\mathbf{r},\varepsilon \right) -\limfunc{deep},\Omega \mathcal{D}}\left(
K\right) =\mathcal{M}_{\left( \mathbf{r},\varepsilon \right) -\limfunc{deep}%
}\left( K\right) $.) Then in \cite{SaShUr5} for $\ell \geq 1$ we defined the
refinement%
\begin{eqnarray*}
\mathcal{M}_{\left( \mathbf{r},\varepsilon \right) -\limfunc{deep},\Omega 
\mathcal{D}}^{\ell }\left( K\right) &\equiv &\left\{ J\in \mathcal{M}%
_{\left( \mathbf{r},\varepsilon \right) -\limfunc{deep},\Omega \mathcal{D}%
}\left( \pi ^{\ell }K^{\prime }\right) \text{ for some }K^{\prime }\in 
\mathfrak{C}_{\Omega \mathcal{D}}\left( K\right) :\right. \\
&&\ \ \ \ \ \ \ \ \ \ \ \ \ \ \ \ \ \ \ \ \ \ \ \ \ \ \ \ \ \ \left.
J\subset L\text{ for some }L\in \mathcal{M}_{\left( \mathbf{r},\varepsilon
\right) -\limfunc{deep}}\left( K\right) \right\} ,
\end{eqnarray*}%
where $\mathfrak{C}_{\Omega \mathcal{D}}\left( K\right) $ is the obvious
extension to alternate quasicubes of the set of $\Omega \mathcal{D}$-dyadic
children. Thus $\mathcal{M}_{\left( \mathbf{r},\varepsilon \right) -\limfunc{%
deep},\Omega \mathcal{D}}^{\ell }\left( K\right) $ is the union, over all
quasichildren $K^{\prime }$ of $K$, of those quasicubes in $\mathcal{M}%
_{\left( \mathbf{r},\varepsilon \right) -\limfunc{deep}}\left( \pi ^{\ell
}K^{\prime }\right) $ that happen to be contained in some $L\in \mathcal{M}%
_{\left( \mathbf{r},\varepsilon \right) -\limfunc{deep},\Omega \mathcal{D}%
}\left( K\right) $. We then define the \emph{strong} quasienergy condition
as follows.

\begin{definition}
\label{def strong quasienergy}Let $0\leq \alpha <n$ and fix parameters $%
\left( \mathbf{r},\varepsilon \right) $. Suppose $\sigma $ and $\omega $ are
positive Borel measures on $\mathbb{R}^{n}$ possibly with common point
masses. Then the \emph{strong} quasienergy constant $\mathcal{E}_{\alpha }^{%
\limfunc{strong}}$ is defined by 
\begin{eqnarray*}
\left( \mathcal{E}_{\alpha }^{\limfunc{strong}}\right) ^{2} &\equiv &\sup_{I=%
\dot{\cup}I_{r}}\frac{1}{\left\vert I\right\vert _{\sigma }}%
\sum_{r=1}^{\infty }\sum_{J\in \mathcal{M}_{\mathbf{r},\varepsilon -\limfunc{%
deep}}\left( I_{r}\right) }\left( \frac{\mathrm{P}^{\alpha }\left( J,\mathbf{%
1}_{I}\sigma \right) }{\left\vert J\right\vert ^{\frac{1}{n}}}\right)
^{2}\left\Vert \mathsf{P}_{J}^{\omega }\mathbf{x}\right\Vert _{L^{2}\left(
\omega \right) }^{2} \\
&&+\sup_{\Omega \mathcal{D}}\sup_{I\in \mathcal{A}\Omega \mathcal{D}%
}\sup_{\ell \geq 0}\frac{1}{\left\vert I\right\vert _{\sigma }}\sum_{J\in 
\mathcal{M}_{\left( \mathbf{r},\varepsilon \right) -\limfunc{deep},\Omega 
\mathcal{D}}^{\ell }\left( I\right) }\left( \frac{\mathrm{P}^{\alpha }\left(
J,\mathbf{1}_{I}\sigma \right) }{\left\vert J\right\vert ^{\frac{1}{n}}}%
\right) ^{2}\left\Vert \mathsf{P}_{J}^{\omega }\mathbf{x}\right\Vert
_{L^{2}\left( \omega \right) }^{2}\ .
\end{eqnarray*}
\end{definition}

Similarly we have a dual version of $\mathcal{E}_{\alpha }^{\limfunc{strong}%
} $ denoted $\mathcal{E}_{\alpha }^{\limfunc{strong},\ast }$, and both
depend on $\mathbf{r}$ and $\varepsilon $ as well as on $n$ and $\alpha $.
An important point in this definition is that the quasicube $I$ in the
second line is permitted to lie \emph{outside} the quasigrid $\Omega 
\mathcal{D}$, but only as an alternate dyadic quasicube $I\in \mathcal{A}%
\Omega \mathcal{D} $. In the setting of quasicubes we continue to use the
linear function $\mathbf{x}$ in the final factor $\left\Vert \mathsf{P}%
_{J}^{\omega }\mathbf{x}\right\Vert _{L^{2}\left( \omega \right) }^{2}$ of
each line, and not the pushforward of $\mathbf{x}$ by $\Omega $. The reason
of course is that this condition is used to capture the first order
information in the Taylor expansion of a singular kernel. There is a
logically weaker form of the quasienergy conditions that we discuss after
stating our main theorem, but these refined quasienergy conditions are more
complicated to state, and have as yet found no application - the strong
energy conditions above suffice for use when one measure is compactly
supported on a $C^{1,\delta }$ curve as in \cite{SaShUr8}.

\subsection{Statement of the Theorems}

We can now state our main quasicube two weight theorem for general measures
allowing common point masses, as well as our application to energy dispersed
measures. Recall that $\Omega :\mathbb{R}^{n}\rightarrow \mathbb{R}^{n}$ is
a globally biLipschitz map, and that $\Omega \mathcal{P}^{n}$ denotes the
collection of all quasicubes in $\mathbb{R}^{n}$ whose preimages under $%
\Omega $ are usual cubes with sides parallel to the coordinate axes. Denote
by $\Omega \mathcal{D}\subset \Omega \mathcal{P}^{n}$ a dyadic quasigrid in $%
\mathbb{R}^{n}$. For the purpose of obtaining necessity of $\mathcal{A}%
_{2}^{\alpha }$ for $\frac{n}{2}\leq \alpha <n$, we adapt the notion of
strong ellipticity from \cite{SaShUr7}.

\begin{definition}
\label{strong ell}Fix a globally biLipschitz map $\Omega $. Let $\mathbf{T}%
^{\alpha }=\left\{ T_{j}^{\alpha }\right\} _{j=1}^{J}$ be a vector of
singular integral operators with standard kernels $\left\{ K_{j}^{\alpha
}\right\} _{j=1}^{J}$. We say that $\mathbf{T}^{\alpha }$ is \emph{strongly
elliptic} with respect to $\Omega $ if for each $m\in \left\{ 1,-1\right\}
^{n}$, there is a sequence of coefficients $\left\{ \lambda _{j}^{m}\right\}
_{j=1}^{J}$ such that%
\begin{equation}
\left\vert \sum_{j=1}^{J}\lambda _{j}^{m}K_{j}^{\alpha }\left( x,x+t\mathbf{u%
}\right) \right\vert \geq ct^{\alpha -n},\ \ \ \ \ t\in \mathbb{R},
\label{Ktalpha strong}
\end{equation}%
holds for \emph{all} unit vectors $\mathbf{u}$ in the quasi-$n$-ant $\Omega
V_{m}$ where%
\begin{equation*}
V_{m}=\left\{ x\in \mathbb{R}^{n}:m_{i}x_{i}>0\text{ for }1\leq i\leq
n\right\} ,\ \ \ \ \ m\in \left\{ 1,-1\right\} ^{n}.
\end{equation*}
\end{definition}

\begin{theorem}
\label{T1 theorem}Suppose that $T^{\alpha }$ is a standard $\alpha $%
-fractional singular integral operator on $\mathbb{R}^{n}$, and that $\omega 
$ and $\sigma $ are positive Borel measures on $\mathbb{R}^{n}$ (possibly
having common point masses). Set $T_{\sigma }^{\alpha }f=T^{\alpha }\left(
f\sigma \right) $ for any smooth truncation of $T_{\sigma }^{\alpha }$. Let $%
\Omega :\mathbb{R}^{n}\rightarrow \mathbb{R}^{n}$ be a globally biLipschitz
map.

\begin{enumerate}
\item Suppose $0\leq \alpha <n$ and that $\gamma \geq 2$ is given. Then the
operator $T_{\sigma }^{\alpha }$ is bounded from $L^{2}\left( \sigma \right) 
$ to $L^{2}\left( \omega \right) $, i.e. 
\begin{equation}
\left\Vert T_{\sigma }^{\alpha }f\right\Vert _{L^{2}\left( \omega \right)
}\leq \mathfrak{N}_{T_{\sigma }^{\alpha }}\left\Vert f\right\Vert
_{L^{2}\left( \sigma \right) },  \label{two weight}
\end{equation}%
uniformly in smooth truncations of $T^{\alpha }$, and moreover%
\begin{equation*}
\mathfrak{N}_{T_{\sigma }^{\alpha }}\leq C_{\alpha }\left( \sqrt{\mathcal{A}%
_{2}^{\alpha }+\mathcal{A}_{2}^{\alpha ,\ast }+A_{2}^{\alpha ,\limfunc{punct}%
}+A_{2}^{\alpha ,\ast ,\limfunc{punct}}}+\mathfrak{T}_{T^{\alpha }}+%
\mathfrak{T}_{T^{\alpha }}^{\ast }+\mathcal{E}_{\alpha }^{\limfunc{strong}}+%
\mathcal{E}_{\alpha }^{\limfunc{strong},\ast }+\mathcal{WBP}_{T^{\alpha
}}\right) ,
\end{equation*}%
provided that the two dual $\mathcal{A}_{2}^{\alpha }$ conditions and the
two dual punctured Muckenhoupt conditions all hold, and the two dual
quasitesting conditions for $T^{\alpha }$ hold, the quasiweak boundedness
property for $T^{\alpha }$ holds for a sufficiently large constant $C$
depending on the goodness parameter $\mathbf{r}$, and provided that the two
dual strong quasienergy conditions hold uniformly over all dyadic quasigrids 
$\Omega \mathcal{D}\subset \Omega \mathcal{P}^{n}$, i.e. $\mathcal{E}%
_{\alpha }^{\limfunc{strong}}+\mathcal{E}_{\alpha }^{\limfunc{strong},\ast
}<\infty $, and where the goodness parameters $\mathbf{r}$ and $\varepsilon $
implicit in the definition of the collections $\mathcal{M}_{\left( \mathbf{r}%
,\varepsilon \right) -\limfunc{deep}}\left( K\right) $ and $\mathcal{M}%
_{\left( \mathbf{r},\varepsilon \right) -\limfunc{deep},\Omega \mathcal{D}%
}^{\ell }\left( K\right) $ appearing in the strong energy conditions, are
fixed sufficiently large and small respectively depending only on $n$ and $%
\alpha $.

\item Conversely, suppose $0\leq \alpha <n$ and that $\mathbf{T}^{\alpha
}=\left\{ T_{j}^{\alpha }\right\} _{j=1}^{J}$ is a vector of Calder\'{o}%
n-Zygmund operators with standard kernels $\left\{ K_{j}^{\alpha }\right\}
_{j=1}^{J}$. In the range $0\leq \alpha <\frac{n}{2}$, we assume the \emph{%
ellipticity} condition from (\cite{SaShUr7}): there is $c>0$ such that for 
\emph{each} unit vector $\mathbf{u}$ there is $j$ satisfying%
\begin{equation}
\left\vert K_{j}^{\alpha }\left( x,x+t\mathbf{u}\right) \right\vert \geq
ct^{\alpha -n},\ \ \ \ \ t\in \mathbb{R}.  \label{Ktalpha}
\end{equation}%
For the range $\frac{n}{2}\leq \alpha <n$, we assume the strong ellipticity
condition in Definition \ref{strong ell} above. Furthermore, assume that
each operator $T_{j}^{\alpha }$ is bounded from $L^{2}\left( \sigma \right) $
to $L^{2}\left( \omega \right) $, 
\begin{equation*}
\left\Vert \left( T_{j}^{\alpha }\right) _{\sigma }f\right\Vert
_{L^{2}\left( \omega \right) }\leq \mathfrak{N}_{T_{j}^{\alpha }}\left\Vert
f\right\Vert _{L^{2}\left( \sigma \right) }.
\end{equation*}%
Then the fractional $\mathcal{A}_{2}^{\alpha }$ conditions (with `holes')
hold as well as the punctured Muckenhoupt conditions, and moreover,%
\begin{equation*}
\sqrt{\mathcal{A}_{2}^{\alpha }+\mathcal{A}_{2}^{\alpha ,\ast
}+A_{2}^{\alpha ,\limfunc{punct}}+A_{2}^{\alpha ,\ast ,\limfunc{punct}}}\leq
C\mathfrak{N}_{\mathbf{T}^{\alpha }}.
\end{equation*}
\end{enumerate}
\end{theorem}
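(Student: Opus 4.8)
The plan is to derive each of the four Muckenhoupt constants from the norm inequality by testing the bilinear form $\langle T_j^\alpha(f\sigma), g\omega\rangle$ against carefully chosen pairs $(f,g)$. We begin with the classical offset constant $A_2^\alpha(\sigma,\omega)$ of \eqref{offset A2}, which involves a neighbouring pair $(Q,Q')\in\Omega\mathcal N^n$ of \emph{disjoint} quasicubes. For such a pair, take $f=\mathbf 1_Q$ and $g=\mathbf 1_{Q'}$. Since $Q$ and $Q'$ are disjoint but comparable in size and adjacent, for $x\in Q'$ and $y\in Q$ one has $|x-y|\approx |Q|^{1/n}$, so the size condition \eqref{sizeandsmoothness'} gives $|K_j^\alpha(x,y)|\lesssim |Q|^{(\alpha-n)/n}$; in the other direction, ellipticity (or strong ellipticity in the range $\tfrac n2\le\alpha<n$) must be invoked along the relevant quasi-$n$-ant so that some linear combination $\sum_j\lambda_j^m K_j^\alpha$ has $|{}\cdot{}|\gtrsim |Q|^{(\alpha-n)/n}$ with a \emph{fixed sign} — this is the point where the hypothesis \eqref{Ktalpha}/\eqref{Ktalpha strong} is essential, because without a sign there could be cancellation in $\int\int K_j^\alpha\,d\sigma\,d\omega$. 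Choosing the combination adapted to the quasi-$n$-ant in which $Q'$ sits relative to $Q$, one gets
\begin{equation*}
|Q|^{\frac{\alpha}{n}-1}\,|Q|_\sigma\,|Q'|_\omega \;\lesssim\; \Big|\Big\langle \sum_j\lambda_j^m T_j^\alpha(\mathbf 1_Q\sigma),\,\mathbf 1_{Q'}\omega\Big\rangle\Big| \;\lesssim\; \mathfrak N_{\mathbf T^\alpha}\,\sqrt{|Q|_\sigma}\,\sqrt{|Q'|_\omega},
\end{equation*}
and squaring/rearranging yields $A_2^\alpha(\sigma,\omega)\lesssim \mathfrak N_{\mathbf T^\alpha}^2$; the dual constant $A_2^{\alpha,\ast}$ follows by symmetry. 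Upgrading from the offset constant to the one-tailed $\mathcal A_2^\alpha$ with holes (which involves $\mathcal P^\alpha(Q,\mathbf 1_{Q^c}\sigma)$) is then a standard dyadic decomposition of $Q^c$ into an annular tower of neighbouring shells about $Q$: write $Q^c=\bigcup_{k\ge 1}A_k$ with $A_k$ a quasi-annulus at distance $\approx 2^k|Q|^{1/n}$, apply the offset estimate on each $(Q,\widetilde A_k)$-type pair after subdividing $A_k$ into $O(1)$ neighbour-comparable pieces, and sum the resulting geometric-type series against the kernel of $\mathcal P^\alpha$. This is essentially the argument of \cite{SaShUr7}, modified only cosmetically for quasicubes; I do not expect difficulty here beyond bookkeeping with the biLipschitz distortion (controlled by the comparability containments $R_{small}\Omega Q\subset Q+\Omega x_Q\subset R_{big}\Omega Q$ recorded above).

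The genuinely new ingredient is the necessity of the \emph{punctured} conditions $A_2^{\alpha,\limfunc{punct}}$ and $A_2^{\alpha,\ast,\limfunc{punct}}$, and here I would simply cite Lemma \ref{pointed A2}: that lemma reduces punctured necessity for the pair $(\sigma,\omega)$ with common point masses to ordinary $A_2^\alpha$ necessity for all pairs \emph{without} common point masses. So the logical structure is: (i) prove $A_2^\alpha(\sigma,\omega)\le C_0\,\mathfrak N_{\mathbf T^\alpha}^2(\sigma,\omega)$ for \emph{all} measure pairs with no common point masses, by the testing-against-indicators argument above — note this case is cleaner because the tangent-line truncation makes the kernel vanish on the diagonal, so the common-point-mass pathology is absent by hypothesis; (ii) feed this into Lemma \ref{pointed A2} to get $A_2^{\alpha,\limfunc{punct}}+A_2^{\alpha,\ast,\limfunc{punct}}\le 4C_0\,\mathfrak N_{\mathbf T^\alpha}^2$ for general pairs; (iii) separately handle the (non-punctured) $\mathcal A_2^\alpha$-with-holes constants, which remain finite even with common point masses precisely because the "hole" removes the mass point of $\sigma$ from the region where $\omega$ is integrated, so no shrinking-cube blowup occurs — the annular argument of the previous paragraph goes through verbatim since $\mathbf 1_{Q^c}\sigma$ and $\mathbf 1_Q\omega$ have separated supports. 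Summing the four contributions gives $\sqrt{\mathcal A_2^\alpha+\mathcal A_2^{\alpha,\ast}+A_2^{\alpha,\limfunc{punct}}+A_2^{\alpha,\ast,\limfunc{punct}}}\le C\,\mathfrak N_{\mathbf T^\alpha}$ as claimed.

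The main obstacle I anticipate is the sign/ellipticity issue in step (i) when $\tfrac n2\le\alpha<n$: for small $\alpha$ the scalar ellipticity \eqref{Ktalpha} suffices because one can select, for each unit direction $\mathbf u$, a single component $T_j^\alpha$ whose kernel is bounded below along that ray, and the lower bound survives integration over the small solid angle subtended by $Q'$ as seen from $Q$ (this angular spread is $O(1)$, not small, so one must check that the direction-dependence of the choice of $j$ does not destroy the estimate — it does not, because one can partition the relevant cone of directions into finitely many pieces on each of which a single $j$ works, and $Q'$ meets $O(1)$ of these pieces). For $\alpha\ge n/2$ this is genuinely insufficient and one needs the \emph{strong} ellipticity of Definition \ref{strong ell}: the linear combination $\sum_j\lambda_j^m K_j^\alpha$ is bounded below \emph{with a fixed sign} uniformly over the entire quasi-$n$-ant $\Omega V_m$, which is exactly what is needed so that $\langle\sum_j\lambda_j^m T_j^\alpha(\mathbf 1_Q\sigma),\mathbf 1_{Q'}\omega\rangle$ does not suffer cancellation when $Q'$ ranges over a full quasi-$n$-ant's worth of positions — and then the triangle inequality $|\sum_j\lambda_j^m\langle T_j^\alpha\cdots\rangle|\le(\max_j|\lambda_j^m|)\sum_j|\langle T_j^\alpha\cdots\rangle|\le C\,\mathfrak N_{\mathbf T^\alpha}\sqrt{|Q|_\sigma|Q'|_\omega}$ closes the loop. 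The reason one needs a \emph{full} quasi-$n$-ant rather than a narrow cone is precisely that with holes the tail region $Q^c$ (and hence $Q'$ after decomposition) spreads over all directions away from $Q$; this is why the paper introduces the strengthened notion. Beyond this, the remaining work is routine: careful tracking of the biLipschitz constants $c\le J_\Omega\le C$ and the containments relating $\Omega Q$ to axis-parallel cubes, and the geometric summation in the annular decomposition.
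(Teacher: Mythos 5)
Your proposal addresses only part (2) of the theorem; part (1), the sufficiency direction, is the substance of the paper (corona decompositions, functional energy, the Intertwining Proposition, Lacey's stopping-form recursion) and is not touched. Even restricted to part (2), there is a genuine gap at the very first step, the offset condition \eqref{offset A2}. You test against $f=\mathbf{1}_{Q}$, $g=\mathbf{1}_{Q^{\prime }}$ for a \emph{neighbouring} pair and assert that $\left\vert x-y\right\vert \approx \left\vert Q\right\vert ^{1/n}$ for $x\in Q^{\prime }$, $y\in Q$. Neighbouring quasicubes are adjacent (each is contained in the tripled other minus itself), so $\left\vert x-y\right\vert $ degenerates to $0$ at the common boundary and the set of directions $\frac{x-y}{\left\vert x-y\right\vert }$ spanned by $Q^{\prime }\times Q$ is not contained in any narrow cone, nor even in a single $n$-ant. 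Consequently neither \eqref{Ktalpha} nor \eqref{Ktalpha strong} furnishes a single component or linear combination whose kernel keeps a \emph{fixed sign} on all of $Q^{\prime }\times Q$; your suggested remedy of partitioning the directions into finitely many cones does not repair this, because the resulting subsets of $Q^{\prime }\times Q$ are not products of sets, so the norm inequality cannot be applied to indicators of them. This adjacency problem is precisely what the paper's proof of Lemma \ref{necc frac A2} is built to overcome: for $0\leq \alpha <\frac{n}{2}$ it first proves the \emph{separated} equal-size estimate \eqref{asym} by the Stein one-weight argument, then performs a Whitney decomposition of $\left( Q_{0}^{\prime }\times Q_{0}\right) \setminus \mathfrak{D}$ in the product space $\mathbb{R}^{n}\times \mathbb{R}^{n}$ and sums $\sum_{i}\left\vert \mathsf{Q}_{i}\right\vert ^{1-\frac{\alpha }{n}}$, which converges \emph{only} for $\alpha <\frac{n}{2}$; for $\frac{n}{2}\leq \alpha <n$ it instead bisects the measures coordinate-by-coordinate to produce rectangles $G$ and $H$, each carrying a fixed fraction of $\omega $ and $\sigma $ respectively, lying in \emph{opposing} quasi-$n$-ants at a common vertex, where strong ellipticity applies with a coherent sign. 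Neither construction appears in your proposal, and your stated reason for needing strong ellipticity when $\alpha \geq \frac{n}{2}$ (that the tail $Q^{c}$ spreads over all directions) is not the operative one.

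A second, smaller gap: the one-tailed constant $\mathcal{A}_{2}^{\alpha }$ cannot be obtained by applying the offset condition shell-by-shell and summing, as you propose. At distance $2^{k}\ell \left( Q\right) $ the reproducing Poisson kernel contributes a factor $2^{-2k\left( n-\alpha \right) }$ while the offset bound at that scale grows like $2^{2k\left( n-\alpha \right) }$, so the series is $\sum_{k}O(1)$ and diverges. The paper instead tests the operator directly against the tailed functions $f_{a,r}\left( y\right) =\mathbf{1}_{\mathcal{Q}_{-m}\left( a\right) \cap B\left( 0,r\right) }\left( y\right) s_{I}\left( y\right) ^{n-\alpha }$, exploiting the pointwise positivity $m\cdot \left( x-y\right) \geq \left\vert x-y\right\vert $ on opposing $n$-ants, and covers $B\left( 0,4\sqrt{n}\ell \left( Q\right) \right) ^{c}$ by finitely many rotated $n$-ants; the offset condition is used only to fill the bounded annulus $B\left( 0,4\sqrt{n}\ell \left( Q\right) \right) \setminus Q$. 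Your reduction of the punctured conditions to Lemma \ref{pointed A2} is correct and is exactly the paper's route, but it presupposes the offset necessity for \emph{all} pairs without common point masses, which is the step left unproved above.
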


\begin{problem}
Given any strongly elliptic vector $\mathbf{T}^{\alpha }$ of classical $%
\alpha $-fractional Calder\'{o}n-Zygmund operators, it is an open question
whether or not the usual (quasi or not) energy conditions are necessary for
boundedness of $\mathbf{T}^{\alpha }$. See \cite{SaShUr4} for a failure of 
\emph{energy reversal} in higher dimensions - such an energy reversal was
used in dimension $n=1$ to prove the necessity of the energy condition for
the Hilbert transform, and also in \cite{SaShUr3} and \cite{LaSaShUrWi} for
the Riesz transforms and Cauchy transforms respectively when one of the
measures is supported on a line, and in \cite{SaShUr8} when one of the
measures is supported on a $C^{1,\delta }$ curve.
\end{problem}

\begin{remark}
If Definition \ref{strong ell} holds for some $\mathbf{T}^{\alpha }$ and $%
\Omega $, then $\Omega $ must be fairly tame, in particular the logarithmic
spirals in Example \ref{wild} are ruled out! On the other hand, the vector
of Riesz transforms $\mathbf{R}^{\alpha ,n}$ is easily seen to be strongly
elliptic with respect to $\Omega $ if $\Omega $ satisfies the following 
\emph{sector separation property}. Given a hyperplane $H$ and a
perpendicular line $L$ intersecting at point $P$, there exist spherical
cones $S_{H}$ and $S_{L}$ intersecting only at the point $P^{\prime }=\Omega
\left( P\right) $, such that $H^{\prime }\equiv \Omega H\subset S_{H}$ and $%
L^{\prime }\equiv \Omega L\subset S_{L}$ and%
\begin{equation*}
\limfunc{dist}\left( x,\partial S_{H}\right) \approx \left\vert x\right\vert
,\ x\in H\text{ and }\limfunc{dist}\left( x,\partial S_{L}\right) \approx
\left\vert x\right\vert ,\ x\in L\ .
\end{equation*}%
Examples of globally biLipshcitz maps $\Omega $ that satisfy the sector
separation property include finite compositions of maps of the form%
\begin{equation*}
\Omega \left( x_{1},x^{\prime }\right) =\left( x_{1},x^{\prime }+\psi \left(
x_{1}\right) \right) ,\ \ \ \ \ \left( x_{1},x^{\prime }\right) \in \mathbb{R%
}^{n},
\end{equation*}%
where $\psi :\mathbb{R}\rightarrow \mathbb{R}^{n-1}$ is a Lipschitz map with
sufficiently small Lipschitz constant.
\end{remark}

In order to state our application to energy dispersed measures, we introduce
some notation and a definition. Fix a globally biLipschitz map $\Omega :%
\mathbb{R}^{n}\rightarrow \mathbb{R}^{n}$. For $0\leq k\leq n-1$, denote by $%
\mathcal{L}_{k}^{n}$ the collection of all $k$-dimensional planes in $%
\mathbb{R}^{n}$. If in addition $J$ is an $\Omega $-quasicube in $\mathbb{R}%
^{n}$, denote by $M_{k}^{n}\left( J,\mu \right) $ the moments%
\begin{equation*}
\mathsf{M}_{k}^{n}\left( J,\mu \right) ^{2}\equiv \inf_{L\in \mathcal{L}%
_{k}^{n}}\int_{J}\limfunc{dist}\left( x,L\right) ^{2}d\mu \left( x\right) ,
\end{equation*}%
and note that $\mathsf{M}_{0}^{n}\left( J,\mu \right) $ is related to the
energy $\mathsf{E}\left( J,\mu \right) $: 
\begin{equation*}
\mathsf{M}_{0}^{n}\left( J,\mu \right) ^{2}=\int_{J}\left\vert x-\mathbb{E}%
_{J}^{\mu }x\right\vert ^{2}d\mu \left( x\right) =\left\vert J\right\vert
_{\mu }\left\vert J\right\vert ^{\frac{2}{n}}\mathsf{E}\left( J,\mu \right)
^{2}.
\end{equation*}%
Clearly the moments decrease in $k$ and we now give a name to various
reversals of this decrease.

\begin{definition}
Suppose $\mu $ is a locally finite Borel measure on $\mathbb{R}^{n}$, and
let $k$ be an integer satisfying $0\leq k\leq n-1$. We say that $\mu $ is $k$%
\emph{-energy dispersed} if there is a positive constant $C$ such that for
all $\Omega $-quasicubes $J$,%
\begin{equation*}
\mathsf{M}_{0}^{n}\left( J,\mu \right) \leq C\mathsf{M}_{k}^{n}\left( J,\mu
\right) .
\end{equation*}
\end{definition}

If both $\sigma $ and $\omega $ are appropriately energy dispersed relative
to the order $0\leq \alpha <n$, then the $T1$ theorem holds for the $\alpha $%
-fractional Riesz vector transform $\mathbf{R}^{\alpha ,n}$.

\begin{theorem}
\label{dispersed}Let $0\leq \alpha <n$ and $0\leq k\leq n-1$ sastisfy%
\begin{equation*}
\left\{ 
\begin{array}{ccc}
n-k<\alpha <n,\ \alpha \neq n-1 & \text{ if } & 1\leq k\leq n-2 \\ 
0\leq \alpha <n,\ \alpha \neq 1,n-1 & \text{ if } & k=n-1%
\end{array}%
\right. .
\end{equation*}%
Suppose that $\mathbf{R}^{\alpha ,n}$ is the $\alpha $-fractional Riesz
vector transform on $\mathbb{R}^{n}$, and that $\omega $ and $\sigma $ are $%
k $\emph{-energy dispersed} locally finite positive Borel measures on $%
\mathbb{R}^{n}$ (possibly having common point masses). Set $\mathbf{R}%
_{\sigma }^{\alpha ,n}f=\mathbf{R}_{\sigma }^{\alpha ,n}\left( f\sigma
\right) $ for any smooth truncation of $\mathbf{R}^{\alpha ,n}$. Let $\Omega
:\mathbb{R}^{n}\rightarrow \mathbb{R}^{n}$ be a globally biLipschitz map.
Then the operator $\mathbf{R}_{\sigma }^{\alpha ,n}$ is bounded from $%
L^{2}\left( \sigma \right) $ to $L^{2}\left( \omega \right) $, i.e. 
\begin{equation*}
\left\Vert T_{\sigma }^{\alpha }f\right\Vert _{L^{2}\left( \omega \right)
}\leq \mathfrak{N}_{\mathbf{R}_{\sigma }^{\alpha }}\left\Vert f\right\Vert
_{L^{2}\left( \sigma \right) },
\end{equation*}%
uniformly in smooth truncations of $\mathbf{R}^{\alpha ,n}$, if and only if
the Muckenhoupt conditions hold, the testing conditions hold and the weak
boundedness property holds. Moreover, we have the equivalence%
\begin{equation*}
\mathfrak{N}_{\mathbf{R}_{\sigma }^{\alpha ,n}}\approx \sqrt{\mathcal{A}%
_{2}^{\alpha }+\mathcal{A}_{2}^{\alpha ,\ast }+A_{2}^{\alpha ,\limfunc{punct}%
}+A_{2}^{\alpha ,\ast ,\limfunc{punct}}}+\mathfrak{T}_{\mathbf{R}^{\alpha
,n}}+\mathfrak{T}_{\mathbf{R}^{\alpha ,n}}^{\ast }+\mathcal{WBP}_{\mathbf{R}%
^{\alpha ,n}}\ .
\end{equation*}
\end{theorem}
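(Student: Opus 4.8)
The plan is to obtain Theorem~\ref{dispersed} as a corollary of the general Theorem~\ref{T1 theorem}. Everything Theorem~\ref{dispersed} asserts beyond the hypotheses of its ``if'' direction (the Muckenhoupt, testing and weak boundedness conditions) and beyond elementary necessity facts reduces to a single statement: when $\sigma$ and $\omega$ are $k$-energy dispersed with $k,\alpha$ in the stated range, the two strong quasienergy conditions of Definition~\ref{def strong quasienergy} are \emph{automatically} finite, and in fact controlled by the Muckenhoupt constants. So the structure would be: (i) prove $\mathcal{E}_{\alpha}^{\limfunc{strong}}+\mathcal{E}_{\alpha}^{\limfunc{strong},\ast}\lesssim\sqrt{\mathcal{A}_{2}^{\alpha}+\mathcal{A}_{2}^{\alpha,\ast}+A_{2}^{\alpha,\limfunc{punct}}+A_{2}^{\alpha,\ast,\limfunc{punct}}}$, with constant depending only on $n,\alpha,k$ and the dispersal constant, uniformly over all dyadic quasigrids; (ii) feed this into Theorem~\ref{T1 theorem}(1) to get boundedness with $\mathfrak{N}_{\mathbf{R}_{\sigma}^{\alpha,n}}\lesssim\sqrt{\mathcal{A}_{2}^{\alpha}+\mathcal{A}_{2}^{\alpha,\ast}+A_{2}^{\alpha,\limfunc{punct}}+A_{2}^{\alpha,\ast,\limfunc{punct}}}+\mathfrak{T}_{\mathbf{R}^{\alpha,n}}+\mathfrak{T}_{\mathbf{R}^{\alpha,n}}^{\ast}+\mathcal{WBP}_{\mathbf{R}^{\alpha,n}}$; and (iii) for the converse, recall that the quasitesting conditions and the quasiweak boundedness property are necessary for any bounded $T^{\alpha}$ (as already noted above), while the four Muckenhoupt conditions are necessary by Theorem~\ref{T1 theorem}(2), once one verifies the ellipticity of the Riesz vector required there --- the kernel of $\sum_{\ell}u_{\ell}R_{\ell}^{\alpha,n}$ restricted to the ray $x+t\mathbf{u}$ equals $\operatorname{sgn}(t)\left\vert t\right\vert^{\alpha-n}$, which gives ordinary ellipticity for $0\le\alpha<\tfrac{n}{2}$ and the strong ellipticity of Definition~\ref{strong ell} for $\tfrac{n}{2}\le\alpha<n$, and the exponents $\alpha\in\{1,n-1\}$ are excluded precisely as the resonant values at which step (i) breaks down. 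Collecting constants then yields the claimed equivalence for $\mathfrak{N}_{\mathbf{R}_{\sigma}^{\alpha,n}}$.

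For step (i) I would treat $\mathcal{E}_{\alpha}^{\limfunc{strong}}$, the dual being identical after interchanging $\sigma$ and $\omega$ and using that $\sigma$ (rather than $\omega$) is $k$-energy dispersed. Fix $I$ and a disjoint decomposition $I=\bigcup_{r}I_{r}$, and let $\mathcal{M}=\bigcup_{r}\mathcal{M}_{\mathbf{r},\varepsilon-\limfunc{deep}}(I_{r})$, a pairwise disjoint family of dyadic subquasicubes of $I$ (the alternate-quasicube sum in Definition~\ref{def strong quasienergy} built from $\mathcal{M}_{(\mathbf{r},\varepsilon)-\limfunc{deep},\Omega\mathcal{D}}^{\ell}$ is handled the same way). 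Since $\left\Vert\mathsf{P}_{J}^{\omega}\mathbf{x}\right\Vert_{L^{2}(\omega)}^{2}=\mathsf{M}_{0}^{n}(J,\omega)^{2}$, the $k$-energy dispersed hypothesis for $\omega$ supplies the crucial replacement
\begin{equation*}
\left\Vert\mathsf{P}_{J}^{\omega}\mathbf{x}\right\Vert_{L^{2}(\omega)}^{2}\le C^{2}\,\mathsf{M}_{k}^{n}(J,\omega)^{2}=C^{2}\inf_{L\in\mathcal{L}_{k}^{n}}\int_{J}\limfunc{dist}(x,L)^{2}\,d\omega(x)\le C^{2}\int_{J}\limfunc{dist}(x,L_{I})^{2}\,d\omega(x),
\end{equation*}
where $L_{I}$ is a fixed near-optimal $k$-plane for $(I,\omega)$, which after translation may be assumed to meet $I$, so that $\limfunc{dist}(x,L_{I})\lesssim\left\vert I\right\vert^{1/n}$ on $I$ and $\int_{I}\limfunc{dist}(x,L_{I})^{2}\,d\omega\approx\mathsf{M}_{k}^{n}(I,\omega)^{2}\le\left\vert I\right\vert^{2/n}\left\vert I\right\vert_{\omega}$. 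It then suffices to bound $\sum_{J\in\mathcal{M}}\left(\mathrm{P}^{\alpha}(J,\mathbf{1}_{I}\sigma)/\left\vert J\right\vert^{1/n}\right)^{2}\int_{J}\limfunc{dist}(x,L_{I})^{2}\,d\omega$ by a Muckenhoupt multiple of $\left\vert I\right\vert_{\sigma}$. For this I would discretise the Poisson factor through the tower $[J,I]$ via the standard estimate
\begin{equation*}
\frac{\mathrm{P}^{\alpha}(J,\mathbf{1}_{I}\sigma)}{\left\vert J\right\vert^{1/n}}\lesssim\sum_{K\in[J,I]}\frac{\left\vert K\right\vert_{\sigma}}{\left\vert K\right\vert^{1-\frac{\alpha}{n}}}\cdot\frac{1}{\left\vert K\right\vert^{1/n}},
\end{equation*}
square it, interchange the order of summation, and use that the disjoint $J\subset K$ in $\mathcal{M}$ obey $\sum_{J}\int_{J}\limfunc{dist}(x,L_{I})^{2}\,d\omega\le\int_{K}\limfunc{dist}(x,L_{I})^{2}\,d\omega$. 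The problem then collapses to controlling a double sum over towers against the coefficients $\left\vert K\right\vert_{\sigma}/\left\vert K\right\vert^{1-\alpha/n}$ and comparing with the definition of $\mathcal{A}_{2}^{\alpha}$, with the punctured conditions $A_{2}^{\alpha,\limfunc{punct}}$ entering to absorb the local term $K=J$ --- where a common point mass of $\sigma$ and $\omega$ would block the plain $A_{2}^{\alpha}$ comparison --- in the spirit of Lemma~\ref{pointed A2}.

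The hard part will be the bookkeeping in this tower estimate, and it is there that the numerical hypotheses are used: the geometric decay that makes the double sum over towers converge is supplied exactly by the assumption $n-k<\alpha$, i.e.\ $n-\alpha<k$, so that the ``co-dimension'' $n-\alpha$ of the fractional kernel lies strictly below the dimension $k$ of the approximating planes; at the two resonant values $n-\alpha\in\{1,n-1\}$ the relevant series degenerates, which is why those $\alpha$ are excluded. Two further technical points, both familiar from \cite{SaShUr5,SaShUr7}, will need care: reconciling the \emph{standard} Poisson kernel $\mathrm{P}^{\alpha}$ that appears in the quasienergy with the \emph{reproducing} Poisson kernel $\mathcal{P}^{\alpha}$ that appears in $\mathcal{A}_{2}^{\alpha}$ (they agree only in dimension $n=1$, or more generally when $\alpha=n-1$), and making the whole argument uniform over all dyadic quasigrids, including the alternate-quasicube refinement term in Definition~\ref{def strong quasienergy}. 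Once the geometric heart is in place these are routine adaptations of the quasicube machinery, and steps (ii)--(iii) are immediate.
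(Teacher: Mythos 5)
Your step (i) is the heart of the matter, and it claims far more than is true (or at least far more than the paper can prove): you assert that for $k$-energy dispersed measures the strong quasienergy constants are controlled by the Muckenhoupt constants \emph{alone}. The paper does not do this, and for good reason. Its actual route is: (a) use the semi-harmonicity computation $\left\vert \bigtriangleup _{x^{\prime }}\left\vert x\right\vert ^{\alpha -n+1}\right\vert \approx \left\vert x\right\vert ^{\alpha -n-1}$ (valid exactly under the arithmetic restrictions on $(k,\alpha )$) to show that the Hessian $\nabla \mathbf{R}^{\alpha ,n}\mu $ has at least $n-k$ eigenvalues of size $\mathrm{P}^{\alpha }\left( J,\mu \right) /\left\vert J\right\vert ^{1/n}$, hence \emph{strong energy reversal} on every cube (Lemma \ref{k partial reversal}) once $k$-energy dispersal lets one pass from $\mathsf{E}$ to $\mathsf{E}_{k}$; and then (b) the standard consequence $\mathcal{E}_{\alpha }^{\limfunc{strong}}\lesssim \mathfrak{T}_{\mathbf{R}^{\alpha ,n}}+\sqrt{A_{2}^{\alpha ,\limfunc{punct}}}$, in which the \emph{testing} constant is essential. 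Energy reversal --- the actual geometric content of the theorem and the only place the hypotheses $n-k<\alpha $ and $\alpha \neq 1,n-1$ are used (they guarantee $F_{\ell ,\beta }$ has one sign, and exclude the cases where $\left\vert x\right\vert ^{\alpha -n+1}$ is harmonic or constant) --- is entirely absent from your proposal; your explanation of the excluded exponents as ``resonant values where a geometric series degenerates'' does not correspond to anything in the argument.

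Concretely, your tower estimate fails. After discretizing $\mathrm{P}^{\alpha }\left( J,\mathbf{1}_{I}\sigma \right) /\left\vert J\right\vert ^{1/n}$ over $K\in \left[ J,I\right] $, squaring, and interchanging sums, the contribution of a fixed $K$ is of the order $\left( \left\vert K\right\vert _{\sigma }/\left\vert K\right\vert ^{1-\frac{\alpha }{n}}\right) ^{2}\left\vert K\right\vert ^{-2/n}\int_{K}\limfunc{dist}\left( x,L_{I}\right) ^{2}d\omega $. Since $\limfunc{dist}\left( x,L_{I}\right) $ can be comparable to $\left\vert I\right\vert ^{1/n}$ on most of $I$, this is bounded below by roughly $A_{2}^{\alpha }\left( \left\vert I\right\vert ^{1/n}/\left\vert K\right\vert ^{1/n}\right) ^{2}\left\vert K\right\vert _{\sigma }$ in the worst case: the factor $\left( \left\vert I\right\vert /\left\vert K\right\vert \right) ^{2/n}$ \emph{grows} geometrically down the tower, and $\sum_{K\subset I}\left\vert K\right\vert _{\sigma }$ over all dyadic levels is not Carleson-controlled. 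There is no source of decay, and none is supplied by $n-k<\alpha $. Indeed, if your step (i) were correct it would say (taking $\omega $ comparable to Lebesgue measure, which is $k$-energy dispersed for every $k$, so that energy reduces to the pivotal condition) that the pivotal condition follows from $\mathcal{A}_{2}^{\alpha }$ alone --- contradicting the known counterexamples that motivated replacing the pivotal condition by the energy condition in the first place. Steps (ii) and (iii) of your outline are fine and match the paper, but the theorem cannot be reached without proving energy reversal.
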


The case $k=n-1$ of $k$-energy dispersed is similar to the notion of
uniformly full dimension introduced by Lacey and Wick in \cite[versions 2
and 3]{LaWi}. The proof of Theorem \ref{dispersed} shows that we can also
take $\omega $ and $\sigma $ to be $k_{1}$ and $k_{2}$\emph{\ }energy
dispersed respectively, provided $\alpha $ satisfies the hypotheses with
respect to \emph{both} $k_{1}$ and $k_{2}$.

\section{Proof of Theorem \protect\ref{T1 theorem}}

We now give the proof of Theorem \ref{T1 theorem} in the following sections.
Sections 5, 7 and 10 are largely taken verbatim from the corresponding
sections of \cite{SaShUr5}, but are included here since their omission here
would hinder the readability of an already complicated argument.

\subsection{Good quasicubes and energy Muckenhoupt conditions}

First we extend the notion of goodness to quasicubes.

\begin{definition}
Let $\mathbf{r}\in \mathbb{N}$ and $0<\varepsilon <1$. Fix a quasigrid $%
\Omega \mathcal{D}$. A dyadic quasicube $J$ is $\left( \mathbf{r}%
,\varepsilon \right) $\emph{-good}, or simply \emph{good}, if for \emph{every%
} dyadic superquasicube $I$, it is the case that \textbf{either} $J$ has
side length greater than $2^{-\mathbf{r}}$ times that of $I$, \textbf{or} $%
J\Subset _{\mathbf{r},\varepsilon }I$ is $\left( \mathbf{r},\varepsilon
\right) $-deeply embedded in $I$.
\end{definition}

Note that this definition simply asserts that a dyadic quasicube $J=\Omega
J^{\prime }$ is $\left( \mathbf{r},\varepsilon \right) $-good if and only if
the cube $J^{\prime }$ is $\left( \mathbf{r},\varepsilon \right) $-good in
the usual sense. Finally, we say that $J$ is $\mathbf{r}$\emph{-nearby} in $%
K $ when $J\subset K$ and%
\begin{equation*}
\ell \left( J\right) >2^{-\mathbf{r}}\ell \left( K\right) .
\end{equation*}%
The parameters $\mathbf{r},\varepsilon $ will be fixed sufficiently large
and small respectively later in the proof, and we denote the set of such
good dyadic quasicubes by $\Omega \mathcal{D}_{\left( \mathbf{r},\varepsilon
\right) -\limfunc{good}}$, or simply $\Omega \mathcal{D}_{\limfunc{good}}$
when the goodness parameters $\left( \mathbf{r},\varepsilon \right) $ are
understood. Note that if $J^{\prime }\in \Omega \mathcal{D}_{\left( \mathbf{r%
},\varepsilon \right) -\limfunc{good}}$ and if $J^{\prime }\subset K\in
\Omega \mathcal{D}$, then \textbf{either} $J^{\prime }$ is $\mathbf{r}$\emph{%
-nearby} in $K$ \textbf{or} $J^{\prime }\subset J\Subset _{\mathbf{r}%
,\varepsilon }K$.

Throughout the proof, it will be convenient to also consider pairs of
quasicubes $J,K$ where $J$ is $\left( \mathbf{\rho },\varepsilon \right) $%
\emph{-deeply embedded} in $K$, written $J\Subset _{\mathbf{\rho }%
,\varepsilon }K$ and meaning (\ref{def deep embed}) holds with the same $%
\varepsilon >0$ but with $\mathbf{\rho }$ in place of $\mathbf{r}$; as well
as pairs of quasicubes $J,K$ where $J$ is $\mathbf{\rho }$\emph{-nearby} in%
\emph{\ }$K$, $\ell \left( J\right) >2^{-\mathbf{\rho }}\ell \left( K\right) 
$, for a parameter $\mathbf{\rho }\gg \mathbf{r}$ that will be fixed later.

\begin{notation}
We will typically use the side length $\ell \left( J\right) $ of a $\Omega $%
-quasicube when we are describing collections of quasicubes, and when we
want $\ell \left( J\right) $ to be a dyadic or related number; while in
estimates we will typically use $\left\vert J\right\vert ^{\frac{1}{n}%
}\approx \ell \left( J\right) $, and when we want to compare powers of
volumes of quasicubes. We will continue to use the prefix `quasi' when
discussing quasicubes, quasiHaar, quasienergy and quasidistance in the text,
but will not use the prefix `quasi' when discussing other notions. In
particular, since $\limfunc{quasi}\mathcal{A}_{2}^{\alpha }+\limfunc{quasi}%
A_{2}^{\alpha ,\limfunc{punct}}\approx \mathcal{A}_{2}^{\alpha
}+A_{2}^{\alpha ,\limfunc{punct}}$ (see e.g. \cite{SaShUr8} for a proof) we
do not use $\limfunc{quasi}$ as a prefix for the Muckenhoupt conditions,
even though $\limfunc{quasi}\mathcal{A}_{2}^{\alpha }$ alone is not
comparable to$\mathcal{A}_{2}^{\alpha }$. Finally, we will not modify any
mathematical symbols to reflect quasinotions, except for using $\Omega 
\mathcal{D}$ to denote a quasigrid, and $\limfunc{qdist}\left( E,F\right)
\equiv \limfunc{dist}\left( \Omega ^{-1}E,\Omega ^{-1}F\right) $ to denote
quasidistance between sets $E$ and $F$, and using $\left\vert x-y\right\vert
_{\limfunc{qdist}}\equiv \left\vert \Omega ^{-1}x-\Omega ^{-1}y\right\vert $
to denote quasidistance between points $x$ and $y$. This limited use of
quasi in the text serves mainly to remind the reader we are working entirely
in the `quasiworld'.
\end{notation}

\subsubsection{Energy Muckenhoupt conditions}

We now show that the punctured Muckenhoupt conditions $A_{2}^{\alpha ,%
\limfunc{punct}}$ and $A_{2}^{\alpha ,\ast ,\limfunc{punct}}$ control
respectively the `energy $A_{2}^{\alpha }$ conditions', denoted $%
A_{2}^{\alpha ,\limfunc{energy}}$ and $A_{2}^{\alpha ,\ast ,\limfunc{energy}%
} $ where%
\begin{eqnarray}
A_{2}^{\alpha ,\limfunc{energy}}\left( \sigma ,\omega \right) &\equiv
&\sup_{Q\in \Omega \mathcal{P}^{n}}\frac{\left\Vert \mathsf{P}_{Q}^{\omega }%
\frac{\mathbf{x}}{\ell \left( Q\right) }\right\Vert _{L^{2}\left( \omega
\right) }^{2}}{\left\vert Q\right\vert ^{1-\frac{\alpha }{n}}}\frac{%
\left\vert Q\right\vert _{\sigma }}{\left\vert Q\right\vert ^{1-\frac{\alpha 
}{n}}},  \label{def energy A2} \\
A_{2}^{\alpha ,\ast ,\limfunc{energy}}\left( \sigma ,\omega \right) &\equiv
&\sup_{Q\in \Omega \mathcal{P}^{n}}\frac{\left\vert Q\right\vert _{\omega }}{%
\left\vert Q\right\vert ^{1-\frac{\alpha }{n}}}\frac{\left\Vert \mathsf{P}%
_{Q}^{\sigma }\frac{\mathbf{x}}{\ell \left( Q\right) }\right\Vert
_{L^{2}\left( \sigma \right) }^{2}}{\left\vert Q\right\vert ^{1-\frac{\alpha 
}{n}}}.  \notag
\end{eqnarray}%
These energy $A_{2}^{\alpha }$ conditions play a critical role in
controlling local parts of functional energy later in the paper, and it is a
crucial requirement that they are necessary conditions, as shown by the next
lemma.

\begin{lemma}
\label{energy A2}For any positive locally finite Borel measures $\sigma
,\omega $ we have%
\begin{eqnarray*}
A_{2}^{\alpha ,\limfunc{energy}}\left( \sigma ,\omega \right) &\leq &\max
\left\{ n,3\right\} A_{2}^{\alpha ,\limfunc{punct}}\left( \sigma ,\omega
\right) , \\
A_{2}^{\alpha ,\ast ,\limfunc{energy}}\left( \sigma ,\omega \right) &\leq
&\max \left\{ n,3\right\} A_{2}^{\alpha ,\ast ,\limfunc{punct}}\left( \sigma
,\omega \right) .
\end{eqnarray*}
\end{lemma}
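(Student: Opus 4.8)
The plan is to reduce both inequalities to a single scale-invariant pointwise estimate on each quasicube and then read that estimate off from the telescoping identity (\ref{telescope}) together with an elementary optimization. The quantities $A_{2}^{\alpha ,\limfunc{energy}}\left( \sigma ,\omega \right) $ and $A_{2}^{\alpha ,\limfunc{punct}}\left( \sigma ,\omega \right) $ are both suprema over $Q\in \Omega \mathcal{P}^{n}$ of a product in which the factor $\frac{\left\vert Q\right\vert _{\sigma }}{\left\vert Q\right\vert ^{1-\frac{\alpha }{n}}}$ and one factor $\frac{1}{\left\vert Q\right\vert ^{1-\frac{\alpha }{n}}}$ agree; hence the first inequality of the lemma follows from the pointwise bound
\begin{equation*}
\left\Vert \mathsf{P}_{Q}^{\omega }\frac{\mathbf{x}}{\ell \left( Q\right) }\right\Vert _{L^{2}\left( \omega \right) }^{2}\leq \max \left\{ n,3\right\} \,\omega \left( Q,\mathfrak{P}_{\left( \sigma ,\omega \right) }\right) ,\qquad Q\in \Omega \mathcal{P}^{n},
\end{equation*}
and the second (dual) inequality follows by interchanging $\sigma $ and $\omega $, the set $\mathfrak{P}_{\left( \sigma ,\omega \right) }$ of common point masses being symmetric in the two measures.

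First I would recognise the quasiHaar projection as a variance. A quasicube $Q$ is bounded and $\omega $ is locally finite, so $\mathbf{x}\mathbf{1}_{Q}\in L^{2}\left( \omega \right) $ and $\mathsf{P}_{Q}^{\omega }\mathbf{x}=\mathsf{P}_{Q}^{\omega }\left( \mathbf{x}\mathbf{1}_{Q}\right) $ is a bona fide $L^{2}\left( \omega \right) $-orthogonal projection. Summing the telescoping identity (\ref{telescope}) along the decreasing chain of dyadic subquasicubes of $Q$ shrinking to a point, and invoking Lebesgue's differentiation theorem for quasicubes (as already used in constructing the weighted quasiHaar basis), gives $\mathsf{P}_{Q}^{\omega }\mathbf{x}=\left( \mathbf{x}-\mathbb{E}_{Q}^{\omega }\mathbf{x}\right) \mathbf{1}_{Q}$ in $L^{2}\left( \omega \right) $, whence
\begin{equation*}
\left\Vert \mathsf{P}_{Q}^{\omega }\mathbf{x}\right\Vert _{L^{2}\left( \omega \right) }^{2}=\int_{Q}\left\vert x-\mathbb{E}_{Q}^{\omega }x\right\vert ^{2}d\omega \left( x\right) =\inf_{c\in \mathbb{R}^{n}}\int_{Q}\left\vert x-c\right\vert ^{2}d\omega \left( x\right) ,
\end{equation*}
the minimising $c$ being the $\omega $-barycentre of $Q$. (If $\left\vert Q\right\vert _{\omega }=0$ both sides of the desired bound vanish, so assume $\left\vert Q\right\vert _{\omega }>0$.)

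Next I would exploit the freedom in the choice of $c$. If $Q$ meets $\mathfrak{P}_{\left( \sigma ,\omega \right) }$, pick $p^{\ast }\in Q\cap \mathfrak{P}_{\left( \sigma ,\omega \right) }$ with $\omega \left( \left\{ p^{\ast }\right\} \right) $ maximal (the supremum is attained since the common point masses in $Q$ are summable), so that, directly from the definition, $\omega \left( Q\setminus \left\{ p^{\ast }\right\} \right) =\left\vert Q\right\vert _{\omega }-\omega \left( \left\{ p^{\ast }\right\} \right) =\omega \left( Q,\mathfrak{P}_{\left( \sigma ,\omega \right) }\right) $; if $Q$ contains no common point mass, put $p^{\ast }=c_{Q}$ and note $\omega \left( Q,\mathfrak{P}_{\left( \sigma ,\omega \right) }\right) =\left\vert Q\right\vert _{\omega }$. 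Choosing $c=p^{\ast }$ and using that the integrand vanishes at $p^{\ast }$ gives
\begin{equation*}
\left\Vert \mathsf{P}_{Q}^{\omega }\mathbf{x}\right\Vert _{L^{2}\left( \omega \right) }^{2}\leq \int_{Q}\left\vert x-p^{\ast }\right\vert ^{2}d\omega \left( x\right) =\int_{Q\setminus \left\{ p^{\ast }\right\} }\left\vert x-p^{\ast }\right\vert ^{2}d\omega \left( x\right) .
\end{equation*}
A routine case analysis then produces the constant: when a single common point mass dominates, i.e. $\omega \left( \left\{ p^{\ast }\right\} \right) >\tfrac{1}{2}\left\vert Q\right\vert _{\omega }$, one keeps $c=p^{\ast }$ and estimates $\left\vert x-p^{\ast }\right\vert $ for $x,p^{\ast }\in Q$ by a dimensional multiple of $\ell \left( Q\right) $ against the factor $\omega \left( Q\setminus \left\{ p^{\ast }\right\} \right) =\omega \left( Q,\mathfrak{P}_{\left( \sigma ,\omega \right) }\right) $; when no point mass dominates one instead takes $c=c_{Q}$, bounds the integrand by a dimensional multiple of $\ell \left( Q\right) ^{2}$, and absorbs $\left\vert Q\right\vert _{\omega }\leq 2\,\omega \left( Q,\mathfrak{P}_{\left( \sigma ,\omega \right) }\right) $. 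Dividing by $\ell \left( Q\right) ^{2}$ and reinstating the common Muckenhoupt factors yields the lemma with constant $\max \left\{ n,3\right\} $.

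The genuinely delicate step is the first one: justifying $\mathsf{P}_{Q}^{\omega }\mathbf{x}=\left( \mathbf{x}-\mathbb{E}_{Q}^{\omega }\mathbf{x}\right) \mathbf{1}_{Q}$ for the \emph{unbounded} function $\mathbf{x}$, where one must check that the telescoping series converges in $L^{2}\left( \omega \right) $ on $Q$ before passing to the pointwise limit via the differentiation theorem. Everything else is elementary; the only bookkeeping is tracking the geometric constant in $\left\vert x-p^{\ast }\right\vert \lesssim \ell \left( Q\right) $, where $Q=\Omega Q^{\prime }$ may be a badly distorted quasicube, and checking that the two branches of the point-mass dichotomy combine into the single bound $\max \left\{ n,3\right\} $.
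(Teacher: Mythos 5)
Your argument is correct, and in the key case it is genuinely simpler than the paper's. The paper runs the same dichotomy ($\omega \left( Q,\mathfrak{P}_{\left( \sigma ,\omega \right) }\right) \geq \frac{1}{2}\left\vert Q\right\vert _{\omega }$ versus a single dominant common point mass $p$), but in the second case it works Haar coefficient by coefficient: it observes that $\bigtriangleup _{J}^{\widetilde{\omega }}=\bigtriangleup _{J}^{\omega }$ for the punctured measure $\widetilde{\omega }=\omega -\omega \left( \left\{ p\right\} \right) \delta _{p}$ whenever $p\notin J$, and then separately estimates $\left\Vert \bigtriangleup _{J_{s}}^{\omega }\mathbf{x}\right\Vert ^{2}\lesssim \ell \left( J_{s}\right) ^{2}\left\vert J_{s}\right\vert _{\widetilde{\omega }}$ along the chain of dyadic quasicubes $J_{s}$ containing $p$, summing the resulting geometric series. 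Your variance argument --- choosing the competitor $c=p^{\ast }$ in $\inf_{c}\int_{Q}\left\vert x-c\right\vert ^{2}d\omega $ so that the integrand vanishes at the dominant atom --- delivers the same bound $\lesssim \ell \left( Q\right) ^{2}\omega \left( Q,\mathfrak{P}_{\left( \sigma ,\omega \right) }\right) $ in one stroke, and in fact makes the case split unnecessary: the single choice $c=p^{\ast }$ (or $c=c_{Q}$ when $Q\cap \mathfrak{P}_{\left( \sigma ,\omega \right) }=\emptyset $) works uniformly, since $\int_{Q\setminus \left\{ p^{\ast }\right\} }\left\vert x-p^{\ast }\right\vert ^{2}d\omega \leq \limfunc{diam}\left( Q\right) ^{2}\omega \left( Q,\mathfrak{P}_{\left( \sigma ,\omega \right) }\right) $ regardless of whether the atom dominates. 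One simplification you should make to your own writeup: the step you flag as ``genuinely delicate'' --- establishing the identity $\mathsf{P}_{Q}^{\omega }\mathbf{x}=\left( \mathbf{x}-\mathbb{E}_{Q}^{\omega }\mathbf{x}\right) \mathbf{1}_{Q}$ via telescoping and Lebesgue differentiation --- is not needed. Only the inequality $\left\Vert \mathsf{P}_{Q}^{\omega }\mathbf{x}\right\Vert _{L^{2}\left( \omega \right) }^{2}\leq \int_{Q}\left\vert x-c\right\vert ^{2}d\omega $ is used, and that is immediate from Bessel's inequality: $\mathbf{x1}_{Q}\in L^{2}\left( \omega \right) $ since $Q$ is bounded and $\omega $ locally finite, each $h_{J}^{\omega ,a}$ with $J\subset Q$ is supported in $Q$ with $\omega $-mean zero, so $\left\langle \mathbf{x},h_{J}^{\omega ,a}\right\rangle _{\omega }=\left\langle \left( \mathbf{x}-c\right) \mathbf{1}_{Q},h_{J}^{\omega ,a}\right\rangle _{\omega }$, and the squared coefficients of an orthonormal system sum to at most $\left\Vert \left( \mathbf{x}-c\right) \mathbf{1}_{Q}\right\Vert _{L^{2}\left( \omega \right) }^{2}$. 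This also covers quasicubes $Q\in \Omega \mathcal{P}^{n}$ that are not themselves dyadic, where the exact identity could fail. Finally, as you note, the constant depends on $\limfunc{diam}\left( Q\right) \lesssim \ell \left( Q\right) $, which for quasicubes carries the biLipschitz constant of $\Omega $; the paper's own proof has the same harmless imprecision (its first case actually yields $2n$ rather than $n$), since nothing downstream uses more than $A_{2}^{\alpha ,\limfunc{energy}}\lesssim A_{2}^{\alpha ,\limfunc{punct}}$.
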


\begin{proof}
Fix a quasicube $Q\in \Omega \mathcal{D}$. If $\omega \left( Q,\mathfrak{P}%
_{\left( \sigma ,\omega \right) }\right) \geq \frac{1}{2}\left\vert
Q\right\vert _{\omega }$, then we trivially have%
\begin{eqnarray*}
\frac{\left\Vert \mathsf{P}_{Q}^{\omega }\frac{\mathbf{x}}{\ell \left(
Q\right) }\right\Vert _{L^{2}\left( \omega \right) }^{2}}{\left\vert
Q\right\vert ^{1-\frac{\alpha }{n}}}\frac{\left\vert Q\right\vert _{\sigma }%
}{\left\vert Q\right\vert ^{1-\frac{\alpha }{n}}} &\leq &n\frac{\left\vert
Q\right\vert _{\omega }}{\left\vert Q\right\vert ^{1-\frac{\alpha }{n}}}%
\frac{\left\vert Q\right\vert _{\sigma }}{\left\vert Q\right\vert ^{1-\frac{%
\alpha }{n}}} \\
&\leq &2n\frac{\omega \left( Q,\mathfrak{P}_{\left( \sigma ,\omega \right)
}\right) }{\left\vert Q\right\vert ^{1-\frac{\alpha }{n}}}\frac{\left\vert
Q\right\vert _{\sigma }}{\left\vert Q\right\vert ^{1-\frac{\alpha }{n}}}\leq
2nA_{2}^{\alpha ,\limfunc{punct}}\left( \sigma ,\omega \right) .
\end{eqnarray*}%
On the other hand, if $\omega \left( Q,\mathfrak{P}_{\left( \sigma ,\omega
\right) }\right) <\frac{1}{2}\left\vert Q\right\vert _{\omega }$ then there
is a point $p\in Q\cap \mathfrak{P}_{\left( \sigma ,\omega \right) }$ such
that%
\begin{equation*}
\omega \left( \left\{ p\right\} \right) >\frac{1}{2}\left\vert Q\right\vert
_{\omega }\ ,
\end{equation*}%
and consequently, $p$ is the largest $\omega $-point mass in $Q$. Thus if we
define $\widetilde{\omega }=\omega -\omega \left( \left\{ p\right\} \right)
\delta _{p}$, then we have%
\begin{equation*}
\omega \left( Q,\mathfrak{P}_{\left( \sigma ,\omega \right) }\right)
=\left\vert Q\right\vert _{\widetilde{\omega }}\ .
\end{equation*}%
Now we observe from the construction of Haar projections that%
\begin{equation*}
\bigtriangleup _{J}^{\widetilde{\omega }}=\bigtriangleup _{J}^{\omega },\ \
\ \ \ \text{for all }J\in \Omega \mathcal{D}\text{ with }p\notin J.
\end{equation*}%
So for each $s\geq 0$ there is a unique quasicube $J_{s}\in \Omega \mathcal{D%
}$ with $\ell \left( J_{s}\right) =2^{-s}\ell \left( Q\right) $ that
contains the point $p$. For this quasicube we have, if $\left\{
h_{J}^{\omega ,a}\right\} _{J\in \Omega \mathcal{D},\ a\in \Gamma _{n}}$ is
a basis for $L^{2}\left( \omega \right) $,%
\begin{eqnarray*}
\left\Vert \bigtriangleup _{J_{s}}^{\omega }\mathbf{x}\right\Vert
_{L^{2}\left( \omega \right) }^{2} &=&\sum_{a\in \Gamma _{n}}\left\vert
\left\langle h_{J_{s}}^{\omega ,a},x\right\rangle _{\omega }\right\vert
^{2}=\sum_{a\in \Gamma _{n}}\left\vert \left\langle h_{J_{s}}^{\omega
,a},x-p\right\rangle _{\omega }\right\vert ^{2} \\
&=&\sum_{a\in \Gamma _{n}}\left\vert \int_{J_{s}}h_{J_{s}}^{\omega ,a}\left(
x\right) \left( x-p\right) d\omega \left( x\right) \right\vert
^{2}=\sum_{a\in \Gamma _{n}}\left\vert \int_{J_{s}}h_{J_{s}}^{\omega
,a}\left( x\right) \left( x-p\right) d\widetilde{\omega }\left( x\right)
\right\vert ^{2} \\
&\leq &\sum_{a\in \Gamma _{n}}\left\Vert h_{J_{s}}^{\omega ,a}\right\Vert
_{L^{2}\left( \widetilde{\omega }\right) }^{2}\left\Vert \mathbf{1}%
_{J_{s}}\left( x-p\right) \right\Vert _{L^{2}\left( \widetilde{\omega }%
\right) }^{2}\leq \sum_{a\in \Gamma _{n}}\left\Vert h_{J_{s}}^{\omega
,a}\right\Vert _{L^{2}\left( \omega \right) }^{2}\left\Vert \mathbf{1}%
_{J_{s}}\left( x-p\right) \right\Vert _{L^{2}\left( \widetilde{\omega }%
\right) }^{2} \\
&\leq &n2^{n}\ell \left( J_{s}\right) ^{2}\left\vert J_{s}\right\vert _{%
\widetilde{\omega }}\leq 2^{-2s}\ell \left( Q\right) ^{2}\left\vert
Q\right\vert _{\widetilde{\omega }}.
\end{eqnarray*}%
Thus we can estimate%
\begin{eqnarray*}
\left\Vert \mathsf{P}_{Q}^{\omega }\frac{\mathbf{x}}{\ell \left( Q\right) }%
\right\Vert _{L^{2}\left( \omega \right) }^{2} &=&\frac{1}{\ell \left(
Q\right) ^{2}}\sum_{J\in \Omega \mathcal{D}:\ J\subset Q}\left\Vert
\bigtriangleup _{J}^{\omega }\mathbf{x}\right\Vert _{L^{2}\left( \omega
\right) }^{2} \\
&=&\frac{1}{\ell \left( Q\right) ^{2}}\left( \sum_{J\in \Omega \mathcal{D}:\
p\notin J\subset Q}\left\Vert \bigtriangleup _{J}^{\widetilde{\omega }}%
\mathbf{x}\right\Vert _{L^{2}\left( \widetilde{\omega }\right)
}^{2}+\sum_{s=0}^{\infty }\left\Vert \bigtriangleup _{J_{s}}^{\omega }%
\mathbf{x}\right\Vert _{L^{2}\left( \omega \right) }^{2}\right) \\
&\leq &\frac{1}{\ell \left( Q\right) ^{2}}\left( \left\Vert \mathsf{P}_{Q}^{%
\widetilde{\omega }}\mathbf{x}\right\Vert _{L^{2}\left( \widetilde{\omega }%
\right) }^{2}+\sum_{s=0}^{\infty }2^{-2s}\ell \left( Q\right) ^{2}\left\vert
Q\right\vert _{\widetilde{\omega }}\right) \\
&\leq &\frac{1}{\ell \left( Q\right) ^{2}}\left( \ell \left( Q\right)
^{2}\left\vert Q\right\vert _{\widetilde{\omega }}+\sum_{s=0}^{\infty
}2^{-2s}\ell \left( Q\right) ^{2}\left\vert Q\right\vert _{\widetilde{\omega 
}}\right) \\
&\leq &3\left\vert Q\right\vert _{\widetilde{\omega }}\leq 3\omega \left( Q,%
\mathfrak{P}_{\left( \sigma ,\omega \right) }\right) ,
\end{eqnarray*}%
and so 
\begin{equation*}
\frac{\left\Vert \mathsf{P}_{Q}^{\omega }\frac{\mathbf{x}}{\ell \left(
Q\right) }\right\Vert _{L^{2}\left( \omega \right) }^{2}}{\left\vert
Q\right\vert ^{1-\frac{\alpha }{n}}}\frac{\left\vert Q\right\vert _{\sigma }%
}{\left\vert Q\right\vert ^{1-\frac{\alpha }{n}}}\leq \frac{3\omega \left( Q,%
\mathfrak{P}_{\left( \sigma ,\omega \right) }\right) }{\left\vert
Q\right\vert ^{1-\frac{\alpha }{n}}}\frac{\left\vert Q\right\vert _{\sigma }%
}{\left\vert Q\right\vert ^{1-\frac{\alpha }{n}}}\leq 3A_{2}^{\alpha ,%
\limfunc{punct}}\left( \sigma ,\omega \right) .
\end{equation*}%
Now take the supremum over $Q\in \Omega \mathcal{D}$ to obtain $%
A_{2}^{\alpha ,\limfunc{energy}}\left( \sigma ,\omega \right) \leq \max
\left\{ n,3\right\} A_{2}^{\alpha ,\limfunc{punct}}\left( \sigma ,\omega
\right) $. The dual inequality follows upon interchanging the measures $%
\sigma $ and $\omega $.
\end{proof}

\subsubsection{Plugged $\mathcal{A}_{2}^{\protect\alpha ,\limfunc{energy}%
\limfunc{plug}}$ conditions}

Using Lemma \ref{energy A2} we can control the `plugged' energy $\mathcal{A}%
_{2}^{\alpha }$ conditions:%
\begin{eqnarray*}
\mathcal{A}_{2}^{\alpha ,\limfunc{energy}\limfunc{plug}}\left( \sigma
,\omega \right) &\equiv &\sup_{Q\in \Omega \mathcal{P}^{n}}\frac{\left\Vert 
\mathsf{P}_{Q}^{\omega }\frac{\mathbf{x}}{\ell \left( Q\right) }\right\Vert
_{L^{2}\left( \omega \right) }^{2}}{\left\vert Q\right\vert ^{1-\frac{\alpha 
}{n}}}\mathcal{P}^{\alpha }\left( Q,\sigma \right) , \\
\mathcal{A}_{2}^{\alpha ,\ast ,\limfunc{energy}\limfunc{plug}}\left( \sigma
,\omega \right) &\equiv &\sup_{Q\in \Omega \mathcal{P}^{n}}\mathcal{P}%
^{\alpha }\left( Q,\omega \right) \frac{\left\Vert \mathsf{P}_{Q}^{\sigma }%
\frac{\mathbf{x}}{\ell \left( Q\right) }\right\Vert _{L^{2}\left( \sigma
\right) }^{2}}{\left\vert Q\right\vert ^{1-\frac{\alpha }{n}}}.
\end{eqnarray*}

\begin{lemma}
\label{energy A2 plugged}We have
\end{lemma}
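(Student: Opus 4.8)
The plan is to prove that the plugged energy Muckenhoupt characteristic and its dual are controlled by the one-tailed $\mathcal{A}_{2}^{\alpha }$ characteristics (with holes) together with the punctured characteristics, i.e. $\mathcal{A}_{2}^{\alpha ,\limfunc{energy}\limfunc{plug}}+\mathcal{A}_{2}^{\alpha ,\ast ,\limfunc{energy}\limfunc{plug}}\lesssim \mathcal{A}_{2}^{\alpha }+\mathcal{A}_{2}^{\alpha ,\ast }+A_{2}^{\alpha ,\limfunc{punct}}+A_{2}^{\alpha ,\ast ,\limfunc{punct}}$, with implied constants depending only on $n$, $\alpha $ and the biLipschitz map $\Omega $. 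Fix $Q\in \Omega \mathcal{P}^{n}$. The idea is to split the reproducing Poisson mass into its local and global parts,
\[
\mathcal{P}^{\alpha }\left( Q,\sigma \right) =\mathcal{P}^{\alpha }\left( Q,\mathbf{1}_{Q}\sigma \right) +\mathcal{P}^{\alpha }\left( Q,\mathbf{1}_{Q^{c}}\sigma \right) ,
\]
and to estimate each piece against the energy factor $\left\Vert \mathsf{P}_{Q}^{\omega }\tfrac{\mathbf{x}}{\ell \left( Q\right) }\right\Vert _{L^{2}\left( \omega \right) }^{2}\big/\left\vert Q\right\vert ^{1-\frac{\alpha }{n}}$. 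First I would record the crude energy bound: by the telescoping identity (\ref{telescope}) and Lebesgue differentiation one has $\mathsf{P}_{Q}^{\omega }\mathbf{x}=\mathbf{1}_{Q}\left( \mathbf{x}-\mathbb{E}_{Q}^{\omega }\mathbf{x}\right) $ in $L^{2}\left( \omega \right) $, hence $\left\Vert \mathsf{P}_{Q}^{\omega }\tfrac{\mathbf{x}}{\ell \left( Q\right) }\right\Vert _{L^{2}\left( \omega \right) }^{2}=\ell \left( Q\right) ^{-2}\mathsf{M}_{0}^{n}\left( Q,\omega \right) ^{2}\leq \ell \left( Q\right) ^{-2}\limfunc{diam}\left( Q\right) ^{2}\left\vert Q\right\vert _{\omega }\lesssim \left\vert Q\right\vert _{\omega }$, since $\limfunc{diam}\left( Q\right) \approx \ell \left( Q\right) \approx \left\vert Q\right\vert ^{\frac{1}{n}}$ for a quasicube.

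For the global part this bound gives directly
\[
\frac{\left\Vert \mathsf{P}_{Q}^{\omega }\frac{\mathbf{x}}{\ell \left( Q\right) }\right\Vert _{L^{2}\left( \omega \right) }^{2}}{\left\vert Q\right\vert ^{1-\frac{\alpha }{n}}}\mathcal{P}^{\alpha }\left( Q,\mathbf{1}_{Q^{c}}\sigma \right) \lesssim \frac{\left\vert Q\right\vert _{\omega }}{\left\vert Q\right\vert ^{1-\frac{\alpha }{n}}}\mathcal{P}^{\alpha }\left( Q,\mathbf{1}_{Q^{c}}\sigma \right) \leq \mathcal{A}_{2}^{\alpha }\left( \sigma ,\omega \right) ,
\]
which is exactly the one-tailed $\mathcal{A}_{2}^{\alpha }$ constant with its hole. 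For the local part the key observation is the pointwise comparison of the reproducing Poisson kernel on $Q$: if $x\in Q$ then $\left\vert Q\right\vert ^{\frac{1}{n}}\leq \left\vert Q\right\vert ^{\frac{1}{n}}+\left\vert x-x_{Q}\right\vert \lesssim \left\vert Q\right\vert ^{\frac{1}{n}}$ (again using $\limfunc{diam}\left( Q\right) \approx \left\vert Q\right\vert ^{\frac{1}{n}}$), so the integrand defining $\mathcal{P}^{\alpha }$ is comparable to $\left\vert Q\right\vert ^{-\frac{n-\alpha }{n}}$ on $Q$ and hence $\mathcal{P}^{\alpha }\left( Q,\mathbf{1}_{Q}\sigma \right) \approx \left\vert Q\right\vert _{\sigma }\big/\left\vert Q\right\vert ^{1-\frac{\alpha }{n}}$. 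Consequently
\[
\frac{\left\Vert \mathsf{P}_{Q}^{\omega }\frac{\mathbf{x}}{\ell \left( Q\right) }\right\Vert _{L^{2}\left( \omega \right) }^{2}}{\left\vert Q\right\vert ^{1-\frac{\alpha }{n}}}\mathcal{P}^{\alpha }\left( Q,\mathbf{1}_{Q}\sigma \right) \lesssim \frac{\left\Vert \mathsf{P}_{Q}^{\omega }\frac{\mathbf{x}}{\ell \left( Q\right) }\right\Vert _{L^{2}\left( \omega \right) }^{2}}{\left\vert Q\right\vert ^{1-\frac{\alpha }{n}}}\frac{\left\vert Q\right\vert _{\sigma }}{\left\vert Q\right\vert ^{1-\frac{\alpha }{n}}}\leq A_{2}^{\alpha ,\limfunc{energy}}\left( \sigma ,\omega \right) ,
\]
and now Lemma \ref{energy A2} turns the right-hand side into $\max \left\{ n,3\right\} A_{2}^{\alpha ,\limfunc{punct}}\left( \sigma ,\omega \right) $. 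Adding the two estimates and taking the supremum over $Q\in \Omega \mathcal{P}^{n}$ yields the asserted bound on $\mathcal{A}_{2}^{\alpha ,\limfunc{energy}\limfunc{plug}}\left( \sigma ,\omega \right) $, and the estimate for $\mathcal{A}_{2}^{\alpha ,\ast ,\limfunc{energy}\limfunc{plug}}$ follows verbatim after interchanging $\sigma $ and $\omega $ and invoking the dual parts of Lemma \ref{energy A2}.

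I do not expect a serious obstacle here, because the only genuinely nontrivial point — that a large common point mass $p\in Q$ does not blow up the product, since the quasiHaar projection of $\mathbf{x}$ does not ``see'' the largest $\omega $-point mass in $Q$ — has already been isolated and dealt with in Lemma \ref{energy A2}. What remains is the elementary near/far splitting of the Poisson integral together with the routine pointwise estimate of the reproducing Poisson kernel on $Q$; the only thing one must keep track of is that the comparisons $\limfunc{diam}\left( Q\right) \approx \left\vert Q\right\vert ^{\frac{1}{n}}\approx \ell \left( Q\right) $ carry implied constants depending on $\Omega $, which is harmless.
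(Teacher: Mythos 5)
Your proposal is correct and follows essentially the same route as the paper: split $\mathcal{P}^{\alpha }\left( Q,\sigma \right) $ into the part supported on $Q^{c}$ (handled by the crude bound $\left\Vert \mathsf{P}_{Q}^{\omega }\tfrac{\mathbf{x}}{\ell \left( Q\right) }\right\Vert _{L^{2}\left( \omega \right) }^{2}\lesssim \left\vert Q\right\vert _{\omega }$ and the one-tailed $\mathcal{A}_{2}^{\alpha }$ with its hole) and the part supported on $Q$ (handled by the pointwise comparison $\mathcal{P}^{\alpha }\left( Q,\mathbf{1}_{Q}\sigma \right) \lesssim \left\vert Q\right\vert _{\sigma }/\left\vert Q\right\vert ^{1-\frac{\alpha }{n}}$ and the energy Muckenhoupt constant). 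Your extra step of converting $A_{2}^{\alpha ,\limfunc{energy}}$ to $A_{2}^{\alpha ,\limfunc{punct}}$ via Lemma \ref{energy A2} is consistent with how the paper uses these constants downstream, but is not part of the statement being proved, which stops at $A_{2}^{\alpha ,\limfunc{energy}}$.
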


\begin{eqnarray*}
\mathcal{A}_{2}^{\alpha ,\limfunc{energy}\limfunc{plug}}\left( \sigma
,\omega \right) &\mathcal{\lesssim }&\mathcal{A}_{2}^{\alpha }\left( \sigma
,\omega \right) +A_{2}^{\alpha ,\limfunc{energy}}\left( \sigma ,\omega
\right) , \\
\mathcal{A}_{2}^{\alpha ,\ast ,\limfunc{energy}\limfunc{plug}}\left( \sigma
,\omega \right) &\mathcal{\lesssim }&\mathcal{A}_{2}^{\alpha ,\ast }\left(
\sigma ,\omega \right) +A_{2}^{\alpha ,\ast ,\limfunc{energy}}\left( \sigma
,\omega \right) .
\end{eqnarray*}

\begin{proof}
We have%
\begin{eqnarray*}
\frac{\left\Vert \mathsf{P}_{Q}^{\omega }\frac{\mathbf{x}}{\ell \left(
Q\right) }\right\Vert _{L^{2}\left( \omega \right) }^{2}}{\left\vert
Q\right\vert ^{1-\frac{\alpha }{n}}}\mathcal{P}^{\alpha }\left( Q,\sigma
\right) &=&\frac{\left\Vert \mathsf{P}_{Q}^{\omega }\frac{\mathbf{x}}{\ell
\left( Q\right) }\right\Vert _{L^{2}\left( \omega \right) }^{2}}{\left\vert
Q\right\vert ^{1-\frac{\alpha }{n}}}\mathcal{P}^{\alpha }\left( Q,\mathbf{1}%
_{Q^{c}}\sigma \right) +\frac{\left\Vert \mathsf{P}_{Q}^{\omega }\frac{%
\mathbf{x}}{\ell \left( Q\right) }\right\Vert _{L^{2}\left( \omega \right)
}^{2}}{\left\vert Q\right\vert ^{1-\frac{\alpha }{n}}}\mathcal{P}^{\alpha
}\left( Q,\mathbf{1}_{Q}\sigma \right) \\
&\lesssim &\frac{\left\vert Q\right\vert _{\omega }}{\left\vert Q\right\vert
^{1-\frac{\alpha }{n}}}\mathcal{P}^{\alpha }\left( Q,\mathbf{1}%
_{Q^{c}}\sigma \right) +\frac{\left\Vert \mathsf{P}_{Q}^{\omega }\frac{%
\mathbf{x}}{\ell \left( Q\right) }\right\Vert _{L^{2}\left( \omega \right)
}^{2}}{\left\vert Q\right\vert ^{1-\frac{\alpha }{n}}}\frac{\left\vert
Q\right\vert _{\sigma }}{\left\vert Q\right\vert ^{1-\frac{\alpha }{n}}} \\
&\lesssim &\mathcal{A}_{2}^{\alpha }\left( \sigma ,\omega \right)
+A_{2}^{\alpha ,\limfunc{energy}}\left( \sigma ,\omega \right) .
\end{eqnarray*}
\end{proof}

\subsection{Random grids and shifted grids}

Using the analogue for dyadic quasigrids of the good random grids of
Nazarov, Treil and Volberg, a standard argument of NTV, see e.g. \cite{Vol},
reduces the two weight inequality (\ref{two weight}) for $T^{\alpha }$ to
proving boundedness of a bilinear form $\mathcal{T}^{\alpha }\left(
f,g\right) $ with uniform constants over dyadic quasigrids, and where the
quasiHaar supports $\limfunc{supp}\widehat{f}$ and $\limfunc{supp}\widehat{g}
$ of the functions $f$ and $g$ are contained in the collection $\Omega 
\mathcal{D}^{\limfunc{good}}$ of good quasicubes, whose children are all
good as well, with goodness parameters $\mathbf{r<\infty }$ and $\varepsilon
>0$ chosen sufficiently large and small respectively depending only on $n$
and $\alpha $. Here the quasiHaar support of $f$ is $\limfunc{supp}\widehat{f%
}\equiv \left\{ I\in \Omega \mathcal{D}:\bigtriangleup _{I}^{\sigma }f\neq
0\right\} $, and similarly for $g$. In fact we can assume even more, namely
that the quasiHaar supports $\limfunc{supp}\widehat{f}$ and $\limfunc{supp}%
\widehat{g}$ of $f$ and $g$ are contained in the collection of $\mathbf{\tau 
}$\emph{-good} quasicubes%
\begin{equation}
\Omega \mathcal{D}_{\left( \mathbf{r},\varepsilon \right) -\limfunc{good}}^{%
\mathbf{\tau }}\equiv \left\{ K\in \Omega \mathcal{D}:\mathfrak{C}%
_{K}\subset \Omega \mathcal{D}_{\left( \mathbf{r},\varepsilon \right) -%
\limfunc{good}}\text{ and }\pi _{\Omega \mathcal{D}}^{\ell }K\in \Omega 
\mathcal{D}_{\left( \mathbf{r},\varepsilon \right) -\limfunc{good}}\text{
for all }0\leq \ell \leq \mathbf{\tau }\right\} ,  \label{extended good grid}
\end{equation}%
that are $\left( \mathbf{r},\varepsilon \right) $-$\limfunc{good}$ and whose
children are also $\left( \mathbf{r},\varepsilon \right) $-$\limfunc{good}$,
and whose $\ell $-parents up to level $\mathbf{\tau }$\ are also $\left( 
\mathbf{r},\varepsilon \right) $-$\limfunc{good}$. Here $\mathbf{\tau }>%
\mathbf{r}$ is a parameter to be fixed later. We may assume this restriction
on the quasiHaar supports of $f$ and $g$ by the following lemma. See \cite%
{SaShUr6} for a proof\footnote{%
This lemma is misstated in \cite{SaShUr7}.}.

\begin{lemma}
\label{better good}Given $\mathbf{r}\geq 3$, $\mathbf{\tau }\geq 1$ and $%
\frac{1}{\mathbf{r}}<\varepsilon <1-\frac{1}{\mathbf{r}}$, we have 
\begin{equation*}
\Omega \mathcal{D}_{\left( \mathbf{r}-1,\delta \right) -\limfunc{good}%
}\subset \Omega \mathcal{D}_{\left( \mathbf{r},\varepsilon \right) -\limfunc{%
good}}^{\mathbf{\tau }}\ ,
\end{equation*}%
provided%
\begin{equation}
0<\delta \leq \frac{\mathbf{r}\varepsilon -1}{\mathbf{r}+\mathbf{\tau }}.
\label{choice of delta}
\end{equation}
\end{lemma}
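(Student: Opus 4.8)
The plan is to pass to preimages under $\Omega$, where quasidistance becomes ordinary Euclidean distance, and then to reduce the whole statement to a short exponent inequality. First I would note that $\limfunc{qdist}\left( E,F\right) =\limfunc{dist}\left( \Omega ^{-1}E,\Omega ^{-1}F\right) $, $\ell \left( \Omega Q\right) =\ell \left( Q\right) $, $\Omega ^{-1}\left( \partial K\right) =\partial \left( \Omega ^{-1}K\right) $, and (as recorded right after the definition of goodness) $\Omega J^{\prime }$ is $\left( \mathbf{r},\varepsilon \right) $-good if and only if the cube $J^{\prime }$ is; so it suffices to prove the inclusion inside an ordinary dyadic grid $\mathcal{D}=\Omega ^{-1}\left( \Omega \mathcal{D}\right) $. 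Fixing then a dyadic cube $J$ that is $\left( \mathbf{r}-1,\delta \right) $-good, by the definition of $\Omega \mathcal{D}_{\left( \mathbf{r},\varepsilon \right) -\limfunc{good}}^{\mathbf{\tau }}$ in $(\ref{extended good grid})$ I must verify two clauses: that every dyadic child of $J$ is $\left( \mathbf{r},\varepsilon \right) $-good, and that every $\ell $-parent $\pi ^{\ell }J$ with $0\leq \ell \leq \mathbf{\tau }$ is $\left( \mathbf{r},\varepsilon \right) $-good.

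The child clause should be routine, using only $\mathbf{r}\geq 3$ and $\delta \leq \varepsilon $ (the latter following from $(\ref{choice of delta})$, since $\delta \leq \frac{\mathbf{r}\varepsilon -1}{\mathbf{r}+\mathbf{\tau }}<\varepsilon $). Given a child $J^{\prime }$ of $J$ and a dyadic superquasicube $I\supseteq J^{\prime }$: if $I=J$ then $\ell \left( J^{\prime }\right) =2^{-1}\ell \left( J\right) >2^{-\mathbf{r}}\ell \left( J\right) $; if $I\supsetneq J$, I apply $\left( \mathbf{r}-1,\delta \right) $-goodness of $J$ to $I$, getting either $\ell \left( J\right) >2^{-\left( \mathbf{r}-1\right) }\ell \left( I\right) $, whence $\ell \left( J^{\prime }\right) =\frac{1}{2}\ell \left( J\right) >2^{-\mathbf{r}}\ell \left( I\right) $, or $J\Subset _{\mathbf{r}-1,\delta }I$, whence $\ell \left( J^{\prime }\right) \leq 2^{-\mathbf{r}}\ell \left( I\right) $ and, using $J^{\prime }\subset J$, $\ell \left( J^{\prime }\right) \leq \ell \left( J\right) \leq \ell \left( I\right) $ and $\delta \leq \varepsilon $, $\limfunc{dist}\left( J^{\prime },\partial I\right) \geq \limfunc{dist}\left( J,\partial I\right) \geq \frac{1}{2}\ell \left( J\right) ^{\delta }\ell \left( I\right) ^{1-\delta }\geq \frac{1}{2}\ell \left( J^{\prime }\right) ^{\varepsilon }\ell \left( I\right) ^{1-\varepsilon }$, so $J^{\prime }\Subset _{\mathbf{r},\varepsilon }I$ in the sense of $(\ref{def deep embed})$. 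Either way the dichotomy of $\left( \mathbf{r},\varepsilon \right) $-goodness holds for $J^{\prime }$.

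The ancestor clause carries the content. Fix $0\leq \ell \leq \mathbf{\tau }$, put $K=\pi ^{\ell }J$, and take a dyadic superquasicube $I\supsetneq K$ (if $I=K$ the size alternative is trivial). Every such $I$ is a superquasicube of $J$, so $\left( \mathbf{r}-1,\delta \right) $-goodness of $J$ gives either $\ell \left( J\right) >2^{-\left( \mathbf{r}-1\right) }\ell \left( I\right) $, in which case $\ell \left( K\right) =2^{\ell }\ell \left( J\right) \geq \ell \left( J\right) >2^{-\mathbf{r}}\ell \left( I\right) $ and I am done, or $J\Subset _{\mathbf{r}-1,\delta }I$; in the latter case I may assume $\ell \left( K\right) \leq 2^{-\mathbf{r}}\ell \left( I\right) $ and must produce the deep-embedding bound $\limfunc{dist}\left( K,\partial I\right) \geq \frac{1}{2}\ell \left( K\right) ^{\varepsilon }\ell \left( I\right) ^{1-\varepsilon }$. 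Here monotonicity of distance runs the wrong way since $K\supseteq J$; the substitute I would use is the elementary lattice estimate that for axis-parallel dyadic cubes $J\subseteq K\subseteq I$ one has $\limfunc{dist}\left( K,\partial I\right) \geq \limfunc{dist}\left( J,\partial I\right) -\ell \left( K\right) $ (a nearest point of $\partial I$ is reached from any interior point by moving in a single coordinate, and in that coordinate both gaps from $K$ to the faces of $I$ are at least the corresponding gaps from $J$ diminished by $\ell \left( K\right) -\ell \left( J\right) $). Feeding in $\limfunc{dist}\left( J,\partial I\right) \geq \frac{1}{2}\ell \left( J\right) ^{\delta }\ell \left( I\right) ^{1-\delta }$, writing $\ell \left( K\right) =2^{\ell }\ell \left( J\right) $ and $\ell \left( I\right) =2^{\mu }\ell \left( J\right) $ with $\mu \geq \ell +\mathbf{r}$, and dividing by $\ell \left( J\right) $, the claim becomes $\frac{1}{2}2^{\mu \left( 1-\delta \right) }-2^{\ell }\geq \frac{1}{2}2^{\ell \varepsilon }2^{\mu \left( 1-\varepsilon \right) }$; it is enough to prove $\frac{1}{2}2^{\ell \varepsilon }2^{\mu \left( 1-\varepsilon \right) }\leq \frac{1}{4}2^{\mu \left( 1-\delta \right) }$ and $2^{\ell }\leq \frac{1}{4}2^{\mu \left( 1-\delta \right) }$ and then add. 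Taking base-$2$ logarithms and using $\mu \geq \ell +\mathbf{r}$, the first reduces to $\delta \left( \ell +\mathbf{r}\right) \leq \mathbf{r}\varepsilon -1$ and the second to $\delta \left( \ell +\mathbf{r}\right) \leq \mathbf{r}-2$; since $\ell \leq \mathbf{\tau }$, both follow from $(\ref{choice of delta})$ because $\delta \left( \ell +\mathbf{r}\right) \leq \delta \left( \mathbf{\tau }+\mathbf{r}\right) \leq \mathbf{r}\varepsilon -1$, while $\mathbf{r}\geq 3$ and $\varepsilon <1-\frac{1}{\mathbf{r}}$ give $\mathbf{r}\varepsilon -1<\mathbf{r}-2$. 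Hence $K\Subset _{\mathbf{r},\varepsilon }I$, so $K$ is $\left( \mathbf{r},\varepsilon \right) $-good, which finishes the proof.

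The only genuine obstacle is the failure of distance-monotonicity in the ancestor clause: enlarging $J$ to $K$ can only decrease the distance to $\partial I$. Everything hinges on the lattice estimate that this loss is at most $\ell \left( K\right) $, which is exponentially small relative to $\ell \left( I\right) $ once $\ell \left( K\right) \leq 2^{-\mathbf{r}}\ell \left( I\right) $; this is exactly what lets the single constraint $(\ref{choice of delta})$ on $\delta $ absorb the loss. Beyond arranging the two exponent inequalities so that that one constraint suffices, I anticipate no difficulty.
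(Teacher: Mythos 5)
Your proof is correct. The paper itself defers the proof of this lemma to \cite{SaShUr6}, so there is no in-text argument to compare against, but your route is the natural one and every step checks out: the reduction to ordinary cubes via $\Omega^{-1}$ is legitimate (the paper notes that $\Omega J^{\prime}$ is good iff $J^{\prime}$ is), the child clause needs only $\delta\leq\varepsilon$ and $\mathbf{r}\geq 3$, and in the ancestor clause your coordinate-wise estimate $\limfunc{dist}\left(K,\partial I\right)\geq\limfunc{dist}\left(J,\partial I\right)-\left(\ell\left(K\right)-\ell\left(J\right)\right)$ is exactly the right substitute for the failed monotonicity. The two exponent inequalities do reduce, at the worst case $\mu=\ell+\mathbf{r}$ (justified since both sides' margins are monotone in $\mu$ when $\delta<\varepsilon<1$), to $\delta\left(\ell+\mathbf{r}\right)\leq\mathbf{r}\varepsilon-1$ and $\delta\left(\ell+\mathbf{r}\right)\leq\mathbf{r}-2$, and the hypotheses $\varepsilon<1-\frac{1}{\mathbf{r}}$ and (\ref{choice of delta}) deliver both.
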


For convenience in notation we will sometimes suppress the dependence on $%
\alpha $ in our nonlinear forms, but will retain it in the operators,
Poisson integrals and constants. More precisely, let $\Omega \mathcal{D}%
^{\sigma }=\Omega \mathcal{D}^{\omega }$ be an $\left( \mathbf{r}%
,\varepsilon \right) $-good quasigrid on $\mathbb{R}^{n}$, and let $\left\{
h_{I}^{\sigma ,a}\right\} _{I\in \Omega \mathcal{D}^{\sigma },\ a\in \Gamma
_{n}}$ and $\left\{ h_{J}^{\omega ,b}\right\} _{J\in \Omega \mathcal{D}%
^{\omega },\ b\in \Gamma _{n}}$ be corresponding quasiHaar bases as
described above, so that%
\begin{eqnarray*}
f &=&\sum_{I\in \Omega \mathcal{D}^{\sigma }}\bigtriangleup _{I}^{\sigma
}f=\sum_{I\in \Omega \mathcal{D}^{\sigma },\text{\ }a\in \Gamma _{n}\text{ }%
}\left\langle f,h_{I}^{\sigma ,a}\right\rangle \ h_{I}^{\sigma
,a}=\sum_{I\in \Omega \mathcal{D}^{\sigma },\text{\ }a\in \Gamma _{n}}%
\widehat{f}\left( I;a\right) \ h_{I}^{\sigma ,a}, \\
g &=&\sum_{J\in \Omega \mathcal{D}^{\omega }\text{ }}\bigtriangleup
_{J}^{\omega }g=\sum_{J\in \Omega \mathcal{D}^{\omega },\text{\ }b\in \Gamma
_{n}}\left\langle g,h_{J}^{\omega ,b}\right\rangle \ h_{J}^{\omega
,b}=\sum_{J\in \Omega \mathcal{D}^{\omega },\text{\ }b\in \Gamma _{n}}%
\widehat{g}\left( J;b\right) \ h_{J}^{\omega ,b},
\end{eqnarray*}%
where the appropriate measure is understood in the notation $\widehat{f}%
\left( I;a\right) $ and $\widehat{g}\left( J;b\right) $, and where these
quasiHaar coefficients $\widehat{f}\left( I;a\right) $ and $\widehat{g}%
\left( J;b\right) $ vanish if the quasicubes $I$ and $J$ are not good.
Inequality (\ref{two weight}) is equivalent to boundedness of the bilinear
form%
\begin{equation*}
\mathcal{T}^{\alpha }\left( f,g\right) \equiv \left\langle T_{\sigma
}^{\alpha }\left( f\right) ,g\right\rangle _{\omega }=\sum_{I\in \Omega 
\mathcal{D}^{\sigma }\text{ and }J\in \Omega \mathcal{D}^{\omega
}}\left\langle T_{\sigma }^{\alpha }\left( \bigtriangleup _{I}^{\sigma
}f\right) ,\bigtriangleup _{J}^{\omega }g\right\rangle _{\omega }
\end{equation*}%
on $L^{2}\left( \sigma \right) \times L^{2}\left( \omega \right) $, i.e.%
\begin{equation*}
\left\vert \mathcal{T}^{\alpha }\left( f,g\right) \right\vert \leq \mathfrak{%
N}_{T^{\alpha }}\left\Vert f\right\Vert _{L^{2}\left( \sigma \right)
}\left\Vert g\right\Vert _{L^{2}\left( \omega \right) },
\end{equation*}%
uniformly over all quasigrids and appropriate truncations. We may assume the
two quasigrids $\Omega \mathcal{D}^{\sigma }$ and $\Omega \mathcal{D}%
^{\omega }$ are equal here, and this we will do throughout the paper,
although we sometimes continue to use the measure as a superscript on $%
\Omega \mathcal{D}$ for clarity of exposition. Roughly speaking, we analyze
the form $\mathcal{T}^{\alpha }\left( f,g\right) $ by splitting it in a
nonlinear way into three main pieces, following in part the approach in \cite%
{LaSaShUr2} and \cite{LaSaShUr3}. The first piece consists of quasicubes $I$
and $J$ that are either disjoint or of comparable side length, and this
piece is handled using the section on preliminaries of NTV type. The second
piece consists of quasicubes $I$ and $J$ that overlap, but are `far apart'
in a nonlinear way, and this piece is handled using the sections on the
Intertwining Proposition and the control of the functional quasienergy
condition by the quasienergy condition. Finally, the remaining local piece
where the overlapping quasicubes are `close' is handled by generalizing
methods of NTV as in \cite{LaSaShUr}, and then splitting the stopping form
into two sublinear stopping forms, one of which is handled using techniques
of \cite{LaSaUr2}, and the other using the stopping time and recursion of M.
Lacey \cite{Lac}. See the schematic diagram in Subsection 7.4 below.

We summarize our assumptions on the Haar supports of $f$ and $g$, and on the
dyadic quasigrids $\Omega \mathcal{D}$.

\begin{condition}[on Haar supports and quasigrids]
\label{assume shift}We suppose the quasiHaar supports of the functions $f$
and $g$ satisfy $\limfunc{supp}\widehat{f},\limfunc{supp}\widehat{g}\subset
\Omega \mathcal{D}_{\left( \mathbf{r},\varepsilon \right) -\limfunc{good}}^{%
\mathbf{\tau }}$. We also assume that $\left\vert \partial Q\right\vert
_{\sigma +\omega }=0$ for all dyadic quasicubes $Q$ in the grids $\Omega 
\mathcal{D}$ (since this property holds with probability $1$ for random
grids $\Omega \mathcal{D}$).
\end{condition}

\section{Necessity of the $\mathcal{A}_{2}^{\protect\alpha }$ conditions}

Here we consider in particular the necessity of the fractional $\mathcal{A}%
_{2}^{\alpha }$ condition (with holes) when $0\leq \alpha <n$, for the
boundedness from $L^{2}\left( \sigma \right) $ to $L^{2}\left( \omega
\right) $ (where $\sigma $ and $\omega $ may have common point masses) of
the $\alpha $-fractional Riesz vector transform $\mathbf{R}^{\alpha }$
defined by%
\begin{equation*}
\mathbf{R}^{\alpha }\left( f\sigma \right) \left( x\right) =\int_{\mathbb{R}%
^{n}}K_{j}^{\alpha }(x,y)f\left( y\right) d\sigma \left( y\right) ,\ \ \ \ \
K_{j}^{\alpha }\left( x,y\right) =\frac{x^{j}-y^{j}}{\left\vert
x-y\right\vert ^{n+1-\alpha }},
\end{equation*}%
whose kernel $K_{j}^{\alpha }\left( x,y\right) $ satisfies (\ref%
{sizeandsmoothness'}) for $0\leq \alpha <n$. More generally, necessity holds
for elliptic operators as in the next lemma. See \cite{SaShUr7} for the
easier proof in the case without holes.

\begin{lemma}
\label{necc frac A2}Suppose $0\leq \alpha <n$. Let $T^{\alpha }$ be any
collection of operators with $\alpha $-standard fractional kernel
satisfying\ the ellipticity condition (\ref{Ktalpha}), and in the case $%
\frac{n}{2}\leq \alpha <n$, we also assume the more restrictive condition (%
\ref{Ktalpha strong}). Then for $0\leq \alpha <n$ we have%
\begin{equation*}
\sqrt{\mathcal{A}_{2}^{\alpha }}\lesssim \mathfrak{N}_{\alpha }\left(
T^{\alpha }\right) .
\end{equation*}
\end{lemma}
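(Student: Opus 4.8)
The plan is to fix a quasicube $Q\in \Omega \mathcal{P}^{n}$, set $\ell =\left\vert Q\right\vert ^{1/n}\approx \ell \left( Q\right) $, let $x_{Q}$ be its quasicenter, and prove
\[
\mathcal{P}^{\alpha }\left( Q,\mathbf{1}_{Q^{c}}\sigma \right) \frac{\left\vert Q\right\vert _{\omega }}{\left\vert Q\right\vert ^{1-\frac{\alpha }{n}}}\lesssim \mathfrak{N}_{\alpha }\left( T^{\alpha }\right) ^{2}
\]
uniformly in $Q$, then take the supremum (I abbreviate $\mathfrak{N}_{\alpha }\left( T^{\alpha }\right) =\mathfrak{N}_{\alpha }$). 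Choosing a large constant $\rho =\rho \left( n,\alpha ,C_{CZ}\right) $, I would split $\mathbf{1}_{Q^{c}}\sigma =\mathbf{1}_{\rho Q\setminus Q}\sigma +\mathbf{1}_{\left( \rho Q\right) ^{c}}\sigma $ and split $\mathcal{P}^{\alpha }$ accordingly. On $\left( \rho Q\right) ^{c}$ one has $\ell +\left\vert x-x_{Q}\right\vert \approx \left\vert x-x_{Q}\right\vert $, so the tail piece is $\approx \ell ^{n-\alpha }\int_{\left( \rho Q\right) ^{c}}\left\vert x-x_{Q}\right\vert ^{2\left( \alpha -n\right) }d\sigma $; on $\rho Q\setminus Q$ one has $\ell +\left\vert x-x_{Q}\right\vert \approx \ell $, so the near piece is $\approx \ell ^{-\left( n-\alpha \right) }\sigma \left( \rho Q\setminus Q\right) $. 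Hence it suffices to prove the \emph{tail bound} $\left\vert Q\right\vert _{\omega }\int_{\left( \rho Q\right) ^{c}}\left\vert x-x_{Q}\right\vert ^{2\left( \alpha -n\right) }d\sigma \lesssim \mathfrak{N}_{\alpha }^{2}$ and the \emph{near bound} $\ell ^{-2\left( n-\alpha \right) }\sigma \left( \rho Q\setminus Q\right) \left\vert Q\right\vert _{\omega }\lesssim \mathfrak{N}_{\alpha }^{2}$.

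For the tail bound I would use only the ellipticity (\ref{Ktalpha}) and the gradient estimate in (\ref{sizeandsmoothness'}). Given $y\notin \rho Q$, choose $j=j\left( y\right) $ with $\left\vert K_{j}^{\alpha }\left( x_{Q},y\right) \right\vert \geq c\left\vert x_{Q}-y\right\vert ^{\alpha -n}$; since $\left\vert \nabla _{x}K_{j}^{\alpha }\left( x,y\right) \right\vert \lesssim \left\vert x-y\right\vert ^{\alpha -n-1}$ and $\limfunc{diam}Q\lesssim \ell \leq \frac{1}{C}\left\vert x_{Q}-y\right\vert $ once $\rho $ is large, the mean value theorem gives $\left\vert K_{j}^{\alpha }\left( x,y\right) -K_{j}^{\alpha }\left( x_{Q},y\right) \right\vert \leq \frac{c}{2}\left\vert x_{Q}-y\right\vert ^{\alpha -n}$ for all $x\in Q$, so $K_{j}^{\alpha }\left( \cdot ,y\right) $ has one sign on $Q$ and $\left\vert \int_{Q}K_{j}^{\alpha }\left( x,y\right) d\omega \left( x\right) \right\vert \geq \frac{c}{2}\left\vert x_{Q}-y\right\vert ^{\alpha -n}\left\vert Q\right\vert _{\omega }$. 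Letting $E_{j}\subset \left( \rho Q\right) ^{c}$ be the $y$'s witnessed by $j$, and testing the component $T_{j}^{\alpha }$ against $f_{j}=\limfunc{sgn}\left( \int_{Q}K_{j}^{\alpha }\left( \cdot ,y\right) d\omega \right) \left\vert y-x_{Q}\right\vert ^{\alpha -n}\mathbf{1}_{E_{j}\cap B\left( x_{Q},R\right) }$ and $g=\mathbf{1}_{Q}$ (the truncating ball so that $f_{j}\in L^{2}\left( \sigma \right) $, then $R\rightarrow \infty $) gives
\[
\tfrac{c}{2}\left\vert Q\right\vert _{\omega }\int_{E_{j}}\left\vert y-x_{Q}\right\vert ^{2\left( \alpha -n\right) }d\sigma \leq \mathfrak{N}_{\alpha }\Big( \int_{E_{j}}\left\vert y-x_{Q}\right\vert ^{2\left( \alpha -n\right) }d\sigma \Big) ^{1/2}\left\vert Q\right\vert _{\omega }^{1/2};
\]
squaring and summing the finitely many $j$ yields the tail bound, valid for all $0\leq \alpha <n$.

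For the near bound, $\rho Q\setminus Q$ is a bounded union of quasicubes $Q^{\prime }$ disjoint from $Q$ with $\ell \left( Q^{\prime }\right) \approx \ell $, so it suffices to estimate $\ell ^{-2\left( n-\alpha \right) }\sigma \left( Q^{\prime }\right) \left\vert Q\right\vert _{\omega }$ for each such pair. When $Q^{\prime }$ and $Q$ have a uniform gap $\gtrsim \ell $, this is elementary: subdivide $Q,Q^{\prime }$ once into subquasicubes of side $\approx \ell /C$, so that every resulting pair is well separated relative to its size; then (\ref{Ktalpha}) furnishes, on the narrow cone of directions between the two pieces, one component with a one-signed lower bound $\approx \ell ^{\alpha -n}$, and testing it gives $\sigma \left( P^{\prime }\right) \left\vert P\right\vert _{\omega }\lesssim \mathfrak{N}_{\alpha }^{2}\ell ^{2\left( n-\alpha \right) }$; summing the boundedly many pairs suffices. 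The remaining (face-adjacent) pairs amount exactly to the necessity of the offset $A_{2}^{\alpha }$ condition (\ref{offset A2}) for disjoint comparable quasicubes, where the direction set between $Q$ and $Q^{\prime }$ fills an open half-sphere and degenerates to in-plane directions near the shared face. Since all the pieces occurring here live on pairwise disjoint sets, no common point mass of $\sigma $ and $\omega $ is ever split between the two factors, so the holeless necessity argument of \cite{SaShUr7} applies to these truncated pairs and the only modification is the cosmetic passage to quasicubes; in that argument one decomposes the interaction region dyadically away from the shared face and extracts a one-signed main term using the kernel lower bounds, which is why weak ellipticity (\ref{Ktalpha}) suffices for $0\leq \alpha <\frac{n}{2}$ (the favourable exponent $2\left( n-\alpha \right) >n$ makes the scale sum converge) while for $\frac{n}{2}\leq \alpha <n$ one needs the strong ellipticity (\ref{Ktalpha strong}), which supplies for each quasi-$n$-ant a single linear combination $\sum_{j}\lambda _{j}^{m}K_{j}^{\alpha }$ that is one-signed and of size $\gtrsim t^{\alpha -n}$ throughout that octant and thereby handles an entire octant of directions at once.

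The main obstacle is precisely this face-adjacent offset estimate in the range $\frac{n}{2}\leq \alpha <n$: keeping the implied constant independent of $\sigma ,\omega $ when mass of the two measures accumulates near (distinct) points of the shared face, and controlling the off-diagonal interactions produced by combining many scales and directions in single test functions. The strong ellipticity hypothesis is what is used there to replace an otherwise divergent multiscale recursion by an octant-by-octant argument; everything else (the tail estimate, the gapped pairs, the reductions) goes through for the whole range $0\leq \alpha <n$ with only the weak ellipticity (\ref{Ktalpha}).
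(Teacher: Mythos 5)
Your overall architecture is sound and close to the paper's: isolate the tail of $\mathcal{P}^{\alpha }$ from the near part, reduce the near part to the offset $A_{2}^{\alpha }$ condition for disjoint comparable quasicubes, and observe that the threshold $\alpha =\frac{n}{2}$ separates a convergent multiscale (Whitney) argument from one requiring strong ellipticity. Your tail estimate is a legitimate and somewhat more direct variant of the paper's: you obtain a one-signed kernel on $Q$ for each fixed far $y$ from pointwise ellipticity plus the gradient bound in (\ref{sizeandsmoothness'}), whereas the paper works with opposing $n$-ants (thin cones in the general elliptic case) and a finite family of rotations covering $B\left( 0,4\sqrt{n}\ell \left( Q\right) \right) ^{c}$; both routes yield $\left\vert Q\right\vert _{\omega }\int_{\left( \rho Q\right) ^{c}}\left\vert y-x_{Q}\right\vert ^{2\left( \alpha -n\right) }d\sigma \lesssim \mathfrak{N}_{\alpha }^{2}$, and your identification of the exponent condition $2\left( n-\alpha \right) >n$ behind the $\alpha <\frac{n}{2}$ Whitney summation matches the paper exactly.

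The genuine gap is the step you yourself flag as ``the main obstacle'': the offset $A_{2}^{\alpha }$ bound for face-adjacent quasicubes when $\frac{n}{2}\leq \alpha <n$. Asserting that strong ellipticity ``handles an entire octant of directions at once'' does not produce the estimate, because the norm inequality only sees integrals of the kernel against the measures, and one must first locate subsets carrying a definite fraction of $\left\vert Q^{\prime }\right\vert _{\omega }$ and of $\left\vert Q\right\vert _{\sigma }$ whose difference vectors all lie in a \emph{single} quasi-$n$-ant. The paper's key device is an iterated median construction: coordinate by coordinate one chooses $\theta _{i}\in \left[ a_{i},b_{i}\right] $ bisecting the $\omega $-mass of the current slab and then selects the half favouring $\sigma $, so that after $n$ steps one obtains rectangles $G\subset Q^{\prime }$ and $H\subset Q$ with $\left\vert G\right\vert _{\omega }\geq 2^{-n}\left\vert Q^{\prime }\right\vert _{\omega }$ and $\left\vert H\right\vert _{\sigma }\geq 2^{-n}\left\vert Q\right\vert _{\sigma }$ lying in \emph{opposing} $n$-ants at the common vertex $\theta $; condition (\ref{Ktalpha strong}) then applies to the single pair $\left( G,H\right) $ and gives the offset bound in one stroke, with no multiscale recursion. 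Without this (or an equivalent device) your near bound is unproved in the range $\frac{n}{2}\leq \alpha <n$. A secondary omission: the vertex $\theta $ lies on the shared face, so one must rule out that it is a common atom of $\sigma $ and $\omega $; the paper first assumes $\sigma +\omega $ does not charge $\overline{K^{\prime }}\cap \overline{K}$ and then removes that hypothesis by a limiting argument over dilates $rQ$, only countably many of which have charged boundary. Your remark that point masses are never split between the two factors addresses the holes in $\mathcal{A}_{2}^{\alpha }$ but not this issue.
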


\begin{proof}
First we give the proof for the case when $T^{\alpha }$ is the $\alpha $%
-fractional Riesz transform $\mathbf{R}^{\alpha }$, whose kernel is $\mathbf{%
K}^{\alpha }\left( x,y\right) =\frac{x-y}{\left\vert x-y\right\vert
^{n+1-\alpha }}$ . Define the $2^{n}$ generalized $n$-ants $\mathcal{Q}_{m}$
for $m\in \left\{ -1,1\right\} ^{n}$, and their translates $\mathcal{Q}%
_{m}\left( w\right) $ for $w\in \mathbb{R}^{n}$ by 
\begin{equation*}
\mathcal{Q}_{m}=\left\{ \left( x_{1},...,x_{n}\right) :m_{k}x_{k}>0\right\}
,\ \ \ \mathcal{Q}_{m}\left( w\right) =\left\{ z:z-w\in \mathcal{Q}%
_{m}\right\} ,\ \ \ \ \ w\in \mathbb{R}^{n}.
\end{equation*}%
Fix $m\in \left\{ -1,1\right\} ^{n}$ and a quasicube $I$. For $a\in \mathbb{R%
}^{n}$ and $r>0$ let 
\begin{equation*}
s_{I}\left( x\right) =\frac{\ell \left( I\right) }{\ell \left( I\right)
+\left\vert x-\zeta _{I}\right\vert },\ \ \ \ \ f_{a,r}\left( y\right) =%
\mathbf{1}_{\mathcal{Q}_{-m}\left( a\right) \cap B\left( 0,r\right) }\left(
y\right) s_{I}\left( y\right) ^{n-\alpha },
\end{equation*}%
where $\zeta _{I}$ is the center of the cube $I$. Now%
\begin{equation*}
\ell \left( I\right) \left\vert x-y\right\vert \leq \ell \left( I\right)
\left\vert x-\zeta _{I}\right\vert +\ell \left( I\right) \left\vert \zeta
_{I}-y\right\vert \leq \left[ \ell \left( I\right) +\left\vert x-\zeta
_{I}\right\vert \right] \left[ \ell \left( I\right) +\left\vert \zeta
_{I}-y\right\vert \right]
\end{equation*}%
implies%
\begin{equation*}
\frac{1}{\left\vert x-y\right\vert }\geq \frac{1}{\ell \left( I\right) }%
s_{I}\left( x\right) s_{I}\left( y\right) ,\ \ \ \ \ x,y\in \mathbb{R}^{n}.
\end{equation*}%
Now the key observation is that with $L\zeta \equiv m\cdot \zeta $, we have%
\begin{equation*}
L\left( x-y\right) =m\cdot \left( x-y\right) \geq \left\vert x-y\right\vert
,\ \ \ \ \ x\in \mathcal{Q}_{m}\left( y\right) ,
\end{equation*}%
which yields%
\begin{equation}
L\left( \mathbf{K}^{\alpha }\left( x,y\right) \right) =\frac{L\left(
x-y\right) }{\left\vert x-y\right\vert ^{n+1-\alpha }}\geq \frac{1}{%
\left\vert x-y\right\vert ^{n-\alpha }}\geq \ell \left( I\right) ^{\alpha
-n}s_{I}\left( x\right) ^{n-\alpha }s_{I}\left( y\right) ^{n-\alpha },
\label{key separation}
\end{equation}%
provided $x\in \mathcal{Q}_{m}\left( y\right) $. Now we note that $x\in 
\mathcal{Q}_{m}\left( y\right) $ when $x\in \mathcal{Q}_{m}\left( a\right) $
and $y\in \mathcal{Q}_{-m}\left( a\right) $ to obtain that for $x\in 
\mathcal{Q}_{m}\left( a\right) $, 
\begin{eqnarray*}
L\left( T^{\alpha }\left( f_{a,r}\sigma \right) \left( x\right) \right)
&=&\int_{\mathcal{Q}_{-m}\left( a\right) \cap B\left( 0,r\right) }\frac{%
L\left( x-y\right) }{\left\vert x-y\right\vert ^{n+1-\alpha }}s_{I}\left(
y\right) d\sigma \left( y\right) \\
&\geq &\ell \left( I\right) ^{\alpha -n}s_{I}\left( x\right) ^{n-\alpha
}\int_{\mathcal{Q}_{-m}\left( a\right) \cap B\left( 0,r\right) }s_{I}\left(
y\right) ^{2n-2\alpha }d\sigma \left( y\right) .
\end{eqnarray*}

Applying $\left\vert L\zeta \right\vert \leq \sqrt{n}\left\vert \zeta
\right\vert $ and our assumed two weight inequality for the fractional Riesz
transform, we see that for $r>0$ large, 
\begin{align*}
& \ell \left( I\right) ^{2\alpha -2n}\int_{\mathcal{Q}_{m}\left( a\right)
}s_{I}\left( x\right) ^{2n-2\alpha }\left( \int_{\mathcal{Q}_{-m}\left(
a\right) \cap B\left( 0,r\right) }s_{I}\left( y\right) ^{2n-2\alpha }d\sigma
\left( y\right) \right) ^{2}d\omega \left( x\right) \\
& \leq \left\Vert LT(\sigma f_{a,r})\right\Vert _{L^{2}(\omega
)}^{2}\lesssim \mathfrak{N}_{\alpha }\left( \mathbf{R}^{\alpha }\right)
^{2}\left\Vert f_{a,r}\right\Vert _{L^{2}(\sigma )}^{2}=\mathfrak{N}_{\alpha
}\left( \mathbf{R}^{\alpha }\right) ^{2}\int_{\mathcal{Q}_{-m}\left(
a\right) \cap B\left( 0,r\right) }s_{I}\left( y\right) ^{2n-2\alpha }d\sigma
\left( y\right) .
\end{align*}

Rearranging the last inequality, and upon letting $r\rightarrow \infty $, we
obtain 
\begin{equation*}
\int_{\mathcal{Q}_{m}\left( a\right) }\frac{\ell \left( I\right) ^{n-\alpha }%
}{\left( \ell \left( I\right) +\left\vert x-\zeta _{I}\right\vert \right)
^{2n-2\alpha }}d\omega \left( x\right) \int_{\mathcal{Q}_{-m}\left( a\right)
}\frac{\ell \left( I\right) ^{n-\alpha }}{\left( \ell \left( I\right)
+\left\vert y-\zeta _{I}\right\vert \right) ^{2n-2\alpha }}d\sigma \left(
y\right) \lesssim \mathfrak{N}_{\alpha }\left( \mathbf{R}^{\alpha }\right)
^{2}.
\end{equation*}%
Note that the ranges of integration above are pairs of opposing $n$-ants.

Fix a quasicube $Q$, which without loss of generality can be taken to be
centered at the origin, $\zeta _{Q}=0$. Then choose $a=\left( 2\ell \left(
Q\right) ,2\ell \left( Q\right) \right) $ and $I=Q$ so that we have%
\begin{eqnarray*}
&&\left( \int_{\mathcal{Q}_{m}\left( a\right) }\frac{\ell \left( Q\right)
^{n-\alpha }}{\left( \ell \left( Q\right) +\left\vert x\right\vert \right)
^{2n-2\alpha }}d\omega \left( x\right) \right) \left( \ell \left( Q\right)
^{\alpha -n}\int_{Q}d\sigma \right) \\
&\leq &C_{\alpha }\int_{\mathcal{Q}_{m}\left( a\right) }\frac{\ell \left(
Q\right) ^{n-\alpha }}{\left( \ell \left( Q\right) +\left\vert x\right\vert
\right) ^{2n-2\alpha }}d\omega \left( x\right) \int_{\mathcal{Q}_{-m}\left(
a\right) }\frac{\ell \left( Q\right) ^{n-\alpha }}{\left( \ell \left(
Q\right) +\left\vert y\right\vert \right) ^{2n-2\alpha }}d\sigma \left(
y\right) \lesssim \mathfrak{N}_{\alpha }\left( \mathbf{R}^{\alpha }\right)
^{2}.
\end{eqnarray*}%
Now fix $m=\left( 1,1,...,1\right) $ and note that there is a fixed $N$
(independent of $\ell \left( Q\right) $) and a fixed collection of rotations 
$\left\{ \rho _{k}\right\} _{k=1}^{N}$, such that the rotates $\rho _{k}%
\mathcal{Q}_{m}\left( a\right) $, $1\leq k\leq N$, of the $n$-ant $\mathcal{Q%
}_{m}\left( a\right) $ cover the complement of the ball $B\left( 0,4\sqrt{n}%
\ell \left( Q\right) \right) $: 
\begin{equation*}
B\left( 0,4\sqrt{n}\ell \left( Q\right) \right) ^{c}\subset
\bigcup_{k=1}^{N}\rho _{k}\mathcal{Q}_{m}\left( a\right) .
\end{equation*}%
Then we obtain, upon applying the same argument to these rotated pairs of $n$%
-ants, 
\begin{equation}
\left( \int_{B\left( 0,4\sqrt{n}\ell \left( Q\right) \right) ^{c}}\frac{\ell
\left( Q\right) ^{n-\alpha }}{\left( \ell \left( Q\right) +\left\vert
x\right\vert \right) ^{2n-2\alpha }}d\omega \left( x\right) \right) \left(
\ell \left( Q\right) ^{\alpha -n}\int_{Q}d\sigma \right) \lesssim \mathfrak{N%
}_{\alpha }\left( \mathbf{R}^{\alpha }\right) ^{2}.  \label{prelim A2}
\end{equation}

Now we assume for the moment the offset $A_{2}^{\alpha }$ condition 
\begin{equation*}
\ell \left( Q\right) ^{2\left( \alpha -n\right) }\left( \int_{Q^{\prime
}}d\omega \right) \left( \int_{Q}d\sigma \right) \leq A_{2}^{\alpha },
\end{equation*}%
where $Q^{\prime }$ and $Q$ are neighbouring quasicubes, i.e. $\left(
Q^{\prime },Q\right) \in \Omega \mathcal{N}^{n}$. If we use this offset
inequality with $Q^{\prime }$ ranging over $3Q\setminus Q$, and then use the
separation of $B\left( 0,4\sqrt{n}\ell \left( Q\right) \right) \setminus 3Q$
and $Q$ to obtain the inequality%
\begin{equation*}
\ell \left( Q\right) ^{2\left( \alpha -n\right) }\left( \int_{B\left( 0,4%
\sqrt{n}\ell \left( Q\right) \right) \setminus 3Q}d\omega \right) \left(
\int_{Q}d\sigma \right) \lesssim A_{2}^{\alpha }\ ,
\end{equation*}%
together with (\ref{prelim A2}), we obtain%
\begin{equation*}
\left( \int_{\mathbb{R}^{n}\setminus Q}\frac{\ell \left( Q\right) ^{n-\alpha
}}{\left( \ell \left( Q\right) +\left\vert x\right\vert \right) ^{2n-2\alpha
}}d\omega \left( x\right) \right) ^{\frac{1}{2}}\left( \ell \left( Q\right)
^{\alpha -n}\int_{Q}d\sigma \right) ^{\frac{1}{2}}\lesssim \mathfrak{N}%
_{\alpha }\left( \mathbf{R}^{\alpha }\right) +\sqrt{A_{2}^{\alpha }}.
\end{equation*}%
Clearly we can reverse the roles of the measures $\omega $ and $\sigma $ and
obtain 
\begin{equation*}
\sqrt{\mathcal{A}_{2}^{\alpha ,\ast }}\lesssim \mathfrak{N}_{\alpha }\left( 
\mathbf{R}^{\alpha }\right) +\sqrt{A_{2}^{\alpha }}
\end{equation*}%
for the kernels $\mathbf{K}^{\alpha }$, $0\leq \alpha <n$.

More generally, to obtain the case when $T^{\alpha }$ is elliptic and the
offset $A_{2}^{\alpha }$ condition holds, we note that the key estimate (\ref%
{key separation}) above extends to the kernel $\sum_{j=1}^{J}\lambda
_{j}^{m}K_{j}^{\alpha }$ of $\sum_{j=1}^{J}\lambda _{j}^{m}T_{j}^{\alpha }$
in (\ref{Ktalpha strong}) if the $n$-ants above are replaced by thin cones
of sufficently small aperture, and there is in addition sufficient
separation between opposing cones, which in turn may require a larger
constant than $4\sqrt{n}$ in the choice of $Q^{\prime }$ above.

Finally, we turn to showing that the offset $A_{2}^{\alpha }$ condition is
implied by the norm inequality, i.e.%
\begin{eqnarray*}
&&\sqrt{A_{2}^{\alpha }}\equiv \sup_{\left( Q^{\prime },Q\right) \in \Omega 
\mathcal{N}^{n}}\ell \left( Q\right) ^{\alpha }\left( \frac{1}{\left\vert
Q^{\prime }\right\vert }\int_{Q^{\prime }}d\omega \right) ^{\frac{1}{2}%
}\left( \frac{1}{\left\vert Q\right\vert }\int_{Q}d\sigma \right) ^{\frac{1}{%
2}}\lesssim \mathfrak{N}_{\alpha }\left( \mathbf{R}^{\alpha }\right) ; \\
&&\text{i.e. }\left( \int_{Q^{\prime }}d\omega \right) \left(
\int_{Q}d\sigma \right) \lesssim \mathfrak{N}_{\alpha }\left( \mathbf{R}%
^{\alpha }\right) ^{2}\left\vert Q\right\vert ^{2-\frac{2\alpha }{n}},\ \ \
\ \ \left( Q^{\prime },Q\right) \in \Omega \mathcal{N}^{n}.
\end{eqnarray*}%
In the range $0\leq \alpha <\frac{n}{2}$ where we only assume (\ref{Ktalpha}%
), we adapt a corresponding argument from \cite{LaSaUr1}.

The `one weight' argument on page 211 of Stein \cite{Ste} yields the \emph{%
asymmetric} two weight $A_{2}^{\alpha }$ condition%
\begin{equation}
\left\vert Q^{\prime }\right\vert _{\omega }\left\vert Q\right\vert _{\sigma
}\leq C\mathfrak{N}_{\alpha }\left( \mathbf{R}^{\alpha }\right) \left\vert
Q\right\vert ^{2\left( 1-\frac{\alpha }{n}\right) },  \label{asym}
\end{equation}%
where $Q$ and $Q^{\prime }$ are quasicubes of equal side length $r$ and
distance $C_{0}r$ apart for some (fixed large) positive constant $C_{0}$
(for this argument we choose the unit vector $\mathbf{u}$ in (\ref{Ktalpha})
to point in the direction from $Q$ to $Q^{\prime }$). In the one weight case
treated in \cite{Ste} it is easy to obtain from this (even for a \emph{single%
} direction $\mathbf{u}$) the usual (symmetric) $A_{2}$ condition. Here we
will have to employ a different approach.

Now recall (see {Sec 2 of} \cite{Saw3} for the case of usual cubes, and the
case of half open, half closed quasicubes here is no different) that given
an open subset $\Phi $ of $\mathbb{R}^{n}$, we can choose $R\geq 3$
sufficiently large, depending only on the dimension, such that if $\left\{
Q_{j}^{k}\right\} _{j}$ are the dyadic quasicubes maximal among those dyadic
quasicubes $Q$ satisfying $RQ\subset \Phi $, then the following properties
hold:%
\begin{equation}
\left\{ 
\begin{array}{ll}
\text{(disjoint cover)} & \Phi =\bigcup_{j}Q_{j}\text{ and }Q_{j}\cap
Q_{i}=\emptyset \text{ if }i\neq j \\ 
\text{(Whitney condition)} & RQ_{j}\subset \Phi \text{ and }3RQ_{j}\cap \Phi
^{c}\neq \emptyset \text{ for all }j \\ 
\text{(finite overlap)} & \sum_{j}\chi _{3Q_{j}}\leq C\chi _{\Phi }%
\end{array}%
\right. .  \label{Whitney}
\end{equation}

So fix a pair of neighbouring quasicubes $\left( Q_{0}^{\prime
},Q_{0}\right) \in \Omega \mathcal{N}^{n}$,\ and let $\left\{ \mathsf{Q}%
_{i}\right\} _{i}$ be a Whitney decomposition into quasicubes of the set $%
\Phi \equiv \left( Q_{0}^{\prime }\times Q_{0}\right) \setminus \mathfrak{D}$
relative to the diagonal $\mathfrak{D}$ in $\mathbb{R}^{n}\times \mathbb{R}%
^{n}$. Of course, there are no common point masses of $\omega $ in $%
Q_{0}^{\prime }$ and $\sigma $ in $Q_{0}$ since the quasicubes $%
Q_{0}^{\prime }$ and $Q_{0}$ are disjoint. Note that if $\mathsf{Q}%
_{i}=Q_{i}^{\prime }\times Q_{i}$, then (\ref{asym}) can be written%
\begin{equation}
\left\vert \mathsf{Q}_{i}\right\vert _{\omega \times \sigma }\leq C\mathfrak{%
N}_{\alpha }\left( \mathbf{R}^{\alpha }\right) \left\vert \mathsf{Q}%
_{i}\right\vert ^{1-\frac{\alpha }{n}},  \label{asym'}
\end{equation}%
where $\omega \times \sigma $ denotes product measure on $\mathbb{R}%
^{n}\times \mathbb{R}^{n}$. We choose $R$ sufficiently large in the Whitney
decomposition (\ref{Whitney}), depending on $C_{0}$, such that (\ref{asym'})
holds for all the Whitney quasicubes $\mathsf{Q}_{i}$. We have $%
\sum_{i}\left\vert \mathsf{Q}_{i}\right\vert =\left\vert Q^{\prime }\times
Q\right\vert =\left\vert Q\right\vert ^{2}$.

Moreover, if $\mathsf{R}=Q^{\prime }\times Q$ is a rectangle in $\mathbb{R}%
^{n}\times \mathbb{R}^{n}$ (i.e. $Q^{\prime },Q$ are quasicubes in $\mathbb{R%
}^{n}$), and if $\mathsf{R}=\overset{\cdot }{\cup }_{i}\mathsf{R}_{i}$ is a
finite disjoint union of rectangles $\mathsf{R}_{\alpha }$, then by
additivity of the product measure $\omega \times \sigma $, 
\begin{equation*}
\left\vert \mathsf{R}\right\vert _{\omega \times \sigma }=\sum_{i}\left\vert 
\mathsf{R}_{i}\right\vert _{\omega \times \sigma }.
\end{equation*}

Let $\mathsf{Q}_{0}=Q_{0}^{\prime }\times Q_{0}$ and set 
\begin{equation*}
\Lambda \equiv \left\{ \mathsf{Q}=Q^{\prime }\times Q:\mathsf{Q}\subset 
\mathsf{Q}_{0},\ell \left( Q\right) =\ell \left( Q^{\prime }\right) \approx
C_{0}^{-1}\limfunc{qdist}\left( Q,Q^{\prime }\right) \text{ and (\ref{asym})
holds}\right\} .
\end{equation*}%
Divide $\mathsf{Q}_{0}$ into $2n\times 2n=4n^{2}$ congruent subquasicubes $%
\mathsf{Q}_{0}^{1},...,\mathsf{Q}_{0}^{4^{n}}$ of side length $\frac{1}{2}$,
and set aside those $\mathsf{Q}_{0}^{j}\in \Lambda $ (those for which (\ref%
{asym}) holds) into a collection of stopping cubes $\Gamma $. Continue to
divide the remaining $\mathsf{Q}_{0}^{j}\in \Lambda $ of side length $\frac{1%
}{4}$, and again, set aside those $\mathsf{Q}_{0}^{j,i}\in \Phi $ into $%
\Gamma $, and continue subdividing those that remain. We continue with such
subdivisions for $N$ generations so that all the cubes \emph{not} set aside
into $\Gamma $ have side length $2^{-N}$ . The important property these
latter cubes have is that they all lie within distance $r2^{-N}$ of the
diagonal $\mathfrak{D}=\left\{ \left( x,x\right) :\left( x,x\right) \in
Q_{0}^{\prime }\times Q_{0}\right\} $ in $\mathsf{Q}_{0}=Q_{0}^{\prime
}\times Q_{0}$ since (\ref{asym}) holds for all pairs of cubes $Q^{\prime }$
and $Q$ of equal side length $r$ having distance at least $C_{0}r$ apart.
Enumerate the cubes in $\Gamma $ as $\left\{ \mathsf{Q}_{i}\right\} _{i}$
and those remaining that are not in $\Gamma $ as $\left\{ \mathsf{P}%
_{j}\right\} _{j}$. Thus we have the pairwise disjoint decomposition%
\begin{equation*}
\mathsf{Q}_{0}=\left( \dbigcup\limits_{i}\mathsf{Q}_{i}\right) \dbigcup
\left( \dbigcup\limits_{j}\mathsf{P}_{j}\right) .
\end{equation*}%
The countable additivity of the product measure $\omega \times \sigma $
shows that%
\begin{equation*}
\left\vert \mathsf{Q}_{0}\right\vert _{\omega \times \sigma
}=\sum_{i}\left\vert \mathsf{Q}_{i}\right\vert _{\omega \times \sigma
}+\sum_{j}\left\vert \mathsf{P}_{j}\right\vert _{\omega \times \sigma }\ .
\end{equation*}

Now we have%
\begin{equation*}
\sum_{i}\left\vert \mathsf{Q}_{i}\right\vert _{\omega \times \sigma
}\lesssim \sum_{i}\mathfrak{N}_{\alpha }\left( \mathbf{R}^{\alpha }\right)
^{2}\left\vert \mathsf{Q}_{i}\right\vert ^{1-\frac{\alpha }{n}},
\end{equation*}%
and 
\begin{eqnarray*}
\sum_{i}\left\vert \mathsf{Q}_{i}\right\vert ^{1-\frac{\alpha }{n}}
&=&\sum_{k\in \mathbb{Z}:\ 2^{k}\leq \ell \left( Q_{0}\right) }\sum_{i:\
\ell \left( Q_{i}\right) =2^{k}}\left( 2^{2nk}\right) ^{1-\frac{\alpha }{n}%
}\approx \sum_{k\in \mathbb{Z}:\ 2^{k}\leq \ell \left( Q_{0}\right) }\left( 
\frac{2^{k}}{\ell \left( Q_{0}\right) }\right) ^{-n}\left( 2^{2nk}\right)
^{1-\frac{\alpha }{n}}\ \ \ \text{(Whitney)} \\
&=&\ell \left( Q_{0}\right) ^{n}\sum_{k\in \mathbb{Z}:\ 2^{k}\leq \ell
\left( Q_{0}\right) }2^{nk\left( -1+2-\frac{2\alpha }{n}\right) }\leq
C_{\alpha }\ell \left( Q_{0}\right) ^{n}\ell \left( Q_{0}\right) ^{n\left( 1-%
\frac{2\alpha }{n}\right) }=C_{\alpha }\left\vert Q_{0}\times
Q_{0}\right\vert ^{2-\frac{2\alpha }{n}}=C_{\alpha }\left\vert \mathsf{Q}%
_{0}\right\vert ^{1-\frac{\alpha }{n}},
\end{eqnarray*}%
provided $0\leq \alpha <\frac{n}{2}$. Using that the side length of $\mathsf{%
P}_{j}=P_{j}\times P_{j}^{\prime }$ is $2^{-N}$ and $dist\left( \mathsf{P}%
_{j},\mathfrak{D}\right) \leq C_{r}2^{-N}$, we have the following limit,%
\begin{equation*}
\sum_{j}\left\vert \mathsf{P}_{j}\right\vert _{\omega \times \sigma
}=\left\vert \dbigcup\limits_{j}\mathsf{P}_{j}\right\vert _{\omega \times
\sigma }\rightarrow 0\text{ as }N\rightarrow \infty ,
\end{equation*}%
since $\dbigcup\limits_{j}\mathsf{P}_{j}$ shrinks to the empty set as $%
N\rightarrow \infty $, and since locally finite measures such as $\omega
\times \sigma $ are regular in Euclidean space. This completes the proof
that $\sqrt{A_{2}^{\alpha }}\lesssim \mathfrak{N}_{\alpha }\left( \mathbf{R}%
^{\alpha }\right) $ for the range $0\leq \alpha <\frac{n}{2}$.

Now we turn to proving $\sqrt{A_{2}^{\alpha }}\lesssim \mathfrak{N}_{\alpha
}\left( \mathbf{R}^{\alpha }\right) $ for the range $\frac{n}{2}\leq \alpha
<n$, where we assume the stronger ellipticity condition (\ref{Ktalpha strong}%
). So fix a pair of neighbouring quasicubes $\left( K^{\prime },K\right) \in
\Omega \mathcal{N}^{n}$, and assume that $\sigma +\omega $ doesn't charge
the intersection $\overline{K^{\prime }}\cap \overline{K}$ of the closures
of $K^{\prime }$ and $K$. It will be convenient to replace $n$ by $n+1$, i.e
to introduce an additional dimension, and work with the preimages $Q^{\prime
}=\Omega ^{-1}K^{\prime }$ and $Q=\Omega ^{-1}K$ that are usual cubes, and
with the corresponding pullbacks $\widetilde{\omega }=m_{1}\times \Omega
^{\ast }\omega $ and $\widetilde{\sigma }=m_{1}\times \Omega ^{\ast }\sigma $
of the measures $\omega $ and $\sigma $ where $m_{1}$ is Lebesgue measure on
the line. We may also assume that 
\begin{equation*}
Q^{\prime }=\left[ -1,0\right) \times \dprod\limits_{i=1}^{n}Q_{i},\ \ \ \ \
Q=\left[ 0,1\right) \times \dprod\limits_{i=1}^{n}Q_{i}.
\end{equation*}%
where $Q_{i}=\left[ a_{i},b_{i}\right] $ for $1\leq i\leq n$ (since the
other cases are handled in similar fashion). It is important to note that we
are considering the intervals $Q_{i}$ here to be closed, and we will track
this difference as we proceed.

Choose $\theta _{1}\in \left[ a_{1},b_{1}\right] $ so that both 
\begin{equation*}
\left\vert \left[ -1,0\right) \times \left[ a_{1},\theta _{1}\right] \times
\dprod\limits_{i=2}^{n}Q_{i}\right\vert _{\widetilde{\omega }},\ \ \
\left\vert \left[ -1,0\right) \times \left[ \theta _{1},b_{1}\right] \times
\dprod\limits_{i=2}^{n}Q_{i}\right\vert _{\widetilde{\omega }}\geq \frac{1}{2%
}\left\vert Q^{\prime }\right\vert _{\widetilde{\omega }}.
\end{equation*}%
Now denote the two intervals $\left[ a_{1},\theta _{1}\right] $ and $\left[
\theta _{1},b_{1}\right] $ by $\left[ a_{1}^{\ast },b_{1}^{\ast }\right] $
and $\left[ a_{1}^{\ast \ast },b_{1}^{\ast \ast }\right] $ where the order
is chosen so that 
\begin{equation*}
\left\vert \left[ 0,1\right) \times \left[ a_{1}^{\ast },b_{1}^{\ast }\right]
\times \dprod\limits_{i=2}^{n}Q_{i}\right\vert _{\widetilde{\sigma }}\leq
\left\vert \left[ 0,1\right) \times \left[ a_{1}^{\ast \ast },b_{1}^{\ast
\ast }\right] \times \dprod\limits_{i=2}^{n}Q_{i}\right\vert _{\widetilde{%
\sigma }}.
\end{equation*}%
Then we have both%
\begin{equation*}
\left\vert \left[ -1,0\right) \times \left[ a_{1}^{\ast },b_{1}^{\ast }%
\right] \times \dprod\limits_{i=2}^{n}Q_{i}\right\vert _{\widetilde{\omega }%
}\geq \frac{1}{2}\left\vert Q\right\vert _{\widetilde{\omega }}\text{ and }%
\left\vert \left[ 0,1\right) \times \left[ a_{1}^{\ast \ast },b_{1}^{\ast
\ast }\right] \times \dprod\limits_{i=2}^{n}Q_{i}\right\vert _{\widetilde{%
\sigma }}\geq \frac{1}{2}\left\vert Q\right\vert _{\widetilde{\sigma }}\ .
\end{equation*}%
Now choose $\theta _{2}\in \left[ a_{2},b_{2}\right] $ so that both%
\begin{equation*}
\left\vert \left[ -1,0\right) \times \left[ a_{1}^{\ast },b_{1}^{\ast }%
\right] \times \left[ a_{2},\theta _{2}\right] \times
\dprod\limits_{i=3}^{n}Q_{i}\right\vert _{\widetilde{\omega }},\ \ \
\left\vert \left[ -1,0\right) \times \left[ a_{1}^{\ast },b_{1}^{\ast }%
\right] \times \left[ \theta _{2},b_{2}\right] \times
\dprod\limits_{i=3}^{n}Q_{i}\right\vert _{\widetilde{\omega }}\geq \frac{1}{4%
}\left\vert Q\right\vert _{\widetilde{\omega }},
\end{equation*}%
and denote the two intervals $\left[ a_{2},\theta _{2}\right] $ and $\left[
\theta _{2},b_{2}\right] $ by $\left[ a_{2}^{\ast },b_{2}^{\ast }\right] $
and $\left[ a_{2}^{\ast \ast },b_{2}^{\ast \ast }\right] $ where the order
is chosen so that%
\begin{equation*}
\left[ 0,1\right) \times \left\vert \left[ a_{1}^{\ast \ast },b_{1}^{\ast
\ast }\right] \times \left[ a_{2}^{\ast },b_{2}^{\ast }\right] \times
\dprod\limits_{i=2}^{n}Q_{i}\right\vert _{\widetilde{\sigma }}\leq
\left\vert \left[ 0,1\right) \times \left[ a_{1}^{\ast \ast },b_{1}^{\ast
\ast }\right] \times \left[ a_{2}^{\ast \ast },b_{2}^{\ast \ast }\right]
\times \dprod\limits_{i=2}^{n}Q_{i}\right\vert _{\widetilde{\sigma }}.
\end{equation*}%
Then we have both%
\begin{eqnarray*}
\left\vert \left[ -1,0\right) \times \left[ a_{1}^{\ast },b_{1}^{\ast }%
\right] \times \left[ a_{2}^{\ast },b_{2}^{\ast }\right] \times
\dprod\limits_{i=3}^{n}Q_{i}\right\vert _{\widetilde{\omega }} &\geq &\frac{1%
}{4}\left\vert Q\right\vert _{\widetilde{\omega }}\ , \\
\left\vert \left[ 0,1\right) \times \left[ a_{1}^{\ast \ast },b_{1}^{\ast
\ast }\right] \times \left[ a_{2}^{\ast \ast },b_{2}^{\ast \ast }\right]
\times \dprod\limits_{i=3}^{n}Q_{i}\right\vert _{\widetilde{\sigma }} &\geq &%
\frac{1}{4}\left\vert Q\right\vert _{\widetilde{\sigma }}\ ,
\end{eqnarray*}%
and continuing in this way we end up with two rectangles,%
\begin{eqnarray*}
G &\equiv &\left[ -1,0\right) \times \left[ a_{1}^{\ast },b_{1}^{\ast }%
\right] \times \left[ a_{2}^{\ast },b_{2}^{\ast }\right] \times ...\left[
a_{n}^{\ast },b_{n}^{\ast }\right] , \\
H &\equiv &\left[ 0,1\right) \times \left[ a_{1}^{\ast \ast },b_{1}^{\ast
\ast }\right] \times \left[ a_{2}^{\ast \ast },b_{2}^{\ast \ast }\right]
\times ...\left[ a_{n}^{\ast \ast },b_{n}^{\ast \ast }\right] ,
\end{eqnarray*}%
that satisfy%
\begin{eqnarray*}
\left\vert G\right\vert _{\widetilde{\omega }} &=&\left\vert \left[
-1,0\right) \times \left[ a_{1}^{\ast },b_{1}^{\ast }\right] \times \left[
a_{2}^{\ast },b_{2}^{\ast }\right] \times ...\left[ a_{n}^{\ast
},b_{n}^{\ast }\right] \right\vert _{\widetilde{\omega }}\geq \frac{1}{2^{n}}%
\left\vert Q\right\vert _{\widetilde{\omega }}, \\
\left\vert H\right\vert _{\widetilde{\sigma }} &=&\left\vert \left[
0,1\right) \times \left[ a_{1}^{\ast \ast },b_{1}^{\ast \ast }\right] \times %
\left[ a_{2}^{\ast \ast },b_{2}^{\ast \ast }\right] \times ...\left[
a_{n}^{\ast \ast },b_{n}^{\ast \ast }\right] \right\vert _{\widetilde{\sigma 
}}\geq \frac{1}{2^{n}}\left\vert Q\right\vert _{\widetilde{\sigma }}.
\end{eqnarray*}

However, the quasirectangles $\Omega G$ and $\Omega H$ lie in opposing quasi-%
$n$-ants at the vertex $\Omega \theta =\Omega \left( \theta _{1},\theta
_{2},...,\theta _{n}\right) $, and so we can apply (\ref{Ktalpha strong}) to
obtain that for $x\in \Omega G$,%
\begin{equation*}
\left\vert \sum_{j=1}^{J}\lambda _{j}^{m}T_{j}^{\alpha }\left( \mathbf{1}%
_{\Omega H}\sigma \right) \left( x\right) \right\vert =\left\vert
\int_{\Omega H}\sum_{j=1}^{J}\lambda _{j}^{m}K_{j}^{\alpha }\left(
x,y\right) d\sigma \left( y\right) \right\vert \gtrsim \int_{\Omega
H}\left\vert x-y\right\vert ^{\alpha -n}d\sigma \left( y\right) \gtrsim
\left\vert \Omega Q\right\vert ^{\frac{\alpha }{n}-1}\left\vert \Omega
H\right\vert _{\sigma }.
\end{equation*}%
For the inequality above, we need to know that the distinguished point $%
\Omega \theta $ is not a common point mass of $\sigma $ and $\omega $, but
this follows from our assumption that $\sigma +\omega $ doesn't charge the
intersection $\overline{K^{\prime }}\cap \overline{K}$ of the closures of $%
K^{\prime }$ and $K$. Then from the norm inequality we get%
\begin{eqnarray*}
\left\vert \Omega G\right\vert _{\omega }\left( \left\vert \Omega
Q\right\vert ^{\frac{\alpha }{n}-1}\left\vert \Omega H\right\vert _{\sigma
}\right) ^{2} &\lesssim &\int_{G}\left\vert \sum_{j=1}^{J}\lambda
_{j}^{m}T_{j}^{\alpha }\left( \mathbf{1}_{\Omega H}\sigma \right)
\right\vert ^{2}d\omega \\
&\lesssim &\mathfrak{N}_{\sum_{j=1}^{J}\lambda _{j}^{m}T_{j}^{\alpha
}}^{2}\int \mathbf{1}_{\Omega H}^{2}d\sigma =\mathfrak{N}_{\sum_{j=1}^{J}%
\lambda _{j}^{m}T_{j}^{\alpha }}^{2}\left\vert \Omega H\right\vert _{\sigma
},
\end{eqnarray*}%
from which we deduce that%
\begin{eqnarray*}
\left\vert \Omega Q\right\vert ^{2\left( \frac{\alpha }{n}-1\right)
}\left\vert \Omega Q^{\prime }\right\vert _{\omega }\left\vert \Omega
Q\right\vert _{\sigma } &\lesssim &2^{2n}\left\vert \Omega Q\right\vert
^{2\left( \frac{\alpha }{n}-1\right) }\left\vert \Omega G\right\vert
_{\omega }\left\vert \Omega H\right\vert _{\sigma }\lesssim 2^{2n}\mathfrak{N%
}_{\sum_{j=1}^{J}\lambda _{j}^{m}T_{j}^{\alpha }}^{2}; \\
\left\vert K\right\vert ^{2\left( \frac{\alpha }{n}-1\right) }\left\vert
K^{\prime }\right\vert _{\omega }\left\vert K\right\vert _{\sigma }
&\lesssim &2^{2n}\mathfrak{N}_{\sum_{j=1}^{J}\lambda _{j}^{m}T_{j}^{\alpha
}}^{2}\ ,
\end{eqnarray*}%
and hence%
\begin{equation*}
A_{2}^{\alpha }\lesssim 2^{2n}\mathfrak{N}_{\sum_{j=1}^{J}\lambda
_{j}^{m}T_{j}^{\alpha }}^{2}\ .
\end{equation*}

Thus we have obtained the offset $A_{2}^{\alpha }$ condition for pairs $%
\left( K^{\prime },K\right) \in \Omega \mathcal{N}^{n}$ such that $\sigma
+\omega $ doesn't charge the intersection $\overline{K^{\prime }}\cap 
\overline{K}$ of the closures of $K^{\prime }$ and $K$. From this and the
argument at the beginning of this proof, we obtain the one-tailed $\mathcal{A%
}_{2}^{\alpha }$ conditions. Indeed, we note that $\left\vert \partial
\left( rQ\right) \right\vert _{\sigma +\omega }>0$ for only a countable
number of dilates $r>1$, and so a limiting argument applies. This completes
the proof of Lemma \ref{necc frac A2}.
\end{proof}

\section{Monotonicity Lemma and Energy lemma}

The Monotonicity Lemma below will be used to prove the Energy Lemma, which
is then used in several places in the proof of Theorem \ref{T1 theorem}. The
formulation of the Monotonicity Lemma with $m=2$ for cubes is due to M.
Lacey and B. Wick \cite{LaWi}, and corrects that used in early versions of
our paper \cite{SaShUr5}.

\subsection{The Monotonicity Lemma}

For $0\leq \alpha <n$ and $m\in \mathbb{R}_{+}$, we recall the $m$-weighted
fractional Poisson integral%
\begin{equation*}
\mathrm{P}_{m}^{\alpha }\left( J,\mu \right) \equiv \int_{\mathbb{R}^{n}}%
\frac{\left\vert J\right\vert ^{\frac{m}{n}}}{\left( \left\vert J\right\vert
^{\frac{1}{n}}+\left\vert y-c_{J}\right\vert \right) ^{n+m-\alpha }}d\mu
\left( y\right) ,
\end{equation*}%
where $\mathrm{P}_{1}^{\alpha }\left( J,\mu \right) =\mathrm{P}^{\alpha
}\left( J,\mu \right) $ is the standard Poisson integral. The next lemma
holds for quasicubes and common point masses with the same proof as in \cite%
{SaShUr7}.

\begin{lemma}[Monotonicity]
\label{mono}Suppose that$\ I$ and $J$ are quasicubes in $\mathbb{R}^{n}$
such that $J\subset 2J\subset I$, and that $\mu $ is a signed measure on $%
\mathbb{R}^{n}$ supported outside $I$. Finally suppose that $T^{\alpha }$ is
a standard $\alpha $-fractional singular integral on $\mathbb{R}^{n}$ with $%
0<\alpha <n$. Then we have the estimate%
\begin{equation}
\left\Vert \bigtriangleup _{J}^{\omega }T^{\alpha }\mu \right\Vert
_{L^{2}\left( \omega \right) }\lesssim \Phi ^{\alpha }\left( J,\left\vert
\mu \right\vert \right) ,  \label{estimate}
\end{equation}%
where for a positive measure $\nu $,%
\begin{eqnarray*}
\Phi ^{\alpha }\left( J,\nu \right) ^{2} &\equiv &\left( \frac{\mathrm{P}%
^{\alpha }\left( J,\nu \right) }{\left\vert J\right\vert ^{\frac{1}{n}}}%
\right) ^{2}\left\Vert \bigtriangleup _{J}^{\omega }\mathbf{x}\right\Vert
_{L^{2}\left( \omega \right) }^{2}+\left( \frac{\mathrm{P}_{1+\delta
}^{\alpha }\left( J,\nu \right) }{\left\vert J\right\vert ^{\frac{1}{n}}}%
\right) ^{2}\left\Vert \mathbf{x}-\mathbf{m}_{J}\right\Vert _{L^{2}\left( 
\mathbf{1}_{J}\omega \right) }^{2}\ , \\
\mathbf{m}_{J} &\equiv &\mathbb{E}_{J}^{\omega }\mathbf{x}=\frac{1}{%
\left\vert J\right\vert _{\omega }}\int_{J}\mathbf{x}d\omega .
\end{eqnarray*}
\end{lemma}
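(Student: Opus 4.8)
The plan is to estimate $\left\Vert \bigtriangleup _{J}^{\omega }T^{\alpha }\mu \right\Vert _{L^{2}\left( \omega \right) }$ by writing out the Haar projection and Taylor-expanding the kernel $K^{\alpha }(\cdot ,y)$ around the center $c_{J}$ of $J$. First I would fix a quasiHaar function $h_{J}^{\omega ,a}$, $a\in \Gamma _{n}$, and note that since $h_{J}^{\omega ,a}$ has $\omega $-mean zero on $J$, for any constant $c$ we may write
\begin{equation*}
\left\langle T^{\alpha }\mu ,h_{J}^{\omega ,a}\right\rangle _{\omega }=\int_{J}\left( \int_{\mathbb{R}^{n}}\left[ K^{\alpha }\left( x,y\right) -c\right] d\mu \left( y\right) \right) h_{J}^{\omega ,a}\left( x\right) d\omega \left( x\right) ,
\end{equation*}
and more usefully we can subtract the first-order Taylor polynomial of $x\mapsto K^{\alpha }(x,y)$ at $x=c_{J}$. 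Since $h_{J}^{\omega ,a}$ integrates to zero, the constant term $K^{\alpha }(c_{J},y)$ drops out, leaving the linear term $\nabla _{x}K^{\alpha }(c_{J},y)\cdot (x-c_{J})$ and a second-order remainder. This is exactly the point where order of smoothness $1+\delta $ (not merely $\delta $) is needed, as flagged in the text after (\ref{sizeandsmoothness'}).

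The key steps, in order: (1) Split $\left\langle T^{\alpha }\mu ,h_{J}^{\omega ,a}\right\rangle _{\omega }$ into a \emph{linear} piece $\int_{\mathbb{R}^{n}}\nabla _{x}K^{\alpha }(c_{J},y)\,d\mu (y)\cdot \int_{J}(x-c_{J})h_{J}^{\omega ,a}(x)\,d\omega (x)$ and a \emph{remainder} piece involving $\left\vert \nabla _{x}K^{\alpha }(x,y)-\nabla _{x}K^{\alpha }(c_{J},y)\right\vert$. (2) For the linear piece, bound $\left\vert \nabla _{x}K^{\alpha }(c_{J},y)\right\vert \lesssim C_{CZ}\left\vert c_{J}-y\right\vert ^{\alpha -n-1}$ using (\ref{sizeandsmoothness'}); since $\mu $ is supported outside $I\supset 2J$ and $y$ ranges outside $I$, we have $\left\vert c_{J}-y\right\vert \approx \left\vert J\right\vert ^{1/n}+\left\vert y-c_{J}\right\vert$, so integrating against $d\left\vert \mu \right\vert$ produces precisely $\left\vert J\right\vert ^{-1/n}\,\mathrm{P}^{\alpha }(J,\left\vert \mu \right\vert )$ (the exponent $n+1-\alpha$ matching the definition of $\mathrm{P}^{\alpha }$ after adjusting the $\left\vert J\right\vert ^{1/n}$ normalization). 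The remaining factor $\int_{J}(x-c_{J})h_{J}^{\omega ,a}(x)\,d\omega (x)$ is controlled by $\left\Vert \mathbf{1}_{J}(\mathbf{x}-c_{J})\right\Vert _{L^{2}(\omega )}$, and summing over $a\in \Gamma _{n}$ and recognizing $\sum _{a}\left\vert \left\langle \cdot ,h_{J}^{\omega ,a}\right\rangle \right\vert ^{2}$ as $\left\Vert \bigtriangleup _{J}^{\omega }\mathbf{x}\right\Vert _{L^{2}(\omega )}^{2}$ (translation invariance of the Haar projection of a linear function lets us replace $\mathbf{x}-c_{J}$ by $\mathbf{x}$) gives the first term of $\Phi ^{\alpha }(J,\nu )^{2}$. (3) For the remainder piece, apply the Hölder smoothness estimate in (\ref{sizeandsmoothness'}): for $x\in J$ and $y\notin I$, $\left\vert x-c_{J}\right\vert /\left\vert c_{J}-y\right\vert \lesssim \left\vert J\right\vert ^{1/n}/\left\vert c_{J}-y\right\vert \leq \tfrac12$, so $\left\vert \nabla _{x}K^{\alpha }(x,y)-\nabla _{x}K^{\alpha }(c_{J},y)\right\vert \lesssim (\left\vert J\right\vert ^{1/n}/\left\vert c_{J}-y\right\vert )^{\delta }\left\vert c_{J}-y\right\vert ^{\alpha -n-1}$; after multiplying by $\left\vert x-c_{J}\right\vert$ and integrating in $y$ against $\left\vert \mu \right\vert$ this yields $\left\vert J\right\vert ^{\delta /n}\left\vert J\right\vert ^{-1/n}\,\mathrm{P}_{1+\delta }^{\alpha }(J,\left\vert \mu \right\vert )$ up to the $\left\vert J\right\vert ^{m/n}$ normalization built into $\mathrm{P}_{m}^{\alpha }$, and the residual $\left\vert x-c_{J}\right\vert$-weight against $d\omega$ produces the factor $\left\Vert \mathbf{x}-\mathbf{m}_{J}\right\Vert _{L^{2}(\mathbf{1}_{J}\omega )}$ after again using the mean-zero property to recenter at $\mathbf{m}_{J}=\mathbb{E}_{J}^{\omega }\mathbf{x}$ rather than $c_{J}$. (4) Combine (2) and (3) via the triangle inequality in $L^{2}(\omega )$ and take $\nu =\left\vert \mu \right\vert$.

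The main obstacle I anticipate is bookkeeping the normalizations carefully: matching the powers of $\left\vert J\right\vert ^{1/n}$ that appear from Taylor expansion ($\left\vert x-c_{J}\right\vert \lesssim \left\vert J\right\vert ^{1/n}$) against the definitions of $\mathrm{P}^{\alpha }$ and $\mathrm{P}_{1+\delta }^{\alpha }$ (which carry $\left\vert J\right\vert ^{1/n}$ and $\left\vert J\right\vert ^{(1+\delta )/n}$ in the numerator respectively), so that the division by $\left\vert J\right\vert ^{1/n}$ in the statement of $\Phi ^{\alpha }$ comes out exactly right. The analytically substantive point — and the reason for the $1+\delta$ smoothness hypothesis rather than $\delta$ — is step (3): a first-order Taylor expansion of the kernel is needed so that the \emph{difference} of gradients, not the gradient itself, governs the second term, which is what makes $\mathrm{P}_{1+\delta }^{\alpha }$ rather than $\mathrm{P}^{\alpha }$ appear and gives the extra decay $\left\vert J\right\vert ^{\delta /n}$ that is crucial for later summation over scales. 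Since the text explicitly states this lemma "holds for quasicubes and common point masses with the same proof as in \cite{SaShUr7}," the quasicube structure enters only through the comparability $\left\vert J\right\vert ^{1/n}\approx \ell(J)$ and plays no essential role in the estimates themselves.
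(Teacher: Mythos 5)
Your plan is the standard proof of the Monotonicity Lemma and coincides with the argument the paper imports from \cite{SaShUr7} (following \cite{LaWi}): first-order Taylor expansion of $K^{\alpha }\left( \cdot ,y\right) $, the $\omega $-mean-zero property of $h_{J}^{\omega ,a}$ to kill the constant term, the gradient size bound for the linear term (yielding $\mathrm{P}^{\alpha }$ against $\left\Vert \bigtriangleup _{J}^{\omega }\mathbf{x}\right\Vert _{L^{2}\left( \omega \right) }$), and the H\"{o}lder continuity of $\nabla K^{\alpha }$ for the remainder (yielding $\mathrm{P}_{1+\delta }^{\alpha }$ against the second moment). The one correction needed is in your step (3): the remainder is not linear in $x$, so the mean-zero property cannot be used \emph{a posteriori} to replace $\left\Vert \mathbf{1}_{J}\left( \mathbf{x}-c_{J}\right) \right\Vert _{L^{2}\left( \omega \right) }$ by the smaller quantity $\left\Vert \mathbf{x}-\mathbf{m}_{J}\right\Vert _{L^{2}\left( \mathbf{1}_{J}\omega \right) }$; instead one should base the Taylor expansion at $\mathbf{m}_{J}$ from the outset --- which is legitimate because $\mathbf{m}_{J}$ lies in the convex hull of $J$ while $\mu $ is supported outside $I\supset 2J$, so the same distance comparabilities hold --- and this changes nothing in the linear term (again by mean zero) while producing the correct centering in the remainder.
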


\subsection{The Energy Lemma}

Suppose now we are given a subset $\mathcal{H}$ of the dyadic quasigrid $%
\Omega \mathcal{D}^{\omega }$. Let $\mathsf{P}_{\mathcal{H}}^{\omega
}=\sum_{J\in \mathcal{H}}\bigtriangleup _{J}^{\omega }$ be the corresponding 
$\omega $-quasiHaar projection. We define $\mathcal{H}^{\ast }\equiv
\dbigcup\limits_{J\in \mathcal{H}}\left\{ J^{\prime }\in \Omega \mathcal{D}%
^{\omega }:J^{\prime }\subset J\right\} $. The next lemma also holds for
quasicubes and common point masses with the same proof as in \cite{SaShUr7}.

\begin{lemma}[\textbf{Energy Lemma}]
\label{ener}Let $J\ $be a quasicube in $\Omega \mathcal{D}^{\omega }$. Let $%
\Psi _{J}$ be an $L^{2}\left( \omega \right) $ function supported in $J$ and
with $\omega $-integral zero, and denote its quasiHaar support by $\mathcal{H%
}=\limfunc{supp}\widehat{\Psi _{J}}$. Let $\nu $ be a positive measure
supported in $\mathbb{R}^{n}\setminus \gamma J$ with $\gamma \geq 2$, and
for each $J^{\prime }\in \mathcal{H}$, let $d\nu _{J^{\prime }}=\varphi
_{J^{\prime }}d\nu $ with $\left\vert \varphi _{J^{\prime }}\right\vert \leq
1$. Let $T^{\alpha }$ be a standard $\alpha $-fractional singular integral
operator with $0\leq \alpha <n$. Then with $\delta ^{\prime }=\frac{\delta }{%
2}$ we have%
\begin{eqnarray*}
\left\vert \sum_{J^{\prime }\in \mathcal{H}}\left\langle T^{\alpha }\left(
\nu _{J^{\prime }}\right) ,\bigtriangleup _{J^{\prime }}^{\omega }\Psi
_{J}\right\rangle _{\omega }\right\vert &\lesssim &\left\Vert \Psi
_{J}\right\Vert _{L^{2}\left( \omega \right) }\left( \frac{\mathrm{P}%
^{\alpha }\left( J,\nu \right) }{\left\vert J\right\vert ^{\frac{1}{n}}}%
\right) \left\Vert \mathsf{P}_{\mathcal{H}}^{\omega }\mathbf{x}\right\Vert
_{L^{2}\left( \omega \right) } \\
&&+\left\Vert \Psi _{J}\right\Vert _{L^{2}\left( \omega \right) }\frac{1}{%
\gamma ^{\delta ^{\prime }}}\left( \frac{\mathrm{P}_{1+\delta ^{\prime
}}^{\alpha }\left( J,\nu \right) }{\left\vert J\right\vert ^{\frac{1}{n}}}%
\right) \left\Vert \mathsf{P}_{\mathcal{H}^{\ast }}^{\omega }\mathbf{x}%
\right\Vert _{L^{2}\left( \omega \right) } \\
&&\lesssim \left\Vert \Psi _{J}\right\Vert _{L^{2}\left( \omega \right)
}\left( \frac{\mathrm{P}^{\alpha }\left( J,\nu \right) }{\left\vert
J\right\vert ^{\frac{1}{n}}}\right) \left\Vert \mathsf{P}_{\mathcal{H}^{\ast
}}^{\omega }\mathbf{x}\right\Vert _{L^{2}\left( \omega \right) },
\end{eqnarray*}%
and in particular the `pivotal' bound%
\begin{equation*}
\left\vert \left\langle T^{\alpha }\left( \nu \right) ,\Psi
_{J}\right\rangle _{\omega }\right\vert \leq C\left\Vert \Psi
_{J}\right\Vert _{L^{2}\left( \omega \right) }\mathrm{P}^{\alpha }\left(
J,\left\vert \nu \right\vert \right) \sqrt{\left\vert J\right\vert _{\omega }%
}\ .
\end{equation*}
\end{lemma}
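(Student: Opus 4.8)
The plan is to bound each summand via the Monotonicity Lemma \ref{mono} and then sum over the quasiHaar support $\mathcal{H}$ scale by scale. Since $\bigtriangleup_{J^{\prime}}^{\omega}$ is an orthogonal (hence self-adjoint and idempotent) projection, $\left\langle T^{\alpha}\left(\nu_{J^{\prime}}\right),\bigtriangleup_{J^{\prime}}^{\omega}\Psi_{J}\right\rangle _{\omega}=\left\langle \bigtriangleup_{J^{\prime}}^{\omega}T^{\alpha}\left(\nu_{J^{\prime}}\right),\bigtriangleup_{J^{\prime}}^{\omega}\Psi_{J}\right\rangle _{\omega}$, and Cauchy--Schwarz in each summand reduces matters to controlling $\Vert \bigtriangleup_{J^{\prime}}^{\omega}T^{\alpha}\left(\nu_{J^{\prime}}\right)\Vert _{L^{2}\left(\omega\right)}$. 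Because $J^{\prime}\subset J$ forces $2J^{\prime}\subset 2J\subset\gamma J$ (for $\gamma\geq 2$), the signed measure $\nu_{J^{\prime}}=\varphi_{J^{\prime}}\nu$, being supported off $\gamma J$, is supported off $I:=2J$, and $J^{\prime}\subset 2J^{\prime}\subset I$; moreover $\left\vert \nu_{J^{\prime}}\right\vert \leq\nu$. Hence Lemma \ref{mono} gives $\Vert \bigtriangleup_{J^{\prime}}^{\omega}T^{\alpha}\left(\nu_{J^{\prime}}\right)\Vert _{L^{2}\left(\omega\right)}\lesssim\Phi^{\alpha}\left(J^{\prime},\nu\right)$ (the case $\alpha=0$ being handled identically). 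A second Cauchy--Schwarz together with $\sum_{J^{\prime}\in\mathcal{H}}\Vert \bigtriangleup_{J^{\prime}}^{\omega}\Psi_{J}\Vert _{L^{2}\left(\omega\right)}^{2}=\Vert \Psi_{J}\Vert _{L^{2}\left(\omega\right)}^{2}$ then yields
\begin{equation*}
\left\vert \sum_{J^{\prime}\in\mathcal{H}}\left\langle T^{\alpha}\left(\nu_{J^{\prime}}\right),\bigtriangleup_{J^{\prime}}^{\omega}\Psi_{J}\right\rangle _{\omega}\right\vert \lesssim\Vert \Psi_{J}\Vert _{L^{2}\left(\omega\right)}\left(\sum_{J^{\prime}\in\mathcal{H}}\Phi^{\alpha}\left(J^{\prime},\nu\right)^{2}\right)^{1/2},
\end{equation*}
so the whole lemma rests on estimating $\sum_{J^{\prime}\in\mathcal{H}}\Phi^{\alpha}\left(J^{\prime},\nu\right)^{2}$.

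I would split $\Phi^{\alpha}\left(J^{\prime},\nu\right)^{2}$ into its two defining terms. For $y\notin\gamma J$ and $J^{\prime}\subset J$ one has the routine comparisons $\left\vert y-c_{J^{\prime}}\right\vert \approx\left\vert y-c_{J}\right\vert \approx\left\vert J\right\vert ^{\frac1n}+\left\vert y-c_{J}\right\vert$, whence $\mathrm{P}^{\alpha}\left(J^{\prime},\nu\right)/\left\vert J^{\prime}\right\vert ^{\frac1n}\approx\mathrm{P}^{\alpha}\left(J,\nu\right)/\left\vert J\right\vert ^{\frac1n}$, so the first term sums to $\approx\left(\mathrm{P}^{\alpha}\left(J,\nu\right)/\left\vert J\right\vert ^{\frac1n}\right)^{2}\sum_{J^{\prime}\in\mathcal{H}}\Vert \bigtriangleup_{J^{\prime}}^{\omega}\mathbf{x}\Vert ^{2}=\left(\mathrm{P}^{\alpha}\left(J,\nu\right)/\left\vert J\right\vert ^{\frac1n}\right)^{2}\Vert \mathsf{P}_{\mathcal{H}}^{\omega}\mathbf{x}\Vert ^{2}$. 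For the second term I would use the telescoping identity (\ref{telescope}) in the form $\Vert \mathbf{x}-\mathbf{m}_{J^{\prime}}\Vert _{L^{2}\left(\mathbf{1}_{J^{\prime}}\omega\right)}^{2}=\Vert \mathsf{P}_{J^{\prime}}^{\omega}\mathbf{x}\Vert _{L^{2}\left(\omega\right)}^{2}=\sum_{J^{\prime\prime}\in\Omega\mathcal{D},\,J^{\prime\prime}\subset J^{\prime}}\Vert \bigtriangleup_{J^{\prime\prime}}^{\omega}\mathbf{x}\Vert ^{2}$, and interchange the order of summation over the pair $J^{\prime\prime}\subset J^{\prime}$. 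For fixed $J^{\prime\prime}$ the $J^{\prime}\in\mathcal{H}$ with $J^{\prime}\supset J^{\prime\prime}$ form a tower, at most one per dyadic scale; using $\mathrm{P}_{1+\delta}^{\alpha}\left(J^{\prime},\nu\right)/\left\vert J^{\prime}\right\vert ^{\frac1n}\approx\ell\left(J^{\prime}\right)^{\delta}\int\left\vert y-c_{J}\right\vert ^{\alpha-n-1-\delta}d\nu$, the geometric series in the scale sums to a bound $\lesssim\ell\left(J\right)^{2\delta}\bigl(\int\left\vert y-c_{J}\right\vert ^{\alpha-n-1-\delta}d\nu\bigr)^{2}$ (uniformly in $J^{\prime\prime}$), and then the support condition $\left\vert y-c_{J}\right\vert \gtrsim\gamma\left\vert J\right\vert ^{\frac1n}$ together with $\delta=2\delta^{\prime}$ converts one power $\ell\left(J\right)^{\delta}$ into $\gamma^{-\delta^{\prime}}$ while lowering the Poisson index, producing $\lesssim\gamma^{-2\delta^{\prime}}\bigl(\mathrm{P}_{1+\delta^{\prime}}^{\alpha}\left(J,\nu\right)/\left\vert J\right\vert ^{\frac1n}\bigr)^{2}\Vert \mathsf{P}_{\mathcal{H}^{\ast}}^{\omega}\mathbf{x}\Vert ^{2}$. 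Adding the two terms and taking square roots gives the displayed two-term estimate. The last inequality of the lemma then follows because $\mathcal{H}\subset\mathcal{H}^{\ast}$ and, once more using that the support of $\nu$ misses $\gamma J$, $\mathrm{P}_{1+\delta^{\prime}}^{\alpha}\left(J,\nu\right)\leq\left(2/\gamma\right)^{\delta^{\prime}}\mathrm{P}^{\alpha}\left(J,\nu\right)$, so for $\gamma\geq 2$ the second term is absorbed into the first.

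For the pivotal bound I would take $\mathcal{H}=\limfunc{supp}\widehat{\Psi_{J}}$ and all $\varphi_{J^{\prime}}\equiv 1$, so that the sum collapses to $\left\langle T^{\alpha}\nu,\Psi_{J}\right\rangle _{\omega}$; since $\Psi_{J}$ is supported in $J$ with $\omega$-mean zero, every $J^{\prime}\in\mathcal{H}$ lies in $J$, hence $\mathcal{H}^{\ast}\subset\left\{ J^{\prime\prime}\in\Omega\mathcal{D}:J^{\prime\prime}\subset J\right\} $ and $\Vert \mathsf{P}_{\mathcal{H}^{\ast}}^{\omega}\mathbf{x}\Vert _{L^{2}\left(\omega\right)}\leq\Vert \mathbf{x}-\mathbb{E}_{J}^{\omega}\mathbf{x}\Vert _{L^{2}\left(\mathbf{1}_{J}\omega\right)}\leq\Vert \mathbf{x}-c_{J}\Vert _{L^{2}\left(\mathbf{1}_{J}\omega\right)}\lesssim\left\vert J\right\vert ^{\frac1n}\sqrt{\left\vert J\right\vert _{\omega}}$; inserting this into the estimate already proved gives $\left\vert \left\langle T^{\alpha}\nu,\Psi_{J}\right\rangle _{\omega}\right\vert \lesssim\Vert \Psi_{J}\Vert _{L^{2}\left(\omega\right)}\mathrm{P}^{\alpha}\left(J,\nu\right)\sqrt{\left\vert J\right\vert _{\omega}}$, and for a signed $\nu$ one applies this to $\left\vert \nu\right\vert $ with densities of modulus one. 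The main obstacle is the second term of $\sum_{J^{\prime}\in\mathcal{H}}\Phi^{\alpha}\left(J^{\prime},\nu\right)^{2}$: one must interchange the two sums over the nested pair $J^{\prime\prime}\subset J^{\prime}$, use that only one $J^{\prime}$ occurs per scale above a given $J^{\prime\prime}$ so the scale sum is geometric, and keep careful track of the exponents $\delta$ versus $\delta^{\prime}=\delta/2$ so that precisely one factor $\gamma^{-\delta^{\prime}}$ and the Poisson index $1+\delta^{\prime}$ emerge; the distance and Poisson-kernel comparisons used throughout are then routine.
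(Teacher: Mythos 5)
Your proposal is correct and follows essentially the same route as the proof the paper invokes by reference to \cite{SaShUr7}: Monotonicity Lemma plus Cauchy--Schwarz on each term, Poisson comparability for the additive `small energy' piece, and the interchange of the nested sums with a geometric series in scale, splitting $\delta=2\delta'$ to extract the factor $\gamma^{-\delta'}$ while lowering the Poisson index to $1+\delta'$. The collapse to the single-term bound via $\mathcal{H}\subset\mathcal{H}^{\ast}$ and $\mathrm{P}_{1+\delta'}^{\alpha}\lesssim\mathrm{P}^{\alpha}$, and the derivation of the pivotal bound by taking $\varphi_{J'}\equiv 1$ and estimating $\left\Vert \mathsf{P}_{\mathcal{H}^{\ast}}^{\omega}\mathbf{x}\right\Vert$ by $\left\vert J\right\vert^{1/n}\sqrt{\left\vert J\right\vert_{\omega}}$, are likewise the standard steps.
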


\begin{remark}
The first term on the right side of the energy inequality above is the `big'
Poisson integral $\mathrm{P}^{\alpha }$ times the `small' energy term $%
\left\Vert \mathsf{P}_{\mathcal{H}}^{\omega }\mathbf{x}\right\Vert
_{L^{2}\left( \omega \right) }^{2}$ that is additive in $\mathcal{H}$, while
the second term on the right is the `small' Poisson integral $\mathrm{P}%
_{1+\delta ^{\prime }}^{\alpha }$ times the `big' energy term $\left\Vert 
\mathsf{P}_{\mathcal{H}^{\ast }}^{\omega }\mathbf{x}\right\Vert
_{L^{2}\left( \omega \right) }$ that is no longer additive in $\mathcal{H}$.
The first term presents no problems in subsequent analysis due solely to the
additivity of\ the `small' energy term. It is the second term that must be
handled by special methods. For example, in the Intertwining Proposition
below, the interaction of the singular integral occurs with a pair of
quasicubes $J\subset I$ at \emph{highly separated} levels, where the
goodness of $J$ can exploit the decay $\delta ^{\prime }$ in the kernel of
the `small' Poisson integral $\mathrm{P}_{1+\delta ^{\prime }}^{\alpha }$
relative to the `big' Poisson integral $\mathrm{P}^{\alpha }$, and results
in a bound directly by the quasienergy condition. On the other hand, in the
local recursion of M. Lacey at the end of the \ paper, the separation of
levels in the pairs $J\subset I$ can be as \emph{little} as a fixed
parameter $\mathbf{\rho }$, and here we must first separate the stopping
form into two sublinear forms that involve the two estimates respectively.
The form corresponding to the smaller Poisson integral $\mathrm{P}_{1+\delta
^{\prime }}^{\alpha }$ is again handled using goodness and the decay $\delta
^{\prime }$ in the kernel, while the form corresponding to the larger
Poisson integral $\mathrm{P}^{\alpha }$ requires the stopping time and
recursion argument of M. Lacey.
\end{remark}

\section{Preliminaries of NTV type}

An important reduction of our theorem is delivered by the following two
lemmas, that in the case of one dimension are due to Nazarov, Treil and
Volberg (see \cite{NTV3} and \cite{Vol}). The proofs given there do not
extend in standard ways to higher dimensions with common point masses, and
we use the quasiweak boundedness property to handle the case of touching
quasicubes, and an application of Schur's Lemma to handle the case of
separated quasicubes. The first lemma below is Lemmas 8.1 and 8.7 in \cite%
{LaWi} but with the larger constant $\mathcal{A}_{2}^{\alpha }$ there in
place of the smaller constant $A_{2}^{\alpha }$ here. We emphasize that only
the offset $A_{2}^{\alpha }$ condition is needed with testing and weak
boundedness in these preliminary estimates.

\begin{lemma}
\label{standard delta}Suppose $T^{\alpha }$ is a standard fractional
singular integral with $0\leq \alpha <n$, and that all of the quasicubes $%
I\in \Omega \mathcal{D}^{\sigma },J\in \Omega \mathcal{D}^{\omega }$ below
are good with goodness parameters $\varepsilon $ and $\mathbf{r}$. Fix a
positive integer $\mathbf{\rho }>\mathbf{r}$. For $f\in L^{2}\left( \sigma
\right) $ and $g\in L^{2}\left( \omega \right) $ we have%
\begin{equation}
\sum_{\substack{ \left( I,J\right) \in \Omega \mathcal{D}^{\sigma }\times
\Omega \mathcal{D}^{\omega }  \\ 2^{-\mathbf{\rho }}\ell \left( I\right)
\leq \ell \left( J\right) \leq 2^{\mathbf{\rho }}\ell \left( I\right) }}%
\left\vert \left\langle T_{\sigma }^{\alpha }\left( \bigtriangleup
_{I}^{\sigma }f\right) ,\bigtriangleup _{J}^{\omega }g\right\rangle _{\omega
}\right\vert \lesssim \left( \mathfrak{T}_{\alpha }+\mathfrak{T}_{\alpha
}^{\ast }+\mathcal{WBP}_{T^{\alpha }}+\sqrt{A_{2}^{\alpha }}\right)
\left\Vert f\right\Vert _{L^{2}\left( \sigma \right) }\left\Vert
g\right\Vert _{L^{2}\left( \omega \right) }  \label{delta near}
\end{equation}%
and 
\begin{equation}
\sum_{\substack{ \left( I,J\right) \in \Omega \mathcal{D}^{\sigma }\times
\Omega \mathcal{D}^{\omega }  \\ I\cap J=\emptyset \text{ and }\frac{\ell
\left( J\right) }{\ell \left( I\right) }\notin \left[ 2^{-\mathbf{\rho }},2^{%
\mathbf{\rho }}\right] }}\left\vert \left\langle T_{\sigma }^{\alpha }\left(
\bigtriangleup _{I}^{\sigma }f\right) ,\bigtriangleup _{J}^{\omega
}g\right\rangle _{\omega }\right\vert \lesssim \sqrt{A_{2}^{\alpha }}%
\left\Vert f\right\Vert _{L^{2}\left( \sigma \right) }\left\Vert
g\right\Vert _{L^{2}\left( \omega \right) }.  \label{delta far}
\end{equation}
\end{lemma}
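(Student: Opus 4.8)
The plan is to treat the two sums separately, following the NTV template as adapted to quasicubes. For the near-diagonal sum \eqref{delta near}, where $2^{-\mathbf{\rho}}\ell(I)\le\ell(J)\le 2^{\mathbf{\rho}}\ell(I)$, I would organize the pairs $(I,J)$ according to their relative position. First, split off the pairs where $I$ and $J$ are \emph{far apart} in the sense that $\limfunc{qdist}(I,J)$ is at least a fixed multiple of $\max\{\ell(I),\ell(J)\}$: for these, since the side lengths are comparable and the quasicubes are separated, the standard size estimate \eqref{sizeandsmoothness'} on the kernel gives $\left|\left\langle T_\sigma^\alpha(\bigtriangleup_I^\sigma f),\bigtriangleup_J^\omega g\right\rangle_\omega\right|$ controlled by a Poisson-type quantity, and one sums using a Schur argument with the offset $A_2^\alpha$ condition playing the role of the weight balance; because only finitely many dyadic scales separate $I$ and $J$ (at most $2\mathbf{\rho}+1$), the geometric decay in the distance gives summability. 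Second, the pairs where $I,J$ have comparable size and are \emph{close} (touching or nearly so, i.e. one contained in a bounded dilate of the other) are finite in ``multiplicity'' for each fixed $I$, and here one invokes the quasiweak boundedness property $\mathcal{WBP}_{T^\alpha}$ together with the testing conditions $\mathfrak{T}_\alpha$, $\mathfrak{T}_\alpha^\ast$ to absorb the diagonal interaction; the point is to write $\bigtriangleup_I^\sigma f$ and $\bigtriangleup_J^\omega g$ as combinations of indicators of children, reduce to estimating $\int_Q T^\alpha(\mathbf 1_{Q'}\sigma)\,d\omega$ over neighbouring or nested quasicubes, and apply $\mathcal{WBP}$ or testing accordingly. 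Finally, Cauchy--Schwarz over the (boundedly overlapping) families of $I$'s and $J$'s, using the quasiHaar Plancherel identity $\sum_I\|\bigtriangleup_I^\sigma f\|_{L^2(\sigma)}^2=\|f\|_{L^2(\sigma)}^2$, converts the pointwise bounds into the asserted bilinear estimate.

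For the separated sum \eqref{delta far}, where $I\cap J=\emptyset$ and the ratio $\ell(J)/\ell(I)$ lies \emph{outside} $[2^{-\mathbf{\rho}},2^{\mathbf{\rho}}]$, I would use the mean-zero property of the quasiHaar functions to gain extra decay. Say $\ell(J)\le 2^{-\mathbf{\rho}}\ell(I)$ (the other case is dual). Since $\bigtriangleup_I^\sigma f$ has $\sigma$-mean zero and $J$ is disjoint from $I$, one may subtract a constant and use the gradient estimate $|\nabla K^\alpha(x,y)|\le C_{CZ}|x-y|^{\alpha-n-1}$ to obtain
\begin{equation*}
\left|\left\langle T_\sigma^\alpha(\bigtriangleup_I^\sigma f),\bigtriangleup_J^\omega g\right\rangle_\omega\right|
\lesssim \frac{\ell(J)\,\|\bigtriangleup_I^\sigma f\|_{L^1(\sigma)}\,\|\bigtriangleup_J^\omega g\|_{L^1(\omega)}}{\limfunc{qdist}(I,J)^{\,n+1-\alpha}},
\end{equation*}
and symmetrically with the roles of $I$ and $J$ reversed when $J$ is the larger one. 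The resulting double sum is then estimated by a Schur-test argument: one bounds it by $\sqrt{A_2^\alpha}\,\|f\|_{L^2(\sigma)}\|g\|_{L^2(\omega)}$ by grouping pairs $(I,J)$ according to the smallest quasicube $K$ containing both and the ``eccentricity'' $\ell(J)/\ell(I)$, summing the geometric series in the scale difference (which converges because we are outside the band $[2^{-\mathbf{\rho}},2^{\mathbf{\rho}}]$ and because of the $|x-y|^{\alpha-n-1}$ decay), and recognizing the remaining weight factors as an $A_2^\alpha$ average over neighbouring quasicubes. Here one uses that $|\partial Q|_{\sigma+\omega}=0$ (Condition \ref{assume shift}) so that boundaries contribute nothing, and that the offset $A_2^\alpha$ constant in \eqref{offset A2} already controls $\frac{|Q|_\sigma}{|Q|^{1-\alpha/n}}\frac{|Q'|_\omega}{|Q|^{1-\alpha/n}}$ for neighbours.

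The main obstacle I expect is the bookkeeping in the Schur-type summation for the separated sum, specifically making the geometric decay survive the passage to quasicubes: distances must be measured with $\limfunc{qdist}$, and one must check that the biLipschitz distortion of $\Omega$ (with constants $c\le J_\Omega\le C$) does not destroy the comparability $\limfunc{qdist}(I,J)\approx$ (Euclidean distance between the preimages), nor the nesting structure used to group pairs by their common ancestor $K$. A secondary subtlety is the treatment of common point masses in the close-but-disjoint pairs of \eqref{delta far}: one must ensure the kernel's vanishing on the diagonal (from the tangent-line truncation) keeps common point masses from ``seeing'' each other, which is exactly why the offset $A_2^\alpha$ in \eqref{offset A2} is taken over \emph{disjoint} neighbour pairs $(Q,Q')\in\Omega\mathcal N^n$ rather than over a single quasicube. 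Once these geometric comparabilities are in place, the rest is the routine NTV/Schur machinery adapted verbatim from the cube case.
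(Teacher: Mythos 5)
Your overall architecture coincides with the one the paper itself announces for this lemma (and defers to \cite{SaShUr6} for details): quasiweak boundedness for touching comparable pairs, testing for nested or coincident children, and a Schur-type summation against the offset $A_{2}^{\alpha }$ for everything separated. Two steps in your plan, however, would fail as written. First, in the long-range estimate for \eqref{delta far} with $\ell \left( J\right) \leq 2^{-\mathbf{\rho }}\ell \left( I\right) $, the factor $\ell \left( J\right) $ in your displayed numerator can only come from the $\omega $-mean-zero of $\bigtriangleup _{J}^{\omega }g$ --- the quasiHaar function on the \emph{smaller} quasicube --- combined with the gradient bound in \eqref{sizeandsmoothness'} applied in the variable ranging over $J$; you attribute it instead to the $\sigma $-mean-zero of $\bigtriangleup _{I}^{\sigma }f$, which only yields a gain of $\ell \left( I\right) /\limfunc{qdist}\left( I,J\right) $, i.e. no gain at all at the relevant scales, and the subsequent geometric series in the scale separation would not converge. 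Relatedly, you should record where the lower bound on $\limfunc{qdist}\left( I,J\right) $ comes from: since $I\cap J=\emptyset $, the dyadic quasicube $K$ with $\ell \left( K\right) =\ell \left( I\right) $ containing $J$ is distinct from $I$, and goodness gives $J\Subset _{\mathbf{r},\varepsilon }K$, whence $\limfunc{qdist}\left( I,J\right) \geq \frac{1}{2}\ell \left( J\right) ^{\varepsilon }\ell \left( I\right) ^{1-\varepsilon }$; without this the denominator in your display can vanish for $J$ abutting $I$.

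Second, and more seriously, in \eqref{delta near} you claim that for comparable-size pairs that are far apart the ``standard size estimate'' plus the offset $A_{2}^{\alpha }$ suffices, with ``geometric decay in the distance'' giving summability. It does not: grouping the quasicubes $J$ with $\ell \left( J\right) \approx \ell \left( I\right) $ lying at distance $\approx 2^{k}\ell \left( I\right) $ from $I$ and applying only the size bound $\left\vert K^{\alpha }\left( x,y\right) \right\vert \lesssim \left\vert x-y\right\vert ^{\alpha -n}$, Cauchy--Schwarz over that annulus produces the quantity $\sqrt{\left\vert I\right\vert _{\sigma }\left\vert A_{k}\right\vert _{\omega }}\left( 2^{k}\ell \left( I\right) \right) ^{\alpha -n}\lesssim \sqrt{A_{2}^{\alpha }}$ \emph{uniformly in} $k$ by \eqref{offset A2}, so the sum over the annuli diverges. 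The comparable-but-separated pairs need the same first-order cancellation as the long-range ones (mean zero of one quasiHaar function plus the $\nabla K^{\alpha }$ estimate), which supplies the extra factor $\ell \left( I\right) /\limfunc{qdist}\left( I,J\right) $ that makes the annular sum geometric. Once these two cancellation mechanisms are put in the right places, the rest of your plan --- $\mathcal{WBP}_{T^{\alpha }}$ for touching children, $\mathfrak{T}_{T^{\alpha }}$ and $\mathfrak{T}_{T^{\alpha }}^{\ast }$ for coincident or nested children, and Plancherel together with the Schur test to assemble the bilinear bound --- is the standard NTV argument the paper intends.
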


\begin{lemma}
\label{standard indicator}Suppose $T^{\alpha }$ is a standard fractional
singular integral with $0\leq \alpha <n$, that all of the quasicubes $I\in
\Omega \mathcal{D}^{\sigma },J\in \Omega \mathcal{D}^{\omega }$ below are
good, that $\mathbf{\rho }>\mathbf{r}$, that $f\in L^{2}\left( \sigma
\right) $ and $g\in L^{2}\left( \omega \right) $, that $\mathcal{F}\subset
\Omega \mathcal{D}^{\sigma }$ and $\mathcal{G}\subset \Omega \mathcal{D}%
^{\omega }$ are $\sigma $-Carleson and $\omega $-Carleson collections
respectively, i.e.,%
\begin{equation*}
\sum_{F^{\prime }\in \mathcal{F}:\ F^{\prime }\subset F}\left\vert F^{\prime
}\right\vert _{\sigma }\lesssim \left\vert F\right\vert _{\sigma },\ \ \ \ \
F\in \mathcal{F},\text{ and }\sum_{G^{\prime }\in \mathcal{G}:\ G^{\prime
}\subset G}\left\vert G^{\prime }\right\vert _{\omega }\lesssim \left\vert
G\right\vert _{\omega },\ \ \ \ \ G\in \mathcal{G},
\end{equation*}%
that there are numerical sequences $\left\{ \alpha _{\mathcal{F}}\left(
F\right) \right\} _{F\in \mathcal{F}}$ and $\left\{ \beta _{\mathcal{G}%
}\left( G\right) \right\} _{G\in \mathcal{G}}$ such that%
\begin{equation}
\sum_{F\in \mathcal{F}}\alpha _{\mathcal{F}}\left( F\right) ^{2}\left\vert
F\right\vert _{\sigma }\leq \left\Vert f\right\Vert _{L^{2}\left( \sigma
\right) }^{2}\text{ and }\sum_{G\in \mathcal{G}}\beta _{\mathcal{G}}\left(
G\right) ^{2}\left\vert G\right\vert _{\sigma }\leq \left\Vert g\right\Vert
_{L^{2}\left( \sigma \right) }^{2}\ ,  \label{qo}
\end{equation}%
and finally that for each pair of quasicubes $\left( I,J\right) \in \Omega 
\mathcal{D}^{\sigma }\times \Omega \mathcal{D}^{\omega }$, there are bounded
functions $\beta _{I,J}$ and $\gamma _{I,J}$ supported in $I\setminus 2J$
and $J\setminus 2I$ respectively, satisfying%
\begin{equation*}
\left\Vert \beta _{I,J}\right\Vert _{\infty },\left\Vert \gamma
_{I,J}\right\Vert _{\infty }\leq 1.
\end{equation*}%
Then%
\begin{eqnarray}
&&\sum_{\substack{ \left( F,J\right) \in \mathcal{F}\times \Omega \mathcal{D}%
^{\omega }  \\ F\cap J=\emptyset \text{ and }\ell \left( J\right) \leq 2^{-%
\mathbf{\rho }}\ell \left( F\right) }}\left\vert \left\langle T_{\sigma
}^{\alpha }\left( \beta _{F,J}\mathbf{1}_{F}\alpha _{\mathcal{F}}\left(
F\right) \right) ,\bigtriangleup _{J}^{\omega }g\right\rangle _{\omega
}\right\vert +\sum_{\substack{ \left( I,G\right) \in \Omega \mathcal{D}%
^{\sigma }\times \mathcal{G}  \\ I\cap G=\emptyset \text{ and }\ell \left(
I\right) \leq 2^{-\mathbf{\rho }}\ell \left( G\right) }}\left\vert
\left\langle T_{\sigma }^{\alpha }\left( \bigtriangleup _{I}^{\sigma
}f\right) ,\gamma _{I,G}\mathbf{1}_{G}\beta _{\mathcal{G}}\left( G\right)
\right\rangle _{\omega }\right\vert  \label{indicator far} \\
&&\ \ \ \ \ \ \ \ \ \ \ \ \ \ \ \ \ \ \ \ \ \ \ \ \ \ \ \ \ \ \ \ \ \ \ \ \
\ \ \ \lesssim \sqrt{A_{2}^{\alpha }}\left\Vert f\right\Vert _{L^{2}\left(
\sigma \right) }\left\Vert g\right\Vert _{L^{2}\left( \omega \right) }. 
\notag
\end{eqnarray}
\end{lemma}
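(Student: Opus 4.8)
The plan is to reduce the estimate to a single pair bound and then carry out a Schur/Cauchy--Schwarz-type summation. By the symmetry exchanging $\sigma\leftrightarrow\omega$, $f\leftrightarrow g$, $\mathcal F\leftrightarrow\mathcal G$, $\alpha_{\mathcal F}\leftrightarrow\beta_{\mathcal G}$ and $T^{\alpha}$ with its (again standard) dual, it suffices to bound the first sum in (\ref{indicator far}); the second is its exact dual. Fix $F\in\mathcal F$ and a good $J\in\Omega\mathcal D^{\omega}$ with $F\cap J=\emptyset$ and $\ell(J)\le 2^{-\mathbf{\rho}}\ell(F)$. Since $\beta_{F,J}\mathbf 1_{F}$ is supported in $F\setminus 2J\subset\mathbb R^{n}\setminus 2J$, the Energy Lemma \ref{ener} applied with $\Psi_{J}=\bigtriangleup_{J}^{\omega}g$ (so that $\mathcal H=\{J\}$), positive measure $\nu=\mathbf 1_{F\setminus 2J}\sigma$, coefficient $\varphi_{J}=\beta_{F,J}$ and $\gamma=2$, together with the crude bounds $\left\Vert\mathsf P_{J}^{\omega}\mathbf x\right\Vert_{L^{2}\left(\omega\right)},\left\Vert\bigtriangleup_{J}^{\omega}\mathbf x\right\Vert_{L^{2}\left(\omega\right)}\lesssim\left\vert J\right\vert^{1/n}\sqrt{\left\vert J\right\vert_{\omega}}$, yields the pivotal pair estimate
\begin{equation*}
\left\vert\left\langle T_{\sigma}^{\alpha}\!\left(\beta_{F,J}\mathbf 1_{F}\,\alpha_{\mathcal F}(F)\right),\bigtriangleup_{J}^{\omega}g\right\rangle_{\omega}\right\vert\ \lesssim\ \left\vert\alpha_{\mathcal F}(F)\right\vert\,\bigl\vert\widehat g(J)\bigr\vert\,\sqrt{\left\vert J\right\vert_{\omega}}\;\mathrm P^{\alpha}\!\left(J,\mathbf 1_{F}\sigma\right),
\end{equation*}
where I used $\left\vert\beta_{F,J}\right\vert\le1$ and the monotonicity of $\mathrm P^{\alpha}$ in the measure. (The Monotonicity Lemma \ref{mono} is what underlies the Energy Lemma here.)

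Next I sum these pair bounds against $\sqrt{A_{2}^{\alpha}}\left\Vert f\right\Vert_{L^{2}\left(\sigma\right)}\left\Vert g\right\Vert_{L^{2}\left(\omega\right)}$. I organise the double sum by pigeonholing: for each fixed $F$ I group the admissible $J$ by the dyadic scale $\ell(J)=2^{-j}\ell(F)$ ($j\ge\mathbf{\rho}$) and by the dyadic annulus of $J$ about $F$ (equivalently, the case $J\subset 3F\setminus F$, and the shells $\limfunc{qdist}(y,F)\approx 2^{t}\ell(F)$, $t\ge0$). On a far shell $\mathrm P^{\alpha}(J,\mathbf 1_{F}\sigma)\approx\ell(J)\,(2^{t}\ell(F))^{\alpha-n-1}\left\vert F\right\vert_{\sigma}$, and at a fixed scale inside a shell Cauchy--Schwarz gives $\sum_{J}\sqrt{\left\vert J\right\vert_{\omega}}\bigl\vert\widehat g(J)\bigr\vert\le\bigl(\sum_{J}\left\vert J\right\vert_{\omega}\bigr)^{1/2}\bigl(\sum_{J}\bigl\vert\widehat g(J)\bigr\vert^{2}\bigr)^{1/2}$, the first factor being the $\omega$-mass of the shell at that scale (the $J$'s of a fixed scale partition their parents). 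This $\omega$-mass is paired with $\left\vert F\right\vert_{\sigma}$ through the \emph{offset} $A_{2}^{\alpha}$ condition, applied to a pair of comparable-size quasicubes obtained by enlarging both $F$ and the shell piece until they are genuine neighbours (a Whitney / sibling step); in the collar case $J\subset 3F\setminus F$ one applies offset $A_{2}^{\alpha}$ directly to $(F,K)$, $K$ being the grid quasicube of side $\ell(F)$ containing $J$, the factor $2J$ in the support hypothesis keeping $F\setminus 2J$ at distance $\gtrsim\ell(J)$ from $J$ so that the innermost annulus causes no divergence. Summing the resulting geometric series in $j$ and $t$ leaves a bound of the shape $\lesssim\sqrt{A_{2}^{\alpha}}\,\sqrt{\left\vert F\right\vert_{\sigma}}$ times the $\ell^{2}$-norm of the $\widehat g(J)$ over the $J$'s relevant to $F$; a final Cauchy--Schwarz in $F$, the bounded overlap of those $J$-blocks, and the hypothesis $\sum_{F\in\mathcal F}\alpha_{\mathcal F}(F)^{2}\left\vert F\right\vert_{\sigma}\le\left\Vert f\right\Vert_{L^{2}\left(\sigma\right)}^{2}$ — this is where the $\sigma$-Carleson property of $\mathcal F$ is used — complete the first sum, and duality then gives the second.

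The main obstacle is precisely this last summation: carrying it out with only the \emph{offset} $A_{2}^{\alpha}$ constant, rather than the stronger one-tailed $\mathcal A_{2}^{\alpha}$ or the energy constants. The standard Poisson kernel $\mathrm P^{\alpha}$ decays rather slowly (the critical regime being $\alpha$ near $n-1$) while the number of small quasicubes in a dyadic annulus grows with the annulus, so the bookkeeping is tight: one must be economical — extracting only $\sqrt{A_{2}^{\alpha}}$, never the full power, from each comparable-size pairing — and must arrange the Cauchy--Schwarz steps so that the square masses $\sum_{J}\bigl\vert\widehat g(J)\bigr\vert^{2}\le\left\Vert g\right\Vert_{L^{2}\left(\omega\right)}^{2}$ and $\sum_{F}\alpha_{\mathcal F}(F)^{2}\left\vert F\right\vert_{\sigma}\le\left\Vert f\right\Vert_{L^{2}\left(\sigma\right)}^{2}$ are each consumed exactly once. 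Everything else — the pivotal pair bound, the reduction by duality, and the appeal to the Carleson hypotheses — is routine once this accounting is correctly set up.
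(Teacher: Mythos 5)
Your opening move is fine: the pivotal bound from the Energy Lemma reduces the first sum to $\sum_{F,J}\left\vert \alpha _{\mathcal{F}}\left( F\right) \right\vert \left\vert \widehat{g}\left( J\right) \right\vert \sqrt{\left\vert J\right\vert _{\omega }}\,\mathrm{P}^{\alpha }\left( J,\mathbf{1}_{F}\sigma \right) $, and the paper does dispose of this by a Schur--Lemma summation against the offset $A_{2}^{\alpha }$ (with details deferred to \cite{SaShUr6}). But the summation you sketch does not close, for two concrete reasons. First, the ``bounded overlap of those $J$-blocks'' is false: a fixed $J$ with $\ell \left( J\right) =2^{-j}\ell \left( F\right) $ lying at quasidistance $\approx 2^{t}\ell \left( F\right) $ from $F$ belongs to the corresponding block of roughly $2^{tn}$ distinct cubes $F$ of that sidelength. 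Your per-block estimate yields a factor $2^{-j}2^{-t}$ (the shell gains $2^{t\left( \alpha -n-1\right) }$ from the kernel but gives back $2^{t\left( n-\alpha \right) }$ when the offset $A_{2}^{\alpha }$ is applied to the enlarged neighbour pair), so the dual sum over $F$ for fixed $J$ behaves like $\sum_{t}2^{-t}\cdot 2^{tn}$ and diverges for $n\geq 2$. The repair is exactly what Schur's Lemma with well-chosen asymmetric weights provides: on the $F$-side one must add the $\sigma $-masses of the (pairwise disjoint at each fixed scale) $F$'s rather than count them, and summing those masses over all scales is precisely where the $\sigma $-Carleson hypothesis on $\mathcal{F}$ is consumed. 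That hypothesis is distinct from (\ref{qo}), with which you have conflated it; as written, your argument never uses it.

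Second, in the collar regime ($J$ disjoint from $F$ but near $\partial F$) you attribute convergence of the innermost annulus to the support lying in $F\setminus 2J$. That separation of order $\ell \left( J\right) $ controls a single pair but produces no decay in the scale ratio: the $\approx 2^{j\left( n-1\right) }$ quasicubes $J$ of sidelength $2^{-j}\ell \left( F\right) $ abutting $F$ would contribute, at each scale $j$, about $\sqrt{A_{2}^{\alpha }}$ times the square root of the $\sigma $-mass of a $2^{-j}\ell \left( F\right) $-collar of $F$ times $\left( \sum \left\vert \widehat{g}\left( J\right) \right\vert ^{2}\right) ^{1/2}$, and the sum over $j$ of these collar masses need not converge. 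What actually supplies the geometric series in $j$ that you assume is the goodness of $J$, a hypothesis you never invoke: a good $J$ with $J\cap F=\emptyset $ and $\ell \left( J\right) \leq 2^{-\mathbf{r}}\ell \left( F\right) $ satisfies $\limfunc{qdist}\left( J,F\right) \geq \frac{1}{2}\ell \left( J\right) ^{\varepsilon }\ell \left( F\right) ^{1-\varepsilon }$ (apply the deep embedding to the same-size sibling of $F$ containing $J$), which converts the Poisson factor into a gain of $2^{-j\left( 1-\varepsilon \left( n+1-\alpha \right) \right) }$ for $\varepsilon $ small. So the skeleton is right, but the two mechanisms that make the sum converge --- the Schur weights together with the Carleson condition, and the goodness-induced separation --- are either missing or replaced by claims that fail.
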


See \cite{SaShUr6} for complete details of the proofs when common point
masses are permitted.

\begin{remark}
If $\mathcal{F}$ and $\mathcal{G}$ are $\sigma $-Carleson and $\omega $%
-Carleson collections respectively, and if $\alpha _{\mathcal{F}}\left(
F\right) =\mathbb{E}_{F}^{\sigma }\left\vert f\right\vert $ and $\beta _{%
\mathcal{G}}\left( G\right) =\mathbb{E}_{G}^{\omega }\left\vert g\right\vert 
$, then the `quasi' orthogonality condition (\ref{qo}) holds (here `quasi'
has a different meaning than quasi), and this special case of Lemma \ref%
{standard indicator} serves as a basic example.
\end{remark}

\begin{remark}
Lemmas \ref{standard delta} and \ref{standard indicator} differ mainly in
that an orthogonal collection of quasiHaar projections is replaced by a
`quasi' orthogonal collection of indicators $\left\{ \mathbf{1}_{F}\alpha _{%
\mathcal{F}}\left( F\right) \right\} _{F\in \mathcal{F}}$. More precisely,
the main difference between (\ref{delta far}) and (\ref{indicator far}) is
that a quasiHaar projection $\bigtriangleup _{I}^{\sigma }f$ or $%
\bigtriangleup _{J}^{\omega }g$ has been replaced with a constant multiple
of an indicator $\mathbf{1}_{F}\alpha _{\mathcal{F}}\left( F\right) $ or $%
\mathbf{1}_{G}\beta _{\mathcal{G}}\left( G\right) $, and in addition, a
bounded function is permitted to multiply the indicator of the quasicube
having larger sidelength.
\end{remark}

\section{Corona Decompositions and splittings}

We will use two different corona constructions, namely a Calder\'{o}%
n-Zygmund decomposition and an energy decomposition of NTV type, to reduce
matters to the stopping form, the main part of which is handled by Lacey's
recursion argument. We will then iterate these coronas into a double corona.
We first recall our basic setup. For convenience in notation we will
sometimes suppress the dependence on $\alpha $ in our nonlinear forms, but
will retain it in the operators, Poisson integrals and constants. We will
assume that the good/bad quasicube machinery of Nazarov, Treil and Volberg 
\cite{Vol} is in force here as in \cite{SaShUr7}. Let $\Omega \mathcal{D}%
^{\sigma }=\Omega \mathcal{D}^{\omega }$ be an $\left( \mathbf{r}%
,\varepsilon \right) $-good quasigrid on $\mathbb{R}^{n}$, and let $\left\{
h_{I}^{\sigma ,a}\right\} _{I\in \Omega \mathcal{D}^{\sigma },\ a\in \Gamma
_{n}}$ and $\left\{ h_{J}^{\omega ,b}\right\} _{J\in \Omega \mathcal{D}%
^{\omega },\ b\in \Gamma _{n}}$ be corresponding quasiHaar bases as
described above, so that%
\begin{equation*}
f=\sum_{I\in \Omega \mathcal{D}^{\sigma }}\bigtriangleup _{I}^{\sigma }f%
\text{ and }g=\sum_{J\in \Omega \mathcal{D}^{\omega }\text{ }}\bigtriangleup
_{J}^{\omega }g\ ,
\end{equation*}%
where the quasiHaar projections $\bigtriangleup _{I}^{\sigma }f$ and $%
\bigtriangleup _{J}^{\omega }g$ vanish if the quasicubes $I$ and $J$ are not
good. Recall that we must show the bilinear inequality (\ref{star}), i.e. $%
\left\vert \mathcal{T}^{\alpha }\left( f,g\right) \right\vert \leq \mathfrak{%
N}_{T^{\alpha }}\left\Vert f\right\Vert _{L^{2}\left( \sigma \right)
}\left\Vert g\right\Vert _{L^{2}\left( \omega \right) }$.

We now proceed for the remainder of this section to follow the development
in \cite{SaShUr7}, pointing out just the highlights, and referring to \cite%
{SaShUr7} for proofs, when no changes are required by the inclusion of
quasicubes and common point masses.

\subsection{The Calder\'{o}n-Zygmund corona}

We now introduce a stopping tree $\mathcal{F}$ for the function $f\in
L^{2}\left( \sigma \right) $. Let $\mathcal{F}$ be a collection of Calder%
\'{o}n-Zygmund stopping quasicubes for $f$, and let $\Omega \mathcal{D}%
^{\sigma }=\dbigcup\limits_{F\in \mathcal{F}}\mathcal{C}_{F}$ be the
associated corona decomposition of the dyadic quasigrid $\Omega \mathcal{D}%
^{\sigma }$. See below and also \cite{SaShUr7} for the standard definitions
of corona, etc.

For a quasicube $I\in \Omega \mathcal{D}^{\sigma }$ let $\pi _{\Omega 
\mathcal{D}^{\sigma }}I$ be the $\Omega \mathcal{D}^{\sigma }$-parent of $I$
in the quasigrid $\Omega \mathcal{D}^{\sigma }$, and let $\pi _{\mathcal{F}%
}I $ be the smallest member of $\mathcal{F}$ that contains $I$. For $%
F,F^{\prime }\in \mathcal{F}$, we say that $F^{\prime }$ is an $\mathcal{F}$%
-child of $F$ if $\pi _{\mathcal{F}}\left( \pi _{\Omega \mathcal{D}^{\sigma
}}F^{\prime }\right) =F$ (it could be that $F=\pi _{\Omega \mathcal{D}%
^{\sigma }}F^{\prime }$), and we denote by $\mathfrak{C}_{\mathcal{F}}\left(
F\right) $ the set of $\mathcal{F}$-children of $F$. For $F\in \mathcal{F}$,
define the projection $\mathsf{P}_{\mathcal{C}_{F}}^{\sigma }$ onto the
linear span of the quasiHaar functions $\left\{ h_{I}^{\sigma ,a}\right\}
_{I\in \mathcal{C}_{F},\ a\in \Gamma _{n}}$ by%
\begin{equation*}
\mathsf{P}_{\mathcal{C}_{F}}^{\sigma }f=\sum_{I\in \mathcal{C}%
_{F}}\bigtriangleup _{I}^{\sigma }f=\sum_{I\in \mathcal{C}_{F},\ a\in \Gamma
_{n}}\left\langle f,h_{I}^{\sigma ,a}\right\rangle _{\sigma }h_{I}^{\sigma
,a}.
\end{equation*}%
The standard properties of these projections are%
\begin{equation*}
f=\sum_{F\in \mathcal{F}}\mathsf{P}_{\mathcal{C}_{F}}^{\sigma }f,\ \ \ \ \
\int \left( \mathsf{P}_{\mathcal{C}_{F}}^{\sigma }f\right) \sigma =0,\ \ \ \
\ \left\Vert f\right\Vert _{L^{2}\left( \sigma \right) }^{2}=\sum_{F\in 
\mathcal{F}}\left\Vert \mathsf{P}_{\mathcal{C}_{F}}^{\sigma }f\right\Vert
_{L^{2}\left( \sigma \right) }^{2}.
\end{equation*}

\subsection{The energy corona}

We also impose a quasienergy corona decomposition as in \cite{NTV3} and \cite%
{LaSaUr2}.

\begin{definition}
\label{def energy corona 3}Given a quasicube $S_{0}$, define $\mathcal{S}%
\left( S_{0}\right) $ to be the maximal subquasicubes $I\subset S_{0}$ such
that%
\begin{equation}
\sum_{J\in \mathcal{M}_{\mathbf{\tau }-\limfunc{deep}}\left( I\right)
}\left( \frac{\mathrm{P}^{\alpha }\left( J,\mathbf{1}_{S_{0}\setminus \gamma
J}\sigma \right) }{\left\vert J\right\vert ^{\frac{1}{n}}}\right)
^{2}\left\Vert \mathsf{P}_{J}^{\limfunc{subgood},\omega }\mathbf{x}%
\right\Vert _{L^{2}\left( \omega \right) }^{2}\geq C_{\limfunc{energy}}\left[
\left( \mathcal{E}_{\alpha }^{\limfunc{strong}}\right) ^{2}+A_{2}^{\alpha
}+A_{2}^{\alpha ,\limfunc{punct}}\right] \ \left\vert I\right\vert _{\sigma
},  \label{def stop 3}
\end{equation}%
where $\mathcal{E}_{\alpha }^{\limfunc{strong}}$ is the constant in the
strong quasienergy condition defined in Definition \ref{def strong
quasienergy}, and $C_{\limfunc{energy}}$ is a sufficiently large positive
constant depending only on $\mathbf{\tau }\geq \mathbf{r},n$ and $\alpha $.
Then define the $\sigma $-energy stopping quasicubes of $S_{0}$ to be the
collection 
\begin{equation*}
\mathcal{S}=\left\{ S_{0}\right\} \cup \dbigcup\limits_{n=0}^{\infty }%
\mathcal{S}_{n}
\end{equation*}%
where $\mathcal{S}_{0}=\mathcal{S}\left( S_{0}\right) $ and $\mathcal{S}%
_{n+1}=\dbigcup\limits_{S\in \mathcal{S}_{n}}\mathcal{S}\left( S\right) $
for $n\geq 0$.
\end{definition}

From the quasienergy condition in Definition \ref{def strong quasienergy} we
obtain the $\sigma $-Carleson estimate%
\begin{equation}
\sum_{S\in \mathcal{S}:\ S\subset I}\left\vert S\right\vert _{\sigma }\leq
2\left\vert I\right\vert _{\sigma },\ \ \ \ \ I\in \Omega \mathcal{D}%
^{\sigma }.  \label{sigma Carleson 3}
\end{equation}

Finally, we record the reason for introducing quasienergy stopping times. If 
\begin{equation}
X_{\alpha }\left( \mathcal{C}_{S}\right) ^{2}\equiv \sup_{I\in \mathcal{C}%
_{S}}\frac{1}{\left\vert I\right\vert _{\sigma }}\sum_{J\in \mathcal{M}_{%
\mathbf{\tau }-\limfunc{deep}}\left( I\right) }\left( \frac{\mathrm{P}%
^{\alpha }\left( J,\mathbf{1}_{S\setminus \gamma J}\sigma \right) }{%
\left\vert J\right\vert ^{\frac{1}{n}}}\right) ^{2}\left\Vert \mathsf{P}%
_{J}^{\limfunc{subgood},\omega }\mathbf{x}\right\Vert _{L^{2}\left( \omega
\right) }^{2}  \label{def stopping energy 3}
\end{equation}%
is (the square of) the $\alpha $\emph{-stopping quasienergy} of the weight
pair $\left( \sigma ,\omega \right) $ with respect to the corona $\mathcal{C}%
_{S}$, then we have the \emph{stopping quasienergy bounds}%
\begin{equation}
X_{\alpha }\left( \mathcal{C}_{S}\right) \leq \sqrt{C_{\limfunc{energy}}}%
\sqrt{\left( \mathcal{E}_{\alpha }^{\limfunc{strong}}\right)
^{2}+A_{2}^{\alpha }+A_{2}^{\alpha ,\limfunc{punct}}},\ \ \ \ \ S\in 
\mathcal{S},  \label{def stopping bounds 3}
\end{equation}%
where $A_{2}^{\alpha }+A_{2}^{\alpha ,\limfunc{punct}}$ and the the strong
quasienergy constant $\mathcal{E}_{\alpha }^{\limfunc{strong}}$ are
controlled by assumption.

\subsection{General stopping data}

It is useful to extend our notion of corona decomposition to more general
stopping data. Our general definition of stopping data will use a positive
constant $C_{0}\geq 4$.

\begin{definition}
\label{general stopping data}Suppose we are given a positive constant $%
C_{0}\geq 4$, a subset $\mathcal{F}$ of the dyadic quasigrid $\Omega 
\mathcal{D}^{\sigma }$ (called the stopping times), and a corresponding
sequence $\alpha _{\mathcal{F}}\equiv \left\{ \alpha _{\mathcal{F}}\left(
F\right) \right\} _{F\in \mathcal{F}}$ of nonnegative numbers $\alpha _{%
\mathcal{F}}\left( F\right) \geq 0$ (called the stopping data). Let $\left( 
\mathcal{F},\prec ,\pi _{\mathcal{F}}\right) $ be the tree structure on $%
\mathcal{F}$ inherited from $\Omega \mathcal{D}^{\sigma }$, and for each $%
F\in \mathcal{F}$ denote by $\mathcal{C}_{F}=\left\{ I\in \Omega \mathcal{D}%
^{\sigma }:\pi _{\mathcal{F}}I=F\right\} $ the corona associated with $F$: 
\begin{equation*}
\mathcal{C}_{F}=\left\{ I\in \Omega \mathcal{D}^{\sigma }:I\subset F\text{
and }I\not\subset F^{\prime }\text{ for any }F^{\prime }\prec F\right\} .
\end{equation*}%
We say the triple $\left( C_{0},\mathcal{F},\alpha _{\mathcal{F}}\right) $
constitutes \emph{stopping data} for a function $f\in L_{loc}^{1}\left(
\sigma \right) $ if

\begin{enumerate}
\item $\mathbb{E}_{I}^{\sigma }\left\vert f\right\vert \leq \alpha _{%
\mathcal{F}}\left( F\right) $ for all $I\in \mathcal{C}_{F}$ and $F\in 
\mathcal{F}$,

\item $\sum_{F^{\prime }\preceq F}\left\vert F^{\prime }\right\vert _{\sigma
}\leq C_{0}\left\vert F\right\vert _{\sigma }$ for all $F\in \mathcal{F}$,

\item $\sum_{F\in \mathcal{F}}\alpha _{\mathcal{F}}\left( F\right)
^{2}\left\vert F\right\vert _{\sigma }\mathbf{\leq }C_{0}^{2}\left\Vert
f\right\Vert _{L^{2}\left( \sigma \right) }^{2}$,

\item $\alpha _{\mathcal{F}}\left( F\right) \leq \alpha _{\mathcal{F}}\left(
F^{\prime }\right) $ whenever $F^{\prime },F\in \mathcal{F}$ with $F^{\prime
}\subset F$.
\end{enumerate}
\end{definition}

\begin{definition}
If $\left( C_{0},\mathcal{F},\alpha _{\mathcal{F}}\right) $ constitutes
(general) \emph{stopping data} for a function $f\in L_{loc}^{1}\left( \sigma
\right) $, we refer to the othogonal decomposition%
\begin{equation*}
f=\sum_{F\in \mathcal{F}}\mathsf{P}_{\mathcal{C}_{F}}^{\sigma }f;\ \ \ \ \ 
\mathsf{P}_{\mathcal{C}_{F}}^{\sigma }f\equiv \sum_{I\in \mathcal{C}%
_{F}}\bigtriangleup _{I}^{\sigma }f,
\end{equation*}%
as the (general) \emph{corona decomposition} of $f$ associated with the
stopping times $\mathcal{F}$.
\end{definition}

Property (1) says that $\alpha _{\mathcal{F}}\left( F\right) $ bounds the
quasiaverages of $f$ in the corona $\mathcal{C}_{F}$, and property (2) says
that the quasicubes at the tops of the coronas satisfy a Carleson condition
relative to the weight $\sigma $. Note that a standard `maximal quasicube'
argument extends the Carleson condition in property (2) to the inequality%
\begin{equation}
\sum_{F^{\prime }\in \mathcal{F}:\ F^{\prime }\subset A}\left\vert F^{\prime
}\right\vert _{\sigma }\leq C_{0}\left\vert A\right\vert _{\sigma }\text{
for all open sets }A\subset \mathbb{R}^{n}.  \label{Car ext}
\end{equation}%
Property (3) is the `quasi' orthogonality condition that says the sequence
of functions $\left\{ \alpha _{\mathcal{F}}\left( F\right) \mathbf{1}%
_{F}\right\} _{F\in \mathcal{F}}$ is in the vector-valued space $L^{2}\left(
\ell ^{2};\sigma \right) $, and property (4) says that the control on
stopping data is nondecreasing on the stopping tree $\mathcal{F}$. We
emphasize that we are \emph{not} assuming in this definition the stronger
property that there is $C>1$ such that $\alpha _{\mathcal{F}}\left(
F^{\prime }\right) >C\alpha _{\mathcal{F}}\left( F\right) $ whenever $%
F^{\prime },F\in \mathcal{F}$ with $F^{\prime }\subsetneqq F$. Instead, the
properties (2) and (3) substitute for this lack. Of course the stronger
property \emph{does} hold for the familiar \emph{Calder\'{o}n-Zygmund}
stopping data determined by the following requirements for $C>1$,%
\begin{equation*}
\mathbb{E}_{F^{\prime }}^{\sigma }\left\vert f\right\vert >C\mathbb{E}%
_{F}^{\sigma }\left\vert f\right\vert \text{ whenever }F^{\prime },F\in 
\mathcal{F}\text{ with }F^{\prime }\subsetneqq F,\ \ \ \ \ \mathbb{E}%
_{I}^{\sigma }\left\vert f\right\vert \leq C\mathbb{E}_{F}^{\sigma
}\left\vert f\right\vert \text{ for }I\in \mathcal{C}_{F},
\end{equation*}%
which are themselves sufficiently strong to automatically force properties
(2) and (3) with $\alpha _{\mathcal{F}}\left( F\right) =\mathbb{E}%
_{F}^{\sigma }\left\vert f\right\vert $.

We have the following useful consequence of (2) and (3) that says the
sequence $\left\{ \alpha _{\mathcal{F}}\left( F\right) \mathbf{1}%
_{F}\right\} _{F\in \mathcal{F}}$ has a \emph{`quasi' orthogonal} property
relative to $f$ with a constant $C_{0}^{\prime }$ depending only on $C_{0}$:%
\begin{equation}
\left\Vert \sum_{F\in \mathcal{F}}\alpha _{\mathcal{F}}\left( F\right) 
\mathbf{1}_{F}\right\Vert _{L^{2}\left( \sigma \right) }^{2}\leq
C_{0}^{\prime }\left\Vert f\right\Vert _{L^{2}\left( \sigma \right) }^{2}.
\label{q orth}
\end{equation}%
$\ $

We will use a construction that permits \emph{iteration} of general corona
decompositions.

\begin{lemma}
\label{iterating coronas}Suppose that $\left( C_{0},\mathcal{F},\alpha _{%
\mathcal{F}}\right) $ constitutes \emph{stopping data} for a function $f\in
L_{loc}^{1}\left( \sigma \right) $, and that for each $F\in \mathcal{F}$, $%
\left( C_{0},\mathcal{K}\left( F\right) ,\alpha _{\mathcal{K}\left( F\right)
}\right) $ constitutes \emph{stopping data} for the corona projection $%
\mathsf{P}_{\mathcal{C}_{F}}^{\sigma }f$, where in addition $F\in \mathcal{K}%
\left( F\right) $. There is a positive constant $C_{1}$, depending only on $%
C_{0}$, such that if%
\begin{eqnarray*}
\mathcal{K}^{\ast }\left( F\right) &\equiv &\left\{ K\in \mathcal{K}\left(
F\right) \cap \mathcal{C}_{F}:\alpha _{\mathcal{K}\left( F\right) }\left(
K\right) \geq \alpha _{\mathcal{F}}\left( F\right) \right\} \\
\mathcal{K} &\equiv &\mathop{\displaystyle \bigcup }\limits_{F\in \mathcal{F}%
}\mathcal{K}^{\ast }\left( F\right) \cup \left\{ F\right\} , \\
\alpha _{\mathcal{K}}\left( K\right) &\equiv &%
\begin{array}{ccc}
\alpha _{\mathcal{K}\left( F\right) }\left( K\right) & \text{ for } & K\in 
\mathcal{K}^{\ast }\left( F\right) \setminus \left\{ F\right\} \\ 
\max \left\{ \alpha _{\mathcal{F}}\left( F\right) ,\alpha _{\mathcal{K}%
\left( F\right) }\left( F\right) \right\} & \text{ for } & K=F%
\end{array}%
,\ \ \ \ \ \text{for }F\in \mathcal{F},
\end{eqnarray*}%
the triple $\left( C_{1},\mathcal{K},\alpha _{\mathcal{K}}\right) $
constitutes \emph{stopping data} for $f$. We refer to the collection of
quasicubes $\mathcal{K}$ as the \emph{iterated} stopping times, and to the
orthogonal decomposition $f=\sum_{K\in \mathcal{K}}P_{\mathcal{C}_{K}^{%
\mathcal{K}}}f$ as the \emph{iterated} corona decomposition of $f$, where 
\begin{equation*}
\mathcal{C}_{K}^{\mathcal{K}}\equiv \left\{ I\in \Omega \mathcal{D}:I\subset
K\text{ and }I\not\subset K^{\prime }\text{ for }K^{\prime }\prec _{\mathcal{%
K}}K\right\} .
\end{equation*}
\end{lemma}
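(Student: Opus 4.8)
The strategy is to verify each of the four defining properties of stopping data for the triple $\left( C_{1},\mathcal{K},\alpha _{\mathcal{K}}\right)$ directly from the corresponding properties of $\left( C_{0},\mathcal{F},\alpha _{\mathcal{F}}\right)$ and of the family $\left\{ \left( C_{0},\mathcal{K}\left( F\right) ,\alpha _{\mathcal{K}\left( F\right) }\right) \right\}_{F\in\mathcal{F}}$. The key structural observation is that the iterated corona $\mathcal{C}_{K}^{\mathcal{K}}$ of a quasicube $K\in\mathcal{K}^{\ast}(F)\setminus\{F\}$ coincides with the $\mathcal{K}(F)$-corona $\mathcal{C}_{K}^{\mathcal{K}(F)}$ intersected with $\mathcal{C}_{F}$, and that for $K=F$ the iterated corona $\mathcal{C}_{F}^{\mathcal{K}}$ is the union of those $\mathcal{K}(F)$-coronas $\mathcal{C}_{K'}^{\mathcal{K}(F)}$ with $\alpha_{\mathcal{K}(F)}(K')<\alpha_{\mathcal{F}}(F)$, together with $\mathcal{C}_{F}^{\mathcal{K}(F)}$ itself. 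This is just unravelling the definitions of $\mathcal{K}^{\ast}(F)$ and of $\prec_{\mathcal{K}}$, and it reduces everything to bookkeeping within a single $F$ at a time, plus a summation over $\mathcal{F}$.

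First I would check property (1): for $I\in\mathcal{C}_{K}^{\mathcal{K}}$ we have $I\in\mathcal{C}_{F}$ for $F=\pi_{\mathcal{F}}I$, so $\mathbb{E}_{I}^{\sigma}|\mathsf{P}_{\mathcal{C}_{F}}^{\sigma}f|$ is what is controlled by $\alpha_{\mathcal{K}(F)}$; if $I$ lies in some $\mathcal{C}_{K}^{\mathcal{K}(F)}$ with $\alpha_{\mathcal{K}(F)}(K)\geq\alpha_{\mathcal{F}}(F)$ then $K\in\mathcal{K}^{\ast}(F)$ and $\alpha_{\mathcal{K}}(K)=\alpha_{\mathcal{K}(F)}(K)$ bounds $\mathbb{E}_{I}^{\sigma}|f|$ up to the averaging identities relating $\mathbb{E}_{I}^{\sigma}|f|$ and $\mathbb{E}_{I}^{\sigma}|\mathsf{P}_{\mathcal{C}_{F}}^{\sigma}f|$ for $I\in\mathcal{C}_F$ (here one uses that, on $I\in\mathcal{C}_F$, $f$ and $\mathsf{P}_{\mathcal{C}_F}^\sigma f$ differ by a function constant on $I$ of size at most $\alpha_{\mathcal{F}}(\pi_{\mathcal{F}}F)\le\alpha_{\mathcal{F}}(F)$, by property (4) for $\mathcal{F}$); if instead all the relevant $\alpha_{\mathcal{K}(F)}(K)<\alpha_{\mathcal{F}}(F)$, then $I\in\mathcal{C}_{F}^{\mathcal{K}}$ and $\alpha_{\mathcal{K}}(F)=\max\{\alpha_{\mathcal{F}}(F),\alpha_{\mathcal{K}(F)}(F)\}\ge\alpha_{\mathcal{F}}(F)$ does the job. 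Property (4), monotonicity along $\prec_{\mathcal{K}}$, is handled by the same case analysis: within a fixed $F$ it follows from property (4) for $\mathcal{K}(F)$ together with the truncation defining $\mathcal{K}^\ast(F)$, and across an $\mathcal{F}$-edge $F'\subsetneq F$ one uses $\alpha_{\mathcal{K}}(F')\ge\alpha_{\mathcal{F}}(F')\ge\alpha_{\mathcal{F}}(F)$ and that any $K\in\mathcal{K}^\ast(F)$ on the chain has $\alpha_{\mathcal{K}}(K)\ge\alpha_{\mathcal{F}}(F)$, while tops $F$ have $\alpha_{\mathcal{K}}(F)\ge\alpha_{\mathcal{F}}(F)$.

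Next I would establish the Carleson property (2) for $\mathcal{K}$. For a fixed $F\in\mathcal{F}$ and $A\subset F$, the members of $\mathcal{K}$ contained in $\mathcal{C}_F\cap A$ are $\mathcal{K}^\ast(F)$-cubes; by property (2) for $\mathcal{K}(F)$ (in the extended open-set form \eqref{Car ext}) these pack with constant $C_0$ into $|A|_\sigma$. One then sums over the $\mathcal{F}$-tree: any $K\in\mathcal{K}$ with $K\subset A$ has a well-defined $\pi_{\mathcal{F}}K=:F$, and the count splits as a sum over $F\in\mathcal{F}$ with $F\subset A$ of the local packing inside $\mathcal{C}_F$, plus the tops $F$ themselves which pack with constant $C_0$ by property (2) for $\mathcal{F}$; combining gives a constant $C_1=C_1(C_0)$, of order $C_0^2$. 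Property (3), the quasi-orthogonality $\sum_{K\in\mathcal{K}}\alpha_{\mathcal{K}}(K)^2|K|_\sigma\le C_1^2\|f\|_{L^2(\sigma)}^2$, is proved by grouping $K$ by $\pi_{\mathcal{F}}K=F$: the inner sum $\sum_{K\in\mathcal{K}^\ast(F)}\alpha_{\mathcal{K}(F)}(K)^2|K|_\sigma$ is $\le C_0^2\|\mathsf{P}_{\mathcal{C}_F}^\sigma f\|_{L^2(\sigma)}^2$ by property (3) for $\mathcal{K}(F)$, and for $K=F$ one bounds $\alpha_{\mathcal{K}}(F)^2|F|_\sigma\le\alpha_{\mathcal{F}}(F)^2|F|_\sigma+\alpha_{\mathcal{K}(F)}(F)^2|F|_\sigma$, the first summing to $C_0^2\|f\|_{L^2(\sigma)}^2$ by property (3) for $\mathcal{F}$ and the second absorbed into the $\mathcal{K}(F)$-sum; finally $\sum_F\|\mathsf{P}_{\mathcal{C}_F}^\sigma f\|_{L^2(\sigma)}^2=\|f\|_{L^2(\sigma)}^2$ by orthogonality of the corona projections of $\mathcal{F}$. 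I expect the main obstacle to be a purely organizational one: getting the identification of $\mathcal{C}_{K}^{\mathcal{K}}$ with the intersected $\mathcal{K}(F)$-coronas exactly right at the "tops" $K=F$, where the iterated corona of $F$ must collect precisely the $\mathcal{K}(F)$-cubes that were \emph{dropped} (those with $\alpha_{\mathcal{K}(F)}(K)<\alpha_{\mathcal{F}}(F)$) — once that picture is pinned down, every property is a one-line estimate, and no real analysis (no Poisson kernels, no singular integrals) is involved.
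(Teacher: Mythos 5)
Your treatment of properties (1)--(3) is essentially right and is the natural route (the paper itself defers the proof to \cite{SaShUr7}, so I am judging on the merits). Indeed (1) is even more direct than you make it: for $I\in\mathcal{C}_{K}^{\mathcal{K}}$ with $\pi_{\mathcal{F}}I=F$ one has $\mathbb{E}_{I}^{\sigma}\left\vert f\right\vert \leq \alpha_{\mathcal{F}}(F)\leq \alpha_{\mathcal{K}}(K)$ straight from the definition of $\mathcal{K}^{\ast}(F)$ and of $\alpha_{\mathcal{K}}(F)$, so no comparison of $\mathbb{E}_{I}^{\sigma}\left\vert f\right\vert$ with $\mathbb{E}_{I}^{\sigma}\left\vert \mathsf{P}_{\mathcal{C}_{F}}^{\sigma}f\right\vert$ is needed; and your parenthetical claim that $f-\mathsf{P}_{\mathcal{C}_{F}}^{\sigma}f$ is constant on $I\in\mathcal{C}_{F}$ is false as stated, since that difference also contains the Haar projections from descendant coronas $\mathcal{C}_{F^{\prime}}$ with $F^{\prime}\subsetneqq I$. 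The composition of the two Carleson conditions via (\ref{Car ext}), and the grouping of the quasi-orthogonality sum by $\pi_{\mathcal{F}}K=F$ using $\sum_{F}\lVert \mathsf{P}_{\mathcal{C}_{F}}^{\sigma}f\rVert_{L^{2}(\sigma)}^{2}=\lVert f\rVert_{L^{2}(\sigma)}^{2}$, are exactly the right bookkeeping and yield $C_{1}\approx C_{0}^{2}$.

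The genuine gap is property (4) across a corona boundary. Take $K^{\prime}\subset K$ with $K\in\mathcal{K}^{\ast}(F)\setminus\{F\}$ and $\pi_{\mathcal{F}}K^{\prime}=F^{\prime}\subsetneqq F$, so that necessarily $K^{\prime}\subset F^{\prime}\subsetneqq K$. What must be shown is the \emph{upper} bound $\alpha_{\mathcal{K}}(K)=\alpha_{\mathcal{K}(F)}(K)\leq\alpha_{\mathcal{K}}(K^{\prime})$, but your argument only records that both sides are bounded \emph{below} by $\alpha_{\mathcal{F}}(F)$, which says nothing about their relative order. Nothing in the hypotheses forces the needed inequality: one can inflate $\alpha_{\mathcal{K}(F)}(K)$ on a single $K$ of small $\sigma$-measure without violating any of (1)--(4) for $\left(C_{0},\mathcal{K}(F),\alpha_{\mathcal{K}(F)}\right)$, while $\alpha_{\mathcal{K}}(F^{\prime})=\max\left\{\alpha_{\mathcal{F}}(F^{\prime}),\alpha_{\mathcal{K}(F^{\prime})}(F^{\prime})\right\}$ is determined by data attached to $F^{\prime}$ alone; so monotonicity can fail at the crossing. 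To close the argument you must either supply the missing comparison from the specific construction of the inner stopping data in the application, or repair the definition by setting $\widetilde{\alpha}_{\mathcal{K}}(K)\equiv\sup\left\{\alpha_{\mathcal{K}}(K^{\prime\prime}):K^{\prime\prime}\in\mathcal{K},\ K\subset K^{\prime\prime}\right\}$: this makes (4) hold by fiat, leaves (1) and (2) untouched, and preserves (3) with a constant depending only on $C_{0}$ by a Carleson embedding argument --- assign each $K$ to a near-maximizing ancestor $K^{\prime\prime}$ and use property (2) for $\mathcal{K}$ to pack $\sum_{K\mapsto K^{\prime\prime}}\left\vert K\right\vert_{\sigma}\lesssim\left\vert K^{\prime\prime}\right\vert_{\sigma}$. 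As written, your case analysis for (4) does not establish the lemma.
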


Note that in our definition of $\left( C_{1},\mathcal{K},\alpha _{\mathcal{K}%
}\right) $ we have `discarded' from $\mathcal{K}\left( F\right) $ all of
those $K\in \mathcal{K}\left( F\right) $ that are not in the corona $%
\mathcal{C}_{F}$, and also all of those $K\in \mathcal{K}\left( F\right) $
for which $\alpha _{\mathcal{K}\left( F\right) }\left( K\right) $ is
strictly less than $\alpha _{\mathcal{F}}\left( F\right) $. Then the union
over $F$ of what remains is our new collection of stopping times. We then
define stopping data $\alpha _{\mathcal{K}}\left( K\right) $ according to
whether or not $K\in \mathcal{F}$: if $K\notin \mathcal{F}$ but $K\in 
\mathcal{C}_{F}$ then $\alpha _{\mathcal{K}}\left( K\right) $ equals $\alpha
_{\mathcal{K}\left( F\right) }\left( K\right) $, while if $K\in \mathcal{F}$%
, then $\alpha _{\mathcal{K}}\left( K\right) $ is the larger of $\alpha _{%
\mathcal{K}\left( F\right) }\left( F\right) $ and $\alpha _{\mathcal{F}%
}\left( K\right) $. See \cite{SaShUr7} for a proof.

\subsection{Doubly iterated coronas and the NTV quasicube size splitting}

Let 
\begin{equation*}
\mathcal{NTV}_{\alpha }\equiv \sqrt{\mathcal{A}_{2}^{\alpha }+\mathcal{A}%
_{2}^{\alpha ,\ast }+A_{2}^{\alpha ,\limfunc{punct}}+A_{2}^{\alpha ,\ast ,%
\limfunc{punct}}}+\mathfrak{T}_{T^{\alpha }}+\mathfrak{T}_{T^{\alpha
}}^{\ast }\ .
\end{equation*}%
Here is a brief schematic diagram of the decompositions, with bounds usingin 
$\fbox{}$: 
\begin{equation*}
\fbox{$%
\begin{array}{ccccccc}
\left\langle T_{\sigma }^{\alpha }f,g\right\rangle _{\omega } &  &  &  &  & 
&  \\ 
\downarrow &  &  &  &  &  &  \\ 
\mathsf{B}_{\Subset _{\mathbf{\rho }}}\left( f,g\right) & + & \mathsf{B}_{_{%
\mathbf{\rho }}\Supset }\left( f,g\right) & + & \mathsf{B}_{\cap }\left(
f,g\right) & + & \mathsf{B}_{\diagup }\left( f,g\right) \\ 
\downarrow &  & \fbox{dual} &  & \fbox{$\mathcal{NTV}_{\alpha }$} &  & \fbox{%
$\mathcal{NTV}_{\alpha }$} \\ 
\downarrow &  &  &  &  &  &  \\ 
\mathsf{T}_{\limfunc{diagonal}}\left( f,g\right) & + & \mathsf{T}_{\limfunc{%
far}\limfunc{below}}\left( f,g\right) & + & \mathsf{T}_{\limfunc{far}%
\limfunc{above}}\left( f,g\right) & + & \mathsf{T}_{\limfunc{disjoint}%
}\left( f,g\right) \\ 
\downarrow &  & \downarrow &  & \fbox{$\emptyset $} &  & \fbox{$\emptyset $}
\\ 
\downarrow &  & \downarrow &  &  &  &  \\ 
\mathsf{B}_{\Subset _{\mathbf{\rho }}}^{A}\left( f,g\right) &  & \mathsf{T}_{%
\limfunc{far}\limfunc{below}}^{1}\left( f,g\right) & + & \mathsf{T}_{%
\limfunc{far}\limfunc{below}}^{2}\left( f,g\right) &  &  \\ 
\downarrow &  & \fbox{$\mathcal{NTV}_{\alpha }+\mathcal{E}_{\alpha }^{%
\limfunc{strong}}$} &  & \fbox{$\mathcal{NTV}_{\alpha }$} &  &  \\ 
\downarrow &  &  &  &  &  &  \\ 
\mathsf{B}_{stop}^{A}\left( f,g\right) & + & \mathsf{B}_{paraproduct}^{A}%
\left( f,g\right) & + & \mathsf{B}_{neighbour}^{A}\left( f,g\right) &  &  \\ 
\fbox{$\mathcal{E}_{\alpha }^{\limfunc{strong}}+\sqrt{A_{2}^{\alpha }}$} & 
& \fbox{$\mathfrak{T}_{T^{\alpha }}$} &  & \fbox{$\sqrt{A_{2}^{\alpha }}$} & 
& 
\end{array}%
$}
\end{equation*}

We begin with the NTV \emph{quasicube size splitting} of the inner product $%
\left\langle T_{\sigma }^{\alpha }f,g\right\rangle _{\omega }$ - and later
apply the iterated corona construction - that splits the pairs of quasicubes 
$\left( I,J\right) $ in a simultaneous quasiHaar decomposition of $f$ and $g$
into four groups, namely those pairs that:

\begin{enumerate}
\item are below the size diagonal and $\mathbf{\rho }$-deeply embedded,

\item are above the size diagonal and $\mathbf{\rho }$-deeply embedded,

\item are disjoint, and

\item are of $\mathbf{\rho }$-comparable size.
\end{enumerate}

More precisely we have%
\begin{eqnarray*}
\left\langle T_{\sigma }^{\alpha }f,g\right\rangle _{\omega }
&=&\dsum\limits_{I\in \Omega \mathcal{D}^{\sigma },\ J\in \Omega \mathcal{D}%
^{\omega }}\left\langle T_{\sigma }^{\alpha }\left( \bigtriangleup
_{I}^{\sigma }f\right) ,\left( \bigtriangleup _{I}^{\omega }g\right)
\right\rangle _{\omega } \\
&=&\dsum\limits_{\substack{ I\in \Omega \mathcal{D}^{\sigma },\ J\in \Omega 
\mathcal{D}^{\omega }  \\ J\Subset _{\mathbf{\rho }}I}}\left\langle
T_{\sigma }^{\alpha }\left( \bigtriangleup _{I}^{\sigma }f\right) ,\left(
\bigtriangleup _{J}^{\omega }g\right) \right\rangle _{\omega }+\dsum\limits 
_{\substack{ I\in \Omega \mathcal{D}^{\sigma },\ J\in \Omega \mathcal{D}%
^{\omega }  \\ J_{\mathbf{\rho }}\Supset I}}\left\langle T_{\sigma }^{\alpha
}\left( \bigtriangleup _{I}^{\sigma }f\right) ,\left( \bigtriangleup
_{J}^{\omega }g\right) \right\rangle _{\omega } \\
&&+\dsum\limits_{\substack{ I\in \Omega \mathcal{D}^{\sigma },\ J\in \Omega 
\mathcal{D}^{\omega }  \\ J\cap I=\emptyset }}\left\langle T_{\sigma
}^{\alpha }\left( \bigtriangleup _{I}^{\sigma }f\right) ,\left(
\bigtriangleup _{J}^{\omega }g\right) \right\rangle _{\omega }+\dsum\limits 
_{\substack{ I\in \Omega \mathcal{D}^{\sigma },\ J\in \Omega \mathcal{D}%
^{\omega }  \\ 2^{-\mathbf{\rho }}\leq \ell \left( J\right) \diagup \ell
\left( I\right) \leq 2^{\mathbf{\rho }}}}\left\langle T_{\sigma }^{\alpha
}\left( \bigtriangleup _{I}^{\sigma }f\right) ,\left( \bigtriangleup
_{J}^{\omega }g\right) \right\rangle _{\omega } \\
&=&\mathsf{B}_{\Subset _{\mathbf{\rho }}}\left( f,g\right) +\mathsf{B}_{_{%
\mathbf{\rho }}\Supset }\left( f,g\right) +\mathsf{B}_{\cap }\left(
f,g\right) +\mathsf{B}_{\diagup }\left( f,g\right) .
\end{eqnarray*}%
Lemma \ref{standard delta} in the section on NTV preliminaries show that the 
\emph{disjoint} and \emph{comparable} forms $\mathsf{B}_{\cap }\left(
f,g\right) $ and $\mathsf{B}_{\diagup }\left( f,g\right) $ are both bounded
by the $\mathcal{A}_{2}^{\alpha }+A_{2}^{\alpha ,\limfunc{punct}}$,
quasitesting and quasiweak boundedness property constants. The \emph{below}
and \emph{above} forms are clearly symmetric, so we need only consider the
form $\mathsf{B}_{\Subset _{\mathbf{\rho }}}\left( f,g\right) $, to which we
turn for the remainder of the proof. For this we need functional energy.

\begin{definition}
\label{functional energy n}Let $\mathfrak{F}_{\alpha }$ be the smallest
constant in the `\textbf{f}unctional quasienergy' inequality below, holding
for all $h\in L^{2}\left( \sigma \right) $ and all $\sigma $-Carleson
collections $\mathcal{F}$ with Carleson norm $C_{\mathcal{F}}$ bounded by a
fixed constant $C$: 
\begin{equation}
\sum_{F\in \mathcal{F}}\sum_{J\in \mathcal{M}_{\left( \mathbf{r},\varepsilon
\right) -\limfunc{deep}}\left( F\right) }\left( \frac{\mathrm{P}^{\alpha
}\left( J,h\sigma \right) }{\left\vert J\right\vert ^{\frac{1}{n}}}\right)
^{2}\left\Vert \mathsf{P}_{\mathcal{C}_{F}^{\limfunc{good},\mathbf{\tau }-%
\limfunc{shift}};J}^{\omega }\mathbf{x}\right\Vert _{L^{2}\left( \omega
\right) }^{2}\leq \mathfrak{F}_{\alpha }\lVert h\rVert _{L^{2}\left( \sigma
\right) }\,.  \label{e.funcEnergy n}
\end{equation}
\end{definition}

The main ingredient used in reducing control of the below form $\mathsf{B}%
_{\Subset _{\mathbf{\rho }}}\left( f,g\right) $ to control of the functional
energy $\mathcal{F}_{\alpha }$ constant and the stopping form $\mathsf{B}%
_{stop}^{A}\left( f,g\right) $, is an adaptation of the Intertwining
Proposition from \cite{SaShUr7} to include quasicubes and common point
masses. This adaptation is easy because the measures $\omega $ and $\sigma $
only `see each other' in the proof through the energy Muckenhoupt conditions 
$A_{2}^{\alpha ,\limfunc{energy}}$ and $A_{2}^{\alpha ,\ast ,\limfunc{energy}%
}$, and the straightforward details can be found in \cite{SaShUr6}. We now
turn to controlling functional energy.

\section{Control of functional energy by energy modulo $\mathcal{A}_{2}^{%
\protect\alpha }$ and $A_{2}^{\protect\alpha ,\limfunc{punct}}\label{equiv}$}

Now we arrive at one of our main propositions in the proof of our theorem.
We show that the functional quasienergy constants $\mathfrak{F}_{\alpha }$
as in (\ref{e.funcEnergy n}) are controlled by $\mathcal{A}_{2}^{\alpha }$, $%
A_{2}^{\alpha ,\limfunc{punct}}$ and both the \emph{strong} quasienergy
constant $\mathcal{E}_{\alpha }^{\limfunc{strong}}$\ defined in Definition %
\ref{def strong quasienergy}. The proof of this fact is further complicated
when common point masses are permitted, accounting for the inclusion of the
punctured Muckenhoupt condition $A_{2}^{\alpha ,\limfunc{punct}}$. But apart
from this difference, the proof here is essentially the same as that in \cite%
{SaShUr7}, where common point masses were prohibited. As a consequence we
will refer to \cite{SaShUr7} in many of the places where the arguments are
unchanged. A complete and detailed proof can of course be found in \cite%
{SaShUr6}.

\begin{proposition}
\label{func ener control}We have%
\begin{equation*}
\mathfrak{F}_{\alpha }\lesssim \mathcal{E}_{\alpha }^{\limfunc{strong}}+%
\sqrt{\mathcal{A}_{2}^{\alpha }}+\sqrt{\mathcal{A}_{2}^{\alpha ,\ast }}+%
\sqrt{A_{2}^{\alpha ,\limfunc{punct}}}\text{ and }\mathfrak{F}_{\alpha
}^{\ast }\lesssim \mathcal{E}_{\alpha }^{\limfunc{strong},\ast }+\sqrt{%
\mathcal{A}_{2}^{\alpha }}+\sqrt{\mathcal{A}_{2}^{\alpha ,\ast }}+\sqrt{%
A_{2}^{\alpha ,\ast ,\limfunc{punct}}}\ .
\end{equation*}
\end{proposition}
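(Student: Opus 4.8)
The plan is to reduce the functional quasienergy inequality (\ref{e.funcEnergy n}) to a two weight testing inequality for a positive dyadic (Poisson-type) operator, and then invoke the characterization of such inequalities via testing from \cite{Saw3}. First I would set up, following \cite{SaShUr7}, the dyadic model: fix a $\sigma$-Carleson family $\mathcal{F}$ with constant bounded by $C$, and reorganize the left-hand side of (\ref{e.funcEnergy n}) by summing over the quasicubes $J\in\mathcal{M}_{(\mathbf{r},\varepsilon)-\mathrm{deep}}(F)$ and recording the weight $\left(\frac{\mathrm{P}^\alpha(J,h\sigma)}{|J|^{1/n}}\right)^2\left\Vert\mathsf{P}^\omega_{\mathcal{C}_F^{\mathrm{good},\mathbf{\tau}-\mathrm{shift}};J}\mathbf{x}\right\Vert_{L^2(\omega)}^2$. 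The standard move is to linearize: introduce a measure $\mu$ on the "upper half space" $\mathbb{R}^{n+1}_+$ (or on the set of pairs $(J,F)$) built from the energy terms $\left\Vert\mathsf{P}^\omega_{\mathcal{C}_F;J}\mathbf{x}\right\Vert^2$, and recognize $\mathrm{P}^\alpha(J,h\sigma)$ as (a discretization of) the Poisson extension of $h\sigma$ evaluated at the point associated to $J$. Then (\ref{e.funcEnergy n}) becomes the statement that the Poisson-type embedding $h\mapsto \mathrm{P}^\alpha(\cdot,h\sigma)$ maps $L^2(\sigma)$ into $L^2(\mu)$, i.e. a two weight inequality for a positive operator with a Poisson kernel.

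Next I would apply the two weight inequality for the Poisson operator from \cite{Saw3}: such an embedding holds if and only if it holds when tested over indicators $h=\mathbf{1}_Q$, i.e. one needs the testing bound
\begin{equation*}
\sum_{F\in\mathcal{F}}\sum_{J\in\mathcal{M}_{(\mathbf{r},\varepsilon)-\mathrm{deep}}(F),\ J\subset Q}\left(\frac{\mathrm{P}^\alpha(J,\mathbf{1}_Q\sigma)}{|J|^{1/n}}\right)^2\left\Vert\mathsf{P}^\omega_{\mathcal{C}_F;J}\mathbf{x}\right\Vert_{L^2(\omega)}^2\lesssim \left(\mathcal{E}_\alpha^{\mathrm{strong}}{}^2+\mathcal{A}_2^\alpha+A_2^{\alpha,\mathrm{punct}}\right)|Q|_\sigma,
\end{equation*}
together with a dual testing condition (which is where $\mathcal{A}_2^{\alpha,\ast}$ enters, via the necessity of the dual Poisson testing condition for the embedding — this is the role of the "backward Poisson testing" estimates flagged in the introduction). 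The forward testing condition is almost exactly the strong quasienergy condition of Definition \ref{def strong quasienergy}, once one splits the sum over $J$ according to whether $J$ lies inside one of the maximal deeply-embedded subquasicubes of $Q$ (handled by the first line of $(\mathcal{E}_\alpha^{\mathrm{strong}})^2$) or $J$ is "nearby" $Q$, i.e. comparable in size, in which case one uses the alternate-quasicube refinement $\mathcal{M}^\ell_{(\mathbf{r},\varepsilon)-\mathrm{deep},\Omega\mathcal{D}}$ appearing in the second line; here one must be careful that $Q$ may fail to lie in the ambient quasigrid, which is exactly why the second supremum in Definition \ref{def strong quasienergy} ranges over alternate dyadic quasicubes. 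The contribution of the "nearby" piece where the Poisson kernel sees the part of $\sigma$ close to or at $\partial Q$ is controlled by $\mathcal{A}_2^\alpha$ together with the energy Muckenhoupt quantities, which by Lemma \ref{energy A2} and Lemma \ref{energy A2 plugged} are in turn bounded by $\mathcal{A}_2^\alpha+A_2^{\alpha,\mathrm{punct}}$.

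The hard part is the backward (dual) Poisson testing condition, which is where common point masses genuinely complicate matters relative to \cite{SaShUr7}. There one must estimate a sum of the form $\int\left(\mathrm{P}^\alpha(Q,\nu)\right)^2 d\sigma(x)$-type quantities where $\nu$ is the measure $\mathbf{1}_Q\,d\mu$ pushed down, and the naive bound fails precisely when $\omega$ (or the energy term) concentrates at a common point mass of $\sigma$ and $\omega$. The remedy — exactly as anticipated in the introduction — is to split off the largest common point mass in each relevant quasicube and estimate its contribution separately using the punctured condition $A_2^{\alpha,\mathrm{punct}}$ (via the mechanism of Lemma \ref{energy A2}, $\left\Vert\mathsf{P}^\omega_Q\mathbf{x}/\ell(Q)\right\Vert^2\lesssim \omega(Q,\mathfrak{P})$ when a point mass dominates), while the remainder, having no dominant common point mass, is handled by the argument of \cite{SaShUr7} essentially verbatim. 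The dual statement $\mathfrak{F}_\alpha^\ast\lesssim \mathcal{E}_\alpha^{\mathrm{strong},\ast}+\sqrt{\mathcal{A}_2^\alpha}+\sqrt{\mathcal{A}_2^{\alpha,\ast}}+\sqrt{A_2^{\alpha,\ast,\mathrm{punct}}}$ then follows by interchanging the roles of $\sigma$ and $\omega$ throughout. For all steps where the inclusion of quasicubes and common point masses requires no new idea, I would simply cite the corresponding argument in \cite{SaShUr7} (and \cite{SaShUr6} for the fully detailed version), isolating and writing out only the point-mass splitting in the backward testing estimate.
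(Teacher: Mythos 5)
Your proposal follows essentially the same route as the paper: linearize the functional energy sum as a measure $\overline{\mu}$ on $\mathbb{R}_{+}^{n+1}$, invoke the two-weight Poisson characterization of \cite{Saw3} (pulled back under $\Omega$) to reduce to the forward and backward Poisson testing conditions, control the forward local piece by the strong quasienergy condition together with the alternate-quasicube refinement and Lemma \ref{refined lemma}, and handle common point masses by the ``prepare to puncture'' device feeding into $A_{2}^{\alpha ,\limfunc{punct}}$. The only slight misattribution is that $\mathcal{A}_{2}^{\alpha ,\ast }$ actually arises in the \emph{global} part of the forward testing estimate (terms $C$ and $D$), while the backward testing contributes $\mathcal{A}_{2}^{\alpha }$ plus the punctured/energy terms; this does not affect the validity of the argument.
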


To prove this proposition, we fix $\mathcal{F}$ as in (\ref{e.funcEnergy n}%
), and set 
\begin{equation}
\mu \equiv \sum_{F\in \mathcal{F}}\sum_{J\in \mathcal{M}_{\left( \mathbf{r}%
,\varepsilon \right) -\limfunc{deep}}\left( F\right) }\left\Vert \mathsf{P}%
_{F,J}^{\omega }\mathbf{x}\right\Vert _{L^{2}\left( \omega \right)
}^{2}\cdot \delta _{\left( c_{J},\ell \left( J\right) \right) }\text{ and }d%
\overline{\mu }\left( x,t\right) \equiv \frac{1}{t^{2}}d\mu \left(
x,t\right) \ ,  \label{def mu n}
\end{equation}%
where $\mathcal{M}_{\left( \mathbf{r},\varepsilon \right) -\limfunc{deep}%
}\left( F\right) $ consists of the maximal $\mathbf{r}$-deeply embedded
subquasicubes of $F$, and where $\delta _{\left( c_{J},\ell \left( J\right)
\right) }$ denotes the Dirac unit mass at the point $\left( c_{J},\ell
\left( J\right) \right) $ in the upper half-space $\mathbb{R}_{+}^{n+1}$.
Here $J$ is a dyadic quasicube with center $c_{J}$ and side length $\ell
\left( J\right) $. For convenience in notation, we denote for any dyadic
quasicube $J$ the localized projection $\mathsf{P}_{\mathcal{C}_{F}^{%
\limfunc{good},\mathbf{\tau }-\limfunc{shift}};J}^{\omega }$ given by%
\begin{equation*}
\mathsf{P}_{F,J}^{\omega }\equiv \mathsf{P}_{\mathcal{C}_{F}^{\limfunc{good},%
\mathbf{\tau }-\limfunc{shift}};J}^{\omega }=\sum_{J^{\prime }\subset J:\
J^{\prime }\in \mathcal{C}_{F}^{\limfunc{good},\mathbf{\tau }-\limfunc{shift}%
}}\bigtriangleup _{J^{\prime }}^{\omega }.
\end{equation*}%
We emphasize that the quasicubes $J\in \mathcal{M}_{\left( \mathbf{r}%
,\varepsilon \right) -\limfunc{deep}}\left( F\right) $ are not necessarily
good, but that the subquasicubes $J^{\prime }\subset J$ arising in the
projection $\mathsf{P}_{F,J}^{\omega }$ are good. We can replace $\mathbf{x}$
by $\mathbf{x}-\mathbf{c}$ inside the projection for any choice of $\mathbf{c%
}$ we wish; the projection is unchanged. More generally, $\delta _{q}$
denotes a Dirac unit mass at a point $q$ in the upper half-space $\mathbb{R}%
_{+}^{n+1}$.

We prove the two-weight inequality 
\begin{equation}
\left\Vert \mathbb{P}^{\alpha }\left( f\sigma \right) \right\Vert _{L^{2}(%
\mathbb{R}_{+}^{n+1},\overline{\mu })}\lesssim \left( \mathcal{E}_{\alpha }^{%
\limfunc{strong}}+\sqrt{\mathcal{A}_{2}^{\alpha }}+\sqrt{\mathcal{A}%
_{2}^{\alpha ,\ast }}+\sqrt{A_{2}^{\alpha ,\limfunc{punct}}}\right) \lVert
f\rVert _{L^{2}\left( \sigma \right) }\,,  \label{two weight Poisson n}
\end{equation}%
for all nonnegative $f$ in $L^{2}\left( \sigma \right) $, noting that $%
\mathcal{F}$ and $f$ are \emph{not} related here. Above, $\mathbb{P}^{\alpha
}(\cdot )$ denotes the $\alpha $-fractional Poisson extension to the upper
half-space $\mathbb{R}_{+}^{n+1}$,

\begin{equation*}
\mathbb{P}^{\alpha }\nu \left( x,t\right) \equiv \int_{\mathbb{R}^{n}}\frac{t%
}{\left( t^{2}+\left\vert x-y\right\vert ^{2}\right) ^{\frac{n+1-\alpha }{2}}%
}d\nu \left( y\right) ,
\end{equation*}%
so that in particular 
\begin{equation*}
\left\Vert \mathbb{P}^{\alpha }(f\sigma )\right\Vert _{L^{2}(\mathbb{R}%
_{+}^{n+1},\overline{\mu })}^{2}=\sum_{F\in \mathcal{F}}\sum_{J\in \mathcal{M%
}_{\mathbf{r}-\limfunc{deep}}\left( F\right) }\mathbb{P}^{\alpha }\left(
f\sigma \right) (c(J),\ell \left( J\right) )^{2}\left\Vert \mathsf{P}%
_{F,J}^{\omega }\frac{x}{\left\vert J\right\vert ^{\frac{1}{n}}}\right\Vert
_{L^{2}\left( \omega \right) }^{2}\,,
\end{equation*}%
and so (\ref{two weight Poisson n}) proves the first line in Proposition \ref%
{func ener control} upon inspecting (\ref{e.funcEnergy n}). Note also that
we can equivalently write $\left\Vert \mathbb{P}^{\alpha }\left( f\sigma
\right) \right\Vert _{L^{2}(\mathbb{R}_{+}^{n+1},\overline{\mu }%
)}=\left\Vert \widetilde{\mathbb{P}}^{\alpha }\left( f\sigma \right)
\right\Vert _{L^{2}(\mathbb{R}_{+}^{n+1},\mu )}$ where $\widetilde{\mathbb{P}%
}^{\alpha }\nu \left( x,t\right) \equiv \frac{1}{t}\mathbb{P}^{\alpha }\nu
\left( x,t\right) $ is the renormalized Poisson operator. Here we have
simply shifted the factor $\frac{1}{t^{2}}$ in $\overline{\mu }$ to $%
\left\vert \widetilde{\mathbb{P}}^{\alpha }\left( f\sigma \right)
\right\vert ^{2}$ instead, and we will do this shifting often throughout the
proof when it is convenient to do so.

The characterization of the two-weight inequality for fractional and Poisson
integrals in \cite{Saw3} was stated in terms of the collection $\mathcal{P}%
^{n}$ of cubes in $\mathbb{R}^{n}$ with sides parallel to the coordinate
axes. It is a routine matter to pullback the Poisson inequality under a
globally biLipschitz map $\Omega :\mathbb{R}^{n}\rightarrow \mathbb{R}^{n}$,
then apply the theorem in \cite{Saw3} (as a black box), and then to
pushforward the conclusions of the theorems so as to extend these
characterizations of fractional and Poisson integral inequalities to the
setting of quasicubes $Q\in \Omega \mathcal{P}^{n}$ and quasitents $Q\times %
\left[ 0,\ell \left( Q\right) \right] \subset \mathbb{R}_{+}^{n+1}$ with $%
Q\in \Omega \mathcal{P}^{n}$. Using this extended theorem for the two-weight
Poisson inequality, we see that inequality (\ref{two weight Poisson n})
requires checking these two inequalities for dyadic quasicubes $I\in \Omega 
\mathcal{D}$ and quasiboxes $\widehat{I}=I\times \left[ 0,\ell \left(
I\right) \right) $ in the upper half-space$\mathbb{R}_{+}^{n+1}$: 
\begin{equation}
\int_{\mathbb{R}_{+}^{n+1}}\mathbb{P}^{\alpha }\left( \mathbf{1}_{I}\sigma
\right) \left( x,t\right) ^{2}d\overline{\mu }\left( x,t\right) \equiv
\left\Vert \mathbb{P}^{\alpha }\left( \mathbf{1}_{I}\sigma \right)
\right\Vert _{L^{2}(\widehat{I},\overline{\mu })}^{2}\lesssim \left( \left( 
\mathcal{E}_{\alpha }^{\limfunc{strong}}\right) ^{2}+\mathcal{A}_{2}^{\alpha
}+\mathcal{A}_{2}^{\alpha ,\ast }+A_{2}^{\alpha ,\limfunc{punct}}\right)
\sigma (I)\,,  \label{e.t1 n}
\end{equation}%
\begin{equation}
\int_{\mathbb{R}^{n}}[\mathbb{Q}^{\alpha }(t\mathbf{1}_{\widehat{I}}%
\overline{\mu })]^{2}d\sigma (x)\lesssim \left( \left( \mathcal{E}_{\alpha
}^{\limfunc{strong}}\right) ^{2}+\mathcal{A}_{2}^{\alpha }+\mathcal{A}%
_{2}^{\alpha ,\ast }+A_{2}^{\alpha ,\limfunc{punct}}\right) \int_{\widehat{I}%
}t^{2}d\overline{\mu }(x,t),  \label{e.t2 n}
\end{equation}%
for all \emph{dyadic} quasicubes $I\in \Omega \mathcal{D}$, and where the
dual Poisson operator $\mathbb{Q}^{\alpha }$ is given by 
\begin{equation*}
\mathbb{Q}^{\alpha }(t\mathbf{1}_{\widehat{I}}\overline{\mu })\left(
x\right) =\int_{\widehat{I}}\frac{t^{2}}{\left( t^{2}+\lvert x-y\rvert
^{2}\right) ^{\frac{n+1-\alpha }{2}}}d\overline{\mu }\left( y,t\right) \,.
\end{equation*}%
It is important to note that we can choose for $\Omega \mathcal{D}$ any
fixed dyadic quasigrid, the compensating point being that the integrations
on the left sides of (\ref{e.t1 n}) and (\ref{e.t2 n}) are taken over the
entire spaces $\mathbb{R}_{+}^{n+1}$ and $\mathbb{R}^{n}$ respectively.

\begin{remark}
There is a gap in the proof of the Poisson inequality at the top of page 542
in \cite{Saw3}. However, this gap can be fixed as in \cite{SaWh} or \cite%
{LaSaUr1}.
\end{remark}

\subsection{Poisson testing}

We now turn to proving the Poisson testing conditions (\ref{e.t1 n}) and (%
\ref{e.t2 n}). The same testing conditions have been considered in \cite%
{SaShUr5} but in the setting of no common point masses, and the proofs there
carry over to the situation here, but careful attention must now be paid to
the possibility of common point masses. In \cite{Hyt2} Hyt\"{o}nen
circumvented this difficulty by introducing a Poisson operator `with holes',
which was then analyzed using shifted dyadic grids, but part of his argument
was heavily dependent on the dimension being $n=1$, and the extension of
this argument to higher dimensions is feasible (see earlier versions of this
paper on the \textit{arXiv}), but technically very involved. We circumvent
the difficulty of permitting common point masses here instead by using the
energy Muckenhoupt constants $A_{2}^{\alpha ,\limfunc{energy}}$ and $%
A_{2}^{\alpha ,\ast ,\limfunc{energy}}$, which require control by the
punctured Muckenhoupt constants $A_{2}^{\alpha ,\limfunc{punct}}$ and $%
A_{2}^{\alpha ,\ast ,\limfunc{punct}}$. The following elementary Poisson
inequalities (see e.g. \cite{Vol}) will be used extensively.

\begin{lemma}
\label{Poisson inequalities}Suppose that $J,K,I$ are quasicubes in $\mathbb{R%
}^{n}$, and that $\mu $ is a positive measure supported in $\mathbb{R}%
^{n}\setminus I$. If $J\subset K\subset 2K\subset I$, then%
\begin{equation*}
\frac{\mathrm{P}^{\alpha }\left( J,\mu \right) }{\left\vert J\right\vert ^{%
\frac{1}{n}}}\lesssim \frac{\mathrm{P}^{\alpha }\left( K,\mu \right) }{%
\left\vert K\right\vert ^{\frac{1}{n}}}\lesssim \frac{\mathrm{P}^{\alpha
}\left( J,\mu \right) }{\left\vert J\right\vert ^{\frac{1}{n}}},
\end{equation*}%
while if $2J\subset K\subset I$, then%
\begin{equation*}
\frac{\mathrm{P}^{\alpha }\left( K,\mu \right) }{\left\vert K\right\vert ^{%
\frac{1}{n}}}\lesssim \frac{\mathrm{P}^{\alpha }\left( J,\mu \right) }{%
\left\vert J\right\vert ^{\frac{1}{n}}}.
\end{equation*}
\end{lemma}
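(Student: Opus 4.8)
The plan is to establish the two chains of Poisson-integral comparisons in Lemma \ref{Poisson inequalities} directly from the definition of the standard Poisson integral $\mathrm{P}^{\alpha}\left( J,\mu\right) =\int_{\mathbb{R}^{n}}\frac{\left\vert J\right\vert ^{1/n}}{\left( \left\vert J\right\vert ^{1/n}+\left\vert y-c_{J}\right\vert \right) ^{n+1-\alpha}}d\mu\left( y\right)$, exploiting only the geometry forced by the containments and the hypothesis that $\mu$ lives outside $I$. The key elementary fact I would isolate first is this: if $J\subset K$ are quasicubes and $\mu$ is supported in $\mathbb{R}^{n}\setminus K$, then for $y\in \operatorname{supp}\mu$ one has $\left\vert y-c_{J}\right\vert \approx \left\vert y-c_{K}\right\vert$ \emph{up to additive terms of size} $\left\vert K\right\vert ^{1/n}$, more precisely $\left\vert y-c_{K}\right\vert -\sqrt{n}\,\ell\left( K\right) \le \left\vert y-c_{J}\right\vert \le \left\vert y-c_{K}\right\vert +\sqrt{n}\,\ell\left( K\right)$, since $\left\vert c_{J}-c_{K}\right\vert \lesssim \ell\left( K\right) \approx \left\vert K\right\vert ^{1/n}$ (using $\ell\left( J\right) \approx\left\vert J\right\vert^{1/n}$ for quasicubes, as recorded in the excerpt), and moreover that $\left\vert y-c_{K}\right\vert \gtrsim \ell\left( K\right)$ when $2K\subset I$ and $\operatorname{supp}\mu\subset I^{c}$, because then $y$ is at quasidistance at least $\approx\ell\left( K\right)$ from $K$. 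Consequently $\left\vert J\right\vert ^{1/n}+\left\vert y-c_{J}\right\vert \approx \left\vert K\right\vert ^{1/n}+\left\vert y-c_{K}\right\vert$ when $J\subset K$, $2K\subset I$, $\operatorname{supp}\mu\subset I^{c}$, with implied constants depending only on $n$ and the biLipschitz constants of $\Omega$.

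For the first chain, assume $J\subset K\subset 2K\subset I$ and $\operatorname{supp}\mu\subset I^{c}$. I would write
\begin{equation*}
\frac{\mathrm{P}^{\alpha}\left( J,\mu\right) }{\left\vert J\right\vert ^{1/n}}=\int_{\mathbb{R}^{n}}\frac{d\mu\left( y\right) }{\left( \left\vert J\right\vert ^{1/n}+\left\vert y-c_{J}\right\vert \right) ^{n+1-\alpha}},
\end{equation*}
and similarly for $K$, and then observe that the integrands are pointwise comparable on $\operatorname{supp}\mu$ by the displayed equivalence $\left\vert J\right\vert ^{1/n}+\left\vert y-c_{J}\right\vert \approx \left\vert K\right\vert ^{1/n}+\left\vert y-c_{K}\right\vert$ together with the fact that a positive power of comparable quantities stays comparable (here the exponent $n+1-\alpha>0$ since $\alpha<n$). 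Integrating against $d\mu$ gives both inequalities $\frac{\mathrm{P}^{\alpha}\left( J,\mu\right) }{\left\vert J\right\vert ^{1/n}}\lesssim \frac{\mathrm{P}^{\alpha}\left( K,\mu\right) }{\left\vert K\right\vert ^{1/n}}\lesssim \frac{\mathrm{P}^{\alpha}\left( J,\mu\right) }{\left\vert J\right\vert ^{1/n}}$ at once.

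For the second chain, assume only $2J\subset K\subset I$ and $\operatorname{supp}\mu\subset I^{c}$; now I can only expect the one-sided bound $\frac{\mathrm{P}^{\alpha}\left( K,\mu\right) }{\left\vert K\right\vert ^{1/n}}\lesssim \frac{\mathrm{P}^{\alpha}\left( J,\mu\right) }{\left\vert J\right\vert ^{1/n}}$ because $J$ may be a tiny subquasicube pressed against $\partial K$, so the reverse comparison genuinely fails. Here the relevant one-sided geometric estimate is $\left\vert K\right\vert ^{1/n}+\left\vert y-c_{K}\right\vert \gtrsim \left\vert J\right\vert ^{1/n}+\left\vert y-c_{J}\right\vert$ for $y\in \operatorname{supp}\mu\subset K^{c}$: indeed $\left\vert y-c_{J}\right\vert \le \left\vert y-c_{K}\right\vert +\left\vert c_{K}-c_{J}\right\vert \le \left\vert y-c_{K}\right\vert +\sqrt{n}\,\ell\left( K\right)$ and $\left\vert J\right\vert ^{1/n}\le \left\vert K\right\vert ^{1/n}$, while $\ell\left( K\right) \approx \left\vert K\right\vert^{1/n}$, so the right side is $\lesssim \left\vert K\right\vert ^{1/n}+\left\vert y-c_{K}\right\vert$. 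Raising to the power $n+1-\alpha$ and dividing, the integrand for $K$ is pointwise $\lesssim$ that for $J$ on $\operatorname{supp}\mu$, and integrating against $d\mu$ yields the claim.

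I do not expect any serious obstacle: the statement is purely an exercise in the triangle inequality and the equivalence $\ell\left( Q\right) \approx\left\vert Q\right\vert ^{1/n}$ for quasicubes. The only point requiring a little care — and the closest thing to a ``hard part'' — is keeping track of which containment ($2K\subset I$ versus merely $2J\subset K$) is strong enough to convert the additive error $\sqrt{n}\,\ell\left( K\right)$ into a multiplicative comparison: in the first chain the doubling $2K\subset I$ with $\operatorname{supp}\mu\subset I^{c}$ forces $\left\vert y-c_{K}\right\vert \gtrsim \ell\left( K\right)$, which absorbs the error into $\left\vert y-c_{K}\right\vert$ itself, whereas in the second chain one does not need this because one only seeks a one-directional bound and the error is absorbed into $\left\vert K\right\vert ^{1/n}$ on the larger side. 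Finally I would note that all implied constants depend only on $n$, $\alpha$, and $\left\Vert \Omega\right\Vert_{Lip}$, $\left\Vert \Omega^{-1}\right\Vert_{Lip}$, which is harmless throughout the paper.
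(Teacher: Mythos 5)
Your proof is correct, and it is exactly the standard elementary argument intended here: the paper itself omits the proof and simply cites \cite{Vol}, where the same triangle-inequality comparison of $\left\vert J\right\vert ^{1/n}+\left\vert y-c_{J}\right\vert$ with $\left\vert K\right\vert ^{1/n}+\left\vert y-c_{K}\right\vert$ is used. You correctly isolate the one point of substance, namely that the two-sided comparison in the first chain requires $\operatorname{dist}(y,K)\gtrsim \ell\left( K\right)$ (available because $y\notin I\supset 2K$ and $c_{J}\in K$) to absorb the additive error $\left\vert c_{J}-c_{K}\right\vert \lesssim \ell\left( K\right)$, whereas the one-sided bound in the second chain needs no such separation.
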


Now we record the bounded overlap of the projections $\mathsf{P}%
_{F,J}^{\omega }$.

\begin{lemma}
\label{tau ovelap}Suppose $\mathsf{P}_{F,J}^{\omega }$ is as above and fix
any $I_{0}\in \Omega \mathcal{D}$, so that $I_{0}$, $F$ and $J$ all lie in a
common quasigrid. If $J\in \mathcal{M}_{\left( \mathbf{r},\varepsilon
\right) -\limfunc{deep}}\left( F\right) $ for some $F\in \mathcal{F}$ with $%
F\supsetneqq I_{0}\supset J$ and $\mathsf{P}_{F,J}^{\omega }\neq 0$, then 
\begin{equation*}
F=\pi _{\mathcal{F}}^{\left( \ell \right) }I_{0}\text{ for some }0\leq \ell
\leq \mathbf{\tau }.
\end{equation*}%
As a consequence we have the bounded overlap,%
\begin{equation*}
\#\left\{ F\in \mathcal{F}:J\subset I_{0}\subsetneqq F\text{ for some }J\in 
\mathcal{M}_{\left( \mathbf{r},\varepsilon \right) -\limfunc{deep}}\left(
F\right) \text{ with }\mathsf{P}_{F,J}^{\omega }\neq 0\right\} \leq \mathbf{%
\tau }.
\end{equation*}
\end{lemma}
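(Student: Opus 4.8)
The plan is to trace the non-vanishing of $\mathsf{P}_{F,J}^{\omega }$ to a single good subquasicube and then compare dyadic generations in $\Omega \mathcal{D}$ against the stopping tree $\mathcal{F}$. First I would fix $F\in \mathcal{F}$, $J\in \mathcal{M}_{\left( \mathbf{r},\varepsilon \right) -\limfunc{deep}}\left( F\right) $ and $I_{0}\in \Omega \mathcal{D}$ with $F\supsetneqq I_{0}\supset J$ and $\mathsf{P}_{F,J}^{\omega }\neq 0$. Since $\mathsf{P}_{F,J}^{\omega }=\sum_{J^{\prime }\subset J,\ J^{\prime }\in \mathcal{C}_{F}^{\limfunc{good},\mathbf{\tau }-\limfunc{shift}}}\bigtriangleup _{J^{\prime }}^{\omega }$ does not vanish, there is a good quasicube $J^{\prime }\subset J\subset I_{0}$ with $J^{\prime }\in \mathcal{C}_{F}^{\limfunc{good},\mathbf{\tau }-\limfunc{shift}}$. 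Unwinding the definition of the $\mathbf{\tau }$-shifted good corona used in Definition \ref{functional energy n}, this membership forces some $\Omega \mathcal{D}$-ancestor $K$ of $J^{\prime }$, lying at most $\mathbf{\tau }$ dyadic generations above $J^{\prime }$, to belong to the ordinary corona $\mathcal{C}_{F}$, i.e. $\pi _{\mathcal{F}}K=F$. Because $J^{\prime }\subset I_{0}$ with both in $\Omega \mathcal{D}$, the cube $I_{0}$ is itself one of the $\Omega \mathcal{D}$-ancestors of $J^{\prime }$, and since $F\in \mathcal{F}$ with $F\supsetneqq I_{0}$ we already know $F$ is an $\mathcal{F}$-ancestor of $I_{0}$, say $F=\pi _{\mathcal{F}}^{\left( \ell \right) }I_{0}$; the whole point is to show $\ell \leq \mathbf{\tau }$.

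For this I would compare $K$ with $I_{0}$ inside the tower of $\Omega \mathcal{D}$-ancestors of $J^{\prime }$. If $K\subseteq I_{0}$, then any member of $\mathcal{F}$ containing $I_{0}$ also contains $K$, hence (by minimality of $\pi _{\mathcal{F}}K$ among $\mathcal{F}$-cubes containing $K$, together with nestedness) contains $F=\pi _{\mathcal{F}}K$; as $F$ is itself such a member, it is the smallest member of $\mathcal{F}$ properly containing $I_{0}$, so $\ell $ takes its minimal value. If instead $K\supsetneqq I_{0}$, then $K$ is a proper $\Omega \mathcal{D}$-ancestor of $I_{0}$, and since $I_{0}$ lies between $J^{\prime }$ and $K$ in the tower while $K$ is at most $\mathbf{\tau }$ generations above $J^{\prime }$, $K$ is at most $\mathbf{\tau }$ dyadic generations above $I_{0}$. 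Now $\pi _{\mathcal{F}}K=F$ means there is no member of $\mathcal{F}$ strictly between $K$ and $F$, so every $\mathcal{F}$-cube $F^{\prime }$ with $I_{0}\subsetneqq F^{\prime }\subsetneqq F$ must be one of the at most $\mathbf{\tau }-1$ $\Omega \mathcal{D}$-cubes strictly between $I_{0}$ and $K$; counting these gives $F=\pi _{\mathcal{F}}^{\left( \ell \right) }I_{0}$ with $\ell \leq \mathbf{\tau }$. (The exact placement of the endpoints $\ell =0,\mathbf{\tau }$ is dictated by the conventions for $\pi _{\mathcal{F}}$ and by $F\supsetneqq I_{0}$, and is routine.) This establishes the first assertion.

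The consequence is then immediate: in the displayed set, every $F\in \mathcal{F}$ comes with a witness $J\in \mathcal{M}_{\left( \mathbf{r},\varepsilon \right) -\limfunc{deep}}\left( F\right) $, $J\subset I_{0}\subsetneqq F$, $\mathsf{P}_{F,J}^{\omega }\neq 0$, so by the first part $F=\pi _{\mathcal{F}}^{\left( \ell \right) }I_{0}$ for some $\ell $ in an index set of size at most $\mathbf{\tau }$; since the iterated $\mathcal{F}$-parents $\pi _{\mathcal{F}}^{\left( \ell \right) }I_{0}$ are strictly nested, hence pairwise distinct, there are at most $\mathbf{\tau }$ of them. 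I expect the main obstacle to be purely bookkeeping: one must invoke the exact definition of $\mathcal{C}_{F}^{\limfunc{good},\mathbf{\tau }-\limfunc{shift}}$ to extract the ancestor $K$ together with its $\mathbf{\tau }$-generation bound, and then carefully match dyadic generations (using $J^{\prime }\subset J\subset I_{0}$, so that $I_{0}$ lies between $J^{\prime }$ and $K$) against the $\mathcal{F}$-tree, watching the off-by-one in passing from ``$\mathcal{F}$-cubes strictly between $I_{0}$ and $F$'' to the exponent $\ell $.
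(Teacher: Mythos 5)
Your proof is correct and follows the route one would expect; the paper states this lemma without proof (referring to \cite{SaShUr6} for details), so there is no in-paper argument to compare against. The only caveat is that $\mathcal{C}_{F}^{\limfunc{good},\mathbf{\tau }-\limfunc{shift}}$ is never written out in this paper, so your extraction of an $\Omega \mathcal{D}$-ancestor $K$ of $J^{\prime }$ at most $\mathbf{\tau }$ generations up with $\pi _{\mathcal{F}}K=F$ rests on the definition from the companion papers --- but that is precisely the property the shifted corona is built to have, and your two cases ($K\subseteq I_{0}$ versus $K\supsetneqq I_{0}$), the counting of $\mathcal{F}$-cubes trapped between $I_{0}$ and $K$, and the endpoint conventions you flag are all sound.
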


Finally we record the only places in the proof where the \emph{refined}
quasienergy conditions are used. This lemma will be used in bounding both of
the local Poisson testing conditions. Recall that $\mathcal{A}\Omega 
\mathcal{D}$ consists of all alternate $\Omega \mathcal{D}$-dyadic
quasicubes where $K$ is alternate dyadic if it is a union of $2^{n}$ $\Omega 
\mathcal{D}$-dyadic quasicubes $K^{\prime }$ with $\ell \left( K^{\prime
}\right) =\frac{1}{2}\ell \left( K\right) $. See \cite{SaShUr7} for a proof
when common point masses are prohibited, and the presence of common point
masses here requires no change.

\begin{remark}
The following lemma is another of the key results on the way to the proof of
our theorem, and is an analogue of the corresponding lemma from \cite%
{SaShUr5}, but with the right hand side involving only the plugged energy
constants and the energy Muckenhoupt constants.
\end{remark}

\begin{lemma}
\label{refined lemma}Let $\Omega \mathcal{D},\mathcal{F\subset }\Omega 
\mathcal{D}$ be quasigrids and $\left\{ \mathsf{P}_{F,J}^{\omega }\right\} 
_{\substack{ F\in \mathcal{F}  \\ J\in \mathcal{M}_{\left( \mathbf{r}%
,\varepsilon \right) -\limfunc{deep}}\left( F\right) }}$ be as above with $%
J,F$ in the dyadic quasigrid $\Omega \mathcal{D}$. For any alternate
quasicube $I\in \mathcal{A}\Omega \mathcal{D}$ define%
\begin{equation}
B\left( I\right) \equiv \sum_{F\in \mathcal{F}:\ F\supsetneqq I^{\prime }%
\text{ for some }I^{\prime }\in \mathfrak{C}\left( I\right) }\sum_{J\in 
\mathcal{M}_{\left( \mathbf{r},\varepsilon \right) -\limfunc{deep}}\left(
F\right) :\ J\subset I}\left( \frac{\mathrm{P}^{\alpha }\left( J,\mathbf{1}%
_{I}\sigma \right) }{\left\vert J\right\vert ^{\frac{1}{n}}}\right)
^{2}\left\Vert \mathsf{P}_{F,J}^{\omega }\mathbf{x}\right\Vert _{L^{2}\left(
\omega \right) }^{2}\ .  \label{term B}
\end{equation}%
Then%
\begin{equation}
B\left( I\right) \lesssim \mathbf{\tau }\left( \left( \mathcal{E}_{\alpha }^{%
\limfunc{strong}}\right) ^{2}+A_{2}^{\alpha ,\limfunc{energy}}\right)
\left\vert I\right\vert _{\sigma }\ .  \label{B bound}
\end{equation}
\end{lemma}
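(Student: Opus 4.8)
The plan is to decompose the sum defining $B(I)$ according to the $\mathcal{F}$-stopping structure of the cubes $F$ involved, and reduce each piece to a quantity already controlled by the strong quasienergy constant or the energy Muckenhoupt constant. First I would observe that by Lemma \ref{tau ovelap}, for a fixed $I^{\prime}\in\mathfrak{C}(I)$ the cubes $F\in\mathcal{F}$ with $F\supsetneqq I^{\prime}$ and $\mathsf{P}_{F,J}^{\omega}\neq 0$ for some $J\subset I$ with $J\in\mathcal{M}_{(\mathbf r,\varepsilon)-\operatorname{deep}}(F)$ number at most $\mathbf\tau$; in fact they must be among the first $\mathbf\tau$ $\mathcal{F}$-ancestors $\pi_{\mathcal{F}}^{(\ell)}I^{\prime}$. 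This is the structural input that produces the factor $\mathbf\tau$ on the right-hand side of (\ref{B bound}). Summing over the $2^{n}$ children $I^{\prime}$ of $I$ only costs a dimensional constant, so it suffices to fix one such ancestor $F=\pi_{\mathcal{F}}^{(\ell)}I^{\prime}$ and bound
\[
\sum_{J\in\mathcal{M}_{(\mathbf r,\varepsilon)-\operatorname{deep}}(F):\ J\subset I}\left(\frac{\mathrm{P}^{\alpha}(J,\mathbf 1_{I}\sigma)}{\left\vert J\right\vert^{\frac1n}}\right)^{2}\left\Vert\mathsf{P}_{F,J}^{\omega}\mathbf x\right\Vert_{L^{2}(\omega)}^{2}\lesssim\left(\left(\mathcal{E}_{\alpha}^{\operatorname{strong}}\right)^{2}+A_{2}^{\alpha,\operatorname{energy}}\right)\left\vert I\right\vert_{\sigma}\,.
\]

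Next I would split the family of relevant $J$'s into two groups: those $J$ with $2J\subset I$ (the ``interior'' cubes) and those $J$ meeting $\partial I$ in the sense that $J\subset I$ but $2J\not\subset I$ (the ``boundary'' cubes). For the interior cubes, since $\mathbf 1_{I}\sigma$ restricted to the complement of $J$ is dominated by $\mathbf 1_{F}\sigma$ (as $J\subset I\subset F$ up to the alternate-quasicube caveat handled via the second line of Definition \ref{def strong quasienergy}), and since the $J\in\mathcal{M}_{(\mathbf r,\varepsilon)-\operatorname{deep}}(F)$ are pairwise disjoint with $\mathsf{P}_{F,J}^{\omega}\mathbf x$ a piece of $\mathsf{P}_{J}^{\omega}\mathbf x$, the interior sum is bounded by a single instance of the strong quasienergy functional applied with $I_{r}=F$ (or with the alternate-dyadic refinement $\mathcal{M}^{\ell}$ in the second line of the definition). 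Here one uses the Poisson comparison inequalities of Lemma \ref{Poisson inequalities} to pass from $\mathrm{P}^{\alpha}(J,\mathbf 1_{I}\sigma)$ to $\mathrm{P}^{\alpha}(J,\mathbf 1_{F}\sigma)$ when the cube $I$ sits between $J$ and $F$; this gives the $\left(\mathcal{E}_{\alpha}^{\operatorname{strong}}\right)^{2}\left\vert F\right\vert_{\sigma}$ bound, but we actually want $\left\vert I\right\vert_{\sigma}$ — this is resolved because $J\subset I$ forces us to restrict the strong-energy sum to $J$'s inside $I$, and the supremum in Definition \ref{def strong quasienergy} can be localized to the subfamily landing in $I$, producing $\left\vert I\right\vert_{\sigma}$ in place of $\left\vert F\right\vert_{\sigma}$ after the standard $\mathcal{M}^{\ell}$-refinement argument of \cite{SaShUr5}.

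For the boundary cubes — those $J\in\mathcal{M}_{(\mathbf r,\varepsilon)-\operatorname{deep}}(F)$ with $J\subset I$ but $2J\not\subset I$ — the support restriction fails, so I would instead estimate crudely: for such $J$, $\mathrm{P}^{\alpha}(J,\mathbf 1_{I}\sigma)\lesssim\left(\frac{\left\vert J\right\vert^{\frac1n}}{\left\vert I\right\vert^{\frac1n}}\right)\mathrm{P}^{\alpha}(I,\mathbf 1_{I}\sigma)+(\text{local term})$, and bound $\left\Vert\mathsf{P}_{F,J}^{\omega}\mathbf x\right\Vert_{L^{2}(\omega)}^{2}\leq\left\Vert\mathsf{P}_{J}^{\omega}\mathbf x\right\Vert_{L^{2}(\omega)}^{2}\lesssim\left\vert J\right\vert^{\frac2n}\left\vert J\right\vert_{\omega}$; after summing the geometric series over the dyadic scales of boundary cubes, this piece is controlled by $A_{2}^{\alpha}\left\vert I\right\vert_{\sigma}$, which is dominated by $A_{2}^{\alpha,\operatorname{energy}}\left\vert I\right\vert_{\sigma}$ via the trivial inequality $\left\Vert\mathsf{P}_{Q}^{\omega}\tfrac{\mathbf x}{\ell(Q)}\right\Vert_{L^{2}(\omega)}^{2}\leq n\left\vert Q\right\vert_{\omega}$ — or alternatively directly by $A_{2}^{\alpha,\operatorname{energy}}$ after noting the energy factor is genuinely present. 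Finally, summing the at-most-$\mathbf\tau$ contributions over $\ell$ and the $2^{n}$ children yields (\ref{B bound}).

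The main obstacle I anticipate is the careful bookkeeping at the boundary: matching the cube $I$ (which is only an \emph{alternate} $\Omega\mathcal{D}$-quasicube, not necessarily in $\Omega\mathcal{D}$) against the deeply-embedded cubes $J$ that belong to $\mathcal{M}_{(\mathbf r,\varepsilon)-\operatorname{deep}}(F)$, and verifying that the truncated-support Poisson term $\mathrm{P}^{\alpha}(J,\mathbf 1_{I}\sigma)$ can be compared with $\mathrm{P}^{\alpha}(J,\mathbf 1_{S\setminus\gamma J}\sigma)$-type quantities appearing in the strong energy condition (second line of Definition \ref{def strong quasienergy}). This is precisely where the alternate-dyadic refinement $\mathcal{M}_{(\mathbf r,\varepsilon)-\operatorname{deep},\Omega\mathcal{D}}^{\ell}(I)$ built into the strong quasienergy constant is used, and where one must invoke Lemma \ref{Poisson inequalities} repeatedly to absorb the geometric losses. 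The rest is routine given the machinery in \cite{SaShUr5}, to which the detailed estimates can be referred.
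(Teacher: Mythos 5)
Your skeleton is the right one, and it is the one the surrounding definitions were built for: pigeonhole the cubes $F$ via Lemma \ref{tau ovelap} into at most $\mathbf{\tau }$ $\mathcal{F}$-ancestors of the child of $I$ containing $J$ (this is where the factor $\mathbf{\tau }$ comes from), then split the $J$'s into a family controlled by the strong quasienergy constant and an exceptional family controlled by $A_{2}^{\alpha ,\limfunc{energy}}$. (The paper itself defers the proof to \cite{SaShUr7} and \cite{SaShUr6}, so there is no in-text argument to compare against line by line.) However, two of your steps have genuine gaps. The first is that your dichotomy is drawn in the wrong place. The second line of Definition \ref{def strong quasienergy} controls only the sum over $J\in \mathcal{M}_{\left( \mathbf{r},\varepsilon \right) -\limfunc{deep},\Omega \mathcal{D}}^{\ell }\left( I\right) $, i.e.\ over those $J\in \mathcal{M}_{\left( \mathbf{r},\varepsilon \right) -\limfunc{deep}}\left( \pi ^{\ell }I^{\prime }\right) $ that are contained in some $L\in \mathcal{M}_{\left( \mathbf{r},\varepsilon \right) -\limfunc{deep}}\left( I\right) $; such $J$ satisfy $\limfunc{qdist}\left( J,\partial I\right) \gtrsim \ell \left( J\right) ^{\varepsilon }\ell \left( I\right) ^{1-\varepsilon }$, which for small $J$ is far more restrictive than your condition $2J\subset I$. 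So there is an intermediate family --- $J$ with $2J\subset I$ but $J$ not contained in any $L\in \mathcal{M}_{\left( \mathbf{r},\varepsilon \right) -\limfunc{deep}}\left( I\right) $ --- to which neither of your arguments applies: these $J$ are not seen by the localized supremum you invoke (that localization \emph{is} the $\mathcal{M}^{\ell }$-refinement, so it cannot also rescue the cubes it excludes), and they are not within $O\left( \ell \left( J\right) \right) $ of $\partial I$, so the decay you extract from $2J\not\subset I$ is unavailable. Handling exactly these cubes is the technical heart of the lemma, and your proposal does not address it.

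The second gap is that your boundary estimate, as written, is inadmissible in the common-point-mass setting of this paper. You discard the energy by $\left\Vert \mathsf{P}_{F,J}^{\omega }\mathbf{x}\right\Vert ^{2}\lesssim \left\vert J\right\vert ^{\frac{2}{n}}\left\vert J\right\vert _{\omega }$ and arrive at a bound by ``$A_{2}^{\alpha }\left\vert I\right\vert _{\sigma }$, which is dominated by $A_{2}^{\alpha ,\limfunc{energy}}\left\vert I\right\vert _{\sigma }$ via the trivial inequality $\left\Vert \mathsf{P}_{Q}^{\omega }\frac{\mathbf{x}}{\ell \left( Q\right) }\right\Vert ^{2}\leq n\left\vert Q\right\vert _{\omega }$.'' That inequality yields $A_{2}^{\alpha ,\limfunc{energy}}\leq n\,\sup_{Q}\frac{\left\vert Q\right\vert _{\omega }}{\left\vert Q\right\vert ^{1-\frac{\alpha }{n}}}\frac{\left\vert Q\right\vert _{\sigma }}{\left\vert Q\right\vert ^{1-\frac{\alpha }{n}}}$, i.e.\ the \emph{opposite} domination; and the quantity your computation actually produces is the classical (non-offset) $A_{2}^{\alpha }$ characteristic, which the paper points out is infinite whenever $\sigma $ and $\omega $ share a point mass. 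The move you mention only as an afterthought is in fact mandatory: retain the energy factor, split $\mathrm{P}^{\alpha }\left( J,\mathbf{1}_{I}\sigma \right) $ into a local part $\mathrm{P}^{\alpha }\left( J,\mathbf{1}_{CJ\cap I}\sigma \right) \lesssim \left\vert CJ\cap I\right\vert _{\sigma }\left\vert J\right\vert ^{\frac{\alpha }{n}-1}$ plus a decaying tail, bound the local contribution by $A_{2}^{\alpha ,\limfunc{energy}}\left\vert CJ\cap I\right\vert _{\sigma }$, and sum using the bounded overlap of the $CJ$. Only in this form does the $A_{2}^{\alpha ,\limfunc{energy}}$ term in (\ref{B bound}) --- which Lemma \ref{energy A2} controls by the punctured Muckenhoupt constant --- legitimately appear.
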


\subsection{The forward Poisson testing inequality}

Fix $I\in \Omega \mathcal{D}$. We split the integration on the left side of (%
\ref{e.t1 n}) into a local and global piece:%
\begin{equation*}
\int_{\mathbb{R}_{+}^{n+1}}\mathbb{P}^{\alpha }\left( \mathbf{1}_{I}\sigma
\right) ^{2}d\overline{\mu }=\int_{\widehat{I}}\mathbb{P}^{\alpha }\left( 
\mathbf{1}_{I}\sigma \right) ^{2}d\overline{\mu }+\int_{\mathbb{R}%
_{+}^{n+1}\setminus \widehat{I}}\mathbb{P}^{\alpha }\left( \mathbf{1}%
_{I}\sigma \right) ^{2}d\overline{\mu }\equiv \mathbf{Local}\left( I\right) +%
\mathbf{Global}\left( I\right) ,
\end{equation*}%
where more explicitly,%
\begin{eqnarray}
&&\mathbf{Local}\left( I\right) \equiv \int_{\widehat{I}}\left[ \mathbb{P}%
^{\alpha }\left( \mathbf{1}_{I}\sigma \right) \left( x,t\right) \right] ^{2}d%
\overline{\mu }\left( x,t\right) ;\ \ \ \ \ \overline{\mu }\equiv \frac{1}{%
t^{2}}\mu ,  \label{def local forward} \\
\text{i.e. }\overline{\mu } &\equiv &\sum_{J\in \Omega \mathcal{D}}\frac{1}{%
\ell \left( J\right) ^{2}}\ \sum_{F\in \mathcal{F}}\sum_{J\in \mathcal{M}%
_{\left( \mathbf{r},\varepsilon \right) -\limfunc{deep}}\left( F\right)
}\left\Vert \mathsf{P}_{F,J}^{\omega }\mathbf{x}\right\Vert _{L^{2}\left(
\omega \right) }^{2}\cdot \delta _{\left( c_{J},\ell \left( J\right) \right)
}.  \notag
\end{eqnarray}%
Here is a brief schematic diagram of the decompositions, with bounds in $%
\fbox{}$, used in this subsection:%
\begin{equation*}
\fbox{$%
\begin{array}{ccc}
\mathbf{Local}\left( I\right) &  &  \\ 
\downarrow &  &  \\ 
\mathbf{Local}^{\limfunc{plug}}\left( I\right) & + & \mathbf{Local}^{%
\limfunc{hole}}\left( I\right) \\ 
\downarrow &  & \fbox{$\left( \mathcal{E}_{\alpha }^{\limfunc{strong}%
}\right) ^{2}$} \\ 
\downarrow &  &  \\ 
A & + & B \\ 
\fbox{$\left( \mathcal{E}_{\alpha }^{\limfunc{strong}}\right) ^{2}$} &  & 
\fbox{$\left( \mathcal{E}_{\alpha }^{\limfunc{strong}}\right)
^{2}+A_{2}^{\alpha ,\limfunc{energy}}$}%
\end{array}%
$}
\end{equation*}%
and%
\begin{equation*}
\fbox{$%
\begin{array}{ccccccc}
\mathbf{Global}\left( I\right) &  &  &  &  &  &  \\ 
\downarrow &  &  &  &  &  &  \\ 
A & + & B & + & C & + & D \\ 
\fbox{$A_{2}^{\alpha }$} &  & \fbox{$A_{2}^{\alpha }+A_{2}^{\alpha ,\limfunc{%
energy}}$} &  & \fbox{$\mathcal{A}_{2}^{\alpha ,\ast }$} &  & \fbox{$%
\mathcal{A}_{2}^{\alpha ,\ast }+A_{2}^{\alpha ,\limfunc{energy}%
}+A_{2}^{\alpha ,\limfunc{punct}}$}%
\end{array}%
$}.
\end{equation*}

An important consequence of the fact that $I$ and $J$ lie in the same
quasigrid $\Omega \mathcal{D}=\Omega \mathcal{D}^{\omega }$, is that%
\begin{equation}
\left( c\left( J\right) ,\ell \left( J\right) \right) \in \widehat{I}\text{ 
\textbf{if and only if} }J\subset I.  \label{tent consequence}
\end{equation}%
We thus have

\begin{eqnarray*}
&&\mathbf{Local}\left( I\right) =\int_{\widehat{I}}\mathbb{P}^{\alpha
}\left( \mathbf{1}_{I}\sigma \right) \left( x,t\right) ^{2}d\overline{\mu }%
\left( x,t\right) \\
&=&\sum_{F\in \mathcal{F}}\sum_{J\in \mathcal{M}_{\mathbf{r}-\limfunc{deep}%
}(F):\ J\subset I}\mathbb{P}^{\alpha }\left( \mathbf{1}_{I}\sigma \right)
\left( c_{J},\left\vert J\right\vert ^{\frac{1}{n}}\right) ^{2}\left\Vert 
\mathsf{P}_{F,J}^{\omega }\frac{\mathbf{x}}{\left\vert J\right\vert ^{\frac{1%
}{n}}}\right\Vert _{L^{2}\left( \omega \right) }^{2} \\
&\approx &\sum_{F\in \mathcal{F}}\sum_{J\in \mathcal{M}_{\mathbf{r}-\limfunc{%
deep}}\left( F\right) :\ J\subset I}\mathrm{P}^{\alpha }\left( J,\mathbf{1}%
_{I}\sigma \right) ^{2}\lVert \mathsf{P}_{F,J}^{\omega }\frac{\mathbf{x}}{%
\left\vert J\right\vert ^{\frac{1}{n}}}\rVert _{L^{2}\left( \omega \right)
}^{2} \\
&\lesssim &\mathbf{Local}^{\limfunc{plug}}\left( I\right) +\mathbf{Local}^{%
\func{hole}}\left( I\right) ,
\end{eqnarray*}%
where the `plugged' local sum $\mathbf{Local}^{\limfunc{plug}}\left(
I\right) $ is given by 
\begin{align*}
& \mathbf{Local}^{\limfunc{plug}}\left( I\right) \equiv \sum_{F\in \mathcal{F%
}}\sum_{J\in \mathcal{M}_{\mathbf{r}-\limfunc{deep}}\left( F\right) :\
J\subset I}\left( \frac{\mathrm{P}^{\alpha }\left( J,\mathbf{1}_{F\cap
I}\sigma \right) }{\left\vert J\right\vert ^{\frac{1}{n}}}\right)
^{2}\left\Vert \mathsf{P}_{F,J}^{\omega }\mathbf{x}\right\Vert _{L^{2}\left(
\omega \right) }^{2} \\
& =\left\{ \sum_{F\in \mathcal{F}:\ F\subset I}+\sum_{F\in \mathcal{F}:\
F\supsetneqq I}\right\} \sum_{J\in \mathcal{M}_{\mathbf{r}-\limfunc{deep}%
}\left( F\right) :\ J\subset I}\left( \frac{\mathrm{P}^{\alpha }\left( J,%
\mathbf{1}_{F\cap I}\sigma \right) }{\left\vert J\right\vert ^{\frac{1}{n}}}%
\right) ^{2}\left\Vert \mathsf{P}_{F,J}^{\omega }\mathbf{x}\right\Vert
_{L^{2}\left( \omega \right) }^{2} \\
& =A+B.
\end{align*}%
Then a \emph{trivial} application of the deep quasienergy condition (where
`trivial' means that the outer decomposition is just a single quasicube)
gives 
\begin{eqnarray*}
A &\leq &\sum_{F\in \mathcal{F}:\ F\subset I}\sum_{J\in \mathcal{M}_{\mathbf{%
r}-\limfunc{deep}}\left( F\right) }\left( \frac{\mathrm{P}^{\alpha }\left( J,%
\mathbf{1}_{F}\sigma \right) }{\left\vert J\right\vert ^{\frac{1}{n}}}%
\right) ^{2}\left\Vert \mathsf{P}_{F,J}^{\omega }\mathbf{x}\right\Vert
_{L^{2}\left( \omega \right) }^{2} \\
&\leq &\sum_{F\in \mathcal{F}:\ F\subset I}\left( \mathcal{E}_{\alpha }^{%
\limfunc{strong}}\right) ^{2}\left\vert F\right\vert _{\sigma }\lesssim
\left( \mathcal{E}_{\alpha }^{\limfunc{strong}}\right) ^{2}\left\vert
I\right\vert _{\sigma }\,,
\end{eqnarray*}%
since $\left\Vert \mathsf{P}_{F,J}^{\omega }x\right\Vert _{L^{2}\left(
\omega \right) }^{2}\leq \left\Vert \mathsf{P}_{J}^{\limfunc{good},\omega }%
\mathbf{x}\right\Vert _{L^{2}\left( \omega \right) }^{2}$, where we recall
that the quasienergy constant $\mathcal{E}_{\alpha }^{\limfunc{strong}}$ is
defined in Definition \ref{def strong quasienergy}. We also used that the
stopping quasicubes $\mathcal{F}$ satisfy a $\sigma $-Carleson measure
estimate, 
\begin{equation*}
\sum_{F\in \mathcal{F}:\ F\subset F_{0}}\left\vert F\right\vert _{\sigma
}\lesssim \left\vert F_{0}\right\vert _{\sigma }.
\end{equation*}%
Lemma \ref{refined lemma} applies to the remaining term $B$ to obtain the
bound%
\begin{equation*}
B\lesssim \mathbf{\tau }\left( \left( \mathcal{E}_{\alpha }^{\limfunc{strong}%
}\right) ^{2}+A_{2}^{\alpha ,\limfunc{energy}}\right) \left\vert
I\right\vert _{\sigma }\ .
\end{equation*}

It remains then to show the inequality with `holes', where the support of $%
\sigma $ is restricted to the complement of the quasicube $F$. Thus for $%
J\in \mathcal{M}_{\left( \mathbf{r},\varepsilon \right) -\limfunc{deep}%
}\left( F\right) $ we may use $I\setminus F$ in the argument of the Poisson
integral. We consider%
\begin{equation*}
\mathbf{Local}^{\func{hole}}\left( I\right) =\sum_{F\in \mathcal{F}%
}\sum_{J\in \mathcal{M}_{\left( \mathbf{r},\varepsilon \right) -\limfunc{deep%
}}\left( F\right) :\ J\subset I}\left( \frac{\mathrm{P}^{\alpha }\left( J,%
\mathbf{1}_{I\setminus F}\sigma \right) }{\left\vert J\right\vert ^{\frac{1}{%
n}}}\right) ^{2}\left\Vert \mathsf{P}_{F,J}^{\omega }\mathbf{x}\right\Vert
_{L^{2}\left( \omega \right) }^{2}\ .
\end{equation*}

\begin{lemma}
\label{local hole}We have 
\begin{equation}
\mathbf{Local}^{\func{hole}}\left( I\right) \lesssim \left( \mathcal{E}%
_{\alpha }^{\limfunc{strong}}\right) ^{2}\left\vert I\right\vert _{\sigma
}\,.  \label{RTS n}
\end{equation}
\end{lemma}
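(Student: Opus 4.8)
The plan is to observe that, once vanishing summands are discarded, every Poisson integral in $\mathbf{Local}^{\func{hole}}\left( I\right) $ is taken against a measure supported \emph{outside} the stopping quasicube $F$ in which the index $J$ is deeply embedded, and to exploit this separation to absorb the whole sum into $\left( \mathcal{E}_{\alpha }^{\limfunc{strong}}\right) ^{2}$ alone, with no Muckenhoupt term. Conceptually: the plugged piece $B$ needed $A_{2}^{\alpha ,\limfunc{energy}}$ precisely because there the relevant measure $\mathbf{1}_{I}\sigma $ still charges a neighbourhood of $J$; here the hole $F$ deletes exactly that neighbourhood, so the estimate becomes purely an energy estimate, and common point masses are harmless because $\mathbf{1}_{I\setminus F}\sigma $ never charges $F$.

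First I would reduce the index set. Since $J\subset F$ and $J\subset I$ both lie in the common quasigrid $\Omega \mathcal{D}=\Omega \mathcal{D}^{\omega }$, the quasicubes $F$ and $I$ are nested; if $I\subseteq F$ then $I\setminus F=\emptyset $ and the summand vanishes, so only $F\subsetneqq I$ contribute. For such $F$ the measure $\mathbf{1}_{I\setminus F}\sigma $ is supported in $I\setminus F\subseteq \mathbb{R}^{n}\setminus F$, and the deep embedding $J\Subset _{\mathbf{r},\varepsilon }F$ yields, for every $x\in I\setminus F$, the uniform lower bound $\left\vert x-c_{J}\right\vert \geq \limfunc{dist}\left( J,\partial F\right) \gtrsim \ell \left( J\right) ^{\varepsilon }\ell \left( F\right) ^{1-\varepsilon }$. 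Next I would decompose the hole complement along the dyadic tower from $F$ up to $I$: writing $\theta _{G}$ for the $\Omega \mathcal{D}$-child of a dyadic quasicube $G$ with $F\subsetneqq G\subseteq I$ that contains $F$, the sets $G\setminus \theta _{G}$ partition $I\setminus F$, so $\mathrm{P}^{\alpha }\left( J,\mathbf{1}_{I\setminus F}\sigma \right) =\sum_{F\subsetneqq G\subseteq I}\mathrm{P}^{\alpha }\left( J,\mathbf{1}_{G\setminus \theta _{G}}\sigma \right) $. Each annular contribution is split into its \emph{far} part, where $\left\vert x-c_{J}\right\vert $ is comparable to a scale increasing geometrically in $G$, so that the elementary comparisons of Lemma \ref{Poisson inequalities} produce summable decay factors in $G$; and its \emph{near} part, supported (only for the scale $G$ adjacent to $F$) in a fixed dilate $CF\setminus F$, where one has available only the deep-embedding separation $\ell \left( J\right) ^{\varepsilon }\ell \left( F\right) ^{1-\varepsilon }$. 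Choosing $\varepsilon $ small enough that $1-\varepsilon \left( n+1-\alpha \right) >0$ converts the near part into a positive power of $\ell \left( J\right) /\ell \left( F\right) $, which is exactly what is needed in order to sum the resulting bound against $\sum_{J\in \mathcal{M}_{\left( \mathbf{r},\varepsilon \right) -\limfunc{deep}}\left( F\right) }\left\Vert \mathsf{P}_{F,J}^{\omega }\mathbf{x}\right\Vert _{L^{2}\left( \omega \right) }^{2}$; this is the point where the second (``refinement'') term of $\mathcal{E}_{\alpha }^{\limfunc{strong}}$ in Definition \ref{def strong quasienergy} and the goodness of the quasicubes feeding $\mathsf{P}_{F,J}^{\omega }$ are invoked.

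With the per-annulus gains in hand, for each fixed $F$ I would apply the strong quasienergy condition to the trivial (single-quasicube) decomposition of $F$, exactly as was done for the term $A$ above, using $\left\Vert \mathsf{P}_{F,J}^{\omega }\mathbf{x}\right\Vert _{L^{2}\left( \omega \right) }\leq \left\Vert \mathsf{P}_{J}^{\limfunc{good},\omega }\mathbf{x}\right\Vert _{L^{2}\left( \omega \right) }$ together with the second term of Definition \ref{def strong quasienergy} for the near-boundary cubes, and then sum the geometric series over the tower $G$. This gives
\begin{equation*}
\sum_{J\in \mathcal{M}_{\left( \mathbf{r},\varepsilon \right) -\limfunc{deep}}\left( F\right) }\left( \frac{\mathrm{P}^{\alpha }\left( J,\mathbf{1}_{I\setminus F}\sigma \right) }{\left\vert J\right\vert ^{\frac{1}{n}}}\right) ^{2}\left\Vert \mathsf{P}_{F,J}^{\omega }\mathbf{x}\right\Vert _{L^{2}\left( \omega \right) }^{2}\lesssim \left( \mathcal{E}_{\alpha }^{\limfunc{strong}}\right) ^{2}\left\vert F\right\vert _{\sigma }\,,
\end{equation*}
and summing over $F\in \mathcal{F}$ with $F\subsetneqq I$ via the $\sigma $-Carleson estimate (\ref{Car ext}) for $\mathcal{F}$ yields (\ref{RTS n}).

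The main obstacle is the near-boundary estimate inside the tower decomposition: extracting a genuine positive power $\left( \ell \left( J\right) /\ell \left( F\right) \right) ^{\eta }$ from the part of the hole complement that abuts $F$, uniformly over $J\in \mathcal{M}_{\left( \mathbf{r},\varepsilon \right) -\limfunc{deep}}\left( F\right) $. The far part is routine dyadic Poisson decay, but near the shared face of $\theta _{G}$ the measure may sit arbitrarily close to $F$, and only the deep-embedding separation is available; forcing this power to be positive is what dictates the quantitative smallness constraint on $\varepsilon $, and organising the associated sum over $J$ is precisely what requires the refined component of $\mathcal{E}_{\alpha }^{\limfunc{strong}}$ rather than its naive version. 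Everything else — the reduction to $F\subsetneqq I$, the tower decomposition, and the final Carleson summation — carries over essentially verbatim from the corresponding argument in \cite{SaShUr5}, the only new feature being that the presence of common point masses causes no difficulty here since $\mathbf{1}_{I\setminus F}\sigma $ never charges $F$.
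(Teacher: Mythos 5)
The paper itself does not prove this lemma --- it defers to \cite{SaShUr7} and \cite{SaShUr6} --- so your sketch can only be measured against the argument there. Your skeleton (reduction to $F\subsetneqq I$, decomposition of $I\setminus F$ along the dyadic tower from $F$ to $I$, the deep-embedding Poisson gain requiring $1-\varepsilon\left( n+1-\alpha\right) >0$, and the appeal to the refined second component of $\mathcal{E}_{\alpha }^{\limfunc{strong}}$) is indeed the right strategy. But the decisive summation step in your write-up is wrong. The intermediate estimate you display,
\begin{equation*}
\sum_{J\in \mathcal{M}_{\left( \mathbf{r},\varepsilon \right) -\limfunc{deep}}\left( F\right) }\left( \frac{\mathrm{P}^{\alpha }\left( J,\mathbf{1}_{I\setminus F}\sigma \right) }{\left\vert J\right\vert ^{\frac{1}{n}}}\right) ^{2}\left\Vert \mathsf{P}_{F,J}^{\omega }\mathbf{x}\right\Vert _{L^{2}\left( \omega \right) }^{2}\lesssim \left( \mathcal{E}_{\alpha }^{\limfunc{strong}}\right) ^{2}\left\vert F\right\vert _{\sigma }\,,
\end{equation*}
is false: the left side sees $\sigma $ only through its restriction to $I\setminus F$, so taking $\sigma =\delta _{x_{1}}$ with $x_{1}\in I\setminus F$ near $\partial F$, and $\omega $ carried inside the cubes $J$, makes the left side positive and the energy constant finite while $\left\vert F\right\vert _{\sigma }=0$. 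The energy condition can only charge the $\sigma $-mass of a quasicube that actually supports the measure appearing in the Poisson integrals, so the contribution of the annulus $\pi ^{t+1}F\setminus \pi ^{t}F$ must be charged to $\left\vert \pi ^{t+1}F\right\vert _{\sigma }$, not to $\left\vert F\right\vert _{\sigma }$.

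Once that correction is made, your final step --- ``sum over $F$ via the Carleson estimate'' --- no longer closes the argument: $\sum_{F\in \mathcal{F}}\left\vert \pi ^{t+1}F\right\vert _{\sigma }$ is not controlled by $\left\vert I\right\vert _{\sigma }$, since a fixed ancestor is reached from as many as $2^{n\left( t+1\right) }$ distinct $F\in \mathcal{F}$ and the ancestors of a decreasing chain in $\mathcal{F}$ form a long tower of cubes each of which may carry essentially all of $\left\vert I\right\vert _{\sigma }$. The actual proof must reverse the order of operations: after a Cauchy--Schwarz with geometric weights in the tower level $t$, one groups all pairs $\left( F,J\right) $ sharing a common ancestor before applying the energy condition once per ancestor --- this is exactly what the refined collections $\mathcal{M}_{\left( \mathbf{r},\varepsilon \right) -\limfunc{deep},\Omega \mathcal{D}}^{\ell }$ and the alternate quasicubes in Definition \ref{def strong quasienergy} are designed for --- and one must exploit both the gain $2^{-t\delta }$ and the fact that the shifted coronas $\mathcal{C}_{F}^{\limfunc{good},\mathbf{\tau }-\limfunc{shift}}$ have overlap at most $\mathbf{\tau }$, so that the $\omega $-energies $\left\Vert \triangle _{J^{\prime }}^{\omega }\mathbf{x}\right\Vert _{L^{2}\left( \omega \right) }^{2}$ are not counted once for every $F$ in a tower. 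That bookkeeping is the real content of the lemma, and it is absent from your argument.
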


Details are left to the reader,or see \cite{SaShUr7} or \cite{SaShUr6} for a
proof. This completes the proof of%
\begin{eqnarray}
&&  \label{local} \\
\mathbf{Local}\left( L\right) &\approx &\sum_{F\in \mathcal{F}}\sum_{J\in 
\mathcal{M}_{\left( \mathbf{r},\varepsilon \right) -\limfunc{deep}}\left(
F\right) :\ J\subset L}\left( \frac{\mathrm{P}^{\alpha }\left( J,\mathbf{1}%
_{L}\sigma \right) }{\left\vert J\right\vert ^{\frac{1}{n}}}\right)
^{2}\left\Vert \mathsf{P}_{F,J}^{\omega }\mathbf{x}\right\Vert _{L^{2}\left(
\omega \right) }^{2}  \notag \\
&\lesssim &\left( \left( \mathcal{E}_{\alpha }^{\limfunc{strong}}\right)
^{2}+A_{2}^{\alpha ,\limfunc{energy}}\right) \left\vert L\right\vert
_{\sigma },\ \ \ L\in \Omega \mathcal{D}.  \notag
\end{eqnarray}

\subsubsection{The alternate local estimate}

For future use, we prove a strengthening of the local estimate $\mathbf{Local%
}\left( L\right) $ to \emph{alternate} quasicubes $M\in \mathcal{A}\Omega 
\mathcal{D}$.

\begin{lemma}
\label{shifted}With notation as above and $M\in \mathcal{A}\Omega \mathcal{D}
$ an alternate quasicube, we have 
\begin{eqnarray}
&&  \label{shifted local} \\
\mathbf{Local}\left( M\right) &\equiv &\sum_{F\in \mathcal{F}}\sum_{J\in 
\mathcal{M}_{\left( \mathbf{r},\varepsilon \right) -\limfunc{deep}}\left(
F\right) :\ J\subset M}\left( \frac{\mathrm{P}^{\alpha }\left( J,\mathbf{1}%
_{M}\sigma \right) }{\left\vert J\right\vert ^{\frac{1}{n}}}\right)
^{2}\left\Vert \mathsf{P}_{F,J}^{\omega }\mathbf{x}\right\Vert _{L^{2}\left(
\omega \right) }^{2}  \notag \\
&\lesssim &\left( \left( \mathcal{E}_{\alpha }^{\limfunc{strong}}\right)
^{2}+A_{2}^{\alpha ,\limfunc{energy}}\right) \left\vert M\right\vert
_{\sigma },\ \ \ M\in \mathcal{A}\Omega \mathcal{D}.  \notag
\end{eqnarray}
\end{lemma}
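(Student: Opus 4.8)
The plan is to repeat, with the alternate quasicube $M\in \mathcal{A}\Omega \mathcal{D}$ in place of a genuine dyadic quasicube $L\in \Omega \mathcal{D}$, the local-splitting argument already carried out to prove (\ref{local}); the point is that the only new ingredient needed -- the bound for the term $B$ below -- is Lemma \ref{refined lemma}, which is \emph{already} formulated for alternate quasicubes. Write $M=\bigcup_{i=1}^{2^{n}}M_{i}^{\prime }$ with $M_{i}^{\prime }\in \mathfrak{C}\left( M\right) \subset \Omega \mathcal{D}$ and $\ell \left( M_{i}^{\prime }\right) =\frac{1}{2}\ell \left( M\right) $. Since the $M_{i}^{\prime }$ are pairwise disjoint dyadic quasicubes of a common side length, every $J\in \Omega \mathcal{D}$ with $J\subset M$ lies in exactly one $M_{i}^{\prime }$ (a dyadic quasicube of side $\ell \left( M\right) $ cannot be a proper subset of $M$, and $M\notin \Omega \mathcal{D}$), and every $F\in \Omega \mathcal{D}$ meeting $M$ satisfies \emph{either} $F\subseteq M_{i}^{\prime }$ for some $i$ \emph{or} $F\supsetneqq M_{i}^{\prime }$ for some $i$, these alternatives being mutually exclusive. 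This dichotomy is the substitute for ``$F\subseteq L$ or $F\supsetneqq L$'' available when $L\in \Omega \mathcal{D}$.

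With this in hand, the elementary bound $\left( \mathrm{P}^{\alpha }\left( J,\mathbf{1}_{M}\sigma \right) \right) ^{2}\lesssim \left( \mathrm{P}^{\alpha }\left( J,\mathbf{1}_{F\cap M}\sigma \right) \right) ^{2}+\left( \mathrm{P}^{\alpha }\left( J,\mathbf{1}_{M\setminus F}\sigma \right) \right) ^{2}$ yields $\mathbf{Local}\left( M\right) \lesssim \mathbf{Local}^{\limfunc{plug}}\left( M\right) +\mathbf{Local}^{\func{hole}}\left( M\right) $ exactly as for (\ref{local}), where $\mathbf{Local}^{\limfunc{plug}}\left( M\right) =A+B$: here $A$ collects the terms with $F\subseteq M_{i}^{\prime }$ for some $i$ (so $\mathbf{1}_{F\cap M}\sigma =\mathbf{1}_{F}\sigma $) and $B$ collects the terms with $F\supsetneqq M_{i}^{\prime }$ for some $i$ (so $\mathbf{1}_{F\cap M}\sigma \leq \mathbf{1}_{M}\sigma $). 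For $A$, whenever $F\subseteq M_{i}^{\prime }$ every $J\in \mathcal{M}_{\left( \mathbf{r},\varepsilon \right) -\limfunc{deep}}\left( F\right) $ automatically satisfies $J\subset M_{i}^{\prime }\subset M$, and a trivial (single-quasicube) application of the deep quasienergy condition -- the first line of Definition \ref{def strong quasienergy} -- together with $\left\Vert \mathsf{P}_{F,J}^{\omega }\mathbf{x}\right\Vert _{L^{2}\left( \omega \right) }\leq \left\Vert \mathsf{P}_{J}^{\limfunc{good},\omega }\mathbf{x}\right\Vert _{L^{2}\left( \omega \right) }$ bounds the inner $J$-sum by $\left( \mathcal{E}_{\alpha }^{\limfunc{strong}}\right) ^{2}\left\vert F\right\vert _{\sigma }$; summing over $F\subseteq M_{i}^{\prime }$ by the $\sigma $-Carleson property of $\mathcal{F}$ and then over $i$ gives $A\lesssim \left( \mathcal{E}_{\alpha }^{\limfunc{strong}}\right) ^{2}\sum_{i}\left\vert M_{i}^{\prime }\right\vert _{\sigma }=\left( \mathcal{E}_{\alpha }^{\limfunc{strong}}\right) ^{2}\left\vert M\right\vert _{\sigma }$. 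For $B$, since $\mathfrak{C}\left( M\right) =\left\{ M_{i}^{\prime }\right\} $ and $\mathbf{1}_{F\cap M}\sigma \leq \mathbf{1}_{M}\sigma $, we have $B\leq B\left( M\right) $ with $B\left( M\right) $ the quantity in Lemma \ref{refined lemma} for the alternate quasicube $M$, hence $B\lesssim \mathbf{\tau }\left( \left( \mathcal{E}_{\alpha }^{\limfunc{strong}}\right) ^{2}+A_{2}^{\alpha ,\limfunc{energy}}\right) \left\vert M\right\vert _{\sigma }\lesssim \left( \left( \mathcal{E}_{\alpha }^{\limfunc{strong}}\right) ^{2}+A_{2}^{\alpha ,\limfunc{energy}}\right) \left\vert M\right\vert _{\sigma }$ (the fixed factor $\mathbf{\tau }$ being absorbed). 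Finally $\mathbf{Local}^{\func{hole}}\left( M\right) \lesssim \left( \mathcal{E}_{\alpha }^{\limfunc{strong}}\right) ^{2}\left\vert M\right\vert _{\sigma }$ by the argument proving Lemma \ref{local hole}, which uses only that $\mathbf{1}_{M\setminus F}\sigma $ is supported in the complement of $F$ while each $J$ occurring is $\left( \mathbf{r},\varepsilon \right) $-deeply embedded in $F$ -- so $\limfunc{qdist}\left( J,\limfunc{supp}\mathbf{1}_{M\setminus F}\sigma \right) \geq \frac{1}{2}\ell \left( J\right) ^{\varepsilon }\ell \left( F\right) ^{1-\varepsilon }$ -- together with the $\sigma $-Carleson property of $\mathcal{F}$, none of which requires $M\in \Omega \mathcal{D}$. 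Adding the three bounds and invoking $A_{2}^{\alpha ,\limfunc{energy}}\leq \max \left\{ n,3\right\} A_{2}^{\alpha ,\limfunc{punct}}$ from Lemma \ref{energy A2} gives (\ref{shifted local}).

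The only genuinely new points are combinatorial and geometric: that the dichotomy for quasicubes $F$ meeting the alternate cube $M$ is exhaustive and disjoint (which follows from uniqueness of dyadic ancestors), and that the hole estimate Lemma \ref{local hole} is insensitive to whether the ambient quasicube lies in $\Omega \mathcal{D}$. The latter is the step to watch, but it holds precisely because the proof of Lemma \ref{local hole} is driven by the deep-embedding separation of $J$ inside $F$ rather than by any property of the outer quasicube; everything else is a transcription of the dyadic case, with Lemma \ref{refined lemma} -- already valid for $\mathcal{A}\Omega \mathcal{D}$ -- carrying the load in term $B$.
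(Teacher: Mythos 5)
Your proof is correct and is exactly the adaptation the paper intends: the paper defers the details to \cite{SaShUr7} and \cite{SaShUr6}, but Lemma \ref{refined lemma} is formulated for alternate quasicubes precisely so that your term $B$ is literally $B\left( M\right) $, and your terms $A$ and the hole term are handled just as in the proof of (\ref{local}). The one point your write-up glosses over is that $\mathbf{Local}^{\func{hole}}\left( M\right) $ now receives contributions from pairs with $F\supsetneqq M_{i}^{\prime }$ --- these are vacuous in the dyadic case since there $I\setminus F=\emptyset $ --- but for such pairs $\mathrm{P}^{\alpha }\left( J,\mathbf{1}_{M\setminus F}\sigma \right) \leq \mathrm{P}^{\alpha }\left( J,\mathbf{1}_{M}\sigma \right) $, so they are again dominated by $B\left( M\right) $ and need no separate argument, while for $F\subseteq M_{i}^{\prime }$ the hole argument applies as you say.
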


Again details are left to the reader, or see \cite{SaShUr7} or \cite{SaShUr6}
for a proof.

\subsubsection{The global estimate}

Now we turn to proving the following estimate for the global part of the
first testing condition \eqref{e.t1 n}:%
\begin{equation*}
\mathbf{Global}\left( I\right) =\int_{\mathbb{R}_{+}^{n+1}\setminus \widehat{%
I}}\mathbb{P}^{\alpha }\left( \mathbf{1}_{I}\sigma \right) ^{2}d\overline{%
\mu }\lesssim \mathcal{A}_{2}^{\alpha ,\ast }\left\vert I\right\vert
_{\sigma }.
\end{equation*}%
We begin by decomposing the integral on the right into four pieces. As a
particular consequence of Lemma \ref{tau ovelap}, we note that given $J$,
there are at most a fixed number $\mathbf{\tau }$ of $F\in \mathcal{F}$ such
that $J\in \mathcal{M}_{\mathbf{r}-\limfunc{deep}}\left( F\right) $. We have:%
\begin{eqnarray*}
&&\int_{\mathbb{R}_{+}^{n+1}\setminus \widehat{I}}\mathbb{P}^{\alpha }\left( 
\mathbf{1}_{I}\sigma \right) ^{2}d\mu \leq \sum_{J:\ \left( c_{J},\ell
\left( J\right) \right) \in \mathbb{R}_{+}^{n+1}\setminus \widehat{I}}%
\mathbb{P}^{\alpha }\left( \mathbf{1}_{I}\sigma \right) \left( c_{J},\ell
\left( J\right) \right) ^{2}\sum_{\substack{ F\in \mathcal{F}  \\ J\in 
\mathcal{M}_{\left( \mathbf{r},\varepsilon \right) -\limfunc{deep}}\left(
F\right) }}\left\Vert \mathsf{P}_{F,J}^{\omega }\frac{\mathbf{x}}{\left\vert
J\right\vert ^{\frac{1}{n}}}\right\Vert _{L^{2}\left( \omega \right) }^{2} \\
&=&\left\{ \sum_{\substack{ J\cap 3I=\emptyset  \\ \ell \left( J\right) \leq
\ell \left( I\right) }}+\sum_{J\subset 3I\setminus I}+\sum_{\substack{ J\cap
I=\emptyset  \\ \ell \left( J\right) >\ell \left( I\right) }}%
+\sum_{J\supsetneqq I}\right\} \mathbb{P}^{\alpha }\left( \mathbf{1}%
_{I}\sigma \right) \left( c_{J},\ell \left( J\right) \right) ^{2}\sum 
_{\substack{ F\in \mathcal{F}:  \\ J\in \mathcal{M}_{\left( \mathbf{r}%
,\varepsilon \right) -\limfunc{deep}}\left( F\right) }}\left\Vert \mathsf{P}%
_{F,J}^{\omega }\frac{\mathbf{x}}{\left\vert J\right\vert ^{\frac{1}{n}}}%
\right\Vert _{L^{2}\left( \omega \right) }^{2} \\
&=&A+B+C+D.
\end{eqnarray*}

Terms $A$, $B$ and $C$ are handled almost the same as in \cite{SaShUr7}, and
we leave them for the reader. As always complete details are in \cite%
{SaShUr6}.

Finally, we turn to term $D$ which is significantly different due to the
presence of common point masses, more precisely a new \emph{`preparation to
puncture'} argument arises which is explained in detail below. The
quasicubes $J$ occurring here are included in the set of ancestors $%
A_{k}\equiv \pi _{\Omega \mathcal{D}}^{\left( k\right) }I$ of $I$, $1\leq
k<\infty $.%
\begin{eqnarray*}
D &=&\sum_{k=1}^{\infty }\mathbb{P}^{\alpha }\left( \mathbf{1}_{I}\sigma
\right) \left( c\left( A_{k}\right) ,\left\vert A_{k}\right\vert ^{\frac{1}{n%
}}\right) ^{2}\sum_{\substack{ F\in \mathcal{F}:  \\ A_{k}\in \mathcal{M}%
_{\left( \mathbf{r},\varepsilon \right) -\limfunc{deep}}\left( F\right) }}%
\left\Vert \mathsf{P}_{F,A_{k}}^{\omega }\frac{\mathbf{x}}{\lvert
A_{k}\rvert ^{\frac{1}{n}}}\right\Vert _{L^{2}\left( \omega \right) }^{2} \\
&=&\sum_{k=1}^{\infty }\mathbb{P}^{\alpha }\left( \mathbf{1}_{I}\sigma
\right) \left( c\left( A_{k}\right) ,\left\vert A_{k}\right\vert ^{\frac{1}{n%
}}\right) ^{2}\sum_{\substack{ F\in \mathcal{F}:  \\ A_{k}\in \mathcal{M}%
_{\left( \mathbf{r},\varepsilon \right) -\limfunc{deep}}\left( F\right) }}%
\sum_{J^{\prime }\in \mathcal{C}_{F}^{\limfunc{good},\mathbf{\tau }-\limfunc{%
shift}}:\ J^{\prime }\subset A_{k}\setminus I}\left\Vert \bigtriangleup
_{J^{\prime }}^{\omega }\frac{\mathbf{x}}{\lvert A_{k}\rvert ^{\frac{1}{n}}}%
\right\Vert _{L^{2}\left( \omega \right) }^{2} \\
&&+\sum_{k=1}^{\infty }\mathbb{P}^{\alpha }\left( \mathbf{1}_{I}\sigma
\right) \left( c\left( A_{k}\right) ,\left\vert A_{k}\right\vert ^{\frac{1}{n%
}}\right) ^{2}\sum_{\substack{ F\in \mathcal{F}:  \\ A_{k}\in \mathcal{M}%
_{\left( \mathbf{r},\varepsilon \right) -\limfunc{deep}}\left( F\right) }}%
\sum_{J^{\prime }\in \mathcal{C}_{F}^{\limfunc{good},\mathbf{\tau }-\limfunc{%
shift}}:\ J^{\prime }\subset I}\left\Vert \bigtriangleup _{J^{\prime
}}^{\omega }\frac{\mathbf{x}}{\lvert A_{k}\rvert ^{\frac{1}{n}}}\right\Vert
_{L^{2}\left( \omega \right) }^{2} \\
&&+\sum_{k=1}^{\infty }\mathbb{P}^{\alpha }\left( \mathbf{1}_{I}\sigma
\right) \left( c\left( A_{k}\right) ,\left\vert A_{k}\right\vert ^{\frac{1}{n%
}}\right) ^{2}\sum_{\substack{ F\in \mathcal{F}:  \\ A_{k}\in \mathcal{M}%
_{\left( \mathbf{r},\varepsilon \right) -\limfunc{deep}}\left( F\right) }}%
\sum_{J^{\prime }\in \mathcal{C}_{F}^{\limfunc{good},\mathbf{\tau }-\limfunc{%
shift}}:\ I\subsetneqq J^{\prime }\subset A_{k}}\left\Vert \bigtriangleup
_{J^{\prime }}^{\omega }\frac{\mathbf{x}}{\lvert A_{k}\rvert ^{\frac{1}{n}}}%
\right\Vert _{L^{2}\left( \omega \right) }^{2} \\
&\equiv &D_{\limfunc{disjoint}}+D_{\limfunc{descendent}}+D_{\limfunc{ancestor%
}}\ .
\end{eqnarray*}%
We thus have from Lemma \ref{tau ovelap} again,%
\begin{eqnarray*}
D_{\limfunc{disjoint}} &=&\sum_{k=1}^{\infty }\mathbb{P}^{\alpha }\left( 
\mathbf{1}_{I}\sigma \right) \left( c\left( A_{k}\right) ,\left\vert
A_{k}\right\vert ^{\frac{1}{n}}\right) ^{2} \\
&&\ \ \ \ \ \ \ \ \ \ \ \ \ \ \ \times \sum_{\substack{ F\in \mathcal{F}: 
\\ A_{k}\in \mathcal{M}_{\left( \mathbf{r},\varepsilon \right) -\limfunc{deep%
}}\left( F\right) }}\sum_{J^{\prime }\in \mathcal{C}_{F}^{\limfunc{good},%
\mathbf{\tau }-\limfunc{shift}}:\ J^{\prime }\subset A_{k}\setminus
I}\left\Vert \bigtriangleup _{J^{\prime }}^{\omega }\frac{\mathbf{x}}{\lvert
A_{k}\rvert ^{\frac{1}{n}}}\right\Vert _{L^{2}\left( \omega \right) }^{2} \\
&\lesssim &\sum_{k=1}^{\infty }\left( \frac{\left\vert I\right\vert _{\sigma
}\left\vert A_{k}\right\vert ^{\frac{1}{n}}}{\left\vert A_{k}\right\vert ^{1+%
\frac{1-\alpha }{n}}}\right) ^{2}\mathbf{\tau \;}\left\vert A_{k}\setminus
I\right\vert _{\omega }=\mathbf{\tau }\left\{ \frac{\left\vert I\right\vert
_{\sigma }}{\left\vert I\right\vert ^{1-\frac{\alpha }{n}}}%
\sum_{k=1}^{\infty }\frac{\left\vert I\right\vert ^{1-\frac{\alpha }{n}}}{%
\left\vert A_{k}\right\vert ^{2\left( 1-\frac{\alpha }{n}\right) }}%
\left\vert A_{k}\setminus I\right\vert _{\omega }\right\} \left\vert
I\right\vert _{\sigma } \\
&\lesssim &\mathbf{\tau }\left\{ \frac{\left\vert I\right\vert _{\sigma }}{%
\left\vert I\right\vert ^{1-\frac{\alpha }{n}}}\mathcal{P}^{\alpha }\left( I,%
\mathbf{1}_{I^{c}}\omega \right) \right\} \left\vert I\right\vert _{\sigma
}\lesssim \mathbf{\tau }\mathcal{A}_{2}^{\alpha ,\ast }\left\vert
I\right\vert _{\sigma },
\end{eqnarray*}%
since%
\begin{eqnarray*}
\sum_{k=1}^{\infty }\frac{\left\vert I\right\vert ^{1-\frac{\alpha }{n}}}{%
\left\vert A_{k}\right\vert ^{2\left( 1-\frac{\alpha }{n}\right) }}%
\left\vert A_{k}\setminus I\right\vert _{\omega } &=&\int \sum_{k=1}^{\infty
}\frac{\left\vert I\right\vert ^{1-\frac{\alpha }{n}}}{\left\vert
A_{k}\right\vert ^{2\left( 1-\frac{\alpha }{n}\right) }}\mathbf{1}%
_{A_{k}\setminus I}\left( x\right) d\omega \left( x\right) \\
&=&\int \sum_{k=1}^{\infty }\frac{1}{2^{2\left( 1-\frac{\alpha }{n}\right) k}%
}\frac{\left\vert I\right\vert ^{1-\frac{\alpha }{n}}}{\left\vert
I\right\vert ^{2\left( 1-\frac{\alpha }{n}\right) }}\mathbf{1}%
_{A_{k}\setminus I}\left( x\right) d\omega \left( x\right) \\
&\lesssim &\int_{I^{c}}\left( \frac{\left\vert I\right\vert ^{\frac{1}{n}}}{%
\left[ \left\vert I\right\vert ^{\frac{1}{n}}+\limfunc{quasidist}\left(
x,I\right) \right] ^{2}}\right) ^{n-\alpha }d\omega \left( x\right) =%
\mathcal{P}^{\alpha }\left( I,\mathbf{1}_{I^{c}}\omega \right) .
\end{eqnarray*}%
The next term $D_{\limfunc{descendent}}$ satisfies%
\begin{eqnarray*}
D_{\limfunc{descendent}} &\lesssim &\sum_{k=1}^{\infty }\left( \frac{%
\left\vert I\right\vert _{\sigma }\left\vert A_{k}\right\vert ^{\frac{1}{n}}%
}{\left\vert A_{k}\right\vert ^{1+\frac{1-\alpha }{n}}}\right) ^{2}\mathbf{%
\tau \;}\left\Vert \mathsf{P}_{I}^{\limfunc{good},\omega }\frac{\mathbf{x}}{%
2^{k}\lvert I\rvert ^{\frac{1}{n}}}\right\Vert _{L^{2}\left( \omega \right)
}^{2} \\
&=&\mathbf{\tau }\sum_{k=1}^{\infty }2^{-2k\left( n-\alpha +1\right) }\left( 
\frac{\left\vert I\right\vert _{\sigma }}{\left\vert I\right\vert ^{1-\frac{%
\alpha }{n}}}\right) ^{2}\left\Vert \mathsf{P}_{I}^{\limfunc{good},\omega }%
\frac{\mathbf{x}}{\lvert I\rvert ^{\frac{1}{n}}}\right\Vert _{L^{2}\left(
\omega \right) }^{2} \\
&\lesssim &\mathbf{\tau }\left\{ \frac{\left\vert I\right\vert _{\sigma
}\left\Vert \mathsf{P}_{I}^{\limfunc{good},\omega }\frac{\mathbf{x}}{\lvert
I\rvert ^{\frac{1}{n}}}\right\Vert _{L^{2}\left( \omega \right) }^{2}}{%
\left\vert I\right\vert ^{2\left( 1-\frac{\alpha }{n}\right) }}\right\}
\left\vert I\right\vert _{\sigma }\lesssim \mathbf{\tau }A_{2}^{\alpha ,%
\limfunc{energy}}\left\vert I\right\vert _{\sigma }\ .
\end{eqnarray*}

Finally for $D_{\limfunc{ancestor}}$ we note that each $J^{\prime }$ is of
the form $J^{\prime }=A_{\ell }\equiv \pi _{\Omega \mathcal{D}}^{\left( \ell
\right) }I$ for some $\ell \geq 1$, and that there are at most $C\mathbf{%
\tau }$ pairs $\left( F,A_{k}\right) $ with $k\geq \ell $ such that $%
A_{k}\in \mathcal{M}_{\left( \mathbf{r},\varepsilon \right) -\limfunc{deep}%
}\left( F\right) $ and $J^{\prime }=A_{\ell }\in \mathcal{C}_{F}^{\limfunc{%
good},\mathbf{\tau }-\limfunc{shift}}$. Now we write%
\begin{eqnarray*}
D_{\limfunc{ancestor}} &=&\sum_{k=1}^{\infty }\mathbb{P}^{\alpha }\left( 
\mathbf{1}_{I}\sigma \right) \left( c\left( A_{k}\right) ,\left\vert
A_{k}\right\vert ^{\frac{1}{n}}\right) ^{2}\sum_{\substack{ F\in \mathcal{F}%
:  \\ A_{k}\in \mathcal{M}_{\left( \mathbf{r},\varepsilon \right) -\limfunc{%
deep}}\left( F\right) }}\sum_{J^{\prime }\in \mathcal{C}_{F}^{\limfunc{good},%
\mathbf{\tau }-\limfunc{shift}}:\ I\subsetneqq J^{\prime }\subset
A_{k}}\left\Vert \bigtriangleup _{J^{\prime }}^{\omega }\frac{\mathbf{x}}{%
\lvert A_{k}\rvert ^{\frac{1}{n}}}\right\Vert _{L^{2}\left( \omega \right)
}^{2} \\
&\lesssim &\mathbf{\tau }\sum_{k=1}^{\infty }\left( \frac{\left\vert
I\right\vert _{\sigma }\left\vert A_{k}\right\vert ^{\frac{1}{n}}}{%
\left\vert A_{k}\right\vert ^{1+\frac{1-\alpha }{n}}}\right) ^{2}\sum_{\ell
=1}^{k}\left\Vert \bigtriangleup _{A_{\ell }}^{\omega }\frac{\mathbf{x}}{%
\lvert A_{k}\rvert ^{\frac{1}{n}}}\right\Vert _{L^{2}\left( \omega \right)
}^{2} \\
&\leq &\mathbf{\tau }\sum_{k=1}^{\infty }\left( \frac{\left\vert
I\right\vert _{\sigma }\left\vert A_{k}\right\vert ^{\frac{1}{n}}}{%
\left\vert A_{k}\right\vert ^{1+\frac{1-\alpha }{n}}}\right) ^{2}\left\Vert 
\mathsf{P}_{A_{k}}^{\limfunc{good},\omega }\frac{\mathbf{x}}{\lvert
A_{k}\rvert ^{\frac{1}{n}}}\right\Vert _{L^{2}\left( \omega \right) }^{2}.
\end{eqnarray*}

It is at this point that we must invoke a new \emph{`prepare to puncture'}
argument. Now define $\widetilde{\omega }=\omega -\omega \left( \left\{
p\right\} \right) \delta _{p}$ where $p$ is an atomic point in $I$ for which 
\begin{equation*}
\omega \left( \left\{ p\right\} \right) =\sup_{q\in \mathfrak{P}_{\left(
\sigma ,\omega \right) }:\ q\in I}\omega \left( \left\{ q\right\} \right) .
\end{equation*}%
(If $\omega $ has no atomic point in common with $\sigma $ in $I$ set $%
\widetilde{\omega }=\omega $.) Then we have $\left\vert I\right\vert _{%
\widetilde{\omega }}=\omega \left( I,\mathfrak{P}_{\left( \sigma ,\omega
\right) }\right) $ and%
\begin{equation*}
\frac{\left\vert I\right\vert _{\widetilde{\omega }}}{\left\vert
I\right\vert ^{\left( 1-\frac{\alpha }{n}\right) }}\frac{\left\vert
I\right\vert _{\sigma }}{\left\vert I\right\vert ^{\left( 1-\frac{\alpha }{n}%
\right) }}=\frac{\omega \left( I,\mathfrak{P}_{\left( \sigma ,\omega \right)
}\right) }{\left\vert I\right\vert ^{\left( 1-\frac{\alpha }{n}\right) }}%
\frac{\left\vert I\right\vert _{\sigma }}{\left\vert I\right\vert ^{\left( 1-%
\frac{\alpha }{n}\right) }}\leq A_{2}^{\alpha ,\limfunc{punct}}.
\end{equation*}%
A key observation, already noted in the proof of Lemma \ref{energy A2}
above, is that%
\begin{equation}
\left\Vert \bigtriangleup _{K}^{\omega }\mathbf{x}\right\Vert _{L^{2}\left(
\omega \right) }^{2}=\left\{ 
\begin{array}{ccc}
\left\Vert \bigtriangleup _{K}^{\omega }\left( \mathbf{x}-\mathbf{p}\right)
\right\Vert _{L^{2}\left( \omega \right) }^{2} & \text{ if } & p\in K \\ 
\left\Vert \bigtriangleup _{K}^{\omega }\mathbf{x}\right\Vert _{L^{2}\left( 
\widetilde{\omega }\right) }^{2} & \text{ if } & p\notin K%
\end{array}%
\right. \leq \ell \left( K\right) ^{2}\left\vert K\right\vert _{\widetilde{%
\omega }},\ \ \ \ \ \text{for all }K\in \Omega \mathcal{D}\ ,
\label{key obs}
\end{equation}%
and so, as in the proof of Lemma \ref{energy A2},%
\begin{equation*}
\left\Vert \mathsf{P}_{A_{k}}^{\limfunc{good},\omega }\frac{\mathbf{x}}{%
\left\vert A_{k}\right\vert ^{\frac{1}{n}}}\right\Vert _{L^{2}\left( \omega
\right) }^{2}\leq 3\left\vert A_{k}\right\vert _{\widetilde{\omega }}\ .
\end{equation*}%
Then we continue with%
\begin{eqnarray*}
&&\mathbf{\tau }\sum_{k=1}^{\infty }\left( \frac{\left\vert I\right\vert
_{\sigma }\left\vert A_{k}\right\vert ^{\frac{1}{n}}}{\left\vert
A_{k}\right\vert ^{1+\frac{1-\alpha }{n}}}\right) ^{2}\left\Vert \mathsf{P}%
_{A_{k}}^{\limfunc{good},\omega }\frac{\mathbf{x}}{\lvert A_{k}\rvert ^{%
\frac{1}{n}}}\right\Vert _{L^{2}\left( \omega \right) }^{2} \\
&\lesssim &\mathbf{\tau }\sum_{k=1}^{\infty }\left( \frac{\left\vert
I\right\vert _{\sigma }\left\vert A_{k}\right\vert ^{\frac{1}{n}}}{%
\left\vert A_{k}\right\vert ^{1+\frac{1-\alpha }{n}}}\right) ^{2}\left\vert
A_{k}\right\vert _{\widetilde{\omega }} \\
&=&\mathbf{\tau }\sum_{k=1}^{\infty }\left( \frac{\left\vert I\right\vert
_{\sigma }}{\left\vert A_{k}\right\vert ^{1-\frac{\alpha }{n}}}\right)
^{2}\left\vert A_{k}\setminus I\right\vert _{\omega }+\mathbf{\tau }%
\sum_{k=1}^{\infty }\left( \frac{\left\vert I\right\vert _{\sigma }}{%
2^{k\left( n-\alpha \right) }\left\vert I\right\vert ^{1-\frac{\alpha }{n}}}%
\right) ^{2}\left\vert I\right\vert _{\widetilde{\omega }} \\
&\lesssim &\mathbf{\tau }\left( \mathcal{A}_{2}^{\alpha ,\ast
}+A_{2}^{\alpha ,\limfunc{punct}}\right) \left\vert I\right\vert _{\sigma },
\end{eqnarray*}%
where the inequality $\sum_{k=1}^{\infty }\left( \frac{\left\vert
I\right\vert _{\sigma }}{\left\vert A_{k}\right\vert ^{1-\frac{\alpha }{n}}}%
\right) ^{2}\left\vert A_{k}\setminus I\right\vert _{\omega }\lesssim 
\mathcal{A}_{2}^{\alpha ,\ast }\left\vert I\right\vert _{\sigma }$ is
already proved above in the estimate for $D_{\limfunc{disjoint}}$.

\subsection{The backward Poisson testing inequality}

Fix $I\in \Omega \mathcal{D}$. It suffices to prove%
\begin{equation}
\mathbf{Back}\left( \widehat{I}\right) \equiv \int_{\mathbb{R}^{n}}\left[ 
\mathbb{Q}^{\alpha }\left( t\mathbf{1}_{\widehat{I}}\overline{\mu }\right)
\left( y\right) \right] ^{2}d\sigma (y)\lesssim \left\{ \mathcal{A}%
_{2}^{\alpha }+\left( \mathcal{E}_{\alpha }^{\limfunc{plug}}+\sqrt{%
A_{2}^{\alpha ,\limfunc{energy}}}\right) \sqrt{A_{2}^{\alpha ,\limfunc{punct}%
}}\right\} \int_{\widehat{I}}t^{2}d\overline{\mu }(x,t).  \label{e.t2 n'}
\end{equation}%
Note that in dimension $n=1$, Hyt\"{o}nen obtained in \cite{Hyt2} the
simpler bound $A_{2}^{\alpha }$ for the term analogous to (\ref{e.t2 n'}).
Here is a brief schematic diagram of the decompositions, with bounds in $%
\fbox{}$, used in this subsection:%
\begin{equation*}
\fbox{$%
\begin{array}{ccccc}
\mathbf{Back}\left( \widehat{I}\right) &  &  &  &  \\ 
\downarrow &  &  &  &  \\ 
U_{s} &  &  &  &  \\ 
\downarrow &  &  &  &  \\ 
T_{s}^{\limfunc{proximal}} & + & V_{s}^{\limfunc{remote}} &  &  \\ 
\fbox{$%
\begin{array}{c}
\mathcal{A}_{2}^{\alpha }+ \\ 
\left( \mathcal{E}_{\alpha }^{\limfunc{plug}}+\sqrt{A_{2}^{\alpha ,\limfunc{%
energy}}}\right) \sqrt{A_{2}^{\alpha ,\limfunc{punct}}}%
\end{array}%
$} &  & \downarrow &  &  \\ 
&  & \downarrow &  &  \\ 
&  & T_{s}^{\limfunc{difference}} & + & T_{s}^{\limfunc{intersection}} \\ 
&  & \fbox{$%
\begin{array}{c}
\mathcal{A}_{2}^{\alpha }+ \\ 
\left( \mathcal{E}_{\alpha }^{\limfunc{plug}}+\sqrt{A_{2}^{\alpha ,\limfunc{%
energy}}}\right) \sqrt{A_{2}^{\alpha ,\limfunc{punct}}}%
\end{array}%
$} &  & \fbox{$\left( \mathcal{E}_{\alpha }^{\limfunc{plug}}+\sqrt{%
A_{2}^{\alpha ,\limfunc{energy}}}\right) \sqrt{A_{2}^{\alpha ,\limfunc{punct}%
}}$}%
\end{array}%
$}.
\end{equation*}%
Using (\ref{tent consequence}) we see that the integral on the right hand
side of (\ref{e.t2 n'}) is 
\begin{equation}
\int_{\widehat{I}}t^{2}d\overline{\mu }=\sum_{F\in \mathcal{F}}\sum_{J\in 
\mathcal{M}_{\left( \mathbf{r},\varepsilon \right) -\limfunc{deep}}\left(
F\right) :\ J\subset I}\lVert \mathsf{P}_{F,J}^{\omega }\mathbf{x}\rVert
_{L^{2}\left( \omega \right) }^{2}\,.  \label{mu I hat}
\end{equation}%
where $\mathsf{P}_{F,J}^{\omega }$ was defined earlier.

We now compute using (\ref{tent consequence}) again that 
\begin{eqnarray}
\mathbb{Q}^{\alpha }\left( t\mathbf{1}_{\widehat{I}}\overline{\mu }\right)
\left( y\right)  &=&\int_{\widehat{I}}\frac{t^{2}}{\left( t^{2}+\left\vert
x-y\right\vert ^{2}\right) ^{\frac{n+1-\alpha }{2}}}d\overline{\mu }\left(
x,t\right)   \label{PI hat} \\
&\approx &\sum_{F\in \mathcal{F}}\sum_{\substack{ J\in \mathcal{M}_{\left( 
\mathbf{r},\varepsilon \right) -\limfunc{deep}}\left( F\right)  \\ J\subset I
}}\frac{\left\Vert \mathsf{P}_{F,J}^{\omega }\mathbf{x}\right\Vert
_{L^{2}\left( \omega \right) }^{2}}{\left( \left\vert J\right\vert ^{\frac{1%
}{n}}+\left\vert y-c_{J}\right\vert \right) ^{n+1-\alpha }},  \notag
\end{eqnarray}%
and then expand the square and integrate to obtain that the term $\mathbf{%
Back}\left( \widehat{I}\right) $ is 
\begin{equation*}
\sum_{\substack{ F\in \mathcal{F} \\ J\in \mathcal{M}_{\left( \mathbf{r}%
,\varepsilon \right) -\limfunc{deep}}\left( F\right)  \\ J\subset I}}\sum
_{\substack{ F^{\prime }\in \mathcal{F}: \\ J^{\prime }\in \mathcal{M}%
_{\left( \mathbf{r},\varepsilon \right) -\limfunc{deep}}\left( F^{\prime
}\right)  \\ J^{\prime }\subset I}}\int_{\mathbb{R}^{n}}\frac{\left\Vert 
\mathsf{P}_{F,J}^{\omega }\mathbf{x}\right\Vert _{L^{2}\left( \omega \right)
}^{2}}{\left( \left\vert J\right\vert ^{\frac{1}{n}}+\left\vert
y-c_{J}\right\vert \right) ^{n+1-\alpha }}\frac{\left\Vert \mathsf{P}%
_{F^{\prime },J^{\prime }}^{\omega }\mathbf{x}\right\Vert _{L^{2}\left(
\omega \right) }^{2}}{\left( \left\vert J^{\prime }\right\vert ^{\frac{1}{n}%
}+\left\vert y-c_{J^{\prime }}\right\vert \right) ^{n+1-\alpha }}d\sigma
\left( y\right) .
\end{equation*}

By symmetry we may assume that $\ell \left( J^{\prime }\right) \leq \ell
\left( J\right) $. We fix an integer $s$, and consider those quasicubes $J$
and $J^{\prime }$ with $\ell \left( J^{\prime }\right) =2^{-s}\ell \left(
J\right) $. For fixed $s$ we will control the expression 
\begin{eqnarray*}
U_{s} &\equiv &\sum_{\substack{ F,F^{\prime }\in \mathcal{F}}}\sum
_{\substack{ J\in \mathcal{M}_{\left( \mathbf{r},\varepsilon \right) -%
\limfunc{deep}}\left( F\right) ,\ J^{\prime }\in \mathcal{M}_{\left( \mathbf{%
r},\varepsilon \right) -\limfunc{deep}}\left( F^{\prime }\right)  \\ %
J,J^{\prime }\subset I,\ \ell \left( J^{\prime }\right) =2^{-s}\ell \left(
J\right) }} \\
&&\times \int_{\mathbb{R}^{n}}\frac{\left\Vert \mathsf{P}_{F,J}^{\omega }%
\mathbf{x}\right\Vert _{L^{2}\left( \omega \right) }^{2}}{\left( \left\vert
J\right\vert ^{\frac{1}{n}}+\left\vert y-c_{J}\right\vert \right)
^{n+1-\alpha }}\frac{\left\Vert \mathsf{P}_{F^{\prime },J^{\prime }}^{\omega
}\mathbf{x}\right\Vert _{L^{2}\left( \omega \right) }^{2}}{\left( \left\vert
J^{\prime }\right\vert ^{\frac{1}{n}}+\left\vert y-c_{J^{\prime
}}\right\vert \right) ^{n+1-\alpha }}d\sigma \left( y\right) ,
\end{eqnarray*}%
by proving that%
\begin{equation}
U_{s}\lesssim 2^{-\delta s}\left\{ \mathcal{A}_{2}^{\alpha }+\left( \mathcal{%
E}_{\alpha }^{\limfunc{strong}}+\sqrt{A_{2}^{\alpha ,\limfunc{energy}}}%
\right) \sqrt{A_{2}^{\alpha ,\limfunc{punct}}}\right\} \int_{\widehat{I}%
}t^{2}d\overline{\mu },\ \ \ \ \ \text{where }\delta =\frac{1}{2n}.
\label{Us bound}
\end{equation}%
With this accomplished, we can sum in $s\geq 0$ to control the term $\mathbf{%
Back}\left( \widehat{I}\right) $. The remaining details of the proof are
very similar to the corresponding arguments in \cite{SaShUr7}, with the only
exception being the repeated use of the \emph{`prepare to puncture'}
argument above whenever the measures $\sigma $ and $\omega $ can `see each
other' in an estimate. We refer the reader to \cite{SaShUr6} for complete
details\footnote{%
In \cite{SaShUr5} and \cite{SaShUr7} the bound for term $B$ in the global
estimate was mistakenly claimed without proof to be simply $\mathcal{A}%
_{2}^{\alpha }$ instead of the correct bound $\mathcal{A}_{2}^{\alpha
}+\left( \mathcal{E}_{\alpha }^{\limfunc{plug}}+\sqrt{A_{2}^{\alpha ,%
\limfunc{energy}}}\right) \sqrt{A_{2}^{\alpha ,\limfunc{punct}}}$ given in 
\cite{SaShUr6}.}.

\section{The stopping form}

This section is virtually unchanged from the corresponding section in \cite%
{SaShUr7}, so we content ourselves with a brief recollection. In the
one-dimensional setting of the Hilbert transform, Hyt\"{o}nen \cite{Hyt2}
observed that "...the innovative verification of the local estimate by Lacey 
\cite{Lac} is already set up in such a way that it is ready for us to borrow
as a black box." The same observation carried over in spirit regarding the
adaptation of Lacey's recursion and stopping time to proving the local
estimate in \cite{SaShUr7}. However, that adaptation involved the splitting
of the stopping form into two sublinear forms, the first handled by methods
in \cite{LaSaUr2}, and the second by the methods in \cite{Lac}. The
arguments are little changed when including common point masses, and we
leave them for the reader (or see \cite{SaShUr6} for the proofs written out
in detail).

\section{Energy dispersed measures}

In this final section we prove that the energy side conditions in our main
theorem hold if both measures are appropriately energy dispersed. We begin
with the definitions of energy dispersed and reversal of energy.

\subsection{Energy dispersed measures and reversal of energy}

Let $\mu $ be a locally finite positive Borel measure on $\mathbb{R}^{n}$.
Recall that for $0\leq k\leq n$, we denote by $\mathcal{L}_{k}^{n}$ the
collection of all $k$-dimensional planes in $\mathbb{R}^{n}$, and for a
quasicube $J$, we define the $k$\emph{-dimensional second moment} $\mathsf{M}%
_{k}^{n}\left( J,\mu \right) $ of $\mu $ on $J$ by%
\begin{equation*}
\mathsf{M}_{k}^{n}\left( J,\mu \right) ^{2}\equiv \inf_{L\in \mathcal{L}%
_{k}^{n}}\int_{J}\limfunc{dist}\left( x,L\right) ^{2}d\mu \left( x\right) .
\end{equation*}%
Finally we defined $\mu $ to be $k$\emph{-energy dispersed} if there is $c>0$
such that 
\begin{equation*}
\mathsf{M}_{k}^{n}\left( J,\mu \right) \geq c\mathsf{M}_{0}^{n}\left( J,\mu
\right) ,\ \ \ \ \ \text{for all quasicubes }J\text{ in }\mathbb{R}^{n}.
\end{equation*}

In order to introduce a useful reformulation of the $k$-dimensional second
moment, we will use the observation that minimizing $k$-planes $L$ pass
through the center of mass. More precisely, for any $k$-plane $L\in \mathcal{%
L}_{k}^{n}$ such that $\int_{A}\limfunc{dist}\left( x,L\right) ^{2}d\mu
\left( x\right) $ is minimized, where $A$ is a set of positive $\mu $%
-measure, we claim that%
\begin{equation*}
\mathbb{E}_{A}^{\mu }x\in L\ .
\end{equation*}%
Indeed, if we rotate coordinates so that $L=\left\{ \left(
x^{1},...,x^{k},a^{k+1},...,a^{n}\right) :\left( x^{1},...,x^{k}\right) \in 
\mathbb{R}^{k}\right\} $, then%
\begin{eqnarray*}
\int_{A}\limfunc{dist}\left( x,L\right) ^{2}d\mu \left( x\right)
&=&\int_{A}\sum_{j=k+1}^{n}\left( x^{j}-a^{j}\right) ^{2}d\mu \left( x\right)
\\
&=&\sum_{j=k+1}^{n}\left[ \int_{A}\left( x^{j}\right) ^{2}d\mu \left(
x\right) -2a^{j}\int_{A}x^{j}d\mu \left( x\right) +\left( a^{j}\right)
^{2}\int_{A}d\mu \left( x\right) \right] \\
&=&\sum_{j=k+1}^{n}\left[ \int_{A}\left( x^{j}\right) ^{2}d\mu \left(
x\right) +\left( \int_{A}d\mu \left( x\right) \right) \left\{ \left(
a^{j}\right) ^{2}-2\frac{\int_{A}x^{j}d\mu \left( x\right) }{\int_{A}d\mu
\left( x\right) }a^{j}\right\} \right]
\end{eqnarray*}%
is minimized over $a^{k+1},...,a^{n}$ when%
\begin{equation*}
a^{j}=\frac{\int_{A}x^{j}d\mu \left( x\right) }{\int_{A}d\mu \left( x\right) 
}=\left( \mathbb{E}_{A}^{\mu }x\right) ^{j},\ \ \ \ \ k+1\leq j\leq n.
\end{equation*}%
This shows that the point $\mathbb{E}_{A}^{\mu }x$ belongs to the $k$-plane $%
L$.

Now we can obtain our reformulation of the $k$-dimensional second moment.
Let $\mathcal{S}_{k}^{n}$ denote the collection of $k$-dimenional subspaces
in $\mathbb{R}^{n}$. If $\mathcal{P}_{S}$ denotes orthogonal projection onto
the subspace $S\in \mathcal{S}_{n-k}^{n}$ where $S=L_{0}^{\perp }$ and $%
L_{0}\in \mathcal{S}_{k}^{n}$ is the subspace parallel to $L$, then we have
the variance identity,%
\begin{eqnarray}
\mathsf{M}_{k}^{n}\left( J,\mu \right) ^{2} &=&\inf_{L\in \mathcal{L}%
_{k}^{n}}\int_{J}\limfunc{dist}\left( x,L\right) ^{2}d\mu \left( x\right)
=\inf_{S\in \mathcal{S}_{n-k}^{n}}\int_{J}\left\vert \mathcal{P}_{S}x-%
\mathcal{P}_{S}\left( \mathbb{E}_{J}^{\mu }x\right) \right\vert ^{2}d\mu
\left( x\right)  \label{variance} \\
&=&\frac{1}{2}\inf_{S\in \mathcal{S}_{n-k}^{n}}\frac{1}{\left\vert
J\right\vert _{\mu }}\int_{J}\int_{J}\left\vert \mathcal{P}_{S}x-\mathcal{P}%
_{S}y\right\vert ^{2}d\mu \left( x\right) d\mu \left( y\right)  \notag \\
&=&\frac{1}{2}\inf_{L_{0}\in \mathcal{S}_{k}^{n}}\frac{1}{\left\vert
J\right\vert _{\mu }}\int_{J}\int_{J}\limfunc{dist}\left( x,L_{0}+y\right)
^{2}d\mu \left( x\right) d\mu \left( y\right) ,  \notag
\end{eqnarray}%
since $\mathcal{P}_{S}\left( \mathbb{E}_{J}^{\mu }x\right) =\mathbb{E}%
_{J}^{\mu }\left( \mathcal{P}_{S}x\right) $. Here we have used in the first
line the fact that the minimizing $k$-planes $L$ pass through the center of
mass $\mathbb{E}_{J}^{\mu }x$ of $x$ in $J$.

Note that if $\mu $ is supported on a $k$-dimensional plane $L$ in $\mathbb{R%
}^{n}$, then $\mathsf{M}_{k}^{n}\left( J,\mu \right) $ vanishes for all
quasicubes $J$. On the other hand, $\mathsf{M}_{0}^{n}\left( J,\mu \right) $
is positive for any quasicube $J$ on which the restriction of $\mu $ is 
\emph{not} a point mass, and we conclude that measures $\mu $ supported on a 
$k$-plane. and whose restriction to $J$ is not a point mass, are \emph{not} $%
k$-energy dispersed. Thus $\mathsf{M}_{k}^{n}\left( J,\mu \right) $ measures
the extent to which a certain `energy' of $\mu $ is not localized to a $k$%
-plane. In this final section we will prove the necessity of the energy
conditions for boundedness of the vector Riesz transform $\mathbf{R}^{\alpha
,n}$ when the locally finite Borel measures $\sigma $ and $\omega $ on $%
\mathbb{R}^{n}$ are $k$-energy dispersed with%
\begin{equation}
\left\{ 
\begin{array}{ccc}
n-k<\alpha <n,\ \alpha \neq n-1 & \text{ if } & 1\leq k\leq n-2 \\ 
0\leq \alpha <n,\ \alpha \neq 1,n-1 & \text{ if } & k=n-1%
\end{array}%
\right. .  \label{index relations}
\end{equation}

Now we recall the definition of strong energy reversal from \cite{SaShUr2}.
We say that a vector $\mathbf{T}^{\alpha }=\left\{ T_{\ell }^{\alpha
}\right\} _{\ell =1}^{2}$ of $\alpha $-fractional transforms in the plane
has \emph{strong} reversal of $\omega $-energy on a cube $J$ if there is a
positive constant $C_{0}$ such that for all $2\leq \gamma \leq 2^{\mathbf{r}%
\left( 1-\varepsilon \right) }$ and for all positive measures $\mu $
supported outside $\gamma J$, we have the inequality%
\begin{equation}
\mathbb{E}_{J}^{\omega }\left[ \left( \mathbf{x}-\mathbb{E}_{J}^{\omega }%
\mathbf{x}\right) ^{2}\right] \left( \frac{\mathrm{P}^{\alpha }\left( J,\mu
\right) }{\left\vert J\right\vert ^{\frac{1}{n}}}\right) ^{2}=\mathsf{E}%
\left( J,\omega \right) ^{2}\mathrm{P}^{\alpha }\left( J,\mu \right)
^{2}\leq C_{0}\ \mathbb{E}_{J}^{\omega }\left\vert \mathbf{T}^{\alpha }\mu -%
\mathbb{E}_{J}^{d\omega }\mathbf{T}^{\alpha }\mu \right\vert ^{2},
\label{will fail}
\end{equation}%
Now note that if $\omega $ is $k$-energy dispersed, then we have%
\begin{equation*}
\mathsf{E}\left( J,\omega \right) ^{2}=\frac{1}{\left\vert J\right\vert
_{\omega }\left\vert J\right\vert ^{\frac{2}{n}}}\mathsf{M}_{0}^{n}\left(
J,\omega \right) ^{2}\lesssim \frac{1}{\left\vert J\right\vert _{\omega
}\left\vert J\right\vert ^{\frac{2}{n}}}\mathsf{M}_{k}^{n}\left( J,\omega
\right) ^{2}\equiv \mathsf{E}_{k}\left( J,\omega \right) ^{2},
\end{equation*}%
and where we have defined on the right hand side the analogous notion of
energy $\mathsf{E}_{k}\left( J,\omega \right) $ in terms of $\mathsf{M}%
_{k}\left( J,\omega \right) $, and which is smaller than $\mathsf{E}\left(
J,\omega \right) $. We now state the main result of this first subsection.

\begin{lemma}
\label{k partial reversal}Let $0\leq \alpha <n$. Suppose that $\omega $ is $%
k $-energy dispersed and that $k$ and $\alpha $ satisfy (\ref{index
relations}). Then the $\alpha $-fractional Riesz transform $\mathbf{R}%
^{\alpha ,n}=\left\{ R_{\ell }^{n,\alpha }\right\} _{\ell =1}^{n}$ has
strong reversal (\ref{will fail}) of $\omega $-energy on \emph{all} cubes $J$
provided $\gamma $ is chosen large enough depending only on $n$ and $\alpha $%
.
\end{lemma}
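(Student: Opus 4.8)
The plan is to estimate the right-hand side of \eqref{will fail} from below by averaging, over the $n$ components $R_\ell^{n,\alpha}$, the square of the oscillation $\mathbf{T}^\alpha\mu - \mathbb{E}_J^\omega \mathbf{T}^\alpha\mu$, and to show that this oscillation already controls the full $k$-dimensional second moment $\mathsf M_k^n(J,\omega)^2$ times $\left(\mathrm P^\alpha(J,\mu)/|J|^{1/n}\right)^2$, which by the energy-dispersed hypothesis dominates $\mathsf E(J,\omega)^2\, \mathrm P^\alpha(J,\mu)^2/|J|^{2/n}$, i.e. the left side. Concretely, first I would fix $y\in\operatorname{supp}\mu$ outside $\gamma J$ and Taylor-expand the kernel vector $\mathbf K^\alpha(x,y)=\tfrac{x-y}{|x-y|^{n+1-\alpha}}$ in the variable $x\in J$ about the center $c_J$: writing $x = c_J + (x-c_J)$ with $|x-c_J|\lesssim |J|^{1/n}\ll |y-c_J|$, the first-order term is $D_x\mathbf K^\alpha(c_J,y)\cdot(x-c_J)$, and the remainder is $O\big(|x-c_J|^{1+\delta}\,|y-c_J|^{\alpha-n-1-\delta}\big)$ by the $C^{1,\delta}$ kernel bounds in \eqref{sizeandsmoothness'}. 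Integrating against $\mu$ and using $\gamma\geq 2$ large, the remainder contributes at most $\gamma^{-\delta}\,\mathrm P_{1+\delta}^\alpha(J,\mu)/|J|^{1/n}\cdot|x-c_J|$ pointwise, so after subtracting the $\omega$-mean on $J$ it is negligible compared to the main linear term provided $\gamma$ is large.

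The core of the argument is then the linear-in-$(x-c_J)$ term: up to the small remainder,
$$
\mathbf T^\alpha\mu(x) - \mathbb{E}_J^\omega \mathbf T^\alpha\mu \;\approx\; \mathsf A_{J,\mu}\,\big(x - \mathbb{E}_J^\omega \mathbf{x}\big),
$$
where $\mathsf A_{J,\mu}$ is the $n\times n$ matrix $\mathsf A_{J,\mu} = \int D_x\mathbf K^\alpha(c_J,y)\,d\mu(y)$, whose entries are of size $\mathrm P_{1+\delta'}^\alpha(J,\mu)/|J|^{1/n}$-ish but more importantly whose structure I need to exploit. The key linear-algebra fact to establish is that, under the index relations \eqref{index relations} (which exclude $\alpha = 1, n-1$ and force $n-k<\alpha<n$ when $k\leq n-2$), the matrix $\mathsf A_{J,\mu}$ — or rather a suitable rescaling $|J|^{1/n}\,\mathrm P^\alpha(J,\mu)^{-1}\mathsf A_{J,\mu}$ — is uniformly bounded below on the orthogonal complement of at most a $k$-dimensional subspace; equivalently, for any $(n-k)$-dimensional subspace $S$ there is a unit vector $v\in S$ with $|\mathsf A_{J,\mu}\,v|\gtrsim \mathrm P^\alpha(J,\mu)/|J|^{1/n}$ with constant depending only on $n,\alpha$. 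Granting this, I choose $S$ to be a minimizing subspace for $\mathsf M_k^n(J,\omega)$ in the reformulation \eqref{variance}, pick such a $v\in S$, and compute
$$
\mathbb{E}_J^\omega \big|\mathbf T^\alpha\mu - \mathbb{E}_J^\omega \mathbf T^\alpha\mu\big|^2 \;\gtrsim\; \frac{1}{|J|_\omega}\int_J \big|\mathsf A_{J,\mu}\,(x-\mathbb{E}_J^\omega\mathbf x)\big|^2 d\omega(x) \;\gtrsim\; \Big(\tfrac{\mathrm P^\alpha(J,\mu)}{|J|^{1/n}}\Big)^2 \frac{1}{|J|_\omega}\int_J \big|\mathcal P_S(x-\mathbb{E}_J^\omega\mathbf x)\big|^2 d\omega(x),
$$
and the last integral is exactly $\mathsf M_k^n(J,\omega)^2$ by \eqref{variance}. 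Combining with $\mathsf M_0^n(J,\omega)\lesssim \mathsf M_k^n(J,\omega)$ (energy dispersed) and $\mathsf M_0^n(J,\omega)^2 = |J|_\omega |J|^{2/n}\mathsf E(J,\omega)^2$ yields \eqref{will fail}.

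I expect the matrix lower bound — the claim that $D_x\mathbf K^\alpha(c_J,\cdot)$ integrated against a measure $\mu$ supported far away is nondegenerate transverse to a $k$-plane — to be the main obstacle, and the place where the precise exclusions $\alpha\neq 1, n-1$ and the range $n-k<\alpha<n$ enter. Here the plan is to compute $D_x\mathbf K^\alpha$ explicitly: $\partial_{x_i}\big(\tfrac{x_j-y_j}{|x-y|^{n+1-\alpha}}\big) = \tfrac{\delta_{ij}}{|x-y|^{n+1-\alpha}} - (n+1-\alpha)\tfrac{(x_i-y_i)(x_j-y_j)}{|x-y|^{n+3-\alpha}}$, so that for a unit vector $u$, $D_x\mathbf K^\alpha(c_J,y)u = |c_J-y|^{\alpha-n-1}\big(u - (n+1-\alpha)\langle u,\theta\rangle\theta\big)$ with $\theta = \tfrac{c_J-y}{|c_J-y|}$. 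Thus $|D_x\mathbf K^\alpha(c_J,y)u|^2 = |c_J-y|^{2(\alpha-n-1)}\big(1 - (2(n+1-\alpha) - (n+1-\alpha)^2)\langle u,\theta\rangle^2\big)$; the scalar $2(n+1-\alpha)-(n+1-\alpha)^2 = (n+1-\alpha)(1-(n-1-\alpha)) = (n+1-\alpha)(2+\alpha-n)$ is $\leq 1$ precisely away from a controlled window, and one checks it is bounded away from $1$ (uniformly) exactly when $\alpha\notin\{1,n-1\}$ and in the stated ranges, giving $|D_x\mathbf K^\alpha(c_J,y)u|\gtrsim |c_J-y|^{\alpha-n-1}$ for \emph{all} $u$ in that case; the $k$-plane exception then arises only from cancellation in the $y$-integral when $\operatorname{supp}\mu$ clusters near a single direction, and one disposes of it by noting that directions $\theta$ come from $\operatorname{supp}\mu\subset J^c$, whose `angular spread' as seen from $c_J$ can be confined to a $k$-plane only on a set we can absorb — using \eqref{variance} applied to $\mu$ as in the last display of \eqref{variance}, and the fact that $\mathrm P^\alpha$ is comparable to an average of $|c_J-y|^{\alpha-n}$. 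I would then cite the Monotonicity Lemma \ref{mono} and Energy Lemma \ref{ener} structure only to organize the remainder estimate, and finish by taking $\gamma$ large enough that the first-order term dominates.
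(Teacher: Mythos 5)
There is a genuine gap, and it sits exactly at the step you yourself flag as the main obstacle: the lower bound for the \emph{integrated} matrix $\mathsf{A}_{J,\mu}=\int D_{x}\mathbf{K}^{\alpha }(c_{J},y)\,d\mu (y)$ off a $k$-dimensional subspace. Your pointwise computation does not deliver it. Indeed, with $c=n+1-\alpha $ the scalar you compute is $2c-c^{2}=c(2-c)\leq 1$ for \emph{every} $\alpha $ (equality only at $\alpha =n$), so the single-$y$ matrix $D_{x}\mathbf{K}^{\alpha }(c_{J},y)=|c_{J}-y|^{\alpha -n-1}(I-(n+1-\alpha )\theta \otimes \theta )$ is nondegenerate for all $0\leq \alpha <n$; the exclusions $\alpha \neq 1,n-1$ and the range $\alpha >n-k$ cannot enter there. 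They enter through cancellation in the $y$-integral, which your "angular spread of $\operatorname{supp}\mu $" heuristic does not control: for $\alpha =1$, $n\geq 3$, the potential $|x|^{\alpha +1-n}=|x|^{2-n}$ is harmonic, and a measure $\mu $ spread symmetrically on a sphere around $J$ makes $\nabla \mathbf{R}^{\alpha ,n}\mu $ degenerate at the center even though the directions $\theta $ fill the whole sphere. The paper's mechanism is the computation $\bigtriangleup _{x^{\prime }}|x|^{\alpha -n+1}=\beta \{(\ell +\beta -2)|x^{\prime }|^{2}+\ell |x^{\prime \prime }|^{2}\}|x|^{\beta -4}$ with $\beta =\alpha -n+1$, $x^{\prime }\in \mathbb{R}^{\ell }$, $\ell =k+1$: under (\ref{index relations}) this partial Laplacian has a \emph{single sign}, comparable to $|x|^{\alpha -n-1}$, and being a linear functional of the Hessian it commutes with integration against $\mu $ with no cancellation, giving $|\operatorname{trace}\nabla _{L}\mathbf{R}^{\alpha ,n}\mu (z)|\gtrsim \mathrm{P}^{\alpha }(J,\mu )/|J|^{\frac{1}{n}}$ for \emph{every} $\ell $-plane $L$, hence at most $k$ small eigenvalues of $\nabla \mathbf{R}^{\alpha ,n}\mu (c_{J})$. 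Without this (or an equivalent non-cancellation device) you have no proof, and no explanation of why precisely $\alpha =1,n-1$ and $\alpha \leq n-k$ must be excluded.

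Two further points, even granting the matrix bound. First, your final chain of inequalities takes $S$ to be a minimizing subspace for $\mathsf{M}_{k}^{n}(J,\omega )$ and a single $v\in S$ with $|\mathsf{A}_{J,\mu }v|$ large; but the inequality $\int_{J}|\mathsf{A}_{J,\mu }(x-\mathbb{E}_{J}^{\omega }\mathbf{x})|^{2}d\omega \gtrsim \lambda ^{2}\int_{J}|\mathcal{P}_{S}(x-\mathbb{E}_{J}^{\omega }\mathbf{x})|^{2}d\omega $ needs a lower bound for $\mathsf{A}_{J,\mu }$ on \emph{all} of $S$, which your choice does not provide. One should instead use the span $S_{0}$ of the $n-k$ large eigenvectors and the fact that $\int_{J}|\mathcal{P}_{S_{0}}(x-\mathbb{E}_{J}^{\omega }\mathbf{x})|^{2}d\omega \geq \mathsf{M}_{k}^{n}(J,\omega )^{2}$ by the infimum in (\ref{variance}). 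Second, the absorption of the error terms is more delicate than "take $\gamma $ large": the paper must select a gap index $\ell \leq m\leq k$ with $|\lambda _{\ell }|\leq \gamma ^{-\frac{1}{2n}}|\lambda _{\ell +1}|$ so that the contribution of the small-eigenvalue directions (your term $S$) is dominated by $\gamma ^{-1}|\lambda _{\ell +1}|^{2}|J|^{\frac{2}{n}}\mathsf{E}(J,\omega )^{2}$ and can be absorbed, and the $k$-energy dispersed hypothesis is then invoked in the form $\mathsf{E}(J,\omega )\lesssim \mathsf{E}_{\ell }(J,\omega )$ for that particular $\ell $. Your sketch treats only the Taylor remainder and misses this eigenvalue-gap selection.
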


In \cite{SaShUr4} we showed that energy reversal can fail spectacularly for
measures in general, but left open the possibility of reversing at least one
direction in the energy for $\mathbf{R}^{\alpha ,n}$ when $\alpha \neq 1$ in
the plane $n=2$, and we will show in the next subsection that this is indeed
possible, with even more directions included in higher dimensions.

\subsection{Fractional Riesz transforms and semi-harmonicity}

Now we fix $1\leq \ell \leq n$ and write $x=\left( x^{\prime },x^{\prime
\prime }\right) $ with $x^{\prime }=\left( x_{1},...,x_{\ell }\right) \in 
\mathbb{R}^{\ell }$ and $x^{\prime \prime }=\left( x_{\ell
+1},...,x_{n}\right) \in \mathbb{R}^{n-\ell }$ (when $\ell =n$ we have $%
x=x^{\prime }$). Then we compute for $\beta $ real that%
\begin{eqnarray*}
\bigtriangleup _{x^{\prime }}\left\vert x\right\vert ^{\beta }
&=&\bigtriangleup _{x^{\prime }}\left( \left\vert x^{\prime }\right\vert
^{2}+\left\vert x^{\prime \prime }\right\vert ^{2}\right) ^{\frac{\beta }{2}%
}=\nabla _{x^{\prime }}\cdot \nabla _{x^{\prime }}\left( \left\vert
x^{\prime }\right\vert ^{2}+\left\vert x^{\prime \prime }\right\vert
^{2}\right) ^{\frac{\beta }{2}} \\
&=&\nabla _{x^{\prime }}\cdot \left\{ \frac{\beta }{2}\left( \left\vert
x^{\prime }\right\vert ^{2}+\left\vert x^{\prime \prime }\right\vert
^{2}\right) ^{\frac{\beta }{2}-1}2x^{\prime }\right\} =\beta \nabla
_{x^{\prime }}\cdot \left\{ x^{\prime }\left( \left\vert x^{\prime
}\right\vert ^{2}+\left\vert x^{\prime \prime }\right\vert ^{2}\right) ^{%
\frac{\beta }{2}-1}\right\} \\
&=&\beta \left\{ \left( \nabla _{x^{\prime }}\cdot x^{\prime }\right) \left(
\left\vert x^{\prime }\right\vert ^{2}+\left\vert x^{\prime \prime
}\right\vert ^{2}\right) ^{\frac{\beta -2}{2}}+x^{\prime }\cdot \nabla
_{x^{\prime }}\left( \left\vert x^{\prime }\right\vert ^{2}+\left\vert
x^{\prime \prime }\right\vert ^{2}\right) ^{\frac{\beta -2}{2}}\right\} \\
&=&\beta \left\{ \ell \left( \left\vert x^{\prime }\right\vert
^{2}+\left\vert x^{\prime \prime }\right\vert ^{2}\right) ^{\frac{\beta -2}{2%
}}+x^{\prime }\cdot \frac{\beta -2}{2}\left( \left\vert x^{\prime
}\right\vert ^{2}+\left\vert x^{\prime \prime }\right\vert ^{2}\right) ^{%
\frac{\beta -2}{2}-1}2x^{\prime }\right\} \\
&=&\beta \left\{ \ell \left( \left\vert x^{\prime }\right\vert
^{2}+\left\vert x^{\prime \prime }\right\vert ^{2}\right) ^{\frac{\beta -2}{2%
}}+\left( \beta -2\right) \left\vert x^{\prime }\right\vert ^{2}\left(
\left\vert x^{\prime }\right\vert ^{2}+\left\vert x^{\prime \prime
}\right\vert ^{2}\right) ^{\frac{\beta -4}{2}}\right\} \\
&=&\beta \left\{ \ell \left( \left\vert x^{\prime }\right\vert
^{2}+\left\vert x^{\prime \prime }\right\vert ^{2}\right) \left( \left\vert
x^{\prime }\right\vert ^{2}+\left\vert x^{\prime \prime }\right\vert
^{2}\right) ^{\frac{\beta -4}{2}}+\left( \beta -2\right) \left\vert
x^{\prime }\right\vert ^{2}\left( \left\vert x^{\prime }\right\vert
^{2}+\left\vert x^{\prime \prime }\right\vert ^{2}\right) ^{\frac{\beta -4}{2%
}}\right\} \\
&=&\beta \left\{ \left( \ell +\beta -2\right) \left\vert x^{\prime
}\right\vert ^{2}+\ell \left\vert x^{\prime \prime }\right\vert ^{2}\right\}
\left( \left\vert x^{\prime }\right\vert ^{2}+\left\vert x^{\prime \prime
}\right\vert ^{2}\right) ^{\frac{\beta -4}{2}}.
\end{eqnarray*}

The case of interest for us is when $\beta =\alpha -n+1$, since then 
\begin{equation}
\bigtriangleup _{x^{\prime }}\left\vert x\right\vert ^{\beta }=\nabla
_{x^{\prime }}\cdot \nabla _{x^{\prime }}\left\vert x\right\vert ^{\alpha
-n+1}=\nabla _{x^{\prime }}\cdot \nabla \left\vert x\right\vert ^{\alpha
-n+1}=c_{\alpha ,n}\nabla _{x^{\prime }}\cdot \mathbf{K}^{\alpha ,n}\left(
x\right) ,  \label{interest}
\end{equation}%
where $\mathbf{K}^{\alpha ,n}$ is the vector convolution kernel of the $%
\alpha $-fractional Riesz transform $\mathbf{R}^{\alpha ,n}$. Now if $\ell
=1 $ in this case, then the factor 
\begin{equation*}
F_{\ell ,\beta }\left( x\right) \equiv \left( \ell +\beta -2\right)
\left\vert x^{\prime }\right\vert ^{2}+\ell \left\vert x^{\prime \prime
}\right\vert ^{2}
\end{equation*}%
is $\left( \beta -1\right) \left\vert x^{\prime }\right\vert ^{2}+\left\vert
x^{\prime \prime }\right\vert ^{2}$, and thus in dimension $n\geq 2$, the
factor $F_{1,\beta }\left( x\right) $ will be of one sign for all $x$ if and
only if $\alpha -n+1=\beta >1$, i.e. $\alpha >n$, which is of no use since
the Riesz transform $\mathbf{R}^{\alpha ,n}$\ is defined only for $0\leq
\alpha <n$.

Thus we must assume $\ell \geq 2$ and $\beta =\alpha -n+1$ when $n\geq 2$.
Under these assumptions, we then note that $F_{\ell ,\beta }\left( x\right) $
will be of one sign for all $x$ if $\ell +\beta -2>0$, i.e. $\alpha
>n+1-\ell $, in which case we conclude that%
\begin{eqnarray}
\left\vert \bigtriangleup _{x^{\prime }}\left\vert x\right\vert ^{\alpha
-n+1}\right\vert &=&\left\vert \alpha -n+1\right\vert \left\{ \left( \ell
+\alpha -n-1\right) \left\vert x^{\prime }\right\vert ^{2}+\ell \left\vert
x^{\prime \prime }\right\vert ^{2}\right\} \left( \left\vert x^{\prime
}\right\vert ^{2}+\left\vert x^{\prime \prime }\right\vert ^{2}\right) ^{%
\frac{\alpha -n-3}{2}}  \label{conclude} \\
&\approx &\left( \left\vert x^{\prime }\right\vert ^{2}+\left\vert x^{\prime
\prime }\right\vert ^{2}\right) ^{\frac{\alpha -n-1}{2}}=\left\vert
x\right\vert ^{\alpha -n-1},\ \ \ \ \ \text{for }\alpha \neq n-1.  \notag
\end{eqnarray}%
When $\ell =n$, this shows that $\left\vert \bigtriangleup _{x}\left\vert
x\right\vert ^{\alpha -n+1}\right\vert \approx \left\vert x\right\vert
^{\alpha -n-1}$ for $\alpha >1$ with $\alpha \neq n-1$. But in the case $%
\ell =n$ we can obtain more. Indeed, since $x^{\prime \prime }$ is no longer
present, we have for $0\leq \alpha <1$ that%
\begin{equation*}
\bigtriangleup _{x}\left\vert x\right\vert ^{\alpha -n+1}\approx \left\vert
x\right\vert ^{\alpha -n-1}.
\end{equation*}%
(This includes dimension $n=1$ but only for $0<\alpha <1$).

We summarize these results as follows. For dimension $n\geq 2$ and $x=\left(
x^{\prime },x^{\prime \prime }\right) $ with $x^{\prime }\in \mathbb{R}%
^{\ell }$ and $x^{\prime \prime }\in \mathbb{R}^{n-\ell }$, we have%
\begin{equation*}
\left\vert \bigtriangleup _{x^{\prime }}\left\vert x\right\vert ^{\alpha
-n+1}\right\vert \approx \left\vert x\right\vert ^{\alpha -n-1},
\end{equation*}%
provided 
\begin{eqnarray}
&&\text{\textbf{either} }2\leq \ell \leq n-1\text{ and }n+1-\ell <\alpha <n%
\text{ with }\alpha \neq n-1,  \label{either or} \\
&&\text{\textbf{or} }\ell =n\text{ and }0\leq \alpha <n\text{ with }\alpha
\neq 1,n-1.  \notag
\end{eqnarray}%
Thus the two cases not included are $\alpha =1$ and $\alpha =n-1$. The case $%
\alpha =1$ is not included since $\left\vert x\right\vert ^{\alpha
-n+1}=\left\vert x\right\vert ^{2-n}$ is the fundamental solution of the
Laplacian for $n>2$ and constant for $n=2$. The case $\alpha =n-1$ is not
included since $\left\vert x\right\vert ^{\alpha -n+1}=1$ is constant.

So we now suppose that $\alpha $ and $\ell $ are as in (\ref{either or}),
and we consider $\ell $-planes $L$ intersecting the cube $J$. Recall that
the trace of a matrix is invariant under rotations. Thus for each such $\ell 
$-plane $L$, and for $z\in J\cap L$, we have from (\ref{interest}) and (\ref%
{conclude}), and with $\mathbf{I}^{\alpha +1,n}\mu \left( z\right) \equiv
\int_{\mathbb{R}^{n}}\left\vert z-y\right\vert ^{\alpha +1-n}d\mu \left(
y\right) $ denoting the convolution of $\left\vert x\right\vert ^{\alpha
+1-n}$ with $\mu $, that 
\begin{equation}
\left\vert \nabla _{L}\mathbf{R}^{\alpha ,n}\mu \left( z\right) \right\vert
\gtrsim \left\vert \limfunc{trace}\nabla _{L}\mathbf{R}^{\alpha ,n}\mu
\left( z\right) \right\vert =\left\vert \bigtriangleup _{L}\mathbf{I}%
^{\alpha +1,n}\mu \left( z\right) \right\vert \approx \int \left\vert
y-z\right\vert ^{\alpha -n-1}d\mu \left( y\right) \approx \frac{\mathrm{P}%
^{\alpha }\left( J,\mu \right) }{\left\vert J\right\vert ^{\frac{1}{n}}},
\label{L control}
\end{equation}%
where $\nabla _{L}$ denotes the gradient in the $\ell $-plane $L$, i.e. $%
\nabla _{L}=\mathcal{P}_{S}\nabla $ where $S$ is the subspace parallel to $L$
and $\mathcal{P}_{S}$ is orthogonal projection onto $S$, and where we assume
that the positive measure $\mu $ is supported outside the expanded cube $%
\gamma J$.

We now claim that for every $z\in J\cap L$, the full matrix gradient $\nabla 
\mathbf{R}^{\alpha ,n}\mu \left( z\right) $ is `missing' at most $\ell -1$
`large' directions, i.e. has at least $n-\ell +1$ eigenvalues each of size
at least $c\frac{\mathrm{P}^{\alpha }\left( J,\mu \right) }{\left\vert
J\right\vert ^{\frac{1}{n}}}$. Indeed, to see this, suppose instead that the
matrix $\nabla \mathbf{R}^{\alpha ,n}\mu \left( z\right) $ has at most $%
n-\ell $ eigenvalues of size at least $c\frac{\mathrm{P}^{\alpha }\left(
J,\mu \right) }{\left\vert J\right\vert ^{\frac{1}{n}}}$. Then there is an $%
\ell $-dimensional subspace $S$ such that 
\begin{equation*}
\left\vert \nabla _{S}\mathbf{R}^{\alpha ,n}\mu \left( z\right) \right\vert
=\left\vert \left( \mathcal{P}_{S}\nabla \right) \mathbf{R}^{\alpha ,n}\mu
\left( z\right) \right\vert =\left\vert \mathcal{P}_{S}\left( \nabla \mathbf{%
R}^{\alpha ,n}\mu \left( z\right) \right) \right\vert \leq c\frac{\mathrm{P}%
^{\alpha }\left( J,\mu \right) }{\left\vert J\right\vert ^{\frac{1}{n}}},
\end{equation*}%
which contradicts (\ref{L control}) if $c$ is chosen small enough. This
proves our claim, and moreover, it satisfies the quantitative quadratic
estimate%
\begin{equation*}
\left\vert \xi \cdot \nabla \mathbf{R}^{\alpha ,n}\mu \left( z\right) \xi
\right\vert \geq c\frac{\mathrm{P}^{\alpha }\left( J,\mu \right) }{%
\left\vert J\right\vert ^{\frac{1}{n}}}\left\vert \xi \right\vert ^{2},
\end{equation*}%
for all vectors $\xi $ in some $\left( n-\ell +1\right) $-dimensional
subspace 
\begin{equation*}
\mathsf{S}_{z}^{n-\ell +1}\equiv \limfunc{Span}\left\{ \mathbf{v}%
_{z}^{1},...,\mathbf{v}_{z}^{n-\ell +1}\right\} \in \mathcal{S}_{n-\ell
+1}^{n},
\end{equation*}%
with $\mathbf{v}_{z}^{j}\in \mathbb{S}^{n-1}$ for $1\leq j\leq n-\ell +1$.

It is convenient at this point to let 
\begin{equation*}
k=\ell -1,
\end{equation*}%
so that $1\leq k\leq n-1$ and the assumptions (\ref{either or}) become%
\begin{eqnarray}
&&\text{\textbf{either} }1\leq k\leq n-2\text{ and }n-k<\alpha <n\text{ with 
}\alpha \neq n-1,  \label{either or'} \\
&&\text{\textbf{or} }k=n-1\text{ and }0\leq \alpha <n\text{ with }\alpha
\neq 1,n-1,  \notag
\end{eqnarray}%
and our conclusion becomes%
\begin{equation}
\left\vert \xi \cdot \nabla \mathbf{R}^{\alpha ,n}\mu \left( z\right) \xi
\right\vert \geq c\frac{\mathrm{P}^{\alpha }\left( J,\mu \right) }{%
\left\vert J\right\vert ^{\frac{1}{n}}}\left\vert \xi \right\vert ^{2},\ \ \
\ \ \xi \in \mathsf{S}_{z}^{n-k},\ z\in J.  \label{form est}
\end{equation}

\subsubsection{Proof of strong reversal of energy}

We are now in a position to prove the strong reversal of energy for Riesz
transforms in Lemma \ref{k partial reversal}.

\begin{proof}
(of Lemma \ref{k partial reversal}) Recall that $\mathsf{E}_{k}\left(
J,\omega \right) ^{2}=\inf_{L\in \mathcal{L}_{k}^{n}}\frac{1}{\left\vert
J\right\vert _{\omega }}\int_{J}\left( \frac{\limfunc{dist}\left( x,L\right) 
}{\left\vert J\right\vert ^{\frac{1}{n}}}\right) ^{2}d\omega \left( x\right) 
$ and%
\begin{equation}
\frac{1}{\left\vert J\right\vert _{\omega }}\int_{J}\left( \frac{\limfunc{%
dist}\left( x,L\right) }{\left\vert J\right\vert ^{\frac{1}{n}}}\right)
^{2}d\omega \left( x\right) =\frac{1}{2}\frac{1}{\left\vert J\right\vert
_{\omega }}\int_{J}\frac{1}{\left\vert J\right\vert _{\omega }}%
\int_{J}\left( \frac{\limfunc{dist}\left( x,z+L_{0}\right) }{\left\vert
J\right\vert ^{\frac{1}{n}}}\right) ^{2}d\omega \left( x\right) d\omega
\left( z\right) ,  \label{Ek}
\end{equation}%
where we recall that $L_{0}\in \mathcal{S}_{k}^{n}$ is parallel to $L$. The
real matrix%
\begin{equation}
M\left( x\right) \equiv \nabla \mathbf{R}^{\alpha ,n}\mu \left( x\right) ,\
\ \ \ \ x\in J,  \label{assumption2}
\end{equation}%
is a scalar multiple of the Hessian of $\left\vert x\right\vert ^{\alpha +1}$%
, hence is symmetric, and so we can rotate coordinates to diagonalize the
matrix, 
\begin{equation*}
M\left( x\right) =\left[ 
\begin{array}{cccc}
\lambda _{1}\left( x\right) & 0 & \cdots & 0 \\ 
0 & \lambda _{2}\left( x\right) &  & \vdots \\ 
\vdots &  & \ddots & 0 \\ 
0 & \cdots & 0 & \lambda _{n}\left( x\right)%
\end{array}%
\right] ,
\end{equation*}%
where $\left\vert \lambda _{1}\left( x\right) \right\vert \leq \left\vert
\lambda _{2}\left( x\right) \right\vert \leq ...\leq \left\vert \lambda
_{n}\left( x\right) \right\vert $. We now fix $x=c_{J}$ to be the center of $%
J$ in the matrix $M\left( c_{J}\right) $ and fix the eigenvalues
corresponding to $M\left( c_{J}\right) $: 
\begin{equation*}
\left\vert \lambda _{1}\right\vert \leq \left\vert \lambda _{2}\right\vert
\leq ...\leq \left\vert \lambda _{n}\right\vert ,\ \ \ \ \ \lambda
_{j}\equiv \lambda _{j}\left( c_{J}\right) ,
\end{equation*}%
and define also the subspaces $\mathsf{S}^{n-i}$ to be $\mathsf{S}%
_{c_{J}}^{n-i}$ for $1\leq i\leq k$. Note that we then have $\mathsf{S}%
^{n-i}=\limfunc{Span}\left\{ \mathbf{e}_{i+1},...,\mathbf{e}_{n}\right\} $.
Let $L_{z}^{i}$ be the $i$-plane%
\begin{equation}
L_{z}^{i}\equiv z+\left( \mathsf{S}^{n-i}\right) ^{\perp }=\left\{ \left(
u^{1},...,u^{i},z^{i+1},...,z^{n}\right) :\left( u^{1},...,u^{i}\right) \in 
\mathbb{R}^{i}\right\} .  \label{tag}
\end{equation}

By (\ref{form est}) we have%
\begin{equation*}
\left\vert \lambda _{k+1}\right\vert \geq c\frac{\mathrm{P}^{\alpha }\left(
J,\mu \right) }{\left\vert J\right\vert ^{\frac{1}{n}}}.
\end{equation*}%
For convenience define $\left\vert \lambda _{0}\right\vert \equiv 0$ and
then define $0\leq m\leq k$ be the unique integer such that%
\begin{equation}
\left\vert \lambda _{m}\right\vert <c\frac{\mathrm{P}^{\alpha }\left( J,\mu
\right) }{\left\vert J\right\vert ^{\frac{1}{n}}}\leq \left\vert \lambda
_{m+1}\right\vert .  \label{unique}
\end{equation}%
Now consider the largest $0\leq \ell \leq m$ that satisfies%
\begin{equation}
\left\vert \lambda _{\ell }\right\vert \leq \gamma ^{-\frac{1}{2n}%
}\left\vert \lambda _{\ell +1}\right\vert .  \label{largest}
\end{equation}%
Note that this use of $\ell $ is quite different than that used in (\ref%
{either or}).

So suppose first that $\ell $ satisfies $1\leq \ell \leq m$ and is the
largest index satisfying (\ref{largest}). Then if $\ell <m$ we have $%
\left\vert \lambda _{i}\right\vert >\gamma ^{-\frac{1}{2n}}\left\vert
\lambda _{i+1}\right\vert $ for $\ell +1\leq i\leq m$, and so both%
\begin{eqnarray}
\left\vert \lambda _{\ell +1}\right\vert &>&\gamma ^{-\frac{1}{2n}%
}\left\vert \lambda _{\ell +2}\right\vert >...>\gamma ^{-\frac{m-\ell }{2n}%
}\left\vert \lambda _{m+1}\right\vert \geq \gamma ^{-\frac{m-\ell }{2n}}c%
\frac{\mathrm{P}^{\alpha }\left( J,\mu \right) }{\left\vert J\right\vert ^{%
\frac{1}{n}}},  \label{lamb} \\
\left\vert \lambda _{1}\right\vert &\leq &...\leq \left\vert \lambda _{\ell
}\right\vert \leq \gamma ^{-\frac{1}{2n}}\left\vert \lambda _{\ell
+1}\right\vert .  \notag
\end{eqnarray}%
Both inequalities in the display above also hold for $\ell =m$ by (\ref%
{unique}) and (\ref{largest}). Roughly speaking, in this case where $1\leq
\ell \leq m$, the gradient of $\mathbf{R}^{\alpha ,n}\mu $ has modulus at
least $\left\vert \lambda _{\ell +1}\right\vert $ in the directions of $%
\mathbf{e}_{\ell +1},...,\mathbf{e}_{n}$, while the gradient of $\mathbf{R}%
^{\alpha ,n}\mu $ has modulus at most $\gamma ^{-\frac{1}{2n}}\left\vert
\lambda _{\ell +1}\right\vert $ in the directions of $\mathbf{e}_{1},...,%
\mathbf{e}_{\ell }$.

Recall that $\mathsf{S}^{n-\ell }=\mathsf{S}_{c_{J}}^{n-\ell }$ is the
subspace on which the symmetric matrix $M\left( c_{J}\right) =\nabla \left( 
\mathbf{R}^{\alpha ,n}\mu \right) \left( c_{J}\right) $ has energy $\xi ^{%
\limfunc{tr}}M\left( c_{J}\right) \xi $ bounded below by $\left\vert \lambda
_{\ell +1}\right\vert $. Now we proceed to show that%
\begin{equation}
\left\vert \lambda _{\ell +1}\right\vert ^{2}\left\vert J\right\vert ^{\frac{%
2}{n}}\mathsf{E}\left( J,\omega \right) ^{2}\lesssim \frac{1}{\left\vert
J\right\vert _{\omega }^{2}}\int_{J}\int_{J}\left\vert \mathbf{R}^{\alpha
,n}\mu \left( x\right) -\mathbf{R}^{\alpha ,n}\mu \left( z\right)
\right\vert ^{2}d\omega \left( x\right) d\omega \left( z\right) .
\label{will show}
\end{equation}%
We will use our hypothesis that $\omega $ is $k$-energy dispersed to obtain%
\begin{equation*}
\mathsf{E}\left( J,\omega \right) \leq \mathsf{E}_{k}\left( J,\omega \right)
\leq \mathsf{E}_{m}\left( J,\omega \right) \leq \mathsf{E}_{\ell }\left(
J,\omega \right)
\end{equation*}%
since $\ell \leq m\leq k$. To prove (\ref{will show}), we take $L_{z}\equiv
L_{z}^{\ell }$ as in (\ref{tag}) and begin with%
\begin{eqnarray}
\limfunc{dist}\left( x,L_{z}\right) ^{2} &=&\limfunc{dist}\left( x,z+\left( 
\mathsf{S}^{n-\ell }\right) ^{\perp }\right) ^{2}  \label{dist} \\
&=&\left( x_{\ell +1}-z_{\ell +1}\right) ^{2}+...+\left( x_{n}-z_{n}\right)
^{2}=\left\vert x^{\prime \prime }-z^{\prime \prime }\right\vert ^{2}, 
\notag
\end{eqnarray}%
where $x=\left( x^{\prime },x^{\prime \prime }\right) $ with $x^{\prime }\in 
\mathbb{R}^{\ell }$ and $x^{\prime \prime }\in \mathbb{R}^{n-\ell }$, and $%
L_{z}=\left\{ \left( u^{\prime },z^{\prime \prime }\right) :u^{\prime }\in 
\mathbb{R}^{\ell }\right\} $. Now for $x,z\in J$ we take $\xi \equiv \left(
0,\frac{x^{\prime \prime }-z^{\prime \prime }}{\left\vert x^{\prime \prime
}-z^{\prime \prime }\right\vert }\right) \in \mathsf{S}^{n-\ell }$ (where $%
\frac{0}{0}=0$). We use the estimate%
\begin{equation}
\left\vert J\right\vert ^{\frac{1}{n}}\left\Vert \nabla ^{2}\mathbf{R}%
^{\alpha ,n}\mu \right\Vert _{L^{\infty }\left( J\right) }\lesssim
\left\vert J\right\vert ^{\frac{1}{n}}\int_{\mathbb{R}^{n}\setminus \gamma J}%
\frac{d\mu \left( y\right) }{\left\vert y-c_{J}\right\vert ^{n-\alpha +2}}%
\lesssim \frac{1}{\gamma }\int_{\mathbb{R}^{n}\setminus \gamma J}\frac{d\mu
\left( y\right) }{\left\vert y-c_{J}\right\vert ^{n-\alpha +1}}\approx \frac{%
1}{\gamma }\frac{\mathrm{P}^{\alpha }\left( J,\mu \right) }{\left\vert
J\right\vert ^{\frac{1}{n}}},  \label{error ineq}
\end{equation}%
to obtain%
\begin{eqnarray}
&&\frac{1}{\left\vert J\right\vert _{\omega }^{2}}\int_{J}\int_{J}\left(
\left\Vert \nabla ^{2}\mathbf{R}^{\alpha ,n}\mu \right\Vert _{L^{\infty
}\left( J\right) }\left\vert x-z\right\vert \left\vert J\right\vert ^{\frac{1%
}{n}}\right) ^{2}d\omega \left( x\right) d\omega \left( z\right)
\label{error ineq'} \\
&\lesssim &\frac{1}{\gamma ^{2}}\mathrm{P}^{\alpha }\left( J,\mu \right) ^{2}%
\frac{1}{\left\vert J\right\vert _{\omega }^{2}}\int_{J}\int_{J}\left( \frac{%
\left\vert x-z\right\vert }{\left\vert J\right\vert ^{\frac{1}{n}}}\right)
^{2}d\omega \left( x\right) d\omega \left( z\right) =\frac{1}{\gamma ^{2}}%
\mathrm{P}^{\alpha }\left( J,\mu \right) ^{2}\mathsf{E}\left( J,\omega
\right) ^{2}.  \notag
\end{eqnarray}

We then start with a decomposition into big $B$ and small $S$ pieces,%
\begin{eqnarray*}
&&\frac{1}{\left\vert J\right\vert _{\omega }^{2}}\int_{J}\int_{J}\left\vert 
\mathbf{R}^{\alpha ,n}\mu \left( x\right) -\mathbf{R}^{\alpha ,n}\mu \left(
z\right) \right\vert ^{2}d\omega \left( x\right) d\omega \left( z\right) \\
&\gtrsim &\frac{1}{\left\vert J\right\vert _{\omega }^{2}}%
\int_{J}\int_{J}\left\vert \mathbf{R}^{\alpha ,n}\mu \left( z^{\prime
},x^{\prime \prime }\right) -\mathbf{R}^{\alpha ,n}\mu \left( z^{\prime
},z^{\prime \prime }\right) \right\vert ^{2}d\omega \left( x\right) d\omega
\left( z\right) \\
&&-\frac{1}{\left\vert J\right\vert _{\omega }^{2}}\int_{J}\int_{J}\left%
\vert \mathbf{R}^{\alpha ,n}\mu \left( x^{\prime },x^{\prime \prime }\right)
-\mathbf{R}^{\alpha ,n}\mu \left( z^{\prime },x^{\prime \prime }\right)
\right\vert ^{2}d\omega \left( x\right) d\omega \left( z\right) \\
&\equiv &B-S.
\end{eqnarray*}%
For $w\in J$ we have%
\begin{eqnarray}
\left\vert \nabla \mathbf{R}^{\alpha ,n}\mu \left( w\right) -M\left(
c_{J}\right) \right\vert &=&\left\vert \nabla \mathbf{R}^{\alpha ,n}\mu
\left( w\right) -\nabla \mathbf{R}^{\alpha ,n}\mu \left( c_{J}\right)
\right\vert  \label{w est} \\
&\lesssim &\left\vert w-c_{J}\right\vert \left\Vert \nabla ^{2}\mathbf{R}%
^{\alpha ,n}\mu \right\Vert _{L^{\infty }\left( J\right) }\lesssim \frac{1}{%
\gamma }\frac{\mathrm{P}^{\alpha }\left( J,\mu \right) }{\left\vert
J\right\vert ^{\frac{1}{n}}},  \notag
\end{eqnarray}%
from (\ref{error ineq}), and this inequality will allow us to replace $x$ or 
$z$ by $c_{J}$ at appropriate places in the estimates below, introducing a
harmless error. We now use the second inequality in (\ref{lamb}) with the
diagonal form of $M\left( c_{J}\right) =\nabla \mathbf{R}^{\alpha ,n}\mu
\left( c_{J}\right) $, along with the error estimates (\ref{error ineq'})
and (\ref{w est}), to control $S$ by%
\begin{eqnarray*}
S &\leq &\frac{1}{\left\vert J\right\vert _{\omega }^{2}}\int_{J}\int_{J}%
\left\vert \left( x^{\prime }-z^{\prime }\right) \cdot \nabla ^{\prime }%
\mathbf{R}^{\alpha ,n}\mu \left( x\right) \right\vert ^{2}d\omega \left(
x\right) d\omega \left( z\right) \\
&&+\frac{1}{\left\vert J\right\vert _{\omega }^{2}}\int_{J}\int_{J}\left\{
\left\Vert \nabla ^{2}\mathbf{R}^{\alpha ,n}\mu \right\Vert _{L^{\infty
}\left( J\right) }\left\vert x^{\prime }-z^{\prime }\right\vert ^{2}\right\}
^{2}d\omega \left( x\right) d\omega \left( z\right) \\
&\lesssim &\frac{1}{\left\vert J\right\vert _{\omega }^{2}}%
\int_{J}\int_{J}\left\vert \left( x^{\prime }-z^{\prime }\right) \cdot
\nabla ^{\prime }\mathbf{R}^{\alpha ,n}\mu \left( c_{J}\right) \right\vert
^{2}d\omega \left( x\right) d\omega \left( z\right) \\
&&+\frac{1}{\left\vert J\right\vert _{\omega }^{2}}\int_{J}\int_{J}\left\{
\left\Vert \nabla ^{2}\mathbf{R}^{\alpha ,n}\mu \right\Vert _{L^{\infty
}\left( J\right) }\left\vert x^{\prime }-z^{\prime }\right\vert \left\vert
J\right\vert ^{\frac{1}{n}}\right\} ^{2}d\omega \left( x\right) d\omega
\left( z\right) ,
\end{eqnarray*}%
and then continuing with%
\begin{eqnarray*}
S &\lesssim &\frac{1}{\left\vert J\right\vert _{\omega }^{2}}%
\int_{J}\int_{J}\left\{ \left\vert x^{\prime }-z^{\prime }\right\vert \
\left\vert \lambda _{\ell }\right\vert \right\} ^{2}d\omega \left( x\right)
d\omega \left( z\right) +\frac{1}{\gamma ^{2}}\mathrm{P}^{\alpha }\left(
J,\mu \right) ^{2}\mathsf{E}\left( J,\omega \right) ^{2} \\
&\lesssim &\frac{1}{\gamma }\left\vert \lambda _{\ell +1}\right\vert ^{2}%
\frac{1}{\left\vert J\right\vert _{\omega }^{2}}\int_{J}\int_{J}\left\vert
x-z\right\vert ^{2}d\omega \left( x\right) d\omega \left( z\right) +\frac{1}{%
\gamma ^{2}}\mathrm{P}^{\alpha }\left( J,\mu \right) ^{2}\mathsf{E}\left(
J,\omega \right) ^{2} \\
&=&\frac{1}{\gamma }\left\vert J\right\vert ^{\frac{2}{n}}\left\vert \lambda
_{\ell +1}\right\vert ^{2}\mathsf{E}\left( J,\omega \right) ^{2}+\frac{1}{%
\gamma ^{2}}\mathrm{P}^{\alpha }\left( J,\mu \right) ^{2}\mathsf{E}\left(
J,\omega \right) ^{2},
\end{eqnarray*}%
which is small enough to be absorbed later on in the proof. To bound term $B$
from below we use (\ref{w est}) in%
\begin{eqnarray*}
\mathbf{R}^{\alpha ,n}\mu \left( z^{\prime },x^{\prime \prime }\right) -%
\mathbf{R}^{\alpha ,n}\mu \left( z^{\prime },z^{\prime \prime }\right)
&=&\left( x^{\prime \prime }-z^{\prime \prime }\right) \cdot \nabla ^{\prime
\prime }\mathbf{R}^{\alpha ,n}\mu \left( z\right) +O\left( \left\Vert \nabla
^{2}\mathbf{R}^{\alpha ,n}\mu \right\Vert _{L^{\infty }\left( J\right)
}\left\vert x-z\right\vert ^{2}\right) \\
&=&\left( x^{\prime \prime }-z^{\prime \prime }\right) \cdot \nabla ^{\prime
\prime }\mathbf{R}^{\alpha ,n}\mu \left( c_{J}\right) +O\left( \left\Vert
\nabla ^{2}\mathbf{R}^{\alpha ,n}\mu \right\Vert _{L^{\infty }\left(
J\right) }\left\vert x-z\right\vert \left\vert J\right\vert ^{\frac{1}{n}%
}\right) ,
\end{eqnarray*}%
and then (\ref{form est}) with the choice $\xi \equiv \left( 0,\frac{%
x^{\prime \prime }-z^{\prime \prime }}{\left\vert x^{\prime \prime
}-z^{\prime \prime }\right\vert }\right) \in \mathsf{S}^{n-\ell }$, to obtain%
\begin{eqnarray*}
\left\vert x^{\prime \prime }-z^{\prime \prime }\right\vert \left\vert
\lambda _{\ell +1}\right\vert &\leq &\left\vert x^{\prime \prime }-z^{\prime
\prime }\right\vert \left\vert \left( \xi \cdot \nabla ^{\prime \prime
}\right) \mathbf{R}^{\alpha ,n}\mu \left( c_{J}\right) \cdot \xi \right\vert
\\
&=&\left\vert \left( x^{\prime \prime }-z^{\prime \prime }\right) \cdot
\nabla ^{\prime \prime }\mathbf{R}^{\alpha ,n}\mu \left( c_{J}\right) \cdot
\xi \right\vert \\
&\leq &\left\vert \left( x^{\prime \prime }-z^{\prime \prime }\right) \cdot
\nabla ^{\prime \prime }\mathbf{R}^{\alpha ,n}\mu \left( c_{J}\right)
\right\vert \\
&\leq &\left\vert \mathbf{R}^{\alpha ,n}\mu \left( z^{\prime },x^{\prime
\prime }\right) -\mathbf{R}^{\alpha ,n}\mu \left( z^{\prime },z^{\prime
\prime }\right) \right\vert +O\left( \left\Vert \nabla ^{2}\mathbf{R}%
^{\alpha ,n}\mu \right\Vert _{L^{\infty }\left( J\right) }\left\vert
x-z\right\vert \left\vert J\right\vert ^{\frac{1}{n}}\right) .
\end{eqnarray*}%
Then using (\ref{error ineq'}) and (\ref{w est}) we continue with%
\begin{eqnarray*}
&&\frac{1}{\left\vert J\right\vert _{\omega }^{2}}\int_{J}\int_{J}\left\vert 
\mathbf{R}^{\alpha ,n}\mu \left( z^{\prime },x^{\prime \prime }\right) -%
\mathbf{R}^{\alpha ,n}\mu \left( z^{\prime },z^{\prime \prime }\right)
\right\vert ^{2}d\omega \left( x\right) d\omega \left( z\right) \\
&\gtrsim &\left\vert \lambda _{\ell +1}\right\vert ^{2}\frac{1}{\left\vert
J\right\vert _{\omega }^{2}}\int_{J}\int_{J}\left\vert x^{\prime \prime
}-z^{\prime \prime }\right\vert ^{2}d\omega \left( x\right) d\omega \left(
z\right) -\frac{1}{\gamma ^{2}}\mathrm{P}^{\alpha }\left( J,\mu \right) ^{2}%
\mathsf{E}\left( J,\omega \right) ^{2},
\end{eqnarray*}%
and then%
\begin{eqnarray}
&&\left\vert \lambda _{\ell +1}\right\vert ^{2}\left\vert J\right\vert ^{%
\frac{2}{n}}\mathsf{E}\left( J,\omega \right) ^{2}\leq C\left\vert \lambda
_{\ell +1}\right\vert ^{2}\left\vert J\right\vert ^{\frac{2}{n}}\mathsf{E}%
_{\ell }\left( J,\omega \right) ^{2}  \label{use} \\
&=&\left\vert \lambda _{\ell +1}\right\vert ^{2}\frac{1}{\left\vert
J\right\vert _{\omega }^{2}}\int_{J}\int_{J}\limfunc{dist}\left(
x,L_{z}\right) ^{2}d\omega \left( x\right) d\omega \left( z\right)
=\left\vert \lambda _{\ell +1}\right\vert ^{2}\frac{1}{\left\vert
J\right\vert _{\omega }^{2}}\int_{J}\int_{J}\left\vert x^{\prime \prime
}-z^{\prime \prime }\right\vert ^{2}d\omega \left( x\right) d\omega \left(
z\right)  \notag \\
&\lesssim &\frac{1}{\left\vert J\right\vert _{\omega }^{2}}%
\int_{J}\int_{J}\left\vert \mathbf{R}^{\alpha ,n}\mu \left( z^{\prime
},x^{\prime \prime }\right) -\mathbf{R}^{\alpha ,n}\mu \left( z^{\prime
},z^{\prime \prime }\right) \right\vert ^{2}d\omega \left( x\right) d\omega
\left( z\right) +\frac{1}{\gamma ^{2}}\mathrm{P}^{\alpha }\left( J,\mu
\right) ^{2}\mathsf{E}\left( J,\omega \right) ^{2}  \notag \\
&\lesssim &\frac{1}{\left\vert J\right\vert _{\omega }^{2}}%
\int_{J}\int_{J}\left\vert \mathbf{R}^{\alpha ,n}\mu \left( x\right) -%
\mathbf{R}^{\alpha ,n}\mu \left( z\right) \right\vert ^{2}d\omega \left(
x\right) d\omega \left( z\right) +S+\frac{1}{\gamma ^{2}}\mathrm{P}^{\alpha
}\left( J,\mu \right) ^{2}\mathsf{E}\left( J,\omega \right) ^{2}  \notag \\
&\lesssim &\frac{1}{\left\vert J\right\vert _{\omega }^{2}}%
\int_{J}\int_{J}\left\vert \mathbf{R}^{\alpha ,n}\mu \left( x\right) -%
\mathbf{R}^{\alpha ,n}\mu \left( z\right) \right\vert ^{2}d\omega \left(
x\right) d\omega \left( z\right) +\frac{1}{\gamma }\left\vert \lambda _{\ell
+1}\right\vert ^{2}\left\vert J\right\vert ^{\frac{2}{n}}\mathsf{E}\left(
J,\omega \right) ^{2},  \notag
\end{eqnarray}%
since $\frac{1}{\gamma ^{2}}\mathrm{P}^{\alpha }\left( J,\mu \right) ^{2}%
\mathsf{E}\left( J,\omega \right) ^{2}\leq \frac{1}{\gamma }\left\vert
J\right\vert ^{\frac{2}{n}}\left\vert \lambda _{\ell +1}\right\vert ^{2}%
\mathsf{E}\left( J,\omega \right) ^{2}$ for $\gamma $ large enough depending
only on $n$ and $\alpha $. Finally then, for $\gamma $ large enough
depending only on $n$ and $\alpha $ we can absorb the last term on the right
hand side of (\ref{use}) into the left hand side to obtain (\ref{will show}):%
\begin{equation*}
\left\vert \lambda _{\ell +1}\right\vert ^{2}\left\vert J\right\vert ^{\frac{%
2}{n}}\mathsf{E}\left( J,\omega \right) ^{2}\lesssim \frac{1}{\left\vert
J\right\vert _{\omega }^{2}}\int_{J}\int_{J}\left\vert \mathbf{R}^{\alpha
,n}\mu \left( x\right) -\mathbf{R}^{\alpha ,n}\mu \left( z\right)
\right\vert ^{2}d\omega \left( x\right) d\omega \left( z\right) .
\end{equation*}%
But since $\gamma ^{-\frac{m-\ell }{2n}}c\frac{\mathrm{P}^{\alpha }\left(
J,\mu \right) }{\left\vert J\right\vert ^{\frac{1}{n}}}\leq \left\vert
\lambda _{\ell +1}\right\vert $ by (\ref{lamb}), we have obtained%
\begin{eqnarray*}
\mathrm{P}^{\alpha }\left( J,\mu \right) ^{2}\mathsf{E}\left( J,\omega
\right) ^{2} &\leq &\frac{1}{c^{2}}\gamma \left\vert \lambda _{\ell
+1}\right\vert ^{2}\left\vert J\right\vert ^{\frac{2}{n}}\mathsf{E}\left(
J,\omega \right) ^{2} \\
&\lesssim &\frac{1}{\left\vert J\right\vert _{\omega }^{2}}%
\int_{J}\int_{J}\left\vert \mathbf{R}^{\alpha ,n}\mu \left( x\right) -%
\mathbf{R}^{\alpha ,n}\mu \left( z\right) \right\vert ^{2}d\omega \left(
x\right) d\omega \left( z\right) ,
\end{eqnarray*}%
which is the strong reverse energy inequality for $J$ since%
\begin{equation*}
\frac{1}{2\left\vert J\right\vert _{\omega }^{2}}\int_{J}\int_{J}\left\vert 
\mathbf{R}^{\alpha ,n}\mu \left( x\right) -\mathbf{R}^{\alpha ,n}\mu \left(
z\right) \right\vert ^{2}d\omega \left( x\right) d\omega \left( z\right) =%
\mathbb{E}_{J}^{\omega }\left\vert \mathbf{R}^{\alpha ,n}\mu -\mathbb{E}%
_{J}^{d\omega }\mathbf{R}^{\alpha ,n}\mu \right\vert ^{2}.
\end{equation*}%
This completes the proof of strong reversal of energy under the assumption
that $1\leq \ell \leq m$.

If instead $\ell =0$, then $\left\vert \lambda _{i}\right\vert >\gamma ^{-%
\frac{1}{2n}}\left\vert \lambda _{i+1}\right\vert $ for all $1\leq i\leq m$,
and so the smallest eigenvalue satisfies 
\begin{equation*}
\left\vert \lambda _{1}\right\vert >\gamma ^{-\frac{1}{2n}}\left\vert
\lambda _{2}\right\vert >\gamma ^{-\frac{2}{2n}}\left\vert \lambda
_{3}\right\vert >...>\gamma ^{-\frac{k}{2n}}\left\vert \lambda
_{m+1}\right\vert >\gamma ^{-\frac{1}{2}}c\frac{\mathrm{P}^{\alpha }\left(
J,\mu \right) }{\left\vert J\right\vert ^{\frac{1}{n}}}.
\end{equation*}%
In this case the arguments above show that%
\begin{eqnarray*}
\left( \gamma ^{-\frac{1}{2}}c\frac{\mathrm{P}^{\alpha }\left( J,\mu \right) 
}{\left\vert J\right\vert ^{\frac{1}{n}}}\right) ^{2}\mathsf{E}\left(
J,\omega \right) ^{2} &\lesssim &\frac{1}{\left\vert J\right\vert _{\omega
}^{2}}\int_{J}\int_{J}\left\vert \mathbf{R}^{\alpha ,n}\mu \left( x\right) -%
\mathbf{R}^{\alpha ,n}\mu \left( z\right) \right\vert ^{2}d\omega \left(
x\right) d\omega \left( z\right) \\
&&+\frac{1}{\gamma ^{2}}\mathrm{P}^{\alpha }\left( J,\mu \right) ^{2}\mathsf{%
E}\left( J,\omega \right) ^{2},
\end{eqnarray*}%
which again yields the strong reverse energy inequality for $J$ since the
second term on the right hand side can then be absorbed into the left hand
side for $\gamma $ sufficiently large depending only on $n$ and $\alpha $.
\end{proof}

\subsection{Necessity of the energy conditions}

Now we demonstrate in a standard way the necessity of the energy conditions
for the vector Riesz transform $\mathbf{R}^{\alpha ,n}$ when the measures $%
\sigma $ and $\omega $ are appropriately energy dispersed. Indeed, we can
then establish the inequality 
\begin{equation*}
\mathcal{E}_{\alpha }^{\limfunc{strong}}\lesssim \sqrt{\mathcal{A}%
_{2}^{\alpha }}+\mathfrak{T}_{\mathbf{R}^{\alpha ,n}}.
\end{equation*}%
So assume that (\ref{either or'}) holds. We use Lemma \ref{k partial
reversal} to obtain that the $\alpha $-fractional Riesz transform $\mathbf{R}%
^{\alpha ,n}$ has strong reversal of $\omega $-energy on \emph{all}
quasicubes $J$. Then we use the next lemma to obtain the energy condition $%
\mathcal{E}_{\alpha }^{\limfunc{strong}}\lesssim \mathfrak{T}_{\mathbf{R}%
^{n,\alpha }}+\sqrt{A_{2}^{\alpha }}$.

\begin{lemma}
Let $0\leq \alpha <n$ and suppose that $\mathbf{R}^{\alpha ,n}$ has\ strong
reversal of $\omega $-energy on \emph{all} quasicubes $J$. Then we have the
energy condition inequality,%
\begin{equation*}
\mathcal{E}_{\alpha }^{\limfunc{strong}}\lesssim \mathfrak{T}_{\mathbf{T}%
^{n,\alpha }}+\sqrt{A_{2}^{\alpha ,\limfunc{punct}}}\ .
\end{equation*}
\end{lemma}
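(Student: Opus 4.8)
The plan is to establish the strong quasienergy inequality $\mathcal{E}_{\alpha }^{\limfunc{strong}}\lesssim \mathfrak{T}_{\mathbf{R}^{n,\alpha }}+\sqrt{A_{2}^{\alpha ,\limfunc{punct}}}$ by testing the norm inequality against well-chosen functions. Fix a decomposition $I=\dot{\cup}I_{r}$ and for each $r$ the maximal $(\mathbf{r},\varepsilon)$-deeply embedded quasicubes $J\in\mathcal{M}_{\mathbf{r},\varepsilon-\limfunc{deep}}(I_{r})$. For each such $J$, apply the strong reversal of energy (Lemma \ref{k partial reversal}, available since $\omega$ is $k$-energy dispersed with the index relations holding) to the measure $\mu = \mathbf{1}_{I\setminus\gamma J}\sigma$, which is supported outside $\gamma J$: this gives
\begin{equation*}
\left( \frac{\mathrm{P}^{\alpha }\left( J,\mathbf{1}_{I\setminus\gamma J}\sigma \right) }{\left\vert J\right\vert ^{\frac{1}{n}}}\right) ^{2}\left\Vert \mathsf{P}_{J}^{\omega }\mathbf{x}\right\Vert _{L^{2}\left( \omega \right) }^{2}\lesssim \int_{J}\left\vert \mathbf{R}^{\alpha ,n}\left( \mathbf{1}_{I\setminus\gamma J}\sigma \right) -\mathbb{E}_{J}^{\omega }\mathbf{R}^{\alpha ,n}\left( \mathbf{1}_{I\setminus\gamma J}\sigma \right) \right\vert ^{2}d\omega .
\end{equation*}
The quantity $\left\Vert \mathsf{P}_{J}^{\omega }\mathbf{x}\right\Vert^2$ is $\le \left\Vert \mathbf{x}-\mathbb{E}_J^\omega\mathbf{x}\right\Vert_{L^2(\mathbf{1}_J\omega)}^2 = \left\vert J\right\vert_\omega\left\vert J\right\vert^{2/n}\mathsf{E}(J,\omega)^2$, so the reversal lemma is exactly what converts the energy factor into an $L^2(\omega)$-norm of a Riesz transform image minus its mean.

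Next I would replace the restricted argument $\mathbf{1}_{I\setminus\gamma J}\sigma$ by $\mathbf{1}_I\sigma$ inside the Poisson integral. Since $J$ is deeply embedded in $I_r\subset I$, the Poisson tails satisfy $\mathrm{P}^\alpha(J,\mathbf{1}_{\gamma J\cap I}\sigma)/\left\vert J\right\vert^{1/n}\lesssim \mathrm{P}^\alpha(J,\mathbf{1}_{I}\sigma)/\left\vert J\right\vert^{1/n}$ trivially, but I want a replacement going the other way: the point is that the piece of $\sigma$ living in $\gamma J$ contributes a term controlled, via the Monotonicity/Energy Lemma type estimates together with the $A_2^{\alpha,\limfunc{punct}}$ condition (or rather its energy consequence $A_2^{\alpha,\limfunc{energy}}$ from Lemma \ref{energy A2}), by $A_2^{\alpha,\limfunc{punct}}\left\vert J\right\vert_\sigma$ after summing — this is precisely where common point masses force the appearance of the punctured Muckenhoupt constant and where the ``prepare to puncture'' idea from the backward testing section reappears. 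The right side $\int_{J}\left\vert \mathbf{R}^{\alpha,n}(\mathbf{1}_I\sigma) - c_J\right\vert^2 d\omega$, after choosing $c_J = \mathbb{E}_J^\omega \mathbf{R}^{\alpha,n}(\mathbf{1}_I\sigma)$ so it is minimized, is then bounded using the pairwise disjointness of the $J$'s in $\mathcal{M}_{\mathbf{r},\varepsilon-\limfunc{deep}}(I_r)$ and of the $I_r$'s: summing over $J$ and $r$ gives $\sum_{r}\sum_J \int_J \left\vert \mathbf{R}^{\alpha,n}(\mathbf{1}_I\sigma) - \mathbb{E}_{I}^\omega\mathbf{R}^{\alpha,n}(\mathbf{1}_I\sigma)\right\vert^2 d\omega \le \int_I \left\vert \mathbf{R}^{\alpha,n}(\mathbf{1}_I\sigma)\right\vert^2 d\omega + \left\vert I\right\vert_\omega\,\left\vert\mathbb{E}_I^\omega \mathbf{R}^{\alpha,n}(\mathbf{1}_I\sigma)\right\vert^2 \lesssim \left(\mathfrak{T}_{\mathbf{R}^{\alpha,n}}^2 + A_2^\alpha\right)\left\vert I\right\vert_\sigma$, the first term being the quasicube testing condition applied to $\mathbf{1}_I\sigma$ and the second handled by Cauchy–Schwarz plus testing. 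Dividing by $\left\vert I\right\vert_\sigma$ and taking the supremum over all decompositions yields the first (deep) term in the definition of $\mathcal{E}_\alpha^{\limfunc{strong}}$; the second (alternate-quasicube, $\ell$-shifted) term is handled identically after noting Lemma \ref{shifted} already gives the analogous local bound on alternate quasicubes and the reversal lemma applies to any cube $J$ regardless of which quasigrid it sits in.

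The main obstacle I expect is controlling the ``missing'' near part $\mathbf{1}_{\gamma J}\sigma$ and the mean-subtraction constant uniformly in the presence of common point masses: the reversal lemma is stated for $\mu$ supported outside $\gamma J$, but the testing function naturally uses $\mathbf{1}_I\sigma$, which charges $\gamma J$, and when $\sigma$ and $\omega$ share a point mass in $J$ the naive estimate $\left\Vert\mathsf{P}_J^\omega\mathbf{x}\right\Vert^2\le \left\vert J\right\vert_\omega\left\vert J\right\vert^{2/n}$ is far too lossy. The fix is the ``prepare to puncture'' device: pass to $\widetilde\omega = \omega - \omega(\{p\})\delta_p$ with $p$ the largest common point mass in $J$, use $\left\Vert\bigtriangleup_K^\omega\mathbf{x}\right\Vert_{L^2(\omega)}^2 \le \left\vert K\right\vert^{2/n}\left\vert K\right\vert_{\widetilde\omega}$ as in the proof of Lemma \ref{energy A2}, and absorb the resulting $\omega(J,\mathfrak{P}_{(\sigma,\omega)})\left\vert J\right\vert_\sigma/\left\vert J\right\vert^{2(1-\alpha/n)}$ into $A_2^{\alpha,\limfunc{punct}}$. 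Carrying this bookkeeping through the sum over $J$ and $r$, while keeping the disjointness exploited correctly, is the delicate part; everything else is a routine assembly of the testing condition, $A_2^\alpha$, and the already-established energy Muckenhoupt inequalities. I would also remark that strictly speaking one should pull back to the preimage cubes under $\Omega$ before invoking the reversal lemma (which is stated for cubes $J$), then push forward, exactly as done elsewhere in the paper for the Poisson inequality.
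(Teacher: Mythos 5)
Your overall strategy is the paper's: apply the strong reversal of $\omega $-energy to the hole-punched measure $\mathbf{1}_{I_{r}\setminus \gamma J}\sigma $ for each $J\in \mathcal{M}_{\left( \mathbf{r},\varepsilon \right) -\limfunc{deep}}\left( I_{r}\right) $, control the missing near Poisson piece $\mathrm{P}^{\alpha }\left( J,\mathbf{1}_{\gamma J}\sigma \right) $ through $A_{2}^{\alpha ,\limfunc{energy}}\lesssim A_{2}^{\alpha ,\limfunc{punct}}$ (Lemma \ref{energy A2}), and sum the right-hand sides using disjointness of the $J$'s and the testing condition. That is exactly how the paper proceeds, and your first and third paragraphs reconstruct it correctly.

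The one step that is off target is the singular integral of the near part on the right-hand side of the reversal inequality. The reversal applied to $\mu =\mathbf{1}_{I\setminus \gamma J}\sigma $ produces $\mathbb{E}_{J}^{\omega }\left\vert \mathbf{R}^{\alpha ,n}\left( \mathbf{1}_{I\setminus \gamma J}\sigma \right) -\mathbb{E}_{J}^{\omega }\mathbf{R}^{\alpha ,n}\left( \mathbf{1}_{I\setminus \gamma J}\sigma \right) \right\vert ^{2}$, not $\int_{J}\left\vert \mathbf{R}^{\alpha ,n}\left( \mathbf{1}_{I}\sigma \right) -c_{J}\right\vert ^{2}d\omega $ as your display asserts, and since $\mathbf{1}_{I\setminus \gamma J}$ is not the indicator of a quasicube you cannot feed it to the testing condition. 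The fix is the triangle inequality $\mathbf{R}^{\alpha ,n}\left( \mathbf{1}_{I_{r}\setminus \gamma J}\sigma \right) =\mathbf{R}^{\alpha ,n}\left( \mathbf{1}_{I_{r}}\sigma \right) -\mathbf{R}^{\alpha ,n}\left( \mathbf{1}_{\gamma J}\sigma \right) $: the first piece sums over the disjoint $J$ to $\int_{I_{r}}\left\vert \mathbf{R}^{\alpha ,n}\left( \mathbf{1}_{I_{r}}\sigma \right) \right\vert ^{2}d\omega \leq \mathfrak{T}^{2}\left\vert I_{r}\right\vert _{\sigma }$ as you say, while the second requires the additional estimate $\sum_{J}\int_{\gamma J}\left\vert \mathbf{R}^{\alpha ,n}\left( \mathbf{1}_{\gamma J}\sigma \right) \right\vert ^{2}d\omega \lesssim \mathfrak{T}^{2}\sum_{J}\left\vert \gamma J\right\vert _{\sigma }\lesssim \mathfrak{T}^{2}\left\vert I_{r}\right\vert _{\sigma }$, using testing on each $\gamma J$, the containment $\gamma J\subset I_{r}$ (from goodness once $\gamma \leq 2^{\mathbf{r}\left( 1-\varepsilon \right) }$), and the bounded overlap of the dilates $\gamma J$. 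Your proposed remedy for the obstacle that ``$\mathbf{1}_{I}\sigma $ charges $\gamma J$'' --- the prepare-to-puncture device on $\omega $ --- only addresses the left-hand Poisson term, and there it is already packaged inside Lemma \ref{energy A2}; it does nothing for the singular-integral term. With that substitution your argument coincides with the paper's proof. Two minor remarks: the lemma takes the reversal as a hypothesis, so $k$-energy dispersion is not invoked at this stage; and since $\mathbb{E}_{J}^{\omega }$ minimizes the $L^{2}\left( \mathbf{1}_{J}\omega \right) $ distance to constants you may simply take $c=0$ and dispense with the extra term $\left\vert I\right\vert _{\omega }\left\vert \mathbb{E}_{I}^{\omega }\mathbf{R}^{\alpha ,n}\left( \mathbf{1}_{I}\sigma \right) \right\vert ^{2}$.
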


\begin{proof}
Fix $\gamma \geq 2$ large enough depending only on $n$ and $\alpha $, and
fix goodness parameters $\mathbf{r}$ and $\varepsilon $ so that $\gamma \leq
2^{\mathbf{r}\left( 1-\varepsilon \right) }$. Then Lemma \ref{k partial
reversal} holds. From the strong reversal of $\omega $-energy with $d\mu
\equiv \mathbf{1}_{I_{r}\setminus \gamma J}d\sigma $, we have%
\begin{eqnarray*}
&&\mathsf{E}\left( J,\omega \right) ^{2}\mathrm{P}^{\alpha }\left( J,\mathbf{%
1}_{I_{r}\setminus \gamma J}d\sigma \right) ^{2} \\
&\leq &C\ \mathbb{E}_{J}^{\omega }\left\vert \mathbf{T}^{\alpha }\left( 
\mathbf{1}_{I_{r}\setminus \gamma J}d\sigma \right) -\mathbb{E}_{J}^{d\omega
}\mathbf{T}^{\alpha }\left( \mathbf{1}_{I_{r}\setminus \gamma J}d\sigma
\right) \right\vert ^{2} \\
&\lesssim &\mathbb{E}_{J}^{\omega }\left\vert \mathbf{T}^{\alpha }\left( 
\mathbf{1}_{I_{r}\setminus \gamma J}d\sigma \right) \right\vert ^{2}\lesssim 
\mathbb{E}_{J}^{\omega }\left\vert \mathbf{T}^{\alpha }\left( \mathbf{1}%
_{I_{r}}d\sigma \right) \right\vert ^{2}+\mathbb{E}_{J}^{\omega }\left\vert 
\mathbf{T}^{\alpha }\left( \mathbf{1}_{\gamma J}d\sigma \right) \right\vert
^{2},
\end{eqnarray*}%
and so%
\begin{eqnarray*}
\sum_{J\in M_{\left( \mathbf{r},\varepsilon \right) -\limfunc{deep}}\left(
I_{r}\right) }\left\vert J\right\vert _{\omega }\mathsf{E}\left( J,\omega
\right) ^{2}\mathrm{P}^{\alpha }\left( J,\mu \right) ^{2} &\lesssim
&\sum_{J}\int_{J}\left\vert \mathbf{T}^{\alpha }\left( \mathbf{1}%
_{I_{r}}d\sigma \right) \left( x\right) \right\vert ^{2}d\omega \left(
x\right) +\sum_{J}\int_{J}\left\vert \mathbf{T}^{\alpha }\left( \mathbf{1}%
_{\gamma J}d\sigma \right) \left( x\right) \right\vert ^{2}d\omega \left(
x\right) \\
&\lesssim &\int_{I_{r}}\left\vert \mathbf{T}^{\alpha }\left( \mathbf{1}%
_{I_{r}}d\sigma \right) \left( x\right) \right\vert ^{2}d\omega \left(
x\right) +\sum_{J}\int_{\gamma J}\left\vert \mathbf{T}^{\alpha }\left( 
\mathbf{1}_{\gamma J}d\sigma \right) \left( x\right) \right\vert ^{2}d\omega
\left( x\right) \\
&\lesssim &\mathfrak{T}_{\mathbf{T}^{n,\alpha }}\left\vert I_{r}\right\vert
_{\sigma }+\sum_{J}\mathfrak{T}_{\mathbf{T}^{n,\alpha }}\left\vert \gamma
J\right\vert _{\sigma }\lesssim \mathfrak{T}_{\mathbf{T}^{n,\alpha
}}\left\vert I_{r}\right\vert _{\sigma }
\end{eqnarray*}%
since $\gamma J\subset I_{r}$ for $\gamma \leq 2^{\mathbf{r}\left(
1-\varepsilon \right) }$, and since the quasicubes $\gamma J$ have bounded
overlap (see \cite[Lemma 2 in v3]{SaShUr6}). We also have%
\begin{equation*}
\sum_{J\in M_{\left( \mathbf{r},\varepsilon \right) -\limfunc{deep}}\left(
I_{r}\right) }\left\vert J\right\vert _{\omega }\mathsf{E}\left( J,\omega
\right) ^{2}\mathrm{P}^{\alpha }\left( J,\mathbf{1}_{\gamma J}d\sigma
\right) ^{2}\lesssim \sum_{J\in M_{\left( \mathbf{r},\varepsilon \right) -%
\limfunc{deep}}\left( I_{r}\right) }A_{2}^{\alpha ,\limfunc{energy}%
}\left\vert \gamma J\right\vert _{\sigma }\lesssim A_{2}^{\alpha ,\limfunc{%
energy}}\left\vert I_{r}\right\vert _{\sigma }
\end{equation*}%
by the bounded overlap of the quasicubes $\gamma J$ in $I_{r}$ once more. We
can now easily complete the proof of $\mathcal{E}_{\alpha }^{\limfunc{strong}%
}\lesssim \mathfrak{T}_{\mathbf{T}^{n,\alpha }}+\sqrt{A_{2}^{\alpha ,%
\limfunc{punct}}}$.
\end{proof}

\end{document}